\documentclass[a4paper,oneside,11pt]{article}
\usepackage{geometry}                
\usepackage[active]{srcltx}
\usepackage{hyperref}

\usepackage{xcolor}

\usepackage[doi=false,isbn=false,url=false,eprint=true, maxbibnames=99, giveninits=true]{biblatex} 
\renewbibmacro{in:}{}
\usepackage{amsmath}
\usepackage{amsthm}
\usepackage{amssymb}
\usepackage{latexsym}
\usepackage{mathtools}
\usepackage{mathrsfs}
\usepackage{graphics}
\usepackage{latexsym}
\usepackage{psfrag}
\usepackage{import}
\usepackage{verbatim}
\usepackage{enumerate}
\usepackage{enumitem}

\newcommand{\R}{\mathbb{R}}
\newcommand{\N}{\mathbb{N}}

\newcommand{\B}{\mathcal{B}}

\newcommand{\G}{\mathcal{G}}

\newcommand{\T}{\mathcal{T}}

\renewcommand{\L}{\mathcal{L}}

\newcommand{\Ric}{{\rm Ric}}
\newcommand{\norm}[1]{\left\Vert#1\right\Vert}

\newcommand{\eps}{\varepsilon}
\newcommand{\tr}{\textrm{tr}}
\newcommand{\K}{\mathcal{K}}

\renewcommand{\L}{\mathcal{L}}
\newcommand{\m}{\mathfrak{m}}
\newcommand{\q}{\mathfrak{q}}

\renewcommand{\P}{\mathbb P}

\renewcommand{\P}{\mathcal{P}}

\newcommand{\spt}{\operatorname{spt}\ }

\theoremstyle{plain}
\newtheorem{lemma}{Lemma}[section]
\newtheorem{theorem}[lemma]{Theorem}

\newtheorem{proposition}[lemma]{Proposition}
\newtheorem{corollary}[lemma]{Corollary}
\newtheorem*{theorem*}{Theorem}
\newtheorem*{maintheorem*}{Main Theorem}

\theoremstyle{definition}

\newtheorem{definition}[lemma]{Definition}
\newtheorem*{definition*}{Definition}
\newtheorem{remark}[lemma]{Remark}

\numberwithin{equation}{section}

\newcounter{mycounter}
\global\long\def\R{\mathbb{R}}%

\global\long\def\T{\mathbb{\mathcal{T}}}%

\global\long\def\g{\mathbb{\mathcal{\gamma}}}%

\global\long\def\P{\mathbb{\mathcal{\mathcal{P}}}}%

\global\long\def\L{\mathcal{L}}%

\global\long\def\m{\mathfrak{m}\mathcal{}}%

\global\long\def\Gu{\Gamma_{u}}%

\global\long\def\q{\mathfrak{q}}%

\global\long\def\spt{\text{spt}\,}%

\global\long\def\restrict{\text{\ensuremath{\llcorner}}}%

\begin{document}

\title{ Monge's transport problem in synthetic Lorentzian spacetimes}
\author{ Afiny Akdemir} 
\maketitle

\begin{abstract}

We prove the existence of solutions to the Lorentzian Monge problem on synthetic Lorentzian spacetimes that satisfy the forward timelike measure contraction property $TMCP^+(K,N)$. The cost for this problem is the time separation function $\ell(x,y)$ (the Lorentz distance), which represents the maximum amount a particle can age when traveling from $x$ to $y$.  The solution is based on the needle decomposition technique, and amounts to a reduction of the full problem to its one-dimensional counterparts. As an application of needle decomposition, we show that the Lorentzian timelike measure contraction property $TMCP^+(K,N)$ is equivalent to the $TMCP^+_{rLip}(K,N)$ condition, where the curvature dimension conditions are defined on gradient flow curves of reverse 1-Lipschitz functions. 
\end{abstract}

\tableofcontents

\section{Introduction}\label{Ss:introduction}
Monge's optimal transportation problem is one of the oldest problems in the fields of geometric analysis and partial differential equations. The objective of the problem is to find a map that moves one distribution of mass onto another as efficiently as possible, where efficiency is defined by the cost function $c(x,y)$ that encodes the cost to move mass from position $x$ onto $y$. 

Letting $(X,d,\m)$ be a metric measure space, and given the Borel probability measures $\mu,\nu\in \P(X)$ --- the source and target distributions, respectively, Monge's  1781 Euclidean formulation \cite{Monge} takes the cost to be the distance function, that is $c(x,y)=d(x,y)$, and accordingly, the problem can thus be stated as the following $L^1$ minimization problem:
\begin{equation}\label{Monge Problem}
    \inf_{T_\#\mu =\nu }\int _X d(x,T(x))d\mu(x),\end{equation} where we are interested in finding the map $T:X\rightarrow X$ with the constraint $T_\# \mu = \nu$ meaning $$\mu(T^{-1}(A)) = \nu(A), \quad \textrm{for all  Borel subsets $A$ of $X$}.$$

In this paper, we investigate the corresponding Lorentzian Monge problem. This problem is set on a globally hyperbolic measured metric spacetime denoted by $(X,d,\m,\ell)$ satisfying the $TMCP^+(K,N)$ condition, where the cost is the time separation function $c(x,y)=\ell(x,y)$, which encodes the chronology of the spacetime.

We briefly recall the history of the Monge problem as well as the underlying assumptions in both the Riemannian and Lorentzian signatures. In Monge's original 1781 formulation, where the cost is proportional to the distance moved, the object being minimized is interpreted as the total work. Despite being physically the most natural cost, the Monge problem with the distance cost presented significant difficulties due to the lack of strict convexity of the distance cost, as well as the non-linearity of constraint $T_\# \mu = \nu$. 

To overcome these difficulties, Kantorovich in his seminal work \cite{Kantorovich} proposed a relaxation of the Monge problem, whereby the focus is on finding an optimal probability coupling $\pi$ of the source distribution $\mu$ and the target distribution $\nu$ over the convex set $\Pi(\mu,\nu)$: that is $\pi\in \P(X\times X)$ where the projections $P_1(x,y)=x,$ and $P_2(x,y)=y$ satisfy $(P_1)_\#\pi = \mu$, $(P_2)_\#\pi = \nu$. Thus, the Kantorovich formulation of the $L^1$ optimal transportation problem is stated as 
\begin{equation}\label{Kantorovich}
    d_1(\mu,\nu) :=\inf _{\pi\in \Pi(\mu\,\nu)}\int_{X\times X}d(x,y)d\pi(x,y),
\end{equation}
and this reformulation converts the nonlinear, non-convex optimization problem \eqref{Monge Problem} into a linear program over a convex set, for which classical duality and compactness methods apply directly. By invoking Kantorovich's duality \cite{Kantorovich}, we get the dual formulation to the primal Kantorovich problem \eqref{Kantorovich}:  

    \begin{equation}\label{Duality}
\inf _{\pi\in \Pi(\mu\,\nu)}\int_{X\times X}d(x,y)d\pi(x,y) = \sup_{u\in \textrm{1-Lip}}\int_Xu(y)d\nu -\int_Xu(x)d\mu,
    \end{equation}
    where 1-Lip denotes the class of Lipschitz continuous functions with respect to  $d$ with their Lipschitz constant not greater than 1. Accordingly, the function $u$ that attains the supremum is called the \emph{Kantorovich potential} for the problem.

The question of whether the Kantorovich \eqref{Kantorovich} and the Monge \eqref{Monge Problem} formulations  agree with one another is twofold: first, whether their optimal values agree, and second, whether the   optimal coupling $\pi$ is concentrated on the graph of a Monge map $T$, that is, $\pi = (Id,T)_\# \mu.$ The values can agree even when no optimal map $T$ exists, and the existence of a Monge map is thus a stronger statement, requiring that the optimal coupling is  induced by a map. The starting point for finding a Monge map is to consider the structure of the support of the optimal coupling $\pi$  provided by the dual formulation \eqref{Duality}. Indeed, for the optimal coupling $\pi$ and the corresponding Kantorovich potential $u$,  $\pi$ is concentrated on the set where $u$ is affine, that is:

\begin{equation}\label{support pi}
    \pi[(x,y)\in X\times X: u(y)-u(x)=d(x,y)] =1.
\end{equation} Additionally, \eqref{support pi} provides the correspondences between $\mu$-a.e. $x$ and $\nu$-a.e. $y$, and so the strategy from here is to use these correspondences to construct a Monge map $T:X\rightarrow X$ such that $T(x)=y.$

The first proposed solution to the above strategy for finding the Monge map in the case where $(X,d,\m)= (\R^n,|\cdot|,\L^n)$  was given by Sudakov in 1978 \cite{Sudakov}, where $|\cdot|$ is the Euclidean distance, and $\L^n$ is the Lebesgue measure. Sudakov's initial proposal applied to arbitrary norms in $\R^n$, where his idea was to restrict the space and the Lebesgue measure $\L^n$ onto lower dimensional subsets using measure decomposition techniques, thus decomposing the full problem into sub-problems for which the solutions can be explicitly constructed. This reduction, together with a restriction of the Kantorovich potential $u$ onto these sub-problems allowed Sudakov to decompose the space into lower dimensional subsets where the Kantorovich potential is an affine function. The particular case where this reduction yields one-dimensional sets can be seen by considering \eqref{support pi}, whereby the $d$-cyclical monotonicity of the support of $\pi$ shows that $u$ must be affine along a geodesic $\gamma:[0,1]\rightarrow X$, that is: $$(x,y)\in \spt \pi \implies \exists\gamma:[0,1]\rightarrow X \,\,\textrm{with} \,\, u(\gamma_t)-u(\gamma_s)=d(\gamma_s,\gamma_t) \, \, \forall s,t\in[0,1], s\leq t. $$ This particular one dimensional restriction is called a \emph{transport ray}, and the restricted problem is now equivalent to $L^1$ optimal transportation problem on $\R$ for which explicit solutions can be classified under the condition that the source measure $\mu$ does not concentrate on points.

The flaw in Sudakov's argument was discovered in the 2000 by Alberti-Kirchheim-Preiss \cite{AKP}, where a Besicovitch style construction applied to Sudakov's decomposition of $\L^n$ onto a family of one dimensional conditional probabilites yields a measure that is no longer absolutely continuous. The issue lies in the lack of geometric control of the transport rays, whereby the counterexample is produced by taking an arbitrary collection of rays which altogether form an absolutely continuous measure in $\R^3$, yet the restriction onto the transport rays produces segments whose  midpoints will have a Dirac part. This counterexample shows that the reduction onto transport rays requires additional geometric estimates that controls the decomposition across rays, and that this reduction must produce the restricted measures that is absolutely continuous.

The first correct solution to the Monge problem in $(\R^n,|\cdot|,\L^n)$ came from the work of Evans-Gangbo \cite{EvansGangbo}, where their argument uses the $p$-Laplacian equation and constructs the optimal map by taking the limit $p\rightarrow \infty$. Shortly afterwards, Caffarelli-Feldman-McCann \cite{CaffarelliFeldmanMcCann2002} and simultaneously Trudinger-Wang \cite{Trudinger-Wang} gave a construction for the cases where $\mu$ and $\nu$ are compactly supported absolutely continuous measures. The work of Trudinger-Wang specifically focus on the Euclidean distance cost, whereas Caffarelli-Feldman-McCann consider norms that need not be Euclidean, providing the first insight into the possibility of extending the proposed construction to non-smooth spaces. Both papers  explicitly establish the geometric control across transport rays through a Lipschitz change of variables that explicitly controls the geometry of the transport rays through the level sets of the Kantorovich potential $u$, providing a restriction on how much these transport rays can vary. Additionally, Ambrosio \cite{Ambrosio} was able to provide another proof for the case where only the source measure $\mu$ is absolutely continuous, whereby the geometric control between transport rays is provided by the co-area formula. The extension to Riemannian manifolds was given by Feldman-McCann \cite{Feldman-McCann}.

 The next major breakthrough came from the works of Bianchini-Cavalletti \cite{BianchiniCavalletti2013}. Originally set on $(\R^n,|\cdot|,\L^n),$ the authors decompose the measure onto  one-dimensional transport rays using the disintegration of measure technique from measure theory. The geometric control on the collection of rays is ensured by showing that this particular disintegration is \emph{strongly consistent}. Being a purely measure theoretic result, the disintegration of measure technique allowed Cavalletti and Bianchini-Cavalletti \cite{Cavalletti2014, BianchiniCavalletti2013} to extend the theory to the domain of metric measure spaces $(X,d,\m)$ that satisfy the measure contraction property $MCP(K,N)$ condition, see also \cite{Cavalletti-Overview}. The curvature properties from the $MCP(K,N)$ condition are required in order to ensure that the restriction of the measure onto the one dimensional rays are absolutely continuous, which is essential for ruling out the counterexamples to Sudakov's approach. Cavalletti \cite{Cavalletti2014} also showed that  the  Monge problem can be solved under the weaker assumption of \emph{essentially non-branching} $-$ roughly, that geodesics used by the optimal transport do not branch, except possibly on a set of measure zero. These results establish that the Monge problem is solvable in a wide class of non-smooth metric measure spaces, provided the space satisfies a synthetic curvature bound and is essentially non-branching.

Having reviewed the results in the Riemannian setting, we now turn to the Lorentzian setting. Optimal transportation in the Lorentzian signature has been developed by McCann \cite{McCann2020}, building on the foundational work done previously by Brenier, Bertrand-Pratelli-Puel, Bertrand-Puel, Eckstein-Miller, Kell-Suhr, McCann-Puel, and Suhr  \cite{ Brenier-Extended-MK, Bertrand-Pratelli-Puel, Bertrand-Puel, Eckstein-Miller, McCann-Puel,   Kell-Suhr, Suhr}. Initially set on a globally hyperbolic Lorentzian manifold $(M,g)$, the cost is defined to be the Lorentzian distance, or proper time separation, defined by $$ \ell(x,y):=\sup \bigg\{\int\sqrt{-g(\dot\gamma,\dot{\gamma})}: \gamma\textrm{ is a future-directed causal curve from $x$ to $y$} \bigg\},$$ and is set to $-\infty$ if no such curve exists. The Lorentzian distance  encodes the chronology of the Lorentzian manifold, and physically $\ell(x,y)$ is the maximal proper time that an observer can experience when traveling from an event $x$ to an event $y$. The $L^1$ Lorentzian optimal transport problem with the cost $\ell$ seeks to \emph{maximize} the total proper time: given two distributions of matter --- the past source $\mu$ and the future target $\nu$ --- one looks for a coupling $\pi$ that pairs events in the past with events in the future to make the total proper time as large as possible. The causal structure enters through the cost: only causally related pairs can be coupled, and the mass moves forward in time along optimal timelike geodesics determined by the transport problem.  The restriction of $\ell(x,y)=-\infty$ if $y$ is not in the causal future of $x$ ensures that only causal events can be coupled with a finite cost. The key insight of McCann \cite{McCann2020} was  to identify the precise condition  of \emph{timelike 1-separatedness} (see definition \ref{dualisable}) that ensures that the Lorentzian optimal transport problem is well posed. Under timelike 1-separatedness, the optimal plan $\pi$ exists, the total cost is finite, and mass is transported entirely along strictly timelike pairs $\{\ell>0\}$, whereby $\pi$ does not charge the null boundary. The Kantorovich problem \eqref{Kantorovich} in the Lorentzian setting
becomes the following $L^1$ maximization problem,

\begin{equation}\label{Lorentzian Kantorovich}
    \sup_{\pi\in \Pi(\mu,\nu)}\int _{X\times X}\ell(x,y)d\pi(x,y),\end{equation}which corresponds to the maximum expected proper time that can elapse between the events whose uncertainty is described by the probability distributions $\mu$ and $\nu$. Accordingly, the Lorentzian Monge problem asks whether the optimal coupling $\pi$ is concentrated on the graph of a map $T:X\rightarrow X$, i.e. $\pi = (Id,T)_\#\mu$, in which case we can rewrite the Lorentzian Kantorovich problem as
$$\sup_{T_\#\mu =\nu}\int _{X}\ell(x,T(x))d\mu(x),$$ and now the Lorentzian Monge map is a deterministic assignment that matches each source event $x$ to a target event $T(x)$ in its timelike future, maximizing the expected proper time between the source distribution $\mu$ and target distribution $\nu.$   The solution to the Lorentzian Monge problem in the case of globally hyperbolic Lorentz-Finsler spacetimes has been explored in the works of Suhr and Kell-Suhr \cite{Suhr, Kell-Suhr}.

In his work, McCann \cite{McCann2020} gave an equivalent definition of timelike Ricci curvature bounds $\Ric\geq Kg$ in terms of displacement convexity, mirroring the  Riemannian  Curvature-Dimension theory of Lott-Villani and Sturm \cite{LottVillani2009,SturmI}, and Von Renesse-Sturm in the smooth case\cite{vonRenesseSturm}; see also the independent Lorentzian development of Mondino-Suhr \cite{Mondino-Suhr}. 
Subsequently, Cavaletti and Mondino \cite{CavallettiMondino2024} generalized McCann's work to the setting of globally hyperbolic Lorentzian spacetimes $(X,d,\m,\ell)$, which are non-smooth generalizations of Lorentzian manifolds $-$ here, $d$ is the background metric inducing the chronological topology, which is Polish, $\m$ is the reference volume measure, and $\ell$ is the Lorentzian time separation. The  causal structure and global hyperbolicity are encoded in a time-separation function $\ell$, following the synthetic approach of Kunzinger-S\"amann \cite{KunzingerSaemann2018}.
Cavalletti and Mondino also generalized the Riemannian  $CD_2(K,N)$ Curvature-Dimension conditions to non-smooth Lorentzian spacetimes, which allowed them to define Lorentzian spacetimes that satisfy the Timelike Curvature Dimension condition, and the Timelike Measure Contraction Property, denoted by $TCD_p(K,N)$ and $TMCP^+(K,N)$ respectively, where $K$ is the curvature parameter, $N$ is the dimension, and the subscript $p\in(0,1)$ denotes the fractional power in the Lorentzian optimal transportation problem \eqref{Lorentzian Kantorovich}. Both conditions encode the synthetic lower bound $\Ric(\dot\gamma,\dot\gamma)\geq -K$ along timelike geodesics, and replace the classical Ricci bound by a measure-theoretic inequality. In particular, the $TMCP^+(K,N)$ condition controls how the volume measure $\m$ contracts or expands along timelike geodesics, encoding the curvature bound $\Ric\geq-K$ in purely variational terms. Additionally, the lack of smoothness means that branching of timelike geodesics is not ruled out. This is handled by the forward timelike $p$-essentially non-branching hypothesis of Braun \cite{BraunTCD}, which ensures that optimal transport plans are concentrated on non-branching geodesics.

With the above overview in mind, we state our main result:

\begin{theorem}[Solution to the Lorentzian Monge problem]\label{Solution to Monge}

    Let $(X,d,\m,\ell)$ be a globally hyperbolic Lorentzian measured metric spacetime that satisfies $TMCP^+(K,N)$ for $K\in \R, N\in[1,\infty),$ and assume that the Lorentzian spacetime is $p$-essentially non-branching for some $p\in(0,1)$. Assume that $(\mu,\nu)\in \P_c(X)\times P_c(X)$ is a timelike 1-separated pair, with $\mu$ being absolutely continuous, and let $\pi$ be the solution to the Lorentzian Kantorovich problem \eqref{Lorentzian Kantorovich}.  Then there exists a Borel map $T:X\rightarrow X$ such that $T_{\#}\mu=\nu$, and $$\int_X \ell(x,T(x)d\mu(x)=\int_{X\times X} \ell(x,y)d\pi(x,y).$$

\end{theorem}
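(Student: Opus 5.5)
The strategy is the Sudakov/needle-decomposition scheme, transplanted from the metric setting of Bianchini--Cavalletti to the Lorentzian one.

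\textbf{Step 1: Kantorovich potential and transport set.} Since $(\mu,\nu)$ is timelike 1-separated (Definition \ref{dualisable}), Lorentzian duality supplies a Kantorovich potential $u$. This $u$ is reverse 1-Lipschitz, i.e. $u(y)-u(x)\ge \ell(x,y)$ whenever $x\le y$. Moreover $\pi$ is concentrated on the set
\[
\Gamma_u=\{(x,y): x\le y,\ u(y)-u(x)=\ell(x,y)>0\}.
\]
Call a timelike geodesic along which $u$ increases exactly by $\ell$ a \emph{transport ray}. Let $\mathcal T$ be the transport set: points lying in the relative interior of some ray, after removing the branching points (forward and backward). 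Then $\Gamma_u$ defines an equivalence relation on $\mathcal T$, and the equivalence classes are the rays $X_\alpha$. I will show that the removed branching set is $\m$-negligible. The argument runs as in Cavalletti's metric proof. Suppose a set $A$ of positive measure consists of branching points. Using $TMCP^+(K,N)$, interpolate $\m\restrict A$ along timelike geodesics toward a suitable point or set, producing an optimal $\ell_p$-plan whose geodesics branch on a set of positive measure. This contradicts $p$-essential non-branching. I expect this step to be the main obstacle. The difficulty is to build a genuinely timelike $\ell_p$-optimal dynamical plan starting from absolutely continuous pieces of $A$, which requires choosing targets in the chronological future and checking $\ell_p$-cyclical monotonicity.

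\textbf{Step 2: Disintegration with absolutely continuous conditionals.} Global hyperbolicity gives compactness of causal diamonds, so I can take a Borel quotient map $Q:\mathcal T\to \mathcal T$, for instance selecting on each ray the point meeting a fixed level set of $u$, using $\sigma$-compactness. Disintegrating gives a strongly consistent decomposition
\[
\m\restrict\mathcal T=\int \m_\alpha\,\q(d\alpha),\qquad \m_\alpha(X_\alpha)=1.
\]
Applying $TMCP^+(K,N)$ once more, by translating sets along rays via the ray flow, shows that each $\m_\alpha$ is absolutely continuous with respect to $\mathcal H^1$ on $X_\alpha$ for $\q$-a.e. $\alpha$, with an $MCP(K,N)$-type density. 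Since $\mu\ll\m$, $\mu$ is concentrated on $\mathcal T$ together with the ray endpoints, and the endpoints are null by the same argument. Hence $\mu=\int\mu_\alpha\,\q(d\alpha)$ with every $\mu_\alpha\ll\mathcal H^1$, in particular atomless.

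\textbf{Step 3: One-dimensional solution and gluing.} Because $\pi(\Gamma_u)=1$, mass moves only forward along rays, so $\nu$ disintegrates along the same partition: $\nu=\int\nu_\alpha\,\q(d\alpha)$, with $\nu_\alpha=(P_2)_\#\pi_\alpha$ supported on $\overline{X_\alpha}$. On each ray, parametrized by $u$, the cost is $\ell(x,y)=u(y)-u(x)$, and $\mu_\alpha$ has no atoms. So the monotone rearrangement $T_\alpha$, given by $F_{\nu_\alpha}^{-1}\circ F_{\mu_\alpha}$, pushes $\mu_\alpha$ to $\nu_\alpha$. It satisfies $T_\alpha(x)\ge x$ along the ray, since $\pi_\alpha$ is itself such a forward coupling. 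Its cost is
\[
\int (u(T_\alpha x)-u(x))\,d\mu_\alpha=\int u\,d\nu_\alpha-\int u\,d\mu_\alpha,
\]
which equals the cost of $\pi_\alpha$. Define $T(x)=T_{Q(x)}(x)$. Borel measurability follows because the distribution functions depend measurably on $\alpha$, which uses the strong consistency from Step 2. Integrating over $\q$ then gives $T_\#\mu=\nu$ and $\int\ell(x,T(x))\,d\mu=\int u\,d\nu-\int u\,d\mu=\int\ell\,d\pi$.
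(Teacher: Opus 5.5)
There is a genuine gap, and it is in Step~1. Your overall architecture (reverse-Lipschitz potential, transport set, null branching set, disintegration with absolutely continuous conditionals, monotone rearrangement on each ray) is the paper's, and Step~3 is fine. Once $T_\alpha(x)\ge x$ is justified by the CDF comparison of Lemma~\ref{mass balance}, your identity $\int(u(T_\alpha x)-u(x))\,d\mu_\alpha=\int u\,d\nu_\alpha-\int u\,d\mu_\alpha$ gives the cost equality more directly than the paper does.

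\textbf{Forward branching.} The mechanism you sketch does not yield a contradiction with $p$-enb.
At $x\in A_{+,u}$ the two rays separate immediately. A plan from $\m\restrict_A$ that follows both rays therefore consists of geodesics that agree only at time $0$. That is not branching in the sense of \eqref{forward-branching}, which needs agreement on some $[0,t]$ with $t>0$; moreover, $p$-enb only constrains plans whose two marginals are both absolutely continuous. If you prepend a backward segment to create a common initial piece, the initial marginal becomes a backward pushforward of $\m\restrict_A$, and $TMCP^+(K,N)$ gives no control on it.
The paper instead turns branching into non-uniqueness. It first selects two branches synchronized by $u$ (Lemma~\ref{measurable correspondence branching}). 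It then sends each $x\in B$ to the two points of its branches lying on one level set $\{u=c\}$. The plan $\tfrac12[(Id,T_1)_\#\m_B+(Id,T_2)_\#\m_B]$ lies in $\Gamma_u$ with constant $u$ on the targets. So it is $\ell^p$-cyclically monotone by Lemma~\ref{p-cyclically monotone set}, hence optimal, yet it is not induced by a map, contradicting Theorem~\ref{Brenier Map}.
$TMCP^+$ and forward $p$-enb enter this step only through that Brenier-type theorem: absolutely continuous interpolants, mutual singularity via a crossing argument restricted to $[0,s]$ with $s<1$, and measure counting. The level-set choice is exactly the answer to the cyclical-monotonicity difficulty you flag.

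\textbf{Backward branching.} Your claim that the argument ``runs as in Cavalletti's metric proof'' is unsupported here. In the metric case, backward branching for $u$ is forward branching for $-u$ because $d$ is symmetric. In the Lorentzian setting the only reversal is causal reversal, and running the Brenier argument backwards then requires $TMCP^-(K,N)$ together with backward $p$-enb.
$TMCP^+$ bounds forward images from below. For $B\subset A_{-,u}$ it therefore only bounds from above the measure of the backward branches merging into $B$. Nothing guarantees the absolutely continuous marginals on those branches that backward $p$-enb would need.
The paper does not get $\m(A_{-,u})=0$ from $TMCP^+$ either. Theorem~\ref{Lorentzian Monge} assumes it, and it is derived only under extra hypotheses such as the full $TMCP(K,N)$ with two-sided $p$-enb (Theorem~\ref{endpoints measure zero} applied to the reversed structure). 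Your proof needs that input stated explicitly.

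\textbf{Endpoints.} The same caveat applies to ``the endpoints are null by the same argument.'' $\m(a_u)=0$ does follow from the forward evolution estimate once $\m(A_{+,u})=0$ (Proposition~\ref{m(a)=0}). But $\m(b_u)=0$ would again need $TMCP^-$. You do not need it: $\pi(\Gamma_u\cap\{\ell>0\})=1$ gives $\mu(b_u)=0$ directly, which also makes a possible atom of $\m_\alpha$ at $b_\alpha$ harmless.
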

In fact, we can push Theorem \ref{Solution to Monge} to greater generality. The compact support assumption can be relaxed for the timelike dualisable source $\mu_0\ll m$ and target $\mu_1$ pair for which the Lorentzian Kantorovich problem is finite. Moreover, only the forward $p$-essentially nonbranching condition is used in the construction, where backward branching can be treated by assuming that this set has $\m$ measure zero. Thus, the solution to the Monge problem extends to a one-sided nonsmooth setting which is forward $p$-essentially nonbranching and satisfies the $TMCP^+(K,N)$ condition. Finally, in the Appendix, we prove that under the $TCD_p(K,N)$, condition, forward $p$-essentially nonbranching implies backward essentially $p$-nonbranching, and hence the additional assumption on the backward branching set is automatically satisfied.

The strategy of the proof follows the disintegration of measure technique of Cavalletti \cite{Cavalletti2014}, which has appeared previously in the Lorentzian literature in the works of Cavalletti-Mondino \cite{CavallettiMondino2024, Cavalletti-Mondino2025+} and Braun-McCann \cite{BraunMcCann+}. Starting off with the optimal plan $\pi$ that couples the source $\mu$ and target $\nu$, the timelike 1-separatedness assumption ensures that  $\pi$ is a chronological coupling: all mass is transported along strictly timelike pairs with $\ell>0$. Kantorovich duality then produces a continuous  Kantorovich potential $u$ that is \emph{reverse Lipschitz}, that is:
$$\ell(x,y)>0\implies u(y)-u(x)\geq \ell(x,y),$$ where the inequality is reversed compared to the Riemannian case because the Lorentzian cost is maximized rather than minimized, reflecting the fact that timelike geodesics maximize proper time.  Furthermore duality gives us that $\pi$ is supported on an $\ell$-cyclically monotone set, meaning $$\pi[(x,y)\in X\times X: u(y)-u(x)=\ell(x,y)>0]=1.$$  Along each transport ray --- a timelike geodesic $\gamma$ parametrized by the aforementioned proper time $\ell$ so that $\ell(\gamma(s),\gamma(t))=t-s$ for $s\leq t$ --- the set where $u$ saturates its reverse Lipschitz constraint: $$u(\gamma(t))-u(\gamma(s))=t-s=\ell(\gamma(s),\gamma(t)).$$
 The disintegration of measure technique now produces a strongly consistent disintegration onto one dimensional timelike transport rays (analogous to how Fubini's Theorem splits an integral over a product space into iterated integrals), and the $TMCP^+(K,N)$ condition is required to show that the conditional probabilities of $\m$ restricted onto the transport rays are absolutely continuous. The Lorentzian Monge problem is then solved individually along every transport ray, and we obtain the solution to the full Lorentzian Monge problem by inverting the disintegration procedure.

The main tool underlying the above result is the disintegration of measure technique, known in the literature as \emph{needle decomposition} originating from Klartag's seminal proof of the L\'evy-Gromov isoperimetric inequality in the smooth Riemannian setting \cite{Klartag2017}. In the Riemannian signature, Klartag's argument allowed Cavalletti and Milman \cite{CavallettiMilman2021} to develop the  $CD^1_{Lip}(K,N)$ condition, where   the backround measure $\m$ is disintegrated  along integral curves of an arbitrary Lipschitz function produced by the $L^1$-optimal transportation problem, with the additional requirement that the restriction of $\m$ onto these integral curves must satisfy the $CD_2(K,N)$ condition. With this definition, Cavalletti and Milman  gave a remarkable proof of the globalization property$-$that the spaces satisfying $CD_2(K,N)$ locally must also satisfy $CD_2(K,N)$ globally, where the main step in their argument was to show that  $CD_2(K,N)$ and $CD^1_{Lip}(K,N)$ are in fact equivalent conditions. Unlike the classical $CD_2(K,N)$ conditions formulated via $W_2$-displacement convexity, $CD_{Lip}^1(K,N)$ uses the $L^1$ cost. This inspired Cavalletti-Gigli-Santarcangelo to define the space $CD_1(K,N)$ which is formulated in terms of displacement convexity along $W_1$-Wasserstein geodesics, and  they showed that the two definitions $CD^1_{Lip}(K,N)$ and $CD_1(K,N)$ are in fact equivalent \cite{CavallettiGigliSantarcangelo2021}.

In this work, we follow the approach of Cavalletti-Gigli-Santarcangelo and carry out out the analogous program in the Lorentzian setting. We prove the equivalence of the global $TMCP^+(K,N)$ condition with its localized counterpart $TMCP^+_{rLip}(K,N)$, showing that the timelike measure contraction property localizes to every transport ray. This localization allows us to characterize $TMCP^+(K,N)$ spactimes via the Raychaudhuri equation: on almost every transport ray, the volume density $h$ satisfies the Riccati inequality $$\dot{\theta}+\theta^2/(N-1)\leq K,$$ where $\theta=\log'(h)=\dot h/h$ is the expansion scalar familiar from the Raychaudhuri equation. The model $TMCP^+(K,N)$ spacetimes for which the above Riccati inequality is saturated allows us to classify the corresponding synthetic analogues of forward Minkowski $(K=0)$, de Sitter $(K<0)$ and anti-de Sitter $(K>0)$ spacetimes. We prove a rigidity result for these forward spacetimes: equality in the $TMCP^+(K,N)$ bound forces backward non-branching. Since the Lorentzian cost is asymmetric, the forward and backward evolution can have different curvature bounds --- a feature that is absent in Riemannian geometry but natural in cosmology, where the past and future are physically distinct. This asymmetry allows us to define  \emph{blended curvature bounds}: spacetimes satisfying $TMCP^+(K_1,N_1)$ and $TMCP^-(K_2,N_2)$ with different parameters $K_1\neq K_2$ and $N_1\neq N_2$. While for ordinary smooth Lorentzian manifolds, the infinitesimal Ricci bounds are invariant under reversal of the timelike direction, such an asymmetriy is natural in the broader Lorentz-Finsler setting, where the geometry itself need not be reservable under $v\mapsto -v$. Thus, the blended curvature bounds provide a synthetic framework for treating future and past timelike curvature bounds independently --- for instance, one may have positive timelike Ricci curvature in the past $(K_2>0$ anti-de Sitter-like) and negative in the future $(K_1<0,$ de Sitter-like). Finally, as a direct Lorentzian analogue of the Cavalletti-Gigli-Santarcangelo result, we define the Lorentzian \emph{``Limiting"} $TCD_1(K,N)$ condition, and show that it is equivalent to the \emph{``Localized"} $TCD^{1}_{rLip}(K,N)$ condition.   Our motivation for  this equivalence  lies in the lack of the canonical choice of the exponent $p$ for the Lorentzian $TCD_p(K,N)$ condition: whereas the Riemannian case admits a self-dual canonical choice $p=2$, in the Lorentzian setting, there is no such preferred exponent for the fractional cost $\ell^p$ with $p\in(0,1)$, and  our equivalence corresponds to the study of special cases of  $p\in(0,1)$, starting  with $p\rightarrow 1^-$.

The paper is organized as follows: in section 2 we set up the relevant notions for Lorentzian metric spacetimes and Lorentzian optimal transport that will be used throughout the work. In section 3 we introduce the Lorentzian Monge Kantorovich problem  and prove the corresponding Kantorovich duality result. In particular, we deduce that the Kantorovich potentials that attain the supremum of the dual problem  are continuous reverse-Lipschitz functions whenever the initial and final densities are $1$-separated in the sense of McCann \cite{McCann2020}. In section 4 we delve into the disintegration paradigm, which has already appeared in the works of Cavalletti-Mondino \cite{CavallettiMondino2024, Cavalletti-Mondino2025+} and Braun-McCann \cite{BraunMcCann+}. Our approach is more restrictive  than the assumptions presented in  \cite{CavallettiMondino2024,BraunMcCann+}, due to us working on with the family of continuous reverse-Lipschitz function. We present the solution to the Lorentzian Monge problem in section 5. Section 6 is  devoted to applications of the $TMCP^+(K,N)$ needle decomposition.

\section{Setup}\label{Ss:setup}

\subsection{Lorentzian length spaces}

\hspace*{4.5mm} We first recall the notions related to  Lorentzian optimal transport. We will work in the setting of \emph{globally hyperbolic Lorentzian length spaces} (hereafter \emph{gh LLS}) introduced by
 Kunzinger and S\"amann
\cite{KunzingerSaemann2018}, and  we will follow the  equivalent formulat of gh LLSs given by McCann \cite{McCann2024}.

Let $(X,d)$ be a Polish and proper metric space, with $\m$ a Radon probability measure with full-support $\spt \m = X$. We define the \emph{time separation function $\ell$} as a function $\ell:X^2 \rightarrow [0,\infty] \cup \{-\infty \}$ satisfying $\ell(x,x)\geq 0$, as well as the reverse triangle inequality $$ \ell(x,z)\geq \ell(x,y)+\ell(y,z) \quad \forall x,y,z\in X$$
whenever $\min\{\ell(x,y),\ell(y,z)\}>-\infty$.

\indent We will use the time separation function $\ell$ to define a causal structure on the set $X$. We say that $x$ lies in the \emph{causal past} of $y$, written $x\leq y$ if $\ell(x,y)\geq 0 $, and 
$x$ lies in the \emph{timelike past} of $y$,  written $x\ll y$, if $\ell(x,y)>0$. We will also denote $x<y$ whenever $x\leq y$ and $x\neq y$. We then define $$X^{{2}}_{\leq}:=\{(x,y)\in X\times X: x\leq y\},$$ $$X^{2}_{\ll}:=\{(x,y)\in X\times X: x\ll y\}.$$

For each $x\in X$,  the \emph {timelike and causal future,} $I^+(x),$ and $J^+(x)$ respectively, are defined as $$ I^+(x):=\ell (x,\cdot)^{-1}((0,\infty]), \quad J^+(x):=\ell (x,\cdot)^{-1}([0,\infty]),$$ 
and the \emph{timelike and causal past} respectively by 
$$ I^-(y):=\ell (\cdot,y)^{-1}((0,\infty]), \quad J^-(y):=\ell (\cdot,y)^{-1}([0,\infty]).$$ These definitions can be naturally extended to sets, e.g. $I^+(A)=\cup_{x\in A} I^{+}(x)$. 

We will require that the time separation function $\ell$ satisfies the \emph{anti-symmetry} condition: for every $x,y\in X,$  $$\min\{\ell(x,y),\ell(y,x)\}>-\infty \quad \textrm{if and only if}\quad x=y,$$ which implies that the relations $X_{\ll}\subset X_{\leq}$ are anti-symmetric \cite[Lemma 1]{McCann2024}. 

Following McCann \cite{McCann2024}, we will define a \emph{metric spacetime} as the triple $(X,d,\ell)$, where $(X,d)$ is a metric space equipped with its metric topology, and $\ell$ is the time-separation function.

The causal structure naturally extends to paths. We define a path $\sigma:[0,1]\rightarrow X$ to be $causal$ if $\ell(\sigma(s), \sigma (t))\geq 0 $ for all $s<t$ in $[0,1]$. Paths for which the inequality  is always strict are called  $timelike$, and paths $\sigma$ satisfying $\ell(\sigma(s),\sigma(t))=0$ for all $s<t$ are called $null$. Following  McCann \cite{McCann2024}, we define a \emph{timelike $\ell$-path} as $\sigma:[0,1]\rightarrow X$ that satisfies
\begin{equation}\label{l path}
	\ell(\sigma(s),\sigma(t))=(t-s)\ell(\sigma(0),\sigma(1)) \not\in \{0,\pm \infty\} \quad \forall \, 0\leq s<t\leq 1.
\end{equation} We additionally assume that  the metric spacetime is \emph{regular} \cite{McCann2024}, which means that no maximizing path $\sigma:[a,b]\rightarrow X$ with $\ell(\sigma(a),\sigma(b))>0$ contains a non-constant lightlike subsegment, where maximization is with respect to the length functional defined by $\ell$: $$ L_{\ell}(\sigma) := \inf \big\{ \sum_{k=1}^N\ell(\sigma(t_{k-1}),\sigma(t_{k}) |\,\, N\in \N,\, a = t_0<\cdots<t_N=b  \big\}.$$ It follows that such paths are continuous \cite[Lemma 5]{McCann2024}, and after reparameterization to eliminate any intervals where $\sigma$ is constant, such maximizing curves become timelike.
We thus define a \emph{timelike $\ell$-geodesic} as a timelike $\ell$-path (hence $d$-continuous). We say that the space $X$ is a \emph{timelike path-connected space} whenever $x\ll y$ implies the existence of a timelike $\ell$-path from $x$ to $y$.  We will denote the space of all timelike $\ell$-geodesics as 
$$
TGeo^{\ell}=\{\sigma\in C([0,1],X): \eqref{l path} \textrm{ holds}\},
$$
which becomes a Polish space under the uniform metric $d^{\infty}:$ $$d^{\infty}(\sigma,\tilde{\sigma}):=\sup_{s\in[0,1]}d(\sigma(s),\tilde{\sigma}(s)).$$

One important ingredient that is used throughout the work is the
continuity of non-negative part of the time separation function $\ell_+:=\max\{\ell,0 \}$ whenever the space
is globally hyperbolic \cite{KunzingerSaemann2018}. We will adopt the following equivalent definition of global hyperbolicity due to McCann \cite[Lemma 10]{McCann2024}.
\begin{definition}\label{gh LLS}
A metric spacetime $(X,d,\ell)$ is a \emph{globally hyperbolic Lorentzian length space} (gh LLS hereafter) whenever the following hold:

\begin{enumerate}
\item If $A,B$ are compact sets,  then the set $J^{+}(A)\cap J^{-}(B)$ is compact, and  there exists a constant $C$ such that every causal path in $J^{+}(A)\cap J^{-}(B)$
has its $d$-length bounded by $C$;
\item $I^{\pm}(\{x\})$ $\ne$ $\emptyset$ for every $x\in X$;
\item $X$ is timelike path-connected;
\item $\ell_+ := \max\{\ell,0\} \in C(X^2;\R)$ and $\ell$ is upper semicontinuous.
\end{enumerate}

\end{definition}

\noindent Given a set $S\subset X_{\leq}^2$, and $s\in [0,1]$ we will denote the set of $s$-midpoints  as 
\begin{equation}\label{Midpoints}
Z_s(S):=\cup_{(x,y)\in S} Z_s(x,y), \quad \textrm{where} 
\end{equation}
 \begin{equation}
Z_s(x,y):=\{z\in X: \ell (x,z) = s \ell (x,y),\quad  \ell(z,y)=(1-s)\ell(x,y)\}.
\end{equation} Under global hyperbolicity, if $S$ is compact, then $Z_s(S)$ and $Z(S):=\cup_{s\in[0,1]}Z_s(S)$   are also compact \cite[Proposition 1.6]{CavallettiMondino2024}

\begin{remark}
    This definition of a gh LLS implies that the space is causally closed and timelike path-connected \cite[Lemma 10]{McCann2024}.
\end{remark}

Throughout the paper, we will require the notion of a causally reversed Lorentzian spacetime. We say that $(X,d,\m,\ell)$ has a \emph{causally-reversed structure} $(X,d,\m,\bar{\ell})$ if $\bar{\ell}(x,y) := \ell(y,x)$ $\forall x,y\in X.$

\subsection{Lorentzian optimal transport }
The study of Ricci curvature bounds, as well as the formulation of the Timelike Curvature Dimension condition using Lorentzian optimal transportation has been initiated by the work of McCann \cite{McCann2020}, and Mondino-Suhr \cite{MondinoSuhr} in the smooth setting, and
pushed further by the work of Cavalletti-Mondino \cite{CavallettiMondino2024, Cavalletti-Mondino2025+} and Braun-McCann \cite{BraunMcCann+} to non-smooth spacetimes. We will recall their conventions which will be used throughout this work.  We define a \emph{measured metric spacetime} as $(X,d,\m,\ell),$ where $(X,d,\ell)$ is a metric spacetime, and $\m$ is a fixed Radon measure such that its support satisfies $\spt\, \m = X.$
For a metric space $(X,d),$ we  denote the space of Borel probability measures by  $\P(X)$,  and $\P_c(X) \subset \P(X)$  are probability measures with compact support.

 Given a metric spacetime $(X,d,\ell),$ and
$\mu,\nu\in\P(X),$ the causal structure allows us to define the following inclusion of joint probability measures 
$\Pi_{\ll} \subset \Pi_{\leq} \subset \Pi \subset \P(X\times X)$, where:

\[
\Pi(\mu,\nu):=\{\pi\in\P(X\times X):(P_{1})_{\#}\pi=\mu,(P_{2})_{\#}\pi=\nu\}
,\]

\[
\Pi_{\leq}(\mu,\nu):=\{\pi\in\Pi(\mu,\nu):\pi(X_{\leq}^{2})=1\},
\]

\[
\Pi_{\ll}(\mu,\nu):=\{\pi\in\Pi(\mu,\nu):\pi(X_{\ll}^{2})=1\},
\]

\noindent with $P_i$ denoting the projection onto the $i$'th coordinate.

Given $\mu,\nu\in\P(X),$ and $p\in(0,1]$, we define the \emph {p-Lorentz-Wasserstein}
$distance$ $\ell_{p}$   by 

\begin{equation} \label{p-Lor-Was}
\ell_{p}(\mu,\nu):=\sup_{\pi\in\Pi_{\leq}(\mu,\nu)} \bigg(\int\ell(x,y)^{p}\pi(dxdy)\bigg) ^{1/p},
\end{equation} where $$\ell(x,y)^p:=\begin{cases}
    \ell(x,y)^p,\quad& x\leq y,\\
    -\infty,\quad& x\not\leq y.
\end{cases}$$
and we set $\ell_{p}(\mu,\nu)=-\infty$ if $\Pi_{\leq}(\mu,\nu)=\emptyset.$  
We additionally adopt the convention $$(-\infty)^{\frac{1}{p}}=-\infty = (-\infty)^p.$$ 

Given any pair $\mu,\nu\in\P(X),$ the $primal$ $problem$ is to
find a $\pi\in\Pi_{\leq}(\mu,\nu$) that attains the supremum for
$\ell_{p}.$ A coupling $\pi\in\Pi_{\leq}(\mu,\nu)$ that maximizes
\eqref{p-Lor-Was} is said to be \emph{$\ell_{p}$-optimal.} The set of all
$\ell_{p}$-optimal couplings is denoted by $\Pi_{\leq}^{p \text{-}opt}(\mu,\nu),$ and we moreover define $\Pi_\ll ^{p-opt}(\mu,\nu):=\{ \pi \in \Pi_\leq ^{p-opt}(\mu,\nu): \pi(X_\ll)=1\}.$

For the corresponding dual problem, we will need the notion of timelike $p$-dualisability for the pair
$(\mu,\nu) \in \P(X)^2$. This notion is essentially that of McCann \cite[Section 7]{McCann2020}, however our  terminology is  due to  Cavalletti-Mondino \cite[Definition 2.16] {CavallettiMondino2024}.
\begin{definition}\label{dualisable}
Let $p\in(0,1].$ The pair $(\mu,\nu)\in\P(X)^{2}$ is \emph {timelike
p-dualisable} by $\pi\in\Pi_{\ll}(\mu,\nu$) if:
\end{definition}

\begin{enumerate}
\item $\ell_{p}(\mu,\nu)\in(0,\infty)$;
\item $\pi\in\Pi_{\leq}^{p \text{-} opt}(\mu,\nu)$ and $\pi(X_{\ll}^{2})=1$;
\item there exist measurable functions $a,b:X\rightarrow\R$ with $a\oplus b\in L^{1}(\mu\times\nu)$
such that $\ell^{p}\leq a\oplus b$ on $spt\mu\times spt\nu$.
\end{enumerate}

For a timelike $p$-dualisable pair $(\mu,\nu)\in \P(X)^2$ we define the \emph{Kantorovich dual 
problem}  as
\begin{equation}\label{Kantorovich Dual Problem} K(\mu,\nu):=\inf\int_{X}ud\mu+\int_{X}vd\nu,
\end{equation}
\noindent where the infimum is over the set of measurable functions $u:\spt\mu\rightarrow\R\cup\{+\infty\},$
$v:\spt\nu\rightarrow\R\cup\{+\infty\}$ with $u\oplus v\geq \frac{1}{p}\ell^{p}$
on $\spt\mu\times\spt\nu.$

The functions that attain the infimum in the dual problem belong to
a special family of functions known as $\ell^{p}$-convex functions
\cite[Def. 2.22]{CavallettiMondino2024}.
\begin{definition}
($\ell^{p}$-concave functions, $\ell^{p}$ subdifferential) Fix $p\in(0,1]$
and $U,V\subset X$. The measurable function $\varphi:U\rightarrow\R$
is \emph{$\ell^{p}$-convex} relative to $(U,V)$ if there exists a function
$\psi:V\rightarrow\R\cup \{ +\infty\}$ such that 
\[
\varphi(x)=\sup_{y\in V}\ell^{p}(x,y)-\psi(y)\quad\forall x\in U.
\]
The function $$ \varphi^{\ell^p}:V \rightarrow \R \cup\{-\infty\}, \quad \varphi^{\ell^p}(y):= \sup_{x\in U}  \ell^p(x,y) +\varphi(x)$$
is called the \emph{$\ell^p$-transform of $\varphi$}.
\end{definition}

Given $p\in(0,1]$,
we say that $(\mu,\nu)\in\P(X)^{2}$ satisfies \emph{strong $\ell^{p}$-Kantorovich
duality} if 

\begin{enumerate}
\item $\ell_{p}(\mu,\nu)\in(0,\infty)$
\item there exist $A_{1},A_{2}\subset X$  with $\mu(A_{1})=\nu(A_{2})=1$
and an $\varphi:A_{1}\rightarrow\R$ which is $\ell^{p}$-convex relative to $(A_{1},A_{2})$ and satisfies 
\[
\ell_{p}(\mu,\nu)^p=\int_{X}\varphi^{\ell^{p}}(y)d\nu(y)-\int_{X}\varphi(x)d\mu(x).
\]
\end{enumerate}

We will need the following version of cyclical monotonicity \cite{CavallettiMondino2024}.
\begin{definition}
Let $p\in(0,1]$ and let $(X,d,\ell)$ be a gh LLS. The set $\Gamma \subset X^2_{\leq}$ is  \emph{$\ell^p$-cyclically monotone} if for any $N\in \mathbb{N}$ and any collection of points $(x_1,y_1),...,(x_N,y_N)$ in $\Gamma$, we have 
$$ \sum_{i=1}^{N} \ell(x_i,y_i)^p \geq  \sum_{i=1}^{N} \ell(x_{i+1},y_i)^p, $$
with the convention that $x_{N+1}=x_1$.
\end{definition}

To classify the coupling $\pi$ that solves the dual problem, we will need the following definition.
\begin{definition}
Let $p\in(0,1]$. The pair of measures $(\mu,\nu)\in \P(X)^2$ is \emph{strongly timelike p-dualisable} if
 \begin{enumerate}
\item $(\mu,\nu)$ is timelike $p$-dualisable
\item  there exists a measurable $\ell^p$-cyclically monotone set $\Gamma\subset X^2_{\ll}\cap (\spt\mu\times \spt \nu)$ such that for every  coupling $\pi\in \Pi_{\leq}(\mu,\nu)$, we have that  $\pi$ is $\ell_p$-optimal if and only if $\pi(\Gamma)=1$.
\end{enumerate}
\end{definition}
A general condition on the pair of measures that guarantees equality of the primal and dual problems is summarized by the following result   \cite[ Cor. 2.29]{CavallettiMondino2024}.
\begin{corollary}\label{duality}
Let $(X,d,\ell)$ be a

gh LLS, and $p\in(0,1].$ Given $\mu,\nu\in \P(X)$ assume that $\spt\mu\times\spt\nu\subset X_{\ll}^{2}$, and there exist measurable $a,b$ such that $\ell^p\leq a\oplus b$ and $a\oplus b\in L^1(\mu\otimes\nu)$.
Then $(\mu,\nu)$ satisfy the strong $\ell^{p}$-Kantorovich duality and
are strongly timelike $p$-dualisable. 
\end{corollary}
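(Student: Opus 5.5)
Corollary \ref{duality} is quoted from \cite[Cor. 2.29]{CavallettiMondino2024}; if I had to prove it, I would follow the classical route to Kantorovich duality adapted to the sign conventions of $\ell^p$.

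\textbf{Step 1: finiteness and positivity of $\ell_p(\mu,\nu)$.} The hypothesis $\spt\mu\times\spt\nu\subset X^2_\ll$ gives $\ell>0$ on $\spt\mu\times\spt\nu$. Hence every coupling is causal, and in particular $\mu\otimes\nu\in\Pi_\ll(\mu,\nu)$. This yields $\ell_p(\mu,\nu)^p\ge\int\ell^p\,d(\mu\otimes\nu)>0$. For the upper bound, every $\pi\in\Pi(\mu,\nu)$ satisfies
\[
\int\ell^p\,d\pi\le\int a\,d\mu+\int b\,d\nu<\infty,
\]
using $\ell^p\le a\oplus b$ and the integrability of $a\oplus b$.

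\textbf{Step 2: existence of an optimizer.} Global hyperbolicity makes $\ell_+$ continuous, so $\ell^p$ is continuous and positive on $\spt\mu\times\spt\nu$. The set $\Pi(\mu,\nu)$ is tight, hence weakly compact. The integrand is bounded above by the integrable function $a\oplus b$, so the map $\pi\mapsto\int\ell^p\,d\pi$ is upper semicontinuous. Truncating $\ell^p\wedge M$ and using uniform integrability of $a\oplus b$ across couplings gives this semicontinuity. A maximizer $\pi$ therefore exists. Since $\pi(X^2_\ll)=1$ automatically, timelike $p$-dualisability follows.

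\textbf{Step 3: cyclical monotonicity and the potential.} By the standard perturbation argument, any optimal $\pi$ is concentrated on an $\ell^p$-cyclically monotone set. Rerouting finitely many small mass pieces along a cycle would otherwise increase the cost, and finiteness of $\ell^p$ on the supports makes this rigorous. On a $\sigma$-compact cyclically monotone set $\Gamma\subset\spt\mu\times\spt\nu$, I then build a potential $\varphi$ via Rockafellar's construction:
\[
\varphi(x)=\sup\Bigl\{\sum_{i=0}^{N}\bigl(\ell^p(x_{i+1},y_i)-\ell^p(x_i,y_i)\bigr)\Bigr\},
\]
with $x_{N+1}=x$ and a base point $(x_0,y_0)\in\Gamma$ (sign adapted so that $\varphi$ is $\ell^p$-convex). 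Cyclical monotonicity gives $\varphi(x_0)\le 0$, hence $\varphi$ is finite on $P_1(\Gamma)$. On $\Gamma$ one gets $\varphi^{\ell^p}(y)-\varphi(x)=\ell^p(x,y)$, while the inequality $\varphi^{\ell^p}(y)-\varphi(x)\ge\ell^p(x,y)$ holds everywhere. Integrating against $\pi$ gives strong duality, provided $\varphi\in L^1(\mu)$ and $\varphi^{\ell^p}\in L^1(\nu)$.

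\textbf{Step 4: integrability (main obstacle).} The bounds $\ell^p\le a\oplus b$ together with $\ell^p\ge0$ give
\[
\varphi(x)\ge\ell^p(x,y_0)-\varphi^{\ell^p}(y_0)
\quad\text{and}\quad
\varphi^{\ell^p}(y)\le a\text{-type controls}.
\]
Combining these with $\varphi^{\ell^p}(y)-\varphi(x)=\ell^p\le a(x)+b(y)$ $\pi$-a.e. yields one-sided integrability of each potential. Fubini over $\pi$ then upgrades this to full $L^1$, as in Schachermayer–Teichmann.

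\textbf{Step 5: strong dualisability.} Set $\Gamma:=\{\varphi^{\ell^p}(y)-\varphi(x)=\ell^p(x,y)\}\cap(\spt\mu\times\spt\nu)$. By duality, a coupling $\pi'$ is optimal exactly when $\int(\varphi^{\ell^p}\oplus(-\varphi)-\ell^p)\,d\pi'=0$. Since the integrand is nonnegative, this is equivalent to $\pi'(\Gamma)=1$. Moreover $\Gamma$ is measurable, $\ell^p$-cyclically monotone, and contained in $X^2_\ll$, which completes the argument.
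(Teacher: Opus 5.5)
Your outline is correct. It is the standard proof of Kantorovich duality for a real-valued continuous cost, applied to $\ell^p$ on $\spt\mu\times\spt\nu$, where $0<\ell^p\le a\oplus b$: existence by tightness (the uniform integrability you invoke in Step 2 follows from $(a\oplus b-M)_+\le(2a-M)_+ + (2b-M)_+$), cyclical monotonicity of $\spt\pi$, Rockafellar's potential, and characterization of optimal plans by the contact set. The paper gives no argument of its own and simply quotes \cite[Cor.~2.29]{CavallettiMondino2024}, so your proposal supplies the classical argument that citation rests on.

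The one thing to tidy is that you switch between two sign conventions, a mix already present in the paper's own definitions. Your Rockafellar formula as written, the bound $\varphi(x_0)\le 0$, and the display in Step 4 use $\varphi(x)+\varphi^{\ell^p}(y)\ge\ell^p(x,y)$. The end of Step 3 and Step 5 use $\varphi^{\ell^p}(y)-\varphi(x)\ge\ell^p(x,y)$. Once you fix one and normalize $\varphi$ at the base point $(x_0,y_0)$, the base point gives a one-sided bound on each potential. The $\pi$-a.e. equality, together with $0\le\ell^p\le a\oplus b$ and $a\in L^1(\mu)$, $b\in L^1(\nu)$ (Fubini, since $a,b$ are real-valued), gives the other side. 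So Step 4 is routine rather than the main obstacle.
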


We will consider Borel probability measures  concentrated on $TGeo^{\ell}$, which we denote by $\P(TGeo^{\ell})$. Let $p\in (0,1]$, and suppose that the pair $(\mu,\nu)$  is timelike $p$-dualisable. Define the set of \emph{optimal timelike dynamical transport plans}  as

$$
OptTGeo_p^{\ell}(\mu,\nu):=\{ \eta \in \P(TGeo^{\ell}): (e_0,e_1)_{\#} \eta \in \Pi^{\emph{p-opt}}_{\ll}(\mu,\nu) \},
$$
where $e_t:C([0,1],X)\rightarrow X$ is the evaluation map, $e_t(\gamma):=\gamma_t$. 
As shown by McCann, \cite[Lemma 14]{McCann2024},   the path $ s\in[0,1] \mapsto \mu_s := (e_s)_{\#} \eta $ is an $\ell_p$-geodesic in $(\P_c(X),d_1,\ell_p)$ whenever $\eta\in OptTGeo_p^{\ell}(\mu,\nu)$ with $\mu,\nu \in \P_c(X)$, where $d_1$ is the Kantorovich distance \eqref{Kantorovich} on $\P_c(X)$ induced by the distance $d$ on $X$.

Throughout this paper, we will be working in the timelike essentially non-branching paradigm. To this end, we say that a set $A\subset TGeo^{\ell}$ is  \emph{forward timelike non-branching} if for any $\gamma^1,\gamma^2 \in A$ the following holds:
\begin{equation}\label{forward-branching}
\exists t \in(0,1) \:\: \gamma^1(s)=\gamma^2(s) \:\:\forall s\in [0,t] \quad   \, \implies \gamma^1(s)=\gamma^2(s),  \:\:\: \forall s \in [0,1],
\end{equation}
and  define analogously the notion of backwards timelike nonbranching.
Finally, given $p\in(0,1)$, we say that the spacetime $(X,d, \m,\ell)$  is \emph{forward timelike p-essentially non-branching},  if for every timelike $p$-dualisable pair $(\mu_0,\mu_1)$  satisfying $\mu_0,\mu_1\ll \m$, 
we have that any element of  $OptTGeo_p^{\ell}(\mu_0,\mu_1)$ is concentrated on a set of forward non-branching timelike geodesics. By considering the causally reversed structure, we analogously define the \emph{backward timelike p-essentially non-branching} condition. Finally, we say that the spacetime is \emph{p-essentially non-branching} if it is both forward and backward p-essentially non-branching. We shall refer to these properties as \emph{forward $p$-enb, backwards $p$-enb, and $p$-enb respectively.}

We will also require the properties of metric spacetimes satisfying the forward timelike measure contraction property $TMCP^+(K,N).$  To this end, we first define the \emph{Renyi entropy} $S_N:\P(X)\rightarrow [-\infty,0]$ as 
\begin{equation}\label{Renyi}
    S_N(\mu|\m):= -\int _X \rho ^{1-\frac{1}{N}}d\m
\end{equation}
where $\mu = \rho d\m+\mu^{sing}$ is the Lebesgue decomposition of $\mu,$ and  $N\in [1,\infty)$ is the dimension parameter. 

To define the curvature $K\in \R$ and dimension $N\in (1,\infty]$ distortion coefficients, we first define the profile function $s_K(\theta):[0,D_K)\rightarrow \R$ by 

$$ s_K(\theta):=
	\begin{cases}
		\frac{1}{\sqrt{K}}\sin(\sqrt{K}\theta), &  K>0, \\
		\theta,  & K=0 \\
        \frac{1}{\sqrt{-K}}\sinh(\sqrt{-K}\theta),&K< 0,
	\end{cases}$$

\noindent where $D_K:=\pi/\sqrt{K}$ for $K>0$ and $D_K:=+\infty$ for $K\leq 0.$ The profile function $s_K$ is the unique solution to the ODE \begin{equation}\label{profile function}\ddot{s}_K+Ks_K = 0,\quad s_K(0)=0,\quad \dot{s_K}(0)=1.\end{equation} The synthetic dimension parameter $N$ is incorporated into $D_K$ by defining $D_{K,N}:=\sqrt{N}D_K$, and we define \emph{distortion ratio} $\sigma_{K,N-1}$ for $t\in[0,1]$ and $0<\theta<D_{K,N-1}$  as $$ \sigma^{(t)}_{K,N-1}(\theta) : = \frac{s_K(t\theta/\sqrt{N-1})}{s_K(\theta/\sqrt{N-1})},$$ where $\sigma^{(t)}_{K,N-1}(0):=t$, and $\sigma^{(t)}_{K,N-1}(\theta):=+\infty$ for $\theta\geq D_{K,N-1}.$ Finally, we define  the \emph{distortion coefficient} $\tau_{K,N}^{(t)}:$ $$\tau_{K,N}^{(t)}(\theta):=t^{\frac{1}{N}}\sigma^{(t)}_{K,N-1}(\theta)^{1-\frac{1}{N}},\quad t\in[0,1],\quad 0<\theta<D_{K,N-1},$$ and when $N=1,$ we set $\tau_{K,1}^{(t)}(\theta):=t$ if $K\leq 0$ and $+\infty$ if $K>0.$

We adopt the forward version of $TMCP^+(K,N)$ condition from the works of Braun \cite[Def 4.1]{BraunTCD}, and Beran et al \cite[Def 5.1]{octet+}.

\begin{definition}\label{TMCP} Let $K\in \R, N\in [1,\infty)$, and $p\in(0,1).$ The gh LLS $(X,d,\m,\ell)$ obeys the $TMCP^+(K,N)$ condition if for every absolutely continuous $\mu_0\in \P_c(X)$, that is $d\mu_0 = \rho_0 d\m$, and $\mu_1=\delta_{x_1},$ where $\spt \mu_0 \subset I^-(x_1)$, there exists an optimal timelike dynamical transport plan $\eta\in OptTGeo_p^{\ell}(\mu_0,\mu_1)$ such that for every $t\in[0,1)$ and every $N'\geq N$ we have:
\begin{equation}\label{Renyi Entropy Inequality}
    S_{N'}(\mu_t)\leq -\int_X\tau^{(1-t)}_{K,N'}(\ell(x,x_1)\rho_0(x)^{1-\frac{1}{N'}}d\m(x).
\end{equation}
where $\mu_t = (e_t)_\#\eta.$
\end{definition}
\begin{remark}\label{indep of p}
The above definition is independent of $p\in(0,1)$, and in particular the $TMCP^+(K,N)$ condition is independent of $p\in(0,1)$, see \cite[Rmk. 3.8]{CavallettiMondino2024} and \cite[Rmk. 4.2]{BraunTCD}.
\end{remark}
The forward evolution of $\mu_0$ towards $\delta_{x_1}$ in $TMCP^+(K,N)$ is emphasized by the "+", and we define the $TMCP^-(K,N)$ condition if the causal reversal of the spacetime satisfies  $TMCP^+(K,N).$ We will denote the spacetimes satisfying both the forward and backward timelike measure contraction property by $TMCP(K,N).$

Finally, we will require the following  extension of \cite[Theorem 4.16, 4.17]{BraunTCD} that shows that the timelike coupling for the forward $p$-enb $\ell^p$ optimal transportation problem is induced by a map. To avoid interrupting the flow of ideas, we defer the proof of this Theorem to the Appendix.
\begin{theorem}\label{Brenier Map}
    Set $K\in \R, N\in [1,\infty), p\in(0,1),$ and let $(X,d,\m,\ell)$ be a forward $p$-enb gh LLS that satisfies the $TMCP^+(K,N)$ condition. Let $\mu_0$ be absolutely continuous and let $(\mu_0,\mu_1)\in \P_c(X)^2$ be timelike $p$-dualisable. Then, there exists a unique $\ell^p$-optimal coupling $\pi \in \Pi_{\ll}^{p-opt}(\mu_0,\mu_1)$ and moreover there exists a $\mu_0$-measurable map $T:X\rightarrow X$ such that $\pi = (Id,T)_\#\mu_0$ and $$\ell_p(\mu_0,\mu_1)^p=\int \ell(x,T(x))^pd\mu_0(x).$$ Additionally, there exists a unique chronological timelike $\ell^p$ optimal dynamical plan $\nu \in OptTGeo_p(\mu_0,\mu_1)$ that is induced by a $\mu_0$-measurable map.
    \end{theorem}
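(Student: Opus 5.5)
The plan is to follow Braun's argument for \cite[Theorems 4.16, 4.17]{BraunTCD}, where $TCD$ is assumed, and observe that it only uses two ingredients. The first is forward non-branching of optimal dynamical plans. The second is that $\ell_p$-intermediate measures starting from an absolutely continuous $\mu_0$ are themselves absolutely continuous. The $TMCP^+(K,N)$ condition will supply the second ingredient, but only towards Dirac targets, so the first step is a reduction to that case.

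By Corollary \ref{duality}, or directly from timelike $p$-dualisability, we fix an $\ell^p$-cyclically monotone set $\Gamma\subset X^2_\ll$ on which every optimal coupling is concentrated. Let $\pi$ be any optimal coupling. We disintegrate $\pi=\int \pi_x\,d\mu_0(x)$, and the task is to show that $\pi_x$ is a Dirac mass for $\mu_0$-a.e.\ $x$.

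We argue by contradiction. Suppose the set of $x$ where $\pi_x$ is not a Dirac mass has positive $\mu_0$-measure. A measurable selection argument, in the spirit of \cite{CavallettiMondino2024,BraunTCD}, then produces a compact set $C$ with $\mu_0(C)>0$ and two measurable maps $T_1,T_2$ on $C$ with graphs in $\Gamma$ and $T_1(x)\neq T_2(x)$. Using these we form the coupling $\tilde\pi=\frac12\big((Id,T_1)_\#+(Id,T_2)_\#\big)\mu_0|_C/\mu_0(C)$. It is concentrated on $\Gamma$, hence optimal between its marginals, and its first marginal is absolutely continuous.

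The main obstacle is to show that the intermediate measures $\mu_t$ of a dynamical lift of $\tilde\pi$ are absolutely continuous for small $t>0$. The difficulty is that $TMCP^+$ controls only transports to Dirac masses. To overcome this we localize: cover $\spt\tilde\pi_2$ by finitely many small balls. Then, using the continuity of $\ell_+$ and the reverse triangle inequality, together with cyclical monotonicity, we compare with transports of small pieces of $\mu_0|_C$ to single points $x_1$ and apply \eqref{Renyi Entropy Inequality}. This yields a uniform Rényi entropy bound $S_N(\mu_t)\le -c(1-t)^{N}\mu_0(C)^{1/N}\cdots<0$, which rules out a singular part of $\mu_t$ carrying the mass. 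The cleaner route, which I would try first, is the one of \cite[Prop. 3.38]{CavallettiMondino2024}: $TMCP^+$ with forward $p$-enb implies that every $\mu_t$, $t\in[0,1)$, is absolutely continuous, with density bounded in $L^\infty$ by $C(1-t)^{-N}\|\rho_0\|_\infty$ after truncating $\rho_0$. That argument is done by disintegrating along the Dirac-target decomposition and using $\sigma$-compactness.

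Once $\mu_t\ll\m$ for all small $t$, we apply forward $p$-enb to the pair $(\mu_0,\mu_t)$, whose optimal plan is the restriction of the lifted plan. Geodesics issuing from the same $x$ towards $T_1(x)$ and $T_2(x)$ must then differ near $t=0$. On the other hand, standard $\ell^p$-cyclical monotonicity forces optimal geodesics that meet at time $t$ to coincide on $[0,t]$, because $\ell^p$ is strictly concave in the cost for $p<1$. Combining these, the $t$-midpoints of distinct geodesics from the same point are distinct, which yields a positive-measure set of pairs violating $\ell^p$-cyclical monotonicity, as in Braun's Lemma 4.15. This is a contradiction, so $\pi=(Id,T)_\#\mu_0$. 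Uniqueness follows by linearity: if $\pi,\pi'$ are both optimal, then so is $\frac12(\pi+\pi')$, which must itself be induced by a map, and this forces $\pi=\pi'$.

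Finally, the dynamical statement follows the same pattern. Any optimal dynamical plan $\nu$ has $(e_0,e_t)_\#\nu$ induced by maps, and $\mu_t\ll\m$. By forward enb, $\nu$ is concentrated on geodesics determined by their initial point and initial segment, and hence is of the form $(G)_\#\mu_0$ for a measurable geodesic selection $G$. Uniqueness of $\nu$ is obtained by the same mixing argument as above.
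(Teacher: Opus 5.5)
There is a genuine gap: the step that is supposed to produce the contradiction does not produce one. Your reduction to $T_1\neq T_2$ with graphs in $\Gamma$ and the split plan $\tilde\pi$ is fine. But two optimal geodesics issuing from the same point $x$ towards $T_1(x)\neq T_2(x)$ with distinct $t$-midpoints violate nothing. First, $\tilde\pi$ is concentrated on $\Gamma$, so it is $\ell^p$-cyclically monotone by construction. Second, forward $p$-enb only forbids geodesics that agree on a nontrivial initial interval $[0,s]$; it says nothing about branching exactly at the source.

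The lemma you invoke is also misstated. If two optimal geodesics meet at time $t$, cyclical monotonicity and strict concavity of $r\mapsto r^p$ give only two things: the lengths agree, and the crossed curves $\gamma^1|_{[0,t]}*\gamma^2|_{[t,1]}$ and $\gamma^2|_{[0,t]}*\gamma^1|_{[t,1]}$ are again optimal. Forward non-branching then forces coincidence on $[t,1]$, not on $[0,t]$. The paper uses this fact in the opposite configuration. Take $\mu_0=\m\restrict_E/\m(E)$ and transport it to two \emph{different} Dirac targets $z_1\neq z_2$. A common point of the two interpolants at a time $t\in(0,1)$ would give, via the crossed curve, a forward branching between absolutely continuous marginals. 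Hence the two interpolants are mutually singular for every $t\in(0,1)$.

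What your proposal lacks is the volume-counting argument that turns this mutual singularity into a contradiction.
\begin{itemize}
\item For a Dirac target, $TMCP^+(K,N)$ together with H\"older's inequality gives $\m(\{\rho^i_t>0\})\ge c_{K,N,D}(t)\,\m(E)$, with $c_{K,N,D}(t)\to 1$ as $t\downarrow 0$.
\item The two mutually singular interpolants both live in a forward neighbourhood $G_t(E)$ with $\m(G_t(E))\to\m(E)$.
\item Hence $(2-\eps)\m(E)\le \m(G_t(E))$, which is impossible for small $t$.
\end{itemize}
This is done first for finitely many Dirac targets and then passed to a general $\mu_1$ via measurable selection and Lusin's theorem.

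The count only uses the absolutely continuous part of the Dirac-target interpolants and the size of its support. So there is no need to show that the interpolants of $\tilde\pi$ are absolutely continuous for a general target. Your argument for that is not sound as stated anyway: a strictly negative R\'enyi entropy only shows the absolutely continuous part is nonzero, and does not exclude a singular part carrying mass. Moreover, the $L^\infty$ density bounds for general targets that you would quote are, in the metric-measure literature, themselves obtained from the finite-Dirac decomposition plus mutual singularity of the pieces. That is precisely the step you are missing.
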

Throughout the rest of this paper, we will be working with a \emph{measured gh LLS} $(X,d,\m,\ell)$, which is a globally hyperbolic Lorentzian length space $(X,d,\ell)$ from definition \ref{gh LLS}, together with a fixed background measure $\m$ with full-support $\spt \m=X.$
We will now specialize to the case where $p=1$. In this case the dual
problem has a specific form that we will describe in the next section.

\section{Kantorovich duality for the Lorentzian $L^1$ cost}\label{Section 2}

 In this section, we are specializing to the exponent $p=1$, with the aim of classifying the solutions to the Kantorovich dual problem \eqref{Kantorovich Dual Problem} in the Lorentzian setting. The proofs are
 inspired by the pioneering work of Caffarelli-Feldman-McCann and Feldman-McCann in the Riemannian signature\cite{CaffarelliFeldmanMcCann2002, Feldman-McCann}.
In this section, we fix a gh LLS $(X,d,\ell)$, and fix the exponent $p=1$.

\begin{definition}
Let $Z\subset X.$ The function $u:Z\rightarrow\R$ is said to be \emph{reverse Lipschitz on $Z$}
if 

\[
x,y\in Z,\,\,\, x\leq y\implies u(y)-u(x)\geq\ell(x,y).
\]
\end{definition}

The use of reverse Lipschitz functions in the context of one-dimensional localization already appeared in the works of Cavalletti-Mondino \cite{CavallettiMondino2024, Cavalletti-Mondino2025+} and Braun-McCann \cite[Appendix C]{BraunMcCann+}. We will use the reverse Lipschitz condition to characterize the solution of the Kantorovich dual problem in the Lorentzian setting.

For the compact problem in the following theorem, we first set $$Z:=\cup_{s\in[0,1]}Z_s(\spt \pi),$$ and we call this $Z$ the \emph{transport interpolation set associated with $\pi$}. We now show that, on the transport interpolation set associated with an optimal plan, the two Kantorovich potentials combine int a single continuous reverse-Lipschitz function
\begin{theorem}[Reverse Lipschitz Maximizer]\label{Kantorovich Potential}
Let $(X, d, \m, \ell)$ be a gh LLS, and  
let $\mu, \nu \in \mathcal{P}_c(X)$ be $1$-dualisable by 
$\pi \in \Pi_\ll(\mu, \nu)$ and l.s.c.
$\varphi: U \to \mathbb{R} \cup \{+\infty\}$, 
$\psi: V \to \mathbb{R} \cup \{+\infty\}$ 
with $U = \spt \mu$, $V = \spt \nu$ such that
\begin{align}
    \varphi(x) + \psi(y) &\ge \ell(x,y) \quad \forall (x,y) \in U \times V, \label{eq:sep-ineq} \\
    \operatorname{spt}\pi \subset S &:= \{(x,y) \in U \times V : 
    \varphi(x) + \psi(y) = \ell(x,y)\} \subset \{\ell>0 \}. \label{eq:sep-eq}
\end{align}

Then $\varphi$ and $\psi$ are continuous on $U$ and $V$ respectively. 
Define the associated transport interpolation set $Z$ as
\begin{equation*}
    Z := \bigcup_{s \in [0,1]} Z_s(\operatorname{spt}\pi).
\end{equation*}
Then $Z$ is  compact, and there exists a continuosu reverse-Lipschitz function $u:Z\rightarrow \R$ such that 
\begin{equation*}
    u = -\varphi \quad \text{on } U \cap Z, \qquad u = \psi \quad \text{on } V \cap Z.
\end{equation*}
\end{theorem}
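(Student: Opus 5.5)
The proof proceeds in three steps. First, continuity of the potentials comes from the structure of $\spt\pi$ together with compactness. Second, $u$ is defined on $Z$ by transporting $-\varphi$ forward along the interpolating points, and the reverse triangle inequality shows this is well defined and reverse-Lipschitz. Third, continuity of $u$ follows from the same compactness argument.

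\emph{Step 1: continuity of $\varphi,\psi$.} Since $\mu,\nu\in\P_c(X)$, the set $\spt\pi\subset U\times V$ is compact. Hence $P_1(\spt\pi)$ is a compact set of full $\mu$-measure, so it contains $U=\spt\mu$; similarly $P_2(\spt\pi)\supset V$. Thus every $x\in U$ has some $y\in V$ with $(x,y)\in\spt\pi\subset S$, so that $\varphi(x)=\ell(x,y)-\psi(y)<\infty$ (using that $\psi(y)<\infty$ for the same reason).

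Lower semicontinuity is assumed, so it remains to prove upper semicontinuity. Take $x_n\to x$ in $U$ and pick $y_n$ with $(x_n,y_n)\in\spt\pi$. Pass to a subsequence realizing $\limsup\varphi(x_n)$ along which $y_n\to y$, so that $(x,y)\in\spt\pi$ and $\ell(x,y)>0$. Since $\ell_+$ is continuous and $\psi$ is l.s.c., we get
\[
\limsup_n \varphi(x_n)=\limsup_n\big(\ell(x_n,y_n)-\psi(y_n)\big)\le \ell(x,y)-\psi(y)\le\varphi(x),
\]
where the last step uses \eqref{eq:sep-ineq}. The argument for $\psi$ is symmetric. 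Compactness of $Z$ is exactly \cite[Proposition 1.6]{CavallettiMondino2024} applied to the compact set $\spt\pi$.

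\emph{Step 2: definition and reverse-Lipschitz property of $u$.} For $z\in Z_s(x,y)$ with $(x,y)\in\spt\pi$, set
\[
u(z):=-\varphi(x)+\ell(x,z).
\]
Because $\varphi(x)+\psi(y)=\ell(x,y)=\ell(x,z)+\ell(z,y)$, this equals $\psi(y)-\ell(z,y)$.

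For well-definedness, suppose also $z\in Z_t(x',y')$ with $(x',y')\in\spt\pi$. Then $x'\le z\le y$, and \eqref{eq:sep-ineq} together with the reverse triangle inequality gives
\[
\varphi(x')+\psi(y)\ge\ell(x',y)\ge\ell(x',z)+\ell(z,y).
\]
Substituting $\psi(y)=\ell(x,z)+\ell(z,y)-\varphi(x)$ yields $-\varphi(x)+\ell(x,z)\ge-\varphi(x')+\ell(x',z)$, and by symmetry equality holds.

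Taking $s=0$ or $s=1$ and using Step 1 (every point of $U$, resp.\ $V$, is a first, resp.\ second, coordinate of some pair in $\spt\pi$) shows $u=-\varphi$ on $U\cap Z$ and $u=\psi$ on $V\cap Z$.

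For the reverse-Lipschitz property, let $z\le w$ with $z\in Z(x,y)$ and $w\in Z(x',y')$. Then
\[
u(w)-u(z)=\psi(y')-\ell(w,y')+\varphi(x)-\ell(x,z)\ge \ell(x,y')-\ell(w,y')-\ell(x,z)\ge\ell(z,w),
\]
where the first inequality is \eqref{eq:sep-ineq} for $(x,y')\in U\times V$, and the second is the reverse triangle inequality along the causal chain $x\le z\le w\le y'$. All terms in this chain are finite, since each is bounded by $\ell$ evaluated on a compact set.

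\emph{Step 3: continuity of $u$.} Let $z_n\to z$ with $z_n\in Z_{s_n}(x_n,y_n)$ and $(x_n,y_n)\in\spt\pi$. Extract a subsequence with $(x_n,y_n,s_n)\to(x,y,s)$. Continuity of $\ell_+$ shows that $z\in Z_s(x,y)$ and that $\ell(x_n,z_n)=s_n\ell(x_n,y_n)\to s\ell(x,y)=\ell(x,z)$. Combined with continuity of $\varphi$ from Step 1, this gives $u(z_n)\to u(z)$; since the limit is independent of the subsequence, $u$ is continuous.

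I expect the main obstacle to be Step 1. There one must show that $U\subset P_1(\spt\pi)$ and run the upper-semicontinuity argument using only that $\ell$ is u.s.c., $\ell_+$ is continuous, and $\ell>0$ on $S$. Steps 2 and 3 are then bookkeeping with the reverse triangle inequality.
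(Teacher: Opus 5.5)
Your proof is correct, but it follows a different route from the paper's.

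\textbf{The paper's route.} It works with the $\ell$-transforms
\[
\tilde\varphi(z)=\sup_{y\in V}\,[\ell(z,y)-\psi(y)],\qquad \tilde\psi(z)=\sup_{x\in U}\,[\ell(x,z)-\varphi(x)]
\]
on $Z$. It proves their continuity through a modulus of continuity of $\ell$ on the compact set of causal pairs in $Z\times V$. It then shows $\tilde\varphi+\tilde\psi=0$ on $Z$, the lower bound coming from equality on $\operatorname{spt}\pi$ and the upper bound from the reverse triangle inequality. Finally it sets $u:=\tilde\psi=-\tilde\varphi$.

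\textbf{Your route.} You define $u$ by the explicit formula $u(z)=-\varphi(x)+\ell(x,z)$ along interpolation points. Your well-definedness inequality is precisely the paper's step $\tilde\varphi+\tilde\psi\le 0$, and your $u$ coincides with the paper's $\tilde\psi$.

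\textbf{What each buys.} The sup formula is canonical, since it makes sense beyond $Z$, and it yields the reverse-Lipschitz property in one line; the paper leaves that property unverified. Your approach makes explicit several points the paper glosses over.
\begin{itemize}
\item You prove $U\subset P_1(\operatorname{spt}\pi)$ and $V\subset P_2(\operatorname{spt}\pi)$. This is exactly what the paper's ``immediate to check'' identities $u=-\varphi$ on $U\cap Z$ and $u=\psi$ on $V\cap Z$ need.
\item You prove continuity of $\varphi$ and $\psi$ themselves, which the statement asserts but the paper's proof never addresses directly.
\item Your continuity argument only evaluates $\ell$ on pairs lying along geodesics from the compact set $\operatorname{spt}\pi$. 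It therefore avoids the paper's estimate $\sup_{y}|\ell(z_1,y)-\ell(z_2,y)|\le\omega(d(z_1,z_2))$, which is not justified as written: when $z_1\le y$ but $z_2\not\le y$ the difference is $+\infty$.
\end{itemize}

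\textbf{Small fixes.} In Step 3, when $s\in\{0,1\}$, continuity of $\ell_+$ only gives $\ell_+(x,z)=0$ (resp.\ $\ell_+(z,y)=0$). You need upper semicontinuity of $\ell$, equivalently closedness of $X^2_{\le}$, to exclude $\ell(x,z)=-\infty$ and conclude $z\in Z_s(x,y)$. Likewise, the claim $x\in Z_0(x,y)$ in Step 2 uses $\ell(x,x)=0$. This follows from the reverse triangle inequality together with finiteness of $\ell_+$.
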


\begin{proof}

Since $\mu, \nu \in \mathcal{P}_c(X)$ is  1-dualisable by $\pi$, it follows that  $S \subset \{\ell>0\}$ is also compact. Global hyperbolicity then gives us that
\begin{equation*}
    Z := \bigcup_{s \in [0,1]} Z_s(\operatorname{spt}\pi)
\end{equation*}
is compact. Define $\tilde{\varphi}: Z \to \mathbb{R}$ and $\tilde{\psi}: Z \to \mathbb{R}$ by
\begin{align}
    \tilde{\varphi}(z) &:= \sup_{y \in V}  \ell(z, y) - \psi(y) , \label{eq:ctrans-phi} \\
    \tilde{\psi}(z) &:= \sup_{x \in U}  \ell(x, z) - \varphi(x) , \label{eq:ctrans-psi}
\end{align}

 \noindent and notice that since  $\ell(\cdot, y) = -\infty$ on non-causal pairs, the supremum only consideres the  
 $y \in V$ such that $\ell(z, y)\geq0. $ For $z \in Z$, there exists at 
least one such $y$ (since $Z$ is defined as interpolating points between optimal 
pairs), so the supremum is finite.

We now show that $\tilde{\varphi}$ is continuous. For $z_1, z_2 \in Z$, we have
\begin{align}
    |\tilde{\varphi}(z_1) - \tilde{\varphi}(z_2)|
    &= \left| \sup_{y \in V} \big[ \ell(z_1, y) - \psi(y) \big] - 
              \sup_{y \in V} \big[ \ell(z_2, y) - \psi(y) \big] \right| \nonumber \\
    &\le \sup_{y \in V} \big| \ell(z_1, y) - \ell(z_2, y) \big|, \label{eq:lip-1}
\end{align}

\noindent and the following set
$$
    K := \{ (z, y) \in Z \times V : (z, y) \in X_\leq^2 \}
$$
is compact  as it is the intersection of the closed set $Z \times V$ with the closed 
set of causal pairs in a compact region. Global hyperbolicity gives us that  $\ell$ is continuous on $K$, and hence $\ell$ is uniformly continuous on 
$K$, and there exists an increasing function $\omega: \mathbb{R}_+ \to \mathbb{R}_+$ 
with $\omega(0) = 0$ such that for all $(z_1, y), (z_2, y) \in K$,
$$    |\ell(z_1, y) - \ell(z_2, y)| \le \omega(d(z_1, z_2)).$$
Additionally, for the pairs not in $K$ we have $\ell = -\infty$ so they do not affect the supremum in in~\eqref{eq:lip-1}. Therefore
$$
    |\tilde{\varphi}(z_1) - \tilde{\varphi}(z_2)| \le \omega(d(z_1, z_2)),
$$
and thus $\tilde{\varphi}$ is uniformly continuous 
on $Z$. The proof of continuity of $\tilde{\psi}$ on $Z$ is analogous.

We now relate $\tilde{\varphi}$ and $\tilde{\psi}$ to construct the reverse-Lipschitz $u:Z\rightarrow \R.$ For every $z \in Z$, there exists an optimal 
pair $(x_0, y_0) \in \operatorname{spt}\pi$ and $s \in [0,1]$ such that 
$z \in Z_s(x_0, y_0)$. Then $\ell(x_0, z) = s \ell(x_0, y_0)$ and 
$\ell(z, y_0) = (1-s) \ell(x_0, y_0)$, and 
$\varphi(x_0) + \psi(y_0) = \ell(x_0, y_0)$. By definition of $\tilde{\varphi},\tilde{\psi} $, we get $$
    \tilde{\varphi}(z) \ge \ell(z, y_0) - \psi(y_0) 
    = (1-s)\ell(x_0, y_0) - \psi(y_0),$$ $$ 
    \tilde{\psi}(z) \ge \ell(x_0, z) - \varphi(x_0) 
    = s \ell(x_0, y_0) - \varphi(x_0).
$$
Adding the above two inequalities gives us:
$$    \tilde{\varphi}(z) + \tilde{\psi}(z) \ge \ell(x_0, y_0) - \varphi(x_0) - \psi(y_0) 
    = 0,$$ while on the other hand, given any $y \in V$ and $x \in U$,
$$    \big[ \ell(z, y) - \psi(y) \big] + \big[ \ell(x, z) - \varphi(x) \big]
    = \ell(z, y) + \ell(x, z) - (\varphi(x) + \psi(y)),
$$so the reverse triangle inequality and ~\eqref{eq:sep-ineq} gives us 
$$
    \ell(z, y) + \ell(x, z) \le \ell(x, y) \le \varphi(x) + \psi(y).
$$ Thus, we have that 
$$    \big[ \ell(z, y) - \psi(y) \big] + \big[ \ell(x, z) - \varphi(x) \big] \le 0,
$$ and taking the supremum over $y \in V$ and then over $x \in U$ gives
$$
    \tilde{\varphi}(z) + \tilde{\psi}(z) \le 0.
$$

Thus $\tilde{\varphi}(z) + \tilde{\psi}(z) = 0$ for all $z \in Z$, and we can define 
\begin{equation*}
    u(z) := -\tilde{\varphi}(z) = \tilde{\psi}(z), \quad z \in Z.
\end{equation*}
It follows that  $u: Z \to \mathbb{R}$ is continuous as $\tilde{\varphi}$ is continuous, and it is immediate to check that $u$ agrees with $-\varphi$ on $U\cap Z$, and with $\psi$ on $V\cap Z.$

\end{proof}

We extend the above Theorem to a general $\ell_1$ optimal coupling $\pi$ such that $\ell>0$ holds $\pi$-a.e. This is done by decomposing $\pi$ into 1-separated pieces as in \cite[Theorem 7.1]{McCann2020}.
\begin{theorem}\label{decomposing pi}
Let $(X, d, \mathfrak{m}, \ell)$ be gh LLS, and let  $\mu,\nu \in \mathcal{P}(X)$ 
and let $\pi \in \Pi_\le(\mu, \nu)$ be an $\ell^1$-optimal coupling with 
$\ell > 0$ holding $\pi$-a.e.

Then $\pi$ decomposes as a sum of mutually singular measures
\begin{equation}
    \pi = \sum_{i=1}^\infty \pi^i,
\end{equation}
where each $\pi^i$ is $\ell^1$-optimal with compact support disjoint from 
$\{\ell \le 0\}$, and the marginals of the normalized 
$\hat{\pi}^i = \pi^i / \pi^i[X^2]$ are $1$-separated.

Moreover, the sum is finite if and only if $\operatorname{spt}\pi$ is compact 
and disjoint from $\{\ell \le 0\}$.
\end{theorem}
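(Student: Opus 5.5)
The plan is to follow McCann's decomposition argument \cite[Theorem 7.1]{McCann2020}. We exhaust the open set $X^2_{\ll}=\{\ell>0\}$ by compact product-like pieces, restrict $\pi$ to them, and check that optimality and separation pass to the restrictions.

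\emph{Step 1 (exhaustion).} By Definition \ref{gh LLS}, $\ell_+$ is continuous, so $\{\ell>0\}$ is open in $X^2$. Since $X$ is proper, we can choose a countable family of compact rectangles $A_j\times B_j\subset\{\ell>0\}$ whose union is $\{\ell>0\}$. For instance, take products of closed $d$-balls with rational centers and radii, kept only when the product lies in $\{\ell>0\}$. On each rectangle, continuity of $\ell_+$ and compactness give $\min_{A_j\times B_j}\ell\ge\delta_j>0$.

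Set $\pi^i:=\pi\restrict\big((A_i\times B_i)\setminus\bigcup_{j<i}(A_j\times B_j)\big)$ and discard the zero pieces. These measures are mutually singular, and since $\pi(\{\ell>0\})=1$ they sum to $\pi$. Each support lies in the compact set $A_i\times B_i$, which is disjoint from $\{\ell\le0\}$.

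\emph{Step 2 (optimality and separation).} Each $\pi^i\le\pi$ is concentrated on $\spt\pi$. Since $\pi$ is $\ell^1$-optimal with $\ell$-finite cost, $\spt\pi$ is $\ell$-cyclically monotone. So each $\pi^i$ is concentrated on an $\ell$-cyclically monotone set.

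For $1$-separatedness of the marginals of $\hat\pi^i$ in the sense of McCann, we need a pair of potentials $\varphi,\psi$ satisfying \eqref{eq:sep-ineq}--\eqref{eq:sep-eq} on the product of supports, with the equality set contained in $\{\ell>0\}$. Because $\spt\hat\pi^i\subset A_i\times B_i$ is compact and $\ell\ge\delta_i$ there, the cost restricted to $\spt\mu^i\times\spt\nu^i$ is bounded above on causal pairs. This gives dualisability via Corollary \ref{duality}, and hence a potential pair.

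The delicate point is that $\spt\mu^i\times\spt\nu^i$ need not lie in $\{\ell>0\}$; only $\spt\pi^i$ does. So I would build the potentials by the Rockafellar-type construction from the cyclically monotone set $\Gamma^i=\spt\pi^i$. Define $\psi(y)=\sup$ over chains in $\Gamma^i$ ending at $y$, and $\varphi$ as its $\ell$-transform. The equality set of $(\varphi,\psi)$ on $\spt\mu^i\times\spt\nu^i$ is then contained in the $\ell$-cyclically monotone closure of $\Gamma^i$. Showing that this closure avoids $\{\ell\le 0\}$ is the main obstacle. I would follow McCann's argument: shrink the rectangles if necessary, so that for $(x,y)\in A_i\times B_i$ with $\ell(x,y)\le0$ one can find cycles through $\Gamma^i$ with strictly larger cost. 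This uses the reverse triangle inequality and the uniform lower bound $\delta_i$, which show that the pair $(x,y)$ cannot be in the equality set.

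\emph{Step 3 (finite vs infinite).} If the sum is finite, then $\spt\pi\subset\bigcup_{i\le n}A_i\times B_i$, which is compact and disjoint from $\{\ell\le0\}$.

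Conversely, suppose $\spt\pi$ is compact and disjoint from the closed set $\{\ell\le0\}$. Then it has positive distance from that set, so finitely many rectangles from Step 1 cover it. Reindexing the exhaustion so that these rectangles come first, every later piece vanishes and the sum is finite.
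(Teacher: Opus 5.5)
Your overall route is the paper's (and McCann's): exhaust the open set $\{\ell>0\}$ by countably many compact rectangles, take disjointified restrictions of $\pi$, obtain potentials on each piece by duality, and reorder the exhaustion for the finite case. However, Step~2 rests on a false claim, and as written it does not establish the $1$-separation.

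You assert that $\operatorname{spt}\mu^i\times\operatorname{spt}\nu^i$ need not lie in $\{\ell>0\}$, but with your own construction it always does. Indeed, $\pi^i$ is concentrated on $A_i\times B_i$, so $\mu^i=(P_1)_\#\pi^i$ is concentrated on the closed set $A_i$ and $\nu^i$ is concentrated on $B_i$. Hence
\[
\operatorname{spt}\mu^i\times\operatorname{spt}\nu^i\subset A_i\times B_i\subset\{\ell\ge\delta_i\}.
\]
This is exactly why one exhausts by \emph{products} rather than by arbitrary compact subsets of $\{\ell>0\}$. It is also what the paper implicitly uses when it says the dual problem is attained for pieces supported in $\overline{U_i}\times\overline{W_i}\subset X^2_\ll$. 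As a result, Step~2 is internally inconsistent: you invoke Corollary~\ref{duality}, whose main hypothesis is $\operatorname{spt}\mu\times\operatorname{spt}\nu\subset X^2_{\ll}$, and then deny that hypothesis. The ``main obstacle'' you leave open is a phantom. Your sketch for it concerns pairs $(x,y)\in A_i\times B_i$ with $\ell(x,y)\le 0$, and there are none.

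The repair is to drop the Rockafellar/closure detour entirely. On the compact set $A_i\times B_i$ the cost $\ell$ is finite, continuous and bounded. Standard Kantorovich duality then gives continuous $\ell$-concave potentials $\varphi,\psi$ with $\varphi\oplus\psi\ge\ell$ on $\operatorname{spt}\hat\mu^i\times\operatorname{spt}\hat\nu^i$ and equality on the closed, $\ell$-cyclically monotone set $\operatorname{spt}\hat\pi^i$. The equality set lies in $A_i\times B_i$ and therefore automatically avoids $\{\ell\le 0\}$, which is the required $1$-separation.

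Two smaller points:
\begin{itemize}
\item Your claim in Step~2 that $\operatorname{spt}\pi$ itself is $\ell$-cyclically monotone is more than you need, and it is not clear near $\partial\{\ell>0\}$. There $\ell$ can jump to $-\infty$, so the usual perturbation argument fails. What you actually need is weaker: cycles among points of $\operatorname{spt}\pi^i$ involve only pairs in $A_i\times B_i$, where $\ell$ is continuous. So cyclical monotonicity of $\operatorname{spt}\pi^i$, and hence optimality of $\pi^i$, does follow. Alternatively, argue as the paper does that a restriction of an optimal plan is optimal.
\item In Step~3, note explicitly that the open interiors of your ball rectangles still cover $\{\ell>0\}$. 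This is what guarantees a finite subcover of the compact set $\operatorname{spt}\pi$.
\end{itemize}
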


\begin{proof}

Since $\ell > 0$ holds $\pi$-a.e., we have $\pi(X^2_\ll) = 1$, and  
$X^2_\ll = \{\ell > 0\}$ is an open set. By the gh LLS assumption, our space is locally compact. Indeed, for any $x\in X$, choose $p\ll x\ll q$, for which we have $x\in I^+(p)\cap I^-(q)\subset J^+(p)\cap J^-(q)$, and the set $I^+(p)\cap I^-(q)$ is thus an open neighbourhood of $x$ while the causal diamond $J^+(p)\cap J^-(q)$ is compact by global hyperbolicity. Hence $x$ has a neighbourhood with compact closure, and therefore $X$ is locally compact. Since $X$ is  Polish, it is second countable, and  we can cover
$X^2_\ll$ by countably many open sets $U_k \times W_k$ such that
$$    \overline{U_k} \times \overline{W_k} \subset X^2_\ll,
$$with each $\overline{U_k} \times \overline{W_k}$ compact.

Set $\pi^0 = 0$. For $i = 1, 2, \dots$, define
\begin{equation}
    \pi^i :=  \big[\pi - \sum_{k=1}^{i-1} \pi^k\big] \restrict_{U_i \times W_i},
\end{equation} where each compact piece $\pi_i$ has its own transport interpolation set $Z_i:=\cup_{s\in[0,1]}Z_s(\spt\pi_i).$
Then it follows that each $\pi^i$ is a non-negative Borel measure with $\{\pi^i\}$ being mutually singular, and also $\pi = \sum_{i=1}^\infty \pi^i$ with $\spt \pi^i \subset \overline{U_i} \times \overline{W_i} 
    \subset \{ \ell>0\}$. Moreover, since  $\pi$ is $\ell^1$-optimal and each partial sum $\sum_{k=1}^i \pi^k$ is a 
restriction of $\pi$, we have that each $\pi^i$ remains $\ell^1$-optimal for its marginals 
$(\mu^i, \nu^i)$, where $\mu^i = (P_1)_\# \pi^i$ and $\nu^i = (P_2)_\# \pi^i$.

For each $i$ with $\pi^i \neq 0$, we first normalize:
\begin{equation}
    \hat{\pi}^i := \frac{\pi^i}{\pi^i[X^2]}, \quad 
    \hat{\mu}^i := \frac{\mu^i}{\pi^i[X^2]}, \quad 
    \hat{\nu}^i := \frac{\nu^i}{\pi^i[X^2]},
\end{equation}
and since  $\spt \hat{\pi}^i \subset \overline{U_i} \times \overline{W_i}$ 
is compact and contained in $X^2_\ll$, and $\hat{\pi}^i$ is $\ell^1$-optimal, the 
dual problem is attained. 
Thus there exist continuous potentials $u^i, v^i$ such that
\begin{align}
    u^i(x) + v^i(y) &\ge \ell(x,y) \quad \forall (x,y) \in 
    \operatorname{spt}[\hat{\mu}^i \times \hat{\nu}^i], \\
    \operatorname{spt}\hat{\pi}^i &\subset \{(x,y) : u^i(x) + v^i(y) = \ell(x,y)\} 
    \subset X^2_\ll,
\end{align}
which by definition means that  $(\hat{\mu}^i, \hat{\nu}^i)$ is 1-separated.

We now show that the sum is finite if and only if $\spt \pi$ is compact and disjoint from $\{ \ell>0\}$. Assume $\spt \pi$ is compact and disjoint from $\{\ell \le 0\}$. 
Then $\spt \pi \subset X^2_\ll$, and since  $\spt \pi$ is 
compact and $X^2_\ll$ is open, there exists a finite open cover of 
$\spt\pi$ by open sets $\{U_k \times W_k\}_{k=1}^N$ with compact 
closures contained in $X^2_\ll$.

Arguing as before, we may choose the countable cover so that these 
$N$ open sets are the first $N$ elements. Then for every $i > N$, the open set 
$U_i \times W_i$ is disjoint from $\spt \pi$ up to a set of 
$\pi$-measure zero (because $\spt\pi$ is already covered by the 
first $N$ rectangles, and the inductive subtraction removes the mass already 
assigned). Therefore, $\pi^i = 0$ for all $i > N$. Thus the sum $\pi = \sum_{i=1}^N \pi^i$ is finite.

Conversely, assume the sum is finite: $\pi = \sum_{i=1}^N \pi^i$ with each $\pi^i \neq 0$. Then
$$    \spt\pi \subset \bigcup_{i=1}^N \spt\pi^i.
$$
Since each $\spt \pi^i$ is compact, and disjoint from $\{\ell\leq 0\}$, it follows that $\spt \pi$ is a closed subset of a compact set that is disjoint form $\{ \ell\leq 0\}.$

In this case where $N$ is finite, on each normalized piece $(\hat{\mu}^i, \hat{\nu}^i)$ is 1-separated. Then for each $i,$ there exists a Kantorovich potential $u^{i}$ is continuous on $\spt \hat{\mu}^i,$ similarly for $v^i.$ The previous theorem allows us to re-define the pair $(u^i,v^i)$ into a single reverse-Lipschitz function that we still denote as $u^i.$ By discarding $\pi$ null overlaps, we can take the supports to be disjoint compact sets. On a finite disjoint union of compact sets, a function that is continuous on each piece is continuous on the whole union. Therefore defining the glued potential $u$ as $u|_{U_i}=u^i$, we can choose appropriate constants to ensure that $u$ is continuous, and satisfies $$u(y)-u(x)\geq \ell(x,y)\quad \textrm{on }\spt \mu\times \spt \nu, $$ with equality on $\spt \pi.$

\end{proof}

The consequence of the above Theorems is that for any $\ell_1$-optimal transport problem with compactly supported marginals and an optimal plan $\pi$ that is supported in $\{ \ell>0\},$ there exists a continuous reverse-Lipschitz Kantorovich potential $u:Z\rightarrow \R.$ In particular, the support of $\pi$ is contained in the set $\Gamma_u:$ $$\spt \pi \subset \Gamma_u:=\{(x,y)\in(Z\times Z)\cap X_\leq ^2: u(y)-u(x)=\ell(x,y)>0 \}\cup \{(x,x):x\in Z \}.$$ The continuous Kantorovich potential $u:Z\rightarrow \R$ is the starting point of the disintegration and localization program that follows in the next section.

\section{Structure of the $L^1$ optimal transportation problem and the disintegration theorem}\label{Disintegration section}
The goal of this section is to perform a reduction of the $L^1$ transport problem onto its one dimensional counterparts. 
This Lorentzian construction has already appeared in greater generality in the works of Braun-McCann \cite{BraunMcCann+} and Cavalletti-Mondino \cite{CavallettiMondino2024, Cavalletti-Mondino2025+}. In particular, Braun-McCann work with a  Borel reverse Lipschitz function, and Cavalletti-Mondino specialize to the reverse Lipschitz functions that arise as the Lorentzian analogue of signed distance functions. In our construction, we will specialize to the case of continuous reverse Lipschitz functions, such functions arising as solutions to the Kantorovich dual problem from the previous section.  Thus, throughout this section, we fix a transport interpolation set $S$ and a continuous reverse-Lipschitz function $u:Z\rightarrow \R.$

\begin{subsection}{Transport set and transport relation} To any continuous reverse Lipschitz $u:Z\rightarrow \R$, we can associate a partial order which we call \emph{the transport order} $\Gamma_u$, defined as the followin $\ell$-cyclically
monotone set
\end{subsection}

\[
\Gamma_{u}:=\{(x,y)\in (Z\times Z)\cap X_{\leq}^{2}:u(y)-u(x)=\ell(x,y)>0\} \cup\Delta_Z.
\] where $\Delta_Z:=\{(x,x):x\in Z \}$ denotes the diagonal.

Define the \emph{transport relation} $R_{u}$ as

\[
R_{u}:=\Gamma_{u}\cup\Gamma_{u}^{-1},
\]
where $\Gamma_u ^{-1}:=\{(x,y)\ \in Z\times Z: (y,x)\in \Gamma_u\}$, and also  define the \emph{transport set} $\T_{u}$ as follows
\[
\quad \T_{u}:=P_{1}(R_{u}\setminus\Delta_Z),
\]

We  define the set of \emph{initial and final points} respectively by 
$$ a_u:=\{z\in \T_u:\not\exists x \in \T_u, (x,z)\in \Gamma_u , x\neq z\}, $$
$$ b_u:=\{z\in \T_u:\not\exists x \in \T_u, (z,x)\in \Gamma_u , x\neq z\}. $$

\noindent These sets enjoy the following measurability properties: 

\begin{itemize}
    \item $\Gamma_u$ is $\sigma$-compact. Indeed, since $\Gamma_{u}:=\{(x,y)\in Z_{\leq}^{2}:u(y)-u(x)=\ell(x,y)>0\} \cup\Delta_Z$ and set
    \begin{equation}\label{sigma compact A}
    A :=\{(x,y)\in Z_{\leq}^{2}:u(y)-u(x)=\ell(x,y)>0\} = \bigcup_{n\geq 1}B_n, 
    \end{equation}
    where $ B_n = \{(x,y)\in Z_{\leq}^{2}:u(y)-u(x)=\ell^+(x,y)\geq\frac{1}{n}\}.$ We show that each $B_n$ is closed. Since $u$ is continuous and $\ell$ is continuous on $X^2_{\leq},$  the conditions  $u(y)-u(x)-\ell(x,y)=0$ and  $\ell(x,y)\geq \frac{1}{n}$ define a closed subset of $X^2_\leq$. Since $X^2_\leq$ is closed in $X^2$ it follows that $B_n$ is closed in $Z^2$. Since $X$ is locally compact and second countable, $X^2$ is hence $\sigma$-compact, and since each $B_n$ is closed in $X^2$,  each $B_n$ is $\sigma$-compact, and we can write $A$ as a countable union of $\sigma$-compact sets. Additionally, since $\Delta_Z$ is $\sigma$-compact, and thus $\Gamma_u = A\cup \Delta_Z$ is $\sigma$-compact.

    \item  Similarly, $\Gamma_u^{-1}$ and $R_u$ are $\sigma$-compact.
    \item $\T_u$ is a projection of a Borel set $R_u\setminus \Delta_Z $, therefore it is analytic. Additionally, $\T_u$ is $\sigma$-compact. Indeed, using the notation above, $\Gamma_u = A\cup\Delta_Z,$ we have that $R_u = A\cup A^{-1}\cup \Delta_Z.$ Then $R_u\setminus\Delta_Z = A\cup A^{-1},$ which is a union of $\sigma$-compact sets, and its projection is thus $\sigma$-compact.
    \item $a_u,b_u$ are co-analytic sets. Indeed, consider the set $E := \{(x,z)\in \T_u\times \T_u: x\neq z, (x,z)\in \Gamma_u \} = (\T_u\times \T_u)\cap \Gamma_u\cap (X\times X\setminus \Delta_X).$ This is an analytic set since it is an intersection of an analytic set together with a Borel set $\Gamma_u\cap (X\times X\setminus \Delta_X)$. Defining the analytic set $A':=P_2(E),$ it follows that $a_u= \T_u\setminus A',$ hence $a_u$ co-analytic. The argument for $b_u$ is analogous.
\end{itemize}

The transport order $\Gamma_{u}$ imposes the following structure on timelike geodesics
with endpoints in $\Gamma_{u}.$
\begin{lemma} \label{geodesics align}
For any $\gamma\in TGeo^{\ell}$ such that $(\gamma_{0},\gamma_{1})\in\Gamma_{u}$
we see that 
\[
(\gamma_{s},\gamma_{t})\in\Gamma_{u}\quad\forall~0\leq s\leq t\leq1.
\]
\end{lemma}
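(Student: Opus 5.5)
We argue by combining the reverse Lipschitz inequality with the reverse triangle inequality, in the same spirit as the ``affine along geodesics'' argument sketched in the introduction for the Riemannian case. Fix $\gamma\in TGeo^{\ell}$ with $(\gamma_0,\gamma_1)\in\Gamma_u$. Since $\gamma$ is a timelike $\ell$-path, $\ell(\gamma_0,\gamma_1)>0$, so $\gamma_0\neq\gamma_1$. Hence $(\gamma_0,\gamma_1)$ does not lie in $\Delta_Z$, and therefore
\[
u(\gamma_1)-u(\gamma_0)=\ell(\gamma_0,\gamma_1)=:L>0 .
\]
By \eqref{l path}, $\ell(\gamma_s,\gamma_t)=(t-s)L$ for $s<t$. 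In particular $\gamma_s\in Z_s(\gamma_0,\gamma_1)$.

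The first thing to check is that every $\gamma_s$ lies in $Z$, since $u$ is only defined there. This is the step I expect to be the main (if mild) obstacle: $Z=\bigcup_s Z_s(\spt\pi)$ is built from pairs in $\spt\pi$, not from arbitrary pairs in $\Gamma_u$. The fix I would try first is to argue that $\gamma_0,\gamma_1\in Z$ already, and that $Z$ is closed under taking intermediate points of $\ell$-geodesics. Concretely, suppose $\gamma_0\in Z_a(x,y)$ and $\gamma_1\in Z_b(x',y')$ with $(x,y),(x',y')\in\spt\pi$. Then one uses the reverse triangle inequality together with $u(y)-u(x)=\ell(x,y)$ to concatenate and place $\gamma_s$ on a maximizing configuration. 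If this turns out to be delicate, I would instead take the convention (implicit in the statement) that $\gamma$ is a geodesic in $Z$, i.e.\ $\gamma([0,1])\subset Z$, which is exactly what the later needle decomposition uses. With that in hand, $u(\gamma_s)$ is defined for all $s$.

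For the main step, fix $0\le s<t\le1$; the case $s=t$ is trivial because $\Delta_Z\subset\Gamma_u$. The points $\gamma_0\le\gamma_s\le\gamma_t\le\gamma_1$ are causally, indeed timelike, related. The reverse Lipschitz property of $u$ on $Z$ gives three inequalities:
\[
u(\gamma_s)-u(\gamma_0)\ge sL,\qquad u(\gamma_t)-u(\gamma_s)\ge (t-s)L,\qquad u(\gamma_1)-u(\gamma_t)\ge(1-t)L .
\]
Summing them yields $u(\gamma_1)-u(\gamma_0)\ge L$. But this is an equality by hypothesis, so each of the three inequalities must itself be an equality. In particular
\[
u(\gamma_t)-u(\gamma_s)=(t-s)L=\ell(\gamma_s,\gamma_t)>0 .
\]
Since $(\gamma_s,\gamma_t)\in (Z\times Z)\cap X^2_{\le}$, this shows $(\gamma_s,\gamma_t)\in\Gamma_u$, which concludes the argument.
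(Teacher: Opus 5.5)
Your main step is the paper's argument. The paper inserts the reverse-Lipschitz inequalities for $(\gamma_0,\gamma_s)$ and $(\gamma_t,\gamma_1)$ into $u(\gamma_1)-u(\gamma_0)=\ell(\gamma_0,\gamma_1)$ and uses $\ell(\gamma_0,\gamma_1)-\ell(\gamma_0,\gamma_s)-\ell(\gamma_t,\gamma_1)=\ell(\gamma_s,\gamma_t)$ to get $u(\gamma_t)-u(\gamma_s)\le\ell(\gamma_s,\gamma_t)$, with the reverse inequality again from reverse-Lipschitz. Summing your three inequalities and forcing equality is the same computation.

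Your worry about $\gamma_s\in Z$ is well founded, and the paper's proof does not address it either: it simply evaluates $u(\gamma_s)$. However, your first fix cannot work, because $Z=\bigcup_s Z_s(\spt\pi)$ is in general not closed under interpolation along pairs of $\Gamma_u$. Here is a counterexample in Minkowski $\R^{1,1}$. Put $p_r:=(r,0)$, $\mu=\frac12(\delta_{p_0}+\delta_{p_2})$ and $\nu=\frac12(\delta_{p_1}+\delta_{p_3})$. Since $p_2\not\le p_1$, the only causal coupling is $\pi=\frac12(\delta_{(p_0,p_1)}+\delta_{(p_2,p_3)})$, so $Z=\{p_r: r\in[0,1]\cup[2,3]\}$. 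The function $u(p_r):=r$ is continuous and reverse Lipschitz on $Z$; it is exactly the potential of Theorem~\ref{Kantorovich Potential} for $\varphi(p_0)=0$, $\varphi(p_2)=-2$, $\psi(p_1)=1$, $\psi(p_3)=3$. Then $(p_0,p_3)\in\Gamma_u$, but along $\gamma_s=p_{3s}$ the point $\gamma_{1/2}=p_{3/2}$ is not in $Z$, so $(\gamma_{1/2},\gamma_1)\notin\Gamma_u$.

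What your concatenation actually shows is that $\gamma_s$ interpolates a pair $(x,y')$ formed from the source of one $\spt\pi$-pair and the target of another. That pair lies in the contact set $S$ but need not lie in $\spt\pi$; here $(x,y')=(p_0,p_3)$. So the lemma as stated is false, and your fallback $\gamma([0,1])\subset Z$ is a necessary extra hypothesis rather than a harmless convention. Alternatively, for the potential of Theorem~\ref{Kantorovich Potential} you may build $Z$ from $S$ instead of $\spt\pi$, and then your concatenation argument does close up. Under either fix your proof is complete.
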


\begin{proof}
Take $0\leq s\leq t\leq1$, and note that 
\[
\begin{split}
u(\gamma_{t})-u(\gamma_{s}) & =  u(\gamma_{t})-u(\gamma_{s})+u(\gamma_{0})-u(\gamma_{0})+u(\gamma_{1})-u(\gamma_{1})\\
 & =  \ell(\gamma_{0},\gamma_{1})+u(\gamma_{0})-u(\gamma_{s})+u(\gamma_{t})-u(\gamma_{1})\\
 & \leq  \ell(\gamma_{0},\gamma_{1})-\ell(\gamma_{0},\gamma_{s})-\ell(\gamma_{t},\gamma_{1})\\
 & =  \ell(\gamma_{s},\gamma_{t}),
\end{split}
\]
and the reverse inequality holds by the reverse-Lipschitz condition.
\end{proof}

With the previous lemma in mind, we define $G$ as timelike geodesics whose endpoints are in the transport order: \begin{equation}\label{Geodesics in transport order} G:=\{\gamma\in TGeo^{\ell}: (\gamma_{0},\gamma_{1})\in\Gamma_{u} \}.\end{equation}

We will show that the transport relation $R_u$ is an equivalence relation over a subset of $\T_u$ that is of full measure. The equivalence classes will correspond to timelike geodesics, and we need to ensure that any possible branching structure is removed.  Forward timelike branching in $\Gamma_u$ is modeled by the existence of $x,z,w \in \T_u$ such that $$ (x,z), (x,w)\in \Gamma_u \cap X_{\ll}, \quad \textrm{but}  \, (z,w)\not\in R_u,$$ and backwards branching is treated analogously by considering instead $\Gamma_u^{-1}$. We thus define the \emph{forward and backward branching} points $A_{+,u},A_{-,u}$
as

\[
A_{+,u}:=\{x\in\T_{u}:\exists(x,z),(x,w)\in\Gamma_{u}, z\neq x, w\neq x,(z,w)\not\in R_{u}\}
\]

\[
A_{-,u}:=\{x\in\T_{u}:\exists(x,z),(x,w)\in\Gamma_{u}^{-1}. z\neq x, w\neq x,(z,w)\not\in R_{u}\}
\]

Whenever branching happens, we can construct two distinct timelike geodesics with endpoints in $\Gamma_u$ that are not related by the transport relation $R_u$, as described by the following lemma.
\begin{lemma} \label{branching geodesics}
	Consider a forward branching point $x\in A_{+,u}$, so that  $z,w\in\T_{u}$ be such that
	$(x,z),(x,w)\in\Gamma_{u} \cap X_{\ll},$ but $(z,w)\not\in R_{u}.$ Then there
	exist two distinct non-constant timelike geodesics $\gamma^{1},\gamma^{2}\in TGeo^{\ell}$
	such that:

\begin{enumerate}
	\item $\gamma^{1},\gamma^{2}\in G$, where $G$ is given by \ref{Geodesics in transport order},
	\item $(x,\gamma_{s}^{1}),(x,\gamma_{s}^{2})\in\Gamma_{u}$ for all $s\in(0,1]$,
	\item $u(\gamma_{s}^{1})=u(\gamma_{s}^{2})$ for all $s\in[0,1].$
    \item $\g_s^1,\g_s^2\not \in R_u$ for all $s\in [0,1]$.
\end{enumerate}
    The analogous statement for $A_{-,u}$ follows by causal reversal.
\end{lemma}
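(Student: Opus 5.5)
The plan is to build $\gamma^1,\gamma^2$ from timelike $\ell$-geodesics joining $x$ to $z$ and $x$ to $w$, reparametrize them so they traverse the same $u$-levels, and then cut them at their last coincidence point.

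\emph{Step 1 (two geodesics from $x$).} Since $x\ll z$ and $x\ll w$ and the gh LLS is timelike path-connected (Definition~\ref{gh LLS}), there are $\ell$-geodesics $\sigma,\tau\in TGeo^\ell$ with $\sigma_0=\tau_0=x$, $\sigma_1=z$, $\tau_1=w$. By Lemma~\ref{geodesics align}, every pair $(\sigma_s,\sigma_t)$ and $(\tau_s,\tau_t)$ with $s\le t$ lies in $\Gamma_u$. In particular
\[
u(\sigma_t)=u(x)+t\,\ell(x,z), \qquad u(\tau_t)=u(x)+t\,\ell(x,w).
\]
Without loss of generality $c:=\ell(x,z)\le \ell(x,w)$. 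Set $\tilde\sigma:=\sigma$ and $\tilde\tau_s:=\tau_{sc/\ell(x,w)}$. An affine reparametrization of a subsegment of an $\ell$-path is again an $\ell$-path, so $\tilde\tau\in TGeo^\ell$. Both curves now satisfy $u(\tilde\sigma_s)=u(\tilde\tau_s)=u(x)+sc$. Write $w':=\tilde\tau_1$; then either $w'=w$ or $(w',w)\in\Gamma_u$.

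\emph{Step 2 (same $u$-level forces equality or non-relation).} Suppose $(p,q)\in R_u$ with $u(p)=u(q)$. If $p\neq q$, then by the definition of $\Gamma_u$ we would have $0<\ell=u(q)-u(p)=0$, which is impossible, so $p=q$. Apply this to the endpoints: if $z=w'$, then $(z,w)\in\Gamma_u\cup\Delta_Z\subset R_u$, contradicting the hypothesis. Hence $\tilde\sigma_1\neq\tilde\tau_1$, and so $\tilde\sigma_1,\tilde\tau_1$ are not related by $R_u$.

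\emph{Step 3 (cut at the last coincidence).} The curves may coincide on an initial segment. Let
\[
s^*:=\max\{s\in[0,1]:\tilde\sigma_s=\tilde\tau_s\}.
\]
The maximum exists because the set is closed by continuity of $\ell$-geodesics, and $s^*<1$ by Step 2. Define $\gamma^1_s:=\tilde\sigma_{s^*+s(1-s^*)}$ and $\gamma^2_s:=\tilde\tau_{s^*+s(1-s^*)}$. These are non-constant, distinct elements of $G$, and property 3 holds since the $u$-levels were matched in Step 1.

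For property 2, set $x':=\gamma^1_0=\gamma^2_0$. We have $(x,x')\in\Gamma_u$ and $(x',\gamma^i_s)\in\Gamma_u$. Then the reverse-Lipschitz property together with the reverse triangle inequality gives
\[
u(\gamma^i_s)-u(x)=\ell(x,x')+\ell(x',\gamma^i_s)\le \ell(x,\gamma^i_s)\le u(\gamma^i_s)-u(x),
\]
so equality holds throughout, and $\ell(x,\gamma^i_s)>0$ for $s>0$. Thus $(x,\gamma^i_s)\in\Gamma_u$.

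For property 4, for every $s\in(0,1]$ the points $\gamma^1_s$ and $\gamma^2_s$ are distinct by maximality of $s^*$ and lie on the same $u$-level. By Step 2, $(\gamma^1_s,\gamma^2_s)\notin R_u$. At $s=0$ the curves share the branching point $x'$, so property 4 must be read for $s\in(0,1]$, which is what the later arguments use.

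The main obstacle is Step 3. The naive choice of geodesics may coincide for a while before separating, and one has to check that replacing $x$ by the last coincidence point $x'$ preserves the $\Gamma_u$-relations with $x$. This is handled by the equality case in the reverse triangle inequality combined with the reverse-Lipschitz bound. The backward statement for $A_{-,u}$ follows by applying the same argument to the causally reversed structure $(X,d,\m,\bar\ell)$ with the potential $-u$.
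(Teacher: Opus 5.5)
Your construction is the paper's in substance. You join $x$ to $z$ and to $w$ by $\ell$-geodesics and synchronize them by $u$-level; you shorten the longer one, while the paper equivalently locates the parameter on the $x$-to-$z$ geodesic where $u$ equals $u(w)$. You then use your Step 2 to see that the two synchronized endpoints are distinct points on one $u$-level, hence not $R_u$-related, and finally restrict and rescale. Steps 1--2, items 1 and 3, distinctness, non-constancy and the causal-reversal remark are all fine.

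\textbf{The gap is where you cut.} Cutting at the last coincidence time $s^*$ forces $\gamma^1_0=\gamma^2_0=x'$. Then $(\gamma^1_0,\gamma^2_0)=(x',x')\in\Delta_Z\subset R_u$, so item 4 fails at $s=0$.

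Your claim that the later arguments only need $s\in(0,1]$ is not correct. Lemma~\ref{measurable correspondence branching} defines $\hat F$ with the constraint
\[
h(\gamma^1,\gamma^2)=\min_{s\in[0,1]}d(\gamma^1_s,\gamma^2_s)>0,
\]
and invokes the present lemma to obtain $A_{+,u}\subset P_1(\hat F)$. Its converse inclusion also uses $\gamma^1_0\neq\gamma^2_0$ to produce an unrelated pair at the starting level. For your curves $h=0$, so that step breaks. You weakened the statement rather than proving it.

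\textbf{The repair is one line, and it is exactly what the paper does.} Do not cut at $s^*$; cut at any $s'\in(s^*,1)$. The paper takes a short terminal interval on which continuity keeps the curves apart. With your synchronized parametrization you do not even need the maximum $s^*$, only $\tilde\sigma_1\neq\tilde\tau_1$ and continuity. Set
\[
\gamma^1_s:=\tilde\sigma_{s'+s(1-s')},\qquad \gamma^2_s:=\tilde\tau_{s'+s(1-s')}.
\]
Every parameter then lies in $(s^*,1]$, so for all $s\in[0,1]$ the two points are distinct and on the same $u$-level. Your Step 2 then gives item 4 on the closed interval.

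Item 2 also holds for all $s\in[0,1]$, directly from Lemma~\ref{geodesics align} applied to $\sigma$ and $\tau$. Each $\gamma^i_s$ is a point of a geodesic issuing from $x$ whose endpoints lie in $\Gamma_u$. Your equality-case argument through $x'$ is correct but becomes unnecessary.
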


\begin{proof}
	Since $(x,z),(x,w)\in\Gamma_{u} \cap X_{\ll}$, consider the two timelike geodesics $\gamma_{1},\gamma_{2}$
	such that $(x,z)=(\gamma_{0}^{1},\gamma_{1}^{1})$, and $(x,w)=(\gamma_{0}^{2},\gamma_{1}^{2})$.
	From Lemma \ref{geodesics align}, it follows that that $\gamma^{1},\gamma^{2}\in G.$ Since
	$(z,w)\not\in R_{u},$ this implies that $z\neq w$.

	Note that both $z,w$ are different from $x.$ We also get that
	$x\not\in b_u$: indeed if we assume that $x\in b_u,$ then $\not\exists y\in X$
	with $(x,y)\in\Gamma_u$ and $\ell(x,y)>0$, but this is gives a contradiction
	if we consider $y$ to be either $z$ or $w.$ This also implies that the geodesics $\gamma^{1},\gamma^{2}$
	are non-constant. 
	
	Since $z,w$ can be interchanged above, we can assume $u(z)\geq u(w),$
	and from $(x,w)\in\Gamma_{u} \cap X_{\ll}$ we get that $u(w)> u(x).$ By continuity of $u$ and $\gamma^1$ ,
	there exists $s^*\in(0,1]$ such that 
	\[
	u(w)=u(\gamma_{s^*}^{1}).
	\]
	
	Note that $w\neq\gamma_{s^*}^{1}$, as otherwise, $(w,z)\in\Gamma_{u}$
	because $z$ and $\gamma_{s^*}^{1}$ are along the same geodesic. Furthermore,
	$(w,\gamma_{s^*}^{1})\not\in R_{u}$ because if they were, then $0=u(\gamma_{s^*}^{1})-u(w)=\ell(w,\gamma_{s^*}^{1})$
	which implies either that $w\not\leq\gamma_{s^*}^{1}$ or $\gamma_{s^*}^{1}\not\leq w$,
	and since have established that $w\neq\gamma_{s^*}^{1}$, it contradicts
	the definition of $\Gamma_{u},\Gamma_{u}^{-1}$ as these are defined
	as pairs in $X_{\leq}^{2}.$
	
	By continuity of $u$,$\gamma^{1},\gamma^{2}$ and since we can affinely parametrize $\gamma^{1},\gamma^{2}$, there exists a
	$\delta>0$ such that 
	\[
	u(\gamma_{1-t}^{1})=u(\gamma_{s(1-t)}^{2}),\quad d(\gamma_{1-t}^{1},\gamma_{s(1-t)}^{2})>0
	\]
	for all $0\leq t\leq\delta$. 
	By  re-apply the argument above to every $0\leq t\leq\delta$ to
	deduce that $(\gamma_{1-t}^{1},\gamma_{s^*(1-t)}^{2})\not\in R_{u}.$
	The required geodesics are obtained by properly restricting and rescaling
	the geodesics $\gamma^{1},\gamma^{2}$. 
	
\end{proof}

Consider now the forward  branching point $x\in A_{+,u}$, so that there exists $w,z\in \T_u$ for which there two distinct  $\gamma_1, \gamma_2$ branching timelike geodesics between $(x,w)$ and $(x,z)$ respectively. We will show that there is a measurable correspondence between the forward branching points $A_{+,u}$ and the corresponding geodesics. We will need the following selection result from Theorem 5.5.2 of  \cite{Srivastava1998}.

\begin{theorem}\label{measurable selection}
	Let $X,Y$ be Polish spaces, and let $F\subset X \times Y$ be analytic. Let $\mathcal{A}$ be the $\sigma$-algebra generated by analytic subsets of $X$. Then there is an $\mathcal{A}$-measurable section $S : P_1(F) \rightarrow Y$ of $F$, meaning that  $\textrm{graph} (S)\subset F$.
\end{theorem}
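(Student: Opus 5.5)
The plan is to follow the classical Jankov--von Neumann uniformization argument: code $F$ by the Baire space $\N^{\N}$, and take the lexicographically least code of each fibre. We may assume $F\neq\emptyset$. Since $F$ is analytic in the Polish space $X\times Y$, there is a continuous surjection $g:\N^{\N}\to F$. Put $f:=P_1\circ g:\N^{\N}\to X$, which is continuous with $f(\N^{\N})=P_1(F)$. For each $x\in P_1(F)$ the fibre $C_x:=f^{-1}(x)$ is a nonempty closed subset of $\N^{\N}$.

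The first step is to show that every nonempty closed $C\subset\N^{\N}$ has a lexicographically least element $\alpha_C$. Construct it inductively. Let $\alpha(0)$ be the least $n_0$ such that the cylinder $N_{(n_0)}$ meets $C$. Given $\alpha(0),\dots,\alpha(k-1)$, let $\alpha(k)$ be the least $n$ such that $N_{(\alpha(0),\dots,\alpha(k-1),n)}$ meets $C$. Each cylinder $N_{\alpha|k}$ then meets $C$, and since these cylinders shrink to $\alpha$ and $C$ is closed, we get $\alpha\in C$. Any $\beta\in C$ with $\beta\neq\alpha$ first differs from $\alpha$ at some index $k$. By the minimality of $\alpha(k)$ we must have $\beta(k)>\alpha(k)$, so $\alpha$ is lex-least. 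Define $\alpha(x):=\alpha_{C_x}$ and
\[
S(x):=P_2\big(g(\alpha(x))\big),\qquad x\in P_1(F).
\]
Then $g(\alpha(x))\in F$ and $P_1(g(\alpha(x)))=x$, so $(x,S(x))\in F$, which gives $\mathrm{graph}(S)\subset F$.

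The second step, and the main point, is $\mathcal{A}$-measurability. Since $P_2\circ g$ is continuous, it suffices that $x\mapsto\alpha(x)$ is $\mathcal{A}$-measurable. The cylinders $N_s$, for $s$ a finite sequence, form a countable base generating the Borel $\sigma$-algebra of $\N^{\N}$, so it is enough to check that $\alpha^{-1}(N_s)\in\mathcal{A}$ for each $s$. By the inductive construction, the lex-least point of $C_x$ begins with $s$ exactly when $C_x$ meets $N_s$ and $C_x$ meets no $N_t$ with $t$ of the same length as $s$ and $t<_{\mathrm{lex}}s$. This yields
\[
\alpha^{-1}(N_s)=f(N_s)\setminus\bigcup_{|t|=|s|,\ t<_{\mathrm{lex}}s} f(N_t).
\]
Each $f(N_t)$ is a continuous image of a Polish space, hence analytic, and the union is countable. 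Therefore $\alpha^{-1}(N_s)$ lies in the $\sigma$-algebra generated by the analytic sets, and $S$ is $\mathcal{A}$-measurable.

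The step needing the most care is this characterization of $\alpha^{-1}(N_s)$. The key observation is that lex-minimality is decided by finitely many prefix comparisons at each level. This is what keeps the complexity within countable Boolean combinations of analytic sets, and why the section is only $\mathcal{A}$-measurable rather than Borel.
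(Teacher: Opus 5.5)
Your proof is correct and follows essentially the same route as the paper. The paper gives no argument of its own and simply cites Srivastava's Theorem 5.5.2 \cite{Srivastava1998}, and what you wrote is the standard Jankov--von Neumann proof of that result: code $F$ by $\N^{\N}$, take the lexicographically least point of each closed fibre, and read off $\mathcal{A}$-measurability from $\alpha^{-1}(N_s)=f(N_s)\setminus\bigcup_{|t|=|s|,\,t<_{\mathrm{lex}}s} f(N_t)$, where the union is countable rather than finite, as you noted.
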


\begin{lemma}{\label{measurable correspondence branching}}
	Consider the set of forward branching points $A_{+,u}$. Then there exists an $\m$-measurable map $S:A_{+,u}\rightarrow G\times G$, where $G$ is given by \eqref{Geodesics in transport order}, such that if $S(x)=(\gamma^1,\gamma^2)$, then 
	\begin{enumerate}
		\item $\gamma^{1},\gamma^{2}\in G$
		\item $(x,\gamma_{s}^{1}),(x,\gamma_{s}^{2})\in\Gamma_{u}$ for all $s\in[0,1]$
		\item $(\gamma_{s}^{1},\gamma_{s}^{2})\not\in R_{u}$ for all $s\in[0,1]$
		\item $u(\gamma_{s}^{1})=u(\gamma_{s}^{2})$ for all $s\in[0,1].$
	\end{enumerate}
	Moreover, both geodesics are non-constant, and the set $A_{+,u}$ is $\sigma$-compact.
\end{lemma}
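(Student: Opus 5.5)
Following the Riemannian template of Cavalletti, the plan is to write $A_{+,u}$ as the projection of an explicitly described $\sigma$-compact subset of $X\times G\times G$, and then apply the selection Theorem \ref{measurable selection} to that set. Concretely, define
\[
F:=\Big\{(x,\gamma^1,\gamma^2)\in Z\times G\times G:\ (x,\gamma^i_s)\in\Gamma_u \ \forall s\in[0,1],\ i=1,2;\ u(\gamma^1_s)=u(\gamma^2_s)\ \forall s;\ (\gamma^1_s,\gamma^2_s)\notin R_u\ \forall s\Big\},
\]
viewed as a subset of the Polish space $X\times C([0,1],X)^2$ with the uniform metric $d^\infty$. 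By Lemma \ref{branching geodesics}, every $x\in A_{+,u}$ lies in $P_1(F)$. There is one mismatch to handle: that lemma gives $(x,\gamma^i_s)\in\Gamma_u$ only for $s\in(0,1]$, while the condition for $s=0$ must also hold. I will close this by extending along the geodesic from $x$: restrict the geodesics of Lemma \ref{branching geodesics} to a subinterval $[s_0,1]$ and rescale, so that the new initial points are still in the timelike future of $x$ within $\Gamma_u$. This keeps all four properties and makes the geodesics non-constant. Conversely, if $(x,\gamma^1,\gamma^2)\in F$, then taking $z=\gamma^1_1$ and $w=\gamma^2_1$ gives $(z,w)\notin R_u$. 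We also have $z\neq x$, because $u(z)>u(x)$ forces $\ell(x,z)>0$; this strict inequality has to be built into $F$, for instance via $\ell(x,\gamma^i_0)>0$. Hence $x\in A_{+,u}$, and so $P_1(F)=A_{+,u}$.

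Next I check measurability of $F$. The conditions $(x,\gamma^i_s)\in\Gamma_u$ for all $s$, and $u(\gamma^1_s)=u(\gamma^2_s)$, are closed or $\sigma$-compact conditions. This follows from continuity of $u$ and $\ell_+$, the $\sigma$-compactness of $\Gamma_u$ already established, and the fact that $G$ is $\sigma$-compact. For $G$, write it as $\bigcup_n\{\gamma: \ell(\gamma_0,\gamma_1)\ge 1/n\}\cap e_{(0,1)}^{-1}(\Gamma_u)$ and use Arzel\`a--Ascoli type compactness of causal curves in compact diamonds, which follows from global hyperbolicity. The delicate condition is $(\gamma^1_s,\gamma^2_s)\notin R_u$, which is only co-$\sigma$-compact. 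Since $u(\gamma^1_s)=u(\gamma^2_s)$, membership in $R_u$ with equal $u$-values forces $\ell=0$ on a pair in $\Gamma_u\setminus\Delta_Z$, which is impossible, so the pair lies in $R_u$ only if it is diagonal. The condition therefore reduces to the closed-complement condition $\gamma^1_s\neq\gamma^2_s$ for all $s$. Because $[0,1]$ is compact, this is equivalent to $\min_s d(\gamma^1_s,\gamma^2_s)>0$, which is an open condition and hence $\sigma$-compact in a locally compact second countable space. So $F$ is $\sigma$-compact, and in particular analytic.

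Theorem \ref{measurable selection} then yields a section $S:A_{+,u}\to G\times G$ that is measurable with respect to the $\sigma$-algebra generated by analytic sets. Since analytic sets are universally measurable, $S$ is $\m$-measurable, and properties (1)--(4) hold by construction of $F$. Finally, $A_{+,u}=P_1(F)$ is the continuous image of a $\sigma$-compact set, hence $\sigma$-compact. I expect the main obstacle to be this closedness bookkeeping. The two points that need care are the reduction of ``$\notin R_u$'' to $\gamma^1_s\neq\gamma^2_s$, and the $\sigma$-compactness of the sets of geodesics, where the lower bound $\ell\ge 1/n$ is needed so that limits remain timelike $\ell$-geodesics rather than degenerating to null or constant curves.
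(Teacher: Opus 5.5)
Your proposal is correct and follows essentially the same route as the paper. In both, $A_{+,u}$ is the first projection of a $\sigma$-compact subset of $Z\times G\times G$, and $G$ is $\sigma$-compact via the pieces with $\ell(\gamma_0,\gamma_1)\ge 1/n$. Non-relatedness is encoded by equal $u$-values together with $\min_{s}d(\gamma^1_s,\gamma^2_s)>0$, and the Srivastava selection theorem then gives the map.

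The only difference is that the paper imposes just the endpoint conditions $(x,\gamma^i_0)\in\Gamma_u$, $u(\gamma^1_0)=u(\gamma^2_0)$ and $u(\gamma^1_1)=u(\gamma^2_1)$. It then recovers the ``for all $s$'' properties from transitivity of $\Gamma_u$ and the affinity of $u$ along curves in $G$. That reduction is exactly what you need to justify your claim that the uncountable intersection over $s\in[0,1]$ of the $F_\sigma$ conditions $(x,\gamma^i_s)\in\Gamma_u$ is still $\sigma$-compact. Finally, $u(\gamma^i_1)>u(x)$ already follows from $(x,\gamma^i_0)\in\Gamma_u$ and $\gamma^i\in G$, so your extra condition $\ell(x,\gamma^i_0)>0$ is harmless but unnecessary.
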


\begin{proof}

We first show that the set $G=\{ \gamma \in TGeo^{\ell}:  (\gamma_0, \gamma_1) \in \Gamma_u\},$ is a $\sigma$-compact, and hence Borel subset of $TGeo^{\ell}$, with respect to the uniform distance $d_\infty$. We first show that $G$ is $\sigma$-compact. Recall that $\Gamma_u\setminus\Delta_Z = \cup_{n\geq 1}B_n$ where $$B_n:=\{ (x,y)\in Z\times Z:u(y)-u(x)=\ell^+(x,y)\geq \frac{1}{n}\}.$$ Since $Z$ is compact and $u,\ell^+$ is continuous, each $B_n$ is compact. Define $$G_n:=\{ \g \in TGeo^\ell:(\g_0,\g_1)\in B_n\}.$$ Since  timelike geodescics have non-diagonal endpoints, we have $$G = \cup_{n\geq 1} G_n.$$ We claim that each $G_n$ is compact in the uniform topology. Indeed, since $B_n\subset X_\ll^2$ is compact since every gh LLS is $\K$-globally hyperbolic \cite[Remark 12]{McCann2024}. In particular, all causal curves whose endpoints are in $B_n$ are contained in a common compact region and have uniformly bounded $d$-length. Together with the compactness theorem for timelike $\ell$-geodesics from Beran et al \cite{octet+}, this gives relative compactness of $G_n$ in the uniform topology.

We now show that $G_n$ is closed. Let $$\g^k\in G_n,\quad \g_k\rightarrow \g\quad \textrm{uniformly}.$$ Since $B_n$ is closed, we have $(\g_0,\g_1)\geq \frac{1}{n}>0$. For every $0\leq s<t\leq 1$ we have $$\ell(\g_s^k,\g_t^k)=(t-s)\ell(\g_0^k,\g_1^k)>0\rightarrow \ell(\g_s,\g_t)=(t-s)\ell(\g_0,\g_1)>0,$$ where we have passed to the limit using the continuity of $\ell^+$ o the timelike region. Hence$\g\in TGeo^\ell$, and therefore $\g\in G_n$. Hence $G_n$ is compact and therefore $G$ is $\sigma$-compact.

For every $\g,$ Lemma \ref{branching geodesics} gives us $(\g_0,\g_s)\in \Gamma_u$ for all $s\in [0,1]$. Consequently, we have that $u(\g_s)-u(\g_0)=\ell(\g_0,\g_s)=s\ell(\g_0,\g_1)$, which rearranging gives \begin{equation}\label{affine u}u(\g_s)=(1-s)u(\g_0)+su(\g_1).\end{equation} For $\g^1,\g^2$, define the continuous map $h(\g^1,\g^2):=\min_{s\in [0,1]}d(\g_s^1,\g_s^2)$. Then,  define 
    $$\hat{F}:= \{(x,\g^1,\g^2)\in \T_u\times G\times G: (x,\g_0^1),(x,\g_0^2)\in\Gamma_u, u(\g_0^1)=u(\g_0^2), u(\g_1^1)=u(\g_1^2), h(\g^1,\g^2)>0\}.$$
This set is $\sigma$- compact, since $\T_u,\Gamma_u, G$ are $\sigma$-compact, and $\{ h>0\}=\cup_{m\geq 1}\{h\geq \frac{1}{m} \}.$ After choosing a compact exhaustion of $\T_u,\Gamma_u, G$, the defining conditions are closed on each compact exhaustion. Hence $\hat{F}$ is a countable union of compact sets.

By Lemma \ref{branching geodesics}, there exists a pair $(\g^1,\g^2)\in G\times G$ such that $(x,\g^1,\g^2)\in \hat{F}.$ Thus, $A_{+,u}\subset P_1(\hat{F})$. Conversely, if we suppose that $(x,\g^1,\g^2)\in \hat F$, then setting $z:=\g_0^1,$ $w:=\g_0^2$, we have $(x,z),(x,w)\in \Gamma_u$. Moreover, $h(\g^1,g^2)>0$ implies $z\neq w$, while $u(z)=u(w).$ Hence $(z,w)\not\in R_u$, and therefore $x\in A_{+,u}$. We conclude that $A_{+,u}=P_1(\hat{F}),$ and therefore $A_{+,u}$ is $\sigma$-compact.

We now apply the measurable selection theorem to $\hat{F}$, which we view as an analytic subset of the Polish space $Z\times TGeo^\ell\times TGeo^\ell$. This gives an $\m$-measurable section $$S:A_{+,u}\rightarrow TGeo^\ell\times TGeo^\ell,$$ and since every fiber of $\hat{F}$ lies in $G\times G$, we see that the image of $S$ lies in $G\times G$. Let $S(x) = (\g^1,\g^2).$ By construction, we have that $(x,\g_0^1),(x,\g_0^2)\in \Gamma_u$, and using Lemma \ref{branching geodesics} we have  $$(\g_0^i,\g_s^i)\in \Gamma_u\quad\forall s\in[0,1],$$ and therefore $(x,\g_s^i)\in \Gamma_u$ for $i=1,2.$ By \eqref{affine u} and using $u(\g_0^1)=u(\g_0^2),$ and $u(\g_1^1)=u(\g_1^2)$ we get $$u(\g_s^1)=u(\g_s^2)\quad \forall s\in [0,1].$$
Finally, we see that $h(\g^1,\g^2)>0$ implies that $\g_s^1\neq \g_s^2$ for all $s\in [0,1]$, and since distinct points with equal $u$-value can not be related by $R_u$ we get $$(\g_s^1,\g_s^2)\not \in R_u\quad \forall s\in [0,1].$$ This proves the lemma. \end{proof}

\begin{remark}\label{causal reversal backwards branching synchronization}
    By causal reversal, the analogous statement holds for the backward branching set $A_{-,u}$  as well.
\end{remark}
The following lemma identifies a class of $\ell^p$-cyclically monotone sets for $p\in(0,1)$ within the $\ell$-cyclically monotone transport relation $\Gamma_u$. This will allow us to apply    $\ell^p$-optimal transport results to suitable subsets of the $L^1$ transport relation.
\begin{lemma} \label{p-cyclically monotone set}
	Let $\Delta\subset\Gamma_{u}$ be any set such that $(x_{0},y_{0}),(x_{1},y_{1})\in\Delta\implies((u(y_{1})-u(y_{0}))\cdot(u(x_{1})-u(x_{0}))\geq0.$
	Then $\Delta$ is $\ell^{p}$-cyclically monotone for all $p\in(0,1).$ 
\end{lemma}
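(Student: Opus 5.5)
The plan is to reduce the statement to a one-dimensional rearrangement inequality for the values of $u$, and then use that concave functions favour the monotone coupling.

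\emph{Step 1: reduction to real numbers.} Fix $p\in(0,1)$ and points $(x_1,y_1),\dots,(x_N,y_N)\in\Delta$, and write $a_i:=u(x_i)$, $b_i:=u(y_i)$. Since $\Delta\subset\Gamma_u$, we have $\ell(x_i,y_i)=b_i-a_i\ge 0$; this covers both the strictly timelike pairs and the diagonal pairs, where $\ell(x,x)=0=u(x)-u(x)$. So the left-hand side of the cyclical monotonicity inequality equals $\sum_i (b_i-a_i)^p$.

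For the right-hand side, look at each term $\ell(x_{i+1},y_i)^p$.
\begin{itemize}
\item If $x_{i+1}\not\le y_i$, the term is $-\infty$ and the inequality holds trivially. So we may assume every term is finite, i.e.\ $x_{i+1}\le y_i$ for all $i$.
\item In that case the reverse Lipschitz property of $u$ on $Z$ gives $0\le \ell(x_{i+1},y_i)\le b_i-a_{i+1}$. Since $t\mapsto t^p$ is increasing on $[0,\infty)$, we get $\ell(x_{i+1},y_i)^p\le (b_i-a_{i+1})^p$, with $b_i-a_{i+1}\ge 0$.
\end{itemize}
It therefore suffices to prove the following: if $a_i\le b_i$ and $(b_j-b_i)(a_j-a_i)\ge 0$ for all $i,j$, then for every permutation $\sigma$ with $b_i\ge a_{\sigma(i)}$ for all $i$,
\[
\sum_i h(b_i-a_i)\ \ge\ \sum_i h(b_i-a_{\sigma(i)}),
\]
where $h(t)=t^p$ for $t\ge0$ and $h(t)=-\infty$ for $t<0$. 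Here $\sigma$ is the cyclic shift.

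\emph{Step 2: the two-point exchange inequality.} The function $h$ is concave on $\R$ as an extended-valued function, so $c(a,b):=-h(b-a)$ is a convex function of $b-a$. The classical submodularity estimate for convex functions of the difference then says: if $a\le a'$ and $b\le b'$, then
\[
h(b-a')+h(b'-a)\ \le\ h(b-a)+h(b'-a').
\]
To prove it, observe that $b-a'$ and $b'-a$ have the same sum as $b-a$ and $b'-a'$, and the first pair lies outside the interval spanned by the second pair. For finite values this is the usual Karamata-type estimate for a concave function. The case $b<a'$ is trivial because the left-hand side is then $-\infty$; this case is needed because the monotone matching pairs $b$ with $a$ even when some crossed term is infinite.

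\emph{Step 3: sorting argument, and the expected obstacle.} Order the indices so that $a_1\le\dots\le a_N$. By the hypothesis on $\Delta$, the $b_i$ are then also ordered, after breaking ties consistently (ties in $a$ or $b$ can be permuted freely without changing either side). Any permutation $\sigma\neq\mathrm{id}$ has an inversion, and swapping it does not decrease $\sum_i h(b_i-a_{\sigma(i)})$ by Step 2. Finitely many such swaps reach the identity, which gives the claimed inequality.

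The only delicate point, and the step I expect to need care, is bookkeeping the $-\infty$ values during the swaps. The intermediate permutations may produce terms equal to $-\infty$, but Step 2 is stated for the extended-valued $h$, and the endpoint of the chain (the identity) has all terms finite. So the chain of inequalities remains valid in $[-\infty,\infty)$.
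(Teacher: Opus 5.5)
Your proof is correct and follows essentially the argument the paper defers to (\cite[Prop.~4.12]{CavallettiMondino2024}). There, the reverse-Lipschitz bound $\ell(x_{i+1},y_i)\le u(y_i)-u(x_{i+1})$ reduces everything to the $u$-values, and the one-dimensional fact that a monotone set is cyclically monotone for the concave cost $t\mapsto t^p$ (which you prove directly by the exchange/sorting argument) finishes it; the $-\infty$ bookkeeping you flag in Step 3 is in fact moot, because each swap does not decrease the sum and the starting sum is finite, so no intermediate term can be $-\infty$.
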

\begin{proof}
	This follows from the works of Cavalletti-Mondino \cite[Prop. 4.12]{CavallettiMondino2024}.
\end{proof}

We will now show that the branching points $A_{+},A_{-}$ have $\m$-measure zero under the forward $p$-enb condition. This was first addressed in the metric case by Cavalletti \cite{Cavalletti2014}. In our setting, forward branching will be ruled out by assuming $\ell^p$-optimal couplings are concentrated on a graph of a map, which holds whenever the the metric spacetime is forward $p$-enb and satisfies $TMCP^+(K,N)$ by Theorem \ref{Brenier Map}.

\begin{theorem} \label{endpoints measure zero}
	Let $(X,d,\m,\ell)$ be a gh LLS, fix $p\in (0,1)$, and  assume that the metric spacetime is  forward $p$-enb and satisfies $TMCP^+(K,N)$. Then for any continuous reverse Lipschitz function
	$u:Z\rightarrow\R,$ we have 
	\[
	\m(A_{+,u})=0.
	\]
    
    \noindent Under analogous assumptions on the causally reversed spacetime, we get that $\m(A_{-})=0$.
\end{theorem}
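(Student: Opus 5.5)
We argue by contradiction, following Cavalletti's strategy \cite{Cavalletti2014} adapted to the Lorentzian setting. Suppose $\m(A_{+,u})>0$. Since $A_{+,u}$ is $\sigma$-compact by Lemma \ref{measurable correspondence branching}, we may pick a compact $C\subset A_{+,u}$ with $0<\m(C)<\infty$. Using the measurable section $S(x)=(\gamma^{1}(x),\gamma^{2}(x))$ from Lemma \ref{measurable correspondence branching}, together with Lusin's theorem, we shrink $C$ further so that $S$ is continuous on $C$. We may then also assume there is $\delta>0$ with $\ell(x,\gamma^i_0(x))\ge\delta$ and $\min_s d(\gamma^1_s(x),\gamma^2_s(x))\ge\delta$ on $C$.

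Next we make the values of $u$ on the branches monotone in $u(x)$. Partition $C$ into finitely many slabs $C\cap\{u\in[a_j,a_j+\eta)\}$ with $\eta<\delta/2$, and keep one slab of positive measure, still denoted $C$. This guarantees $u(\gamma^i_0(x))>\sup_C u$ for all $x\in C$. After reparametrizing the branches by the level of $u$ (possible by \eqref{affine u}), we replace $\gamma^i(x)$ by the point $y^i(x)$ on the $i$-th branch with $u(y^i(x))=u(x)+c$, for a fixed constant $c$. This keeps the two branch points disjoint and unrelated by $R_u$.

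Now set $\mu_0:=\m(C)^{-1}\m\restrict_C$ and define the two target measures $\mu_1^i:=(y^i)_\#\mu_0$, together with the coupling $\pi^{mix}:=\tfrac12[(Id,y^1)_\#\mu_0+(Id,y^2)_\#\mu_0]$ between $\mu_0$ and $\tfrac12(\mu_1^1+\mu_1^2)$. The support of $\pi^{mix}$ lies in $\Gamma_u$. Since the map $x\mapsto u(x)+c$ is monotone, Lemma \ref{p-cyclically monotone set} shows that this support is $\ell^p$-cyclically monotone. The marginals are compactly supported with $\ell\ge\delta'>0$ on the support, so Corollary \ref{duality} applies. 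Hence $\pi^{mix}$ is $\ell^p$-optimal and the pair is timelike $p$-dualisable. Since $\mu_0\ll\m$, Theorem \ref{Brenier Map} says the unique optimal coupling is induced by a map. But for $\mu_0$-a.e. $x$ the disintegration of $\pi^{mix}$ at $x$ is $\tfrac12(\delta_{y^1(x)}+\delta_{y^2(x)})$ with $y^1(x)\ne y^2(x)$, which is not a Dirac mass. This contradiction gives $\m(A_{+,u})=0$. Applying the same argument to the causally reversed structure, using Remark \ref{causal reversal backwards branching synchronization}, gives $\m(A_{-,u})=0$.

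\textbf{Main obstacle.} The hard step is the $\ell^p$-cyclical monotonicity of $\spt\pi^{mix}$. Lemma \ref{p-cyclically monotone set} needs the sign condition on all pairs in the support. Pairs $(x,y^1(x'))$ and $(x',y^2(x))$ are handled by the common level shift $u(y^i(x))=u(x)+c$, which is exactly why we synchronize the targets at a fixed $u$-offset. A second concern is that the support is the closure of the graphs, not the graphs themselves. If closure causes trouble, we would fall back on the monotonicity of the graphs alone and verify optimality directly via the Kantorovich potential $u^p$-type construction of \cite[Prop.~4.12]{CavallettiMondino2024}.
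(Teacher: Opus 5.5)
Your strategy is the paper's. After Lusin you synchronize the two branches along level sets of $u$, glue the two branch maps into one plan whose support satisfies the sign condition of Lemma~\ref{p-cyclically monotone set}, and contradict Theorem~\ref{Brenier Map}. The only variation is where you put the targets: at a fixed offset $u(y^i(x))=u(x)+c$, whereas the paper uses a fixed level $u(T_i(x))=c$. Both work, since your choice gives $(u(y_1)-u(y_0))(u(x_1)-u(x_0))=(u(x_1)-u(x_0))^2\ge 0$. So the step you flag as the main obstacle is immediate. Your closure worry is also moot. The closure of the two graphs lies in $\{(x,y)\in\Gamma_u:\ u(y)-u(x)=\ell(x,y)=c\}$, which is closed because $u$ is continuous on the compact set $Z$ and $\ell$ is continuous on the closed set $X^2_{\le}$, and this set still satisfies the lemma's hypothesis.

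Two steps of your localization do not deliver what you use.

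\emph{The uniform offset.} Write $\lambda(x):=u(\gamma^1_0(x))-u(x)=\ell(x,\gamma^1_0(x))$ and $\Lambda(x):=u(\gamma^1_1(x))-u(x)$. A single offset $c$ exists only if $\sup_C\lambda\le\inf_C\Lambda$, because $y^i(x)$ must lie on the branch itself; on the segment from $x$ to $\gamma^i_0(x)$ the two curves may coincide. Your slabs in $u(x)$ give no control over the spread of $\lambda$, so this condition can fail. You need the analogue of the paper's Step 1 ($\sup_B\alpha_x<\inf_B\beta_x$). For instance, split $C$ into finitely many pieces on which the continuous function $\lambda$ oscillates by less than $\min_C(\Lambda-\lambda)=\min_C\ell(\gamma^1_0,\gamma^1_1)>0$, and keep one piece of positive measure.

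\emph{The hypothesis of Corollary~\ref{duality}.} It requires $\spt\mu_0\times\spt\mu_1\subset X^2_{\ll}$, i.e.\ every source point chronologically precedes every target point. Having $\ell\ge\delta'$ on $\spt\pi^{mix}$ does not give this, since points on different rays need not be causally related. Fix it by shrinking $C$ once more to $C\cap\bar B_r(x_0)$ with $x_0\in\spt(\m\restrict_C)$ and $r$ small. Openness of $\{\ell>0\}$ and continuity of $y^1,y^2$ on $C$ then give the inclusion. The cost $\ell^p$ becomes real-valued and continuous on the compact product of supports, so cyclically monotone support implies optimality, hence timelike $p$-dualisability. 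The paper merely asserts this dualisability, so on this point you would be more careful than the source.

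With these two localizations your argument is complete.
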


\begin{proof}
	We focus on the forward branching set $A_{+,u}$, since the proof of $A_{-,u}$
	is identical under causal reversal of the spacetime and the hypotheses.
	
	\textbf{Step 1:} Suppose that $\m(A_{+,u})>0.$ By definition of $A_{+,u},$ and by Lemma \ref{measurable correspondence branching}, there exists an $\m$-measurable map $S:A_{+,u}\rightarrow G\times G$, $S(x)=(\g_x^1,\g_x^2)$ such that $$(x,\g_{x,s}^i)\in \Gamma_u,\quad u(\g_{x,s}^1)=u(\g_{x,s}^2),\quad (\g_{x,s}^1,\g_{x,s}^2)\not \in R_u\quad \forall s\in[0,1].$$
	By Lusin's theorem  applied to $\sigma$-compact $A_{+,u}$, we can select a subset of $A_{+,u}$ that is of positive $\m$-measure such that the 
	map $S$ is continuous and in particular, the functions 
	\[
	A_{+,u}\ni x\mapsto u(\gamma_{j}^{i})\in\R,\quad i=1,2,j=0,1
	\]
	are all continuous.
	
	Define $\alpha_{x}:=u(\gamma_{0}^{1}),$ and $\beta_{x}:=u(\gamma_{1}^{1})$
	and note that $\beta_{x}>\alpha_{x}$, where the inequality is strict as
	the geodesics are not constant. There exists a $B\subset A_{+,u}$
	with $\m(B)>0$ such that 
	\[
	\sup_{x\in B}\alpha_{x}<\inf_{x\in B}\beta_{x}.
	\]
	This set can be obtained by considering $A_{+,u}\cap B_{r}(x)$ for
	each $x\in A_{+,u},$ for a sufficiently small $r.$
	
	\textbf{Step 2}: Let $I=[c,d]$ be a non-trivial interval such that
	\[
	\sup_{x\in B}\alpha_{x}<c<d<\inf_{x\in B}\beta_{x}.
	\]
	By construction, we have that $I\subset\{u(\gamma_{s}^{i}):s\in[0,1]\},$
	for $i=1,2.$ 
	
	Fix any point inside $I,$ say $c.$ By Lemma \ref{branching geodesics}, for any $x\in B$
	there exists $s(x)$ such that 
	\[
	u(\gamma_{s(x)}^{1})=u(\gamma_{s(x)}^{2})=c.
	\]
	Since $\gamma^{1}\neq\gamma^{2},$ we can define on $B$ the two measurable transport
	maps $B\ni x\mapsto T_{i}(x):=\gamma_{s(x)}^{i},$ for $i=1,2.$ Then
	we define the transport plan 
	\[
	\pi:=\frac{1}{2}((Id,T_{1})_{\#}\m_{B}+(Id,T_{2})_{\#}\m_{B}),
	\]
	where $\m_{B}:=(\m(B))^{-1}\m\llcorner_{B}.$
Note that $(\mu_0,\mu_1) = ((P_1)_{\#}\pi, (P_2)_{\#} \pi )$ is a compact timelike $p$-dualisable pair.	

	\textbf{Step 3}: The support of $\pi$ is $\ell^{p}-$cyclically monotone
	for any $p\in(0,1).$ Indeed the measure $\pi$ is concentrated on
	the set 
	\[
	\Delta=\{(x,\gamma_{s(x)}^{1}):x\in B\}\cup\{(x,\gamma_{s(x)}^{2}):x\in B\}\subset\Gamma_{u}.
	\]
	
	If we take any two couples $(x_{0},y_{0}),(x_{1},y_{1})\in\Delta$,
	then by construction: 
	\[
	u(y_{1})-u(y_{0})=0.
	\]
	Therefore trivially $(u(y_{1})-u(y_{0}))\cdot(u(x_{1})-u(x_{0}))=0$
	and by Lemma \eqref{p-cyclically monotone set}
	we get that $\Delta$ is $\ell^{p}$ cyclically monotone
	for any $p\in(0,1).$ We now have an $\ell^p$ optimal plan $\pi$ with $(P_{1})_{\#}\pi\ll\m$. However this plan can not be induced by a map, 
and this contradicts Theorem \ref{Brenier Map} where every $\ell^{p}$ optimal plan $\pi$ with an absolutely continuous initial measure
	is induced by a map.
\end{proof}

We remove branching points to obtain the \emph{non-branched transport set} $\T_u^{nb}$  and the \emph{non-branched transport relation} $R_u^{nb}$ :

\[
\T_{u}^{nb}:=\T_{u}\backslash(A_{+,u}\cup A_{-,u}),\quad R_{u}^{nb}:=R_{u}\cap\left(\T_{u}^{nb}\times \T_{u}^{nb}\right),
\]
and we note that $\T_u^{nb}$ still contains elements from endpoints $a_u,b_u$. This definition emphasizes the distinction between genuine branching $A_{+,u}, A_{-,u},$ where  geodesics match for a positive time interval and are otherwise distinct, and endpoint sets $a_u,b_u$ where it is possible that two distinct geodesics meet at either an initial point or a final point.  The latter observation on $a_u, b_u$ was known to Monge  \cite{Monge}.

After removing the branching structure, we obtain that the set $\T_u^{nb}$ is composed of equivalence classes defined by $R_u^{nb}.$ The following proposition is adapted from the metric result of Cavalletti \cite[Thm. 4.6]{Cavalletti2014}, as well as from the Lorentzian result of Cavalletti-Mondino \cite[Prop. 4.5]{CavallettiMondino2024}.
\begin{proposition}{\label{equivalence relation}}
	The non-branched transport relation $R_{u}^{nb}\subset X\times X$ is an equivalence
	relation on $\T_{u}^{nb}.$
\end{proposition}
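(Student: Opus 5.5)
Reflexivity and symmetry are immediate, so the work is in transitivity; the plan is to verify the three properties directly on $\T_u^{nb}$.

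\emph{Reflexivity.} For $x\in\T_u^{nb}\subset Z$ we have $(x,x)\in\Delta_Z\subset\Gamma_u\subset R_u$. Since $x\in\T_u^{nb}$, also $(x,x)\in R_u^{nb}$.

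\emph{Symmetry.} This is built into the definition $R_u=\Gamma_u\cup\Gamma_u^{-1}$. The restriction to $\T_u^{nb}\times\T_u^{nb}$ is symmetric as well.

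\emph{Transitivity.} Take $(x,y),(y,z)\in R_u^{nb}$ with $x,y,z\in\T_u^{nb}$ pairwise distinct; the degenerate cases are trivial. There are four cases according to whether each pair lies in $\Gamma_u$ or $\Gamma_u^{-1}$.

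\textbf{Case $(x,y),(y,z)\in\Gamma_u$.} By the reverse triangle inequality and the reverse Lipschitz property,
\[
u(z)-u(x)=\ell(x,y)+\ell(y,z)\le \ell(x,z)\le u(z)-u(x).
\]
Equality holds throughout, and $\ell(x,z)\ge\ell(x,y)>0$, so $(x,z)\in\Gamma_u$.

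\textbf{Case $(x,y),(y,z)\in\Gamma_u^{-1}$.} This is the same argument with the roles reversed.

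\textbf{Case $(y,x),(y,z)\in\Gamma_u$ (common past point $y$).} Here $x\ne y$ and $z\ne y$. Since $y\notin A_{+,u}$, the definition of the forward branching set forces $(x,z)\in R_u$. As $x,z\in\T_u^{nb}$, we get $(x,z)\in R_u^{nb}$.

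\textbf{Case $(x,y),(z,y)\in\Gamma_u$ (common future point $y$).} We use $y\notin A_{-,u}$: the pairs $(y,x),(y,z)\in\Gamma_u^{-1}$ with $x,z\ne y$, so $(x,z)\in R_u$.

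The main point to check is that the definitions of $A_{\pm,u}$ match exactly the two mixed cases above. One also has to take care with the strictness $\ell>0$ in $\Gamma_u$ versus the diagonal: pairs with $x\neq y$ in $\Gamma_u$ automatically have $\ell(x,y)>0$. Once this is noted, no measure theory is needed; Theorem~\ref{endpoints measure zero} is only used to know that $\T_u^{nb}$ has full measure.
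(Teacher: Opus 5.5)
Your proof is correct and follows the same structure as the paper's: reflexivity and symmetry come straight from the definitions, and transitivity splits into four cases. In the two aligned cases you use the reverse triangle inequality together with the reverse-Lipschitz bound, and in the two mixed cases you use that $y$ lies outside the branching sets. You pair the common-past configuration with $A_{+,u}$ and the common-future configuration with $A_{-,u}$, which is exactly what the definitions require; this is cleaner than the paper's write-up, whose Case 3 cites $A_{+,u}$ for what is actually a common-future point, and whose Case 4 detours through an auxiliary predecessor $w$ of $y$.
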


\begin{proof}
	For all $x\in P_{1}(\Gamma_{u}),$ it follows that $(x,x)\in\Gamma_{u}$.
	Consider now $x,y\in\T_{u}^{nb}$ with $(x,y)\in R_{u}^{nb}.$ Then
	$(y,x)\in R_{u}^{nb}$ as well since either $(x,y)\in\Gamma$ or $(x,y)\in\Gamma^{-1}$. 
	
	We only have to show transitivity, that is, for every $(x,y),(y,z)\in R_{u}^{nb}$,
	it follows that $(x,z)\in R_{u}^{nb}$. We assume $x\neq y\neq z$,
	as otherwise the claim is trivial.
	
	Case 1: $(x,y),(y,z)\in\Gamma_{u}$. 
	
	We use the reverse triangle inequality
	\[
	u(z)-u(x)\geq\ell(x,z)\geq\ell(x,y)+\ell(y,z)=u(y)-u(x)+u(z)-u(y)=u(z)-u(x),
	\]
	which means that $u(z)-u(x)=\ell(x,z)$, and so $(x,z)\in\Gamma_{u}.$
	
	Case 2: $(x,y),(y,z)\in\Gamma_{u}^{-1}.$ Then $(z,y),(y,x)\in\Gamma_{u}$
	and so $(z,x)\in\Gamma_{u}$ which gives us $(x,z)\in\Gamma_{u}^{-1}.$
	
	Case 3: $(x,y)\in\Gamma_{u}$ and $(y,z)\in\Gamma_{u}^{-1}$.  Then by definition we have $(y,x),(y,z)\in \Gamma_u$, and we may assume that $x\neq y$ $z\neq y$. Since $y\in \T_u^{nb}$ we have that $y\not \in A_{+,u}$. Then it follows by definition of $A_{+,u}$ that since $y\not \in A_{+,u}$ we have that $(x,z)\in R_u$. Finally, since $x,z\in \T_u^{nb}$, we conclude that $$(x,z)\in R_u\cap (\T_u^{nb}\times \T_u^{nb})=R_u^{nb}.$$ 
	Case 4: $(x,y)\in\Gu^{-1}$ and $(y,z)\in\Gu$. 
	
	Similarly to the previous case, as $y\not\in A_{+,u},$ there exists
	$w\in\T_{u}^{nb}$ with $(w,y)\in\Gu$ and $w\neq y$. Then necessarily
	all of the points $w,y,x,z$ lie on the same timelike geodesic and
	the result follows by Lemma \ref{geodesics align}.
\end{proof}
We will now show that each equivalence class of $R_u^{nb}$ is formed by a single timelike geodesic.

\begin{lemma}\label{same geodesic}
Fix any $x\in\mathcal{T}_{u}^{nb}$.
Then for any choice of $z,w\in R_{u}^{nb}(x)$, there exists $\gamma\in G$, where $G$ is defined in \eqref{Geodesics in transport order},
such that 
\[
\{x,z,w\}\subset\{\gamma_{s}:s\in[0,1]\}.
\]

If $\hat{\gamma}\in G$ enjoys the same property, then we have that $$( \{\hat{\gamma}_s: s\in [0,1]\} \cup \{\gamma_s: s\in [0,1]\}) \subset \{\bar{\gamma}_s: s\in [0,1]\}$$  for some $\bar{\gamma} \in G\subset TGeo^{\ell}$.
\end{lemma}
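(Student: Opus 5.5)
The plan is to use the total order that $u$ induces on an equivalence class, and to build the required geodesic explicitly by concatenating or gluing pieces of geodesics parametrized by $u$.

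\textbf{Step 1 (total order by $u$).} If $p\neq q$ and $(p,q)\in R_u$, then $(p,q)\in\Gamma_u\setminus\Delta_Z$ or $(q,p)\in\Gamma_u\setminus\Delta_Z$. In either case $|u(q)-u(p)|=\ell>0$. Hence related points with equal $u$-value coincide, and $u$ is injective on every set of mutually $R_u$-related points.

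By Proposition \ref{equivalence relation}, the points $x,z,w$ are pairwise related by $R^{nb}_u$. Relabel them as $x_1,x_2,x_3$ with $u(x_1)\le u(x_2)\le u(x_3)$. Then $(x_1,x_2),(x_2,x_3),(x_1,x_3)\in\Gamma_u$.

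\textbf{Step 2 (concatenation).} Since $\ell(x_i,x_j)>0$ for distinct points, timelike path-connectedness gives timelike $\ell$-geodesics $\sigma$ from $x_1$ to $x_2$ and $\tau$ from $x_2$ to $x_3$. The degenerate cases, where two of the points coincide, need only one geodesic.

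The $\Gamma_u$ identities give $\ell(x_1,x_3)=u(x_3)-u(x_1)=\ell(x_1,x_2)+\ell(x_2,x_3)$. Let $\gamma$ be the concatenation, reparametrized proportionally to $\ell$-length. For $s<t$ on opposite sides of the junction, the reverse triangle inequality gives
$\ell(\gamma_s,\gamma_t)\ge\ell(\gamma_s,x_2)+\ell(x_2,\gamma_t)$.
In the other direction,
$\ell(x_1,x_3)\ge\ell(x_1,\gamma_s)+\ell(\gamma_s,\gamma_t)+\ell(\gamma_t,x_3)$.
Together these force $\ell(\gamma_s,\gamma_t)=(t-s)\ell(\gamma_0,\gamma_1)$. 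So $\gamma\in TGeo^\ell$, and $(\gamma_0,\gamma_1)=(x_1,x_3)\in\Gamma_u$, hence $\gamma\in G$.

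\textbf{Step 3 (two geodesics through the class).} Let $\gamma,\hat\gamma\in G$ both contain $x$. Every point of either image is $\Gamma_u$- or $\Gamma_u^{-1}$-related to $x$, by Lemma \ref{geodesics align}. I claim any $p\in\gamma$ and $q\in\hat\gamma$ are $R_u$-related, by cases.
\begin{itemize}
\item If $p$ and $q$ both lie in the future of $x$, this follows from $x\notin A_{+,u}$ and the definition of $A_{+,u}$.
\item If both lie in the past of $x$, it follows from $x\notin A_{-,u}$.
\item If one lies in the past and the other in the future, Case 1 of Proposition \ref{equivalence relation} (transitivity in $\Gamma_u$) gives $(p,q)\in\Gamma_u$ or its inverse.
\end{itemize}
This uses only the membership $x\in\T_u^{nb}$, not that of $p$ and $q$. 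By Step 1, $u$ is therefore injective on $\Sigma:=\gamma([0,1])\cup\hat\gamma([0,1])$.

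The $u$-values of each curve form a closed interval, by \eqref{affine u}, and both intervals contain $u(x)$. Hence $u(\Sigma)=[\alpha,\beta]$ is an interval. Define $\bar\gamma_t$ to be the unique point of $\Sigma$ with $u=(1-t)\alpha+t\beta$. This curve is continuous, because it is piecewise a reparametrization of $\gamma$ or $\hat\gamma$ and the pieces agree where they overlap. Since any two points of $\Sigma$ are $\Gamma_u$-related in increasing $u$-order, $\ell(\bar\gamma_s,\bar\gamma_t)=u(\bar\gamma_t)-u(\bar\gamma_s)=(t-s)(\beta-\alpha)$. So $\bar\gamma\in G$ and its image contains $\Sigma$.

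\textbf{Main obstacle.} I expect Step 3 to be the crux, specifically the pairwise-relatedness argument. Points on $\gamma$ or $\hat\gamma$ may themselves be branching points, so we cannot simply invoke Proposition \ref{equivalence relation}. Instead we must use non-branching at the single point $x$ directly through the definitions of $A_{\pm,u}$, together with the reverse triangle inequality.
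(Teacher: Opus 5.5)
Your proof is correct, and it is organized differently from the paper's.

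\textbf{The paper's route.} The paper argues by cases on where $z,w$ sit relative to $x$. When they lie on opposite sides, it uses the reverse triangle inequality to get $\ell(w,z)=\ell(w,x)+\ell(x,z)$ and then appeals to Lemma \ref{geodesics align}. When both lie in the future of $x$, it takes some $\eta\in G$ from $x$ to $z$ and picks $\eta_s$ with $u(\eta_s)=u(w)$. It then shows $\eta_s=w$ by contradiction with $x\notin A_{+,u}$. The second assertion of the lemma is dismissed as ``following from the construction.''

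\textbf{Your route.} You front-load the non-branching into one structural fact: $u$ is injective on any set of pairwise $R_u$-related points, so such a set is totally ordered by $u$ along $\Gamma_u$. You then build geodesics explicitly, by concatenation in Step 2 and by inverting $u$ on $\gamma([0,1])\cup\hat\gamma([0,1])$ in Step 3.

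\textbf{What each buys.} Your approach gains in three places.
\begin{itemize}
\item Step 2 supplies the check the paper's opposite-sides case leaves implicit. Lemma \ref{geodesics align} by itself does not produce a geodesic through $x$; your concatenation argument is what turns the additive identity into one.
\item Step 3 gives an actual proof of the second assertion. It does so under the weaker hypothesis that $\hat\gamma$ merely passes through $x$.
\item You correctly observe that non-branching is available only at $x$, not along the curves, and you use it only there.
\end{itemize}
The paper's contradiction argument directly yields a slightly sharper by-product: every geodesic in $G$ from $x$ to $z$ passes through $w$. This also follows from your Step 3.

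\textbf{One small point.} In the fully degenerate case $z=w=x$ you still need a non-constant $\gamma\in G$ through $x$. Constant curves are not in $TGeo^{\ell}$. Such a $\gamma$ exists because $x\in\T_u=P_1(R_u\setminus\Delta_Z)$.
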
 

\begin{proof}
This proof is similar to that of Proposition \ref{equivalence relation}. Assume that $x,z,w$ are all distinct points as otherwise the proof follows from Lemma \ref{geodesics align}. There are four cases to consider.

Case 1: $z\in \Gamma_u (x), w\in \Gamma_u^{-1}(x).$ By $\ell$-cyclical monotonicity we get:
$$\ell(w,z)\geq \ell(w,x)+\ell(x,z)=u(x)-u(w)+u(z)-u(x)\geq \ell(w,z),$$
which implies that $x,z,w$ all lie on a geodesic by  Lemma \ref{geodesics align}.

Case 2: $z,w\in \Gamma(x)$. Without loss of generality we can assume that $u(x)\leq u(w)\leq u(z)$. In fact these inequalities are strict due to the definition of $\T_{u}^{nb}=\T_{u}^e\backslash(A_{+,u}\cup A_{-,u})$, as seen by the proof of Lemma \ref{measurable correspondence branching}. Thus, we can assume that 
$$u(x)< u(w)< u(z).$$

Assume that there does not exist a geodesic $\gamma \in G$ such that $\gamma_0=x$, $\gamma_1 = z$ and $\gamma_s = w$ with $s\in (0,1).$ Consider $\eta\in G$ such that $(\eta_0,\eta_1)=(x,z)$, and for some $s\in (0,1)$ we have 
$$ u(\eta_s)=u(w), \quad \eta_s \in \Gamma(x), \quad \eta_s \neq w. $$ 
This implies that $(\eta_s,w)\not \in R_{u}^{nb}$ which would imply that $x\in A_{+,u}$ which is a contradiction.

The remaining two cases  are $z,w\in \Gamma_u^{-1}(x)$, and $z\in \Gamma_u^{-1}(x)$ with $w\in\Gamma_u(x)$, both follow since they are the reversed and symmetric cases considered above. The last part of the statement follows from the construction above.
\end{proof}

To summarize this subsection,  given a  transport interpolation set $Z\subset X$ and  a continuous reverse Lipschitz $u:Z\rightarrow \R$ we have constructed the non-branched transport set $\T_u^{nb}$ with the equivalence relation $R_u^{nb}$ on it, whose equivalence classes are timelike geodesics. In the next subsection, we will produce a disintegration of the measure $\m$ with respect to this equivalence relation.

\subsection{Regularity of the disintegration.}

Our goal is to obtain a \emph{strongly consistent disintegration} of the background measure $\m$ restricted to the \emph{non-branched transport set} $\T_u^{nb}$, where each equivalence class will be given by the \emph{non-branched transport relation} $R_u^{nb}$ from the previous section:

\[
\T_{u}^{nb}:=\T_{u}\backslash(A_{+,u}\cup A_{-,u}),\quad R_{u}^{nb}:=R_{u}\cap\left(\T_{u}^{nb}\times \T_{u}^{nb}\right).
\]

We follow the disintegration of measure paradigm as introduced by Bianchini-Cavalletti \cite{BianchiniCavalletti2013}. In order to invoke the disintegration theorem, we first denote each equivalence class by $[x]$ and index it by $\T_{u}^{nb}$, i.e.:

\[
\{[x]\}_{x\in\T_{u}^{nb}}:=\{y\in\T_{u}^{nb}:(x,y)\in R_{u}^{nb}\}_{x\in\T_{u}^{nb}}.
\]

\noindent and we need to construct a quotient
map $f$ associated to the equivalence relation $R_{u}^{nb}$:

$$f:x \in \T_u^{nb} \rightarrow [x].$$

Recall that a \emph{cross-section of an equivalence relation}
$E\subset X\times X$ is a set $D\subset X$ such that the intersection of $D$ with
each equivalence class is a singleton. A  \emph{section of an equivalence relation} is a map
$f:X\rightarrow X$ such that for any $x,y\in X,$ we have 
\[
(x,f(x))\in E, \: \mathrm{and} \: (x,y)\in E\implies f(x)=f(y).
\] Note that each section $f$ canonically produces a cross-section
$D=\{x\in X:x=f(x)\}.$

The following proposition was shown by Bianchini-Cavalletti  \cite[Section 4]{BianchiniCavalletti2013}, also see Braun-McCann \cite[Proposition 6.18]{BraunMcCann+}:
\begin{proposition}\label{quotient map}
There exists an $\m$-measurable quotient map $f:\T_{u}^{nb}\rightarrow\T_{u}^{nb}$
for the equivalence relation $R_{u}^{nb}.$
\end{proposition}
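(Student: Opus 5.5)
The plan is to build the quotient map from the level sets of the continuous potential $u$, as in Bianchini-Cavalletti and Braun-McCann. By Lemma \ref{same geodesic}, each equivalence class $[x]$ of $R_u^{nb}$ lies on a single timelike geodesic in $G$. Along that geodesic $u$ is strictly increasing and affine by \eqref{affine u}, so each class meets every level set $\{u=c\}$ in at most one point. The section is then defined by choosing, on each class, the point with a fixed rational value of $u$, and patching the choices over countably many rationals.

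\textbf{Step 1 (countable partition).} For $q\in\mathbb{Q}$, let $\T_q$ be the set of $x\in\T_u^{nb}$ whose class contains a point $y$ with $u(y)=q$. Equivalently, $\T_q=P_1\big(R_u^{nb}\cap(\T_u^{nb}\times u^{-1}(q))\big)$. Since $R_u$ is $\sigma$-compact and $u$ is continuous, and $\T_u^{nb}$ is $\T_u$ minus the $\sigma$-compact sets $A_{\pm,u}$ of Lemma \ref{measurable correspondence branching}, $\T_q$ is the projection of a Borel set. Hence $\T_q$ is analytic and therefore $\m$-measurable. Every class that is not a singleton carries a nondegenerate interval of $u$-values, so it contains a rational level. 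It follows that $\T_u^{nb}\setminus\bigcup_q\T_q$ is contained in the set of classes that are single points. Those classes are fixed by $f=\mathrm{Id}$, which is harmless. Now enumerate $\mathbb{Q}=\{q_n\}$ and set $\hat\T_n:=\T_{q_n}\setminus\bigcup_{k<n}\T_{q_k}$. Each $\hat\T_n$ is $\m$-measurable and is saturated, meaning it is a union of whole classes.

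\textbf{Step 2 (the section).} For $x\in\hat\T_n$, define $f(x)$ to be the unique $y\in[x]$ with $u(y)=q_n$. Uniqueness holds because two distinct points of one class with the same $u$-value would be $R_u$-related yet satisfy $\ell=0$, which contradicts $\Gamma_u\subset\{\ell>0\}\cup\Delta_Z$. The graph of $f$ restricted to $\hat\T_n$ is $R_u^{nb}\cap(\hat\T_n\times u^{-1}(q_n))$, which is an $\m$-measurable set. So for an open $V$, the preimage $f^{-1}(V)\cap\hat\T_n=P_1\big(\mathrm{graph}\cap(X\times V)\big)$ is analytic. This gives $\m$-measurability of $f$ for the $\sigma$-algebra generated by analytic sets; if needed, Theorem \ref{measurable selection} can be applied instead. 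The section properties $(x,f(x))\in R_u^{nb}$ and $f(x)=f(y)$ whenever $x\sim y$ follow directly from the construction and from Proposition \ref{equivalence relation}.

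\textbf{Main obstacle.} The delicate point is the measurability bookkeeping. $\T_u^{nb}$ is only known to lie in the $\sigma$-algebra generated by analytic sets, so the graph of $f$ is not Borel, and projections of such sets need not be analytic. To handle this, I would first replace $\T_u^{nb}$ by a $\sigma$-compact subset $\T'\subset\T_u^{nb}$ of full $\m$-measure that is saturated for $R_u^{nb}$; this is possible by inner regularity of the Radon measure $\m$. All the sets above then become $\sigma$-compact projections, hence Borel. The remaining $\m$-null set is assigned $f=\mathrm{Id}$.
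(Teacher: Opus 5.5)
Your argument is correct, and it is the standard rational-level-set construction behind the result that the paper simply cites here. The paper gives no proof of its own, but it later relies on exactly this structure when it splits $Q=\bigcup_j Q_j$ with $u\equiv c_j$ on $Q_j$.

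One measurability statement needs fixing. Since $\hat{\T}_n$ is only a difference of analytic sets, $P_1(\mathrm{graph}\cap(X\times V))$ need not be analytic. However, $f^{-1}(V)\cap\hat{\T}_n=\hat{\T}_n\cap P_1\big(R_u^{nb}\cap(X\times(u^{-1}(q_n)\cap V))\big)$, and this set lies in the $\sigma$-algebra generated by analytic sets because $\T_u^{nb}$ is Borel, as you used in Step 1. So the inner-regularity detour in your last paragraph is unnecessary. That is fortunate, since inner regularity alone does not produce a \emph{saturated} $\sigma$-compact subset of full measure.
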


The image of the non branched transport set $D:=f(\T_u^{nb})$ satisfies $D=\{x\in \T_u^{nb}:d(x,f(x))=0\},$ and is thus $\m$-measurable. Consider now the \emph{quotient measure} $$ \q:= f_{\#} \m,$$

\noindent which is a finite Borel measure concentrated on $D$. By inner regularity of $\q$ there exists a $\sigma$-compact set $Q\subset D$ such that $\q(D\setminus Q)=0$. We can now apply the disintegration theorem to $\m$ to obtain a strongly consistent disintegration when restricted to $f^{-1}(Q)$:

$$ \m \restrict _{f^{-1}(Q)}=\int_Q \m_{\alpha}\q (d\alpha), \quad \m_{\alpha}(f^{-1}(\alpha))=\norm{\m_{\alpha}} \quad \q-\textrm{a.e.}\, \alpha \in Q,$$ and since $\q(D\setminus Q)=0$, this implies that $$ \m(\T_u^{nb}\setminus f^{-1}(Q))=0,$$ and our disintegration formula reduces to  \begin{equation}\label{disintegration}
 \m \restrict _{\T_u^{nb}}=\int_Q \m_{\alpha}\q (d\alpha), \quad \m_{\alpha}(f^{-1}(\alpha))=\norm{\m_{\alpha}} \quad \q-a.e.\, \alpha \in Q.
 \end{equation}

To summarize, we have constructed a quotient set $Q$ which we will identify with a subset of $\T_{u}^{nb}$ defined through the image of $f$. Moreover, each equivalence class is  formed by a timelike geodesic by Lemma \ref{same geodesic}.

For the ease of identification of the equivalence classes, we introduce a causal order preserving map $g$, that we call the \emph{ray map}, which will embed the non-branched transport set $\T_u^{nb}$  in $Q\times \R.$ 

\begin{definition}\label{ray map definition}
	Define the \emph{ray map} $g:Dom(g)\subset Q\times \R \rightarrow \T_u^{nb}$ through the formula

	$$\begin{aligned}  \textrm{graph}(g) :=& \{(\alpha,t,x): \alpha\in Q, t\in [0,\infty), (\alpha,x)\in \Gamma_u, x\in \T_u^{nb},\ell(\alpha,x)=t\}\} \\
	& \cup  \{(\alpha,t,x): \alpha\in Q, t\in (-\infty,0], (x,\alpha)\in \Gamma_u, x\in \T_u^{nb},\ell(x,\alpha)=-t\}\} \}
 \end{aligned}$$
\end{definition}
The ray map $g$ associates to each $y\in Q$ and  $t$ a unique element in $\Gamma_u (y)\cap \T_u^{nb}$ that is of timelike distance $t$ away from $y$. By Proposition \ref{equivalence relation}, and Lemma \ref{same geodesic}, we see that the ray map is well defined and we can take $(\alpha,t)\in Dom(g)$ if and only if there exists $x\in \T_u^{nb}$ satisfying the conditions in Definition \ref{ray map definition}.

We summarize 
a few crucial properties of the ray map in the following Proposition.

\begin{proposition}\label{ray map properties}
	The ray map enjoys the following properties:
	\begin{enumerate}
		\item $graph(g)$ is an analytic subset of $Q\times \R\times X$, and the ray map $g$ is Borel; 
		\item The range of $g$ is $\T_u^{nb}$;
		\item $t\mapsto g(\alpha,t)$ is an $\ell$-isometry, i.e if $s,t\in Dom(g(\alpha,\cdot))$
with $s\leq t$ then $\ell(g(\alpha,s),g(\alpha,t))=t-s$;

		\item $Dom(g) \ni (y,t)\mapsto g(y,t)$ is bijective and its inverse is given by
		$$  x\mapsto g^{-1}(x)=\begin{cases}
		    &(f(x), -\ell(x, f(x))) \quad \textrm{if } x\leq f(x) \\
            &
            (f(x),\ell(f(x), x) ) \quad \textrm{if } f(x)\leq x.
		\end{cases}$$
	\end{enumerate}
\end{proposition}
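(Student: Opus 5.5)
The plan is to check the four properties directly from the definition of $\textrm{graph}(g)$. The inputs are Proposition \ref{equivalence relation}, Lemma \ref{same geodesic}, and the measurability of $f$, $Q$, $\Gamma_u$ and $\T_u^{nb}$ established above. I will first do the metric and order-theoretic items (2)--(4), which show that $g$ is a well-defined function, and leave measurability (1) for last.

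\textbf{Items (2)--(4).} For $x\in\T_u^{nb}$, set $\alpha=f(x)\in Q$, which is possible up to the $\q$-null set removed in passing to $Q$. Then $(x,\alpha)\in R_u^{nb}$, so either $(\alpha,x)\in\Gamma_u$ or $(x,\alpha)\in\Gamma_u$.
\begin{itemize}
\item In the first case $(\alpha,\ell(\alpha,x),x)\in\textrm{graph}(g)$. In the second case $(\alpha,-\ell(x,\alpha),x)\in\textrm{graph}(g)$. This gives surjectivity onto $\T_u^{nb}$, i.e.\ item (2).
\item For single-valuedness, take $(\alpha,t,x)$ and $(\alpha,t,x')$ in the graph. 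Both $x$ and $x'$ lie in $[\alpha]$, so Lemma \ref{same geodesic} places $\alpha,x,x'$ on one $\gamma\in G$.
\item By Lemma \ref{geodesics align} and \eqref{affine u}, $u$ is strictly increasing and affine in $\ell$-arclength along $\gamma$. The sign of $t$ and $|t|=\ell$ therefore fix the position on $\gamma$, and so $x=x'$.
\item The same argument gives injectivity in the pair $(\alpha,t)$. The point $x$ determines $\alpha=f(x)$, because $(\alpha,x)\in R_u^{nb}$ with $\alpha\in D$ forces $f(x)=f(\alpha)=\alpha$. The time $t$ is then determined by $\ell$. This yields the inverse formula in (4).
\item For (3), let $s\le t$. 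Place $g(\alpha,s)$, $g(\alpha,t)$ and $\alpha$ on a common geodesic via Lemma \ref{same geodesic}. Then $u(g(\alpha,t))-u(g(\alpha,s))=t-s$. Using $\Gamma_u$-membership of the pair (transitivity from Proposition \ref{equivalence relation}, Case 1/2, or Lemma \ref{geodesics align}), we get $\ell(g(\alpha,s),g(\alpha,t))=t-s$. The cases of mixed sign are handled by the reverse triangle equality along the geodesic, as in Case 1 of Lemma \ref{same geodesic}.
\end{itemize}

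\textbf{Item (1).} I would write the graph as the union of two pieces, $\{(\alpha,t,x): \alpha\in Q,\ x\in\T_u^{nb},\ (\alpha,x)\in\Gamma_u,\ t=\ell(\alpha,x)\ge 0\}$ and its reversed analogue. Each piece is an intersection of:
\begin{itemize}
\item $Q\times\R\times\T_u^{nb}$;
\item the preimage of the $\sigma$-compact set $\Gamma_u$ under a continuous projection;
\item the set $\{t=\ell_+(\alpha,x)\}$, which is closed by continuity of $\ell_+$ on $X^2_\le$.
\end{itemize}
Here $Q$ is $\sigma$-compact. The set $\T_u^{nb}=\T_u\setminus(A_{+,u}\cup A_{-,u})$ is Borel, since $\T_u$ and $A_{\pm,u}$ are $\sigma$-compact by Lemma \ref{measurable correspondence branching} and Remark \ref{causal reversal backwards branching synchronization}. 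Hence $\textrm{graph}(g)$ is Borel, and in particular analytic.

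\textbf{Main obstacle.} The difficulty is getting a \emph{Borel} $g$ rather than merely a function with analytic graph. I would use the standard fact that a function between Polish spaces whose graph is analytic is Borel. The argument: for an open $U$, $g^{-1}(U)=P_{12}(\textrm{graph}(g)\cap(Q\times\R\times U))$ is analytic. Its complement in $Dom(g)$ equals $g^{-1}(X\setminus U)$, which is also analytic. By Suslin's separation theorem $g^{-1}(U)$ is then Borel relative to $Dom(g)$, and $Dom(g)=P_{12}(\textrm{graph}(g))$ is analytic. If one insists on a Borel domain, I would shrink $Q$ to a $\sigma$-compact subset of full $\q$-measure, which does not affect the disintegration \eqref{disintegration}.

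The remaining delicate point is the mixed-sign case of (3), where the common geodesic from Lemma \ref{same geodesic} has to be invoked carefully.
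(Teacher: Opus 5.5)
Your argument is correct and follows the route the paper itself indicates. The paper gives no separate proof: it justifies well-definedness of $g$ by Proposition \ref{equivalence relation} and Lemma \ref{same geodesic}, and your verification of (1)--(4) fills this in along exactly those lines, including your correct observation that the range is really $f^{-1}(Q)$, i.e.\ $\T_u^{nb}$ up to an $\m$-null set.

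The two points you flag as obstacles are not actually delicate. For (3), once all the points lie on one $\gamma\in G$, the identity $u(g(\alpha,r))=u(\alpha)+r$, strict monotonicity of $u$ along $\gamma$, and Lemma \ref{geodesics align} give $\ell(g(\alpha,s),g(\alpha,t))=t-s$ for every sign configuration. For (1), $Dom(g)$ and each $g^{-1}(U)$ are Borel directly by the Lusin--Souslin theorem, because $P_{12}$ is injective on the Borel set $\textrm{graph}(g)$; so no Suslin separation or shrinking of $Q$ is needed.
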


Using the quotient map $f$ and the ray map $g$, we identify each equivalence class from the disintegration of $\m$ in \eqref{disintegration} $$(X_\alpha,\ell\restrict_{X_\alpha},\m_\alpha),$$ where $X_\alpha :=f^{-1}(\alpha), \m_\alpha$ is the conditional measure appearing in the disintegration formula \eqref{disintegration}, and the restriction $\ell\restrict_{X_\alpha}$ uses that $g$ is an $\ell$-isometry. We can further use the ray map to identify each one dimensional $(X_\alpha,\ell\restrict_{X_\alpha},\m_\alpha)$ with a one dimensional representative on $\R.$ In both representations, we will refer to each equivalence class $(X_\alpha,\ell\restrict_{X_\alpha},\m_\alpha),$ as a \emph{transport ray}. In the next subsection, we show that the $TMCP^+(K,N)$ condition on the measured gh LLS $(X,d,\m, \ell)$ can be localized to each transport ray $(X_\alpha,\ell\restrict_{X_\alpha},\m_\alpha)$.

    \subsection{Localization of $TMCP^+(K,N)$ onto transport rays }

In this subsection, we will show that the $TMCP^+(K,N)$ condition on $(X,d,\m,\ell)$ can be localized onto each transport ray.
The $TMCP^+(K,N)$ condition will be reflected in the forward evolution of $\ell^p$-cyclically monotone subsets inside of $\T_u^{nb},$ which we will construct below. 

Starting with the transport set $\T_u$, consider a measurable subset  
$A\subset \T_u$ with $\m(A)>0,$ and note that Theorem \ref{endpoints measure zero} gives us that $\m(A_{+,u})=0.$ Define the measurable set $\Lambda_s$:  $$\Lambda_s = \{(x,y)\in (A\times(\T_u\setminus A_{+,u}))\cap \Gamma_u:\ell(x,y) = s \}.$$ By Lemma \ref{p-cyclically monotone set}, this set is $\ell^p$-cyclically monotone. Since $A$ is measurable, the \emph{forward evolution} $A_t$ of $A$, defined by \begin{equation}\label{Evolution}A_t:=P_2(\Lambda_t)\end{equation} is also analytic, and we will evaluate the Lebesgue measure of times corresponding to this evolution, i.e. $|\{ t\in [0,\infty):\m(A_t)>0\}|$.

The following Proposition is an adaptation of Proposition 4.14 from \cite{CavallettiMondino2024}, see also  \cite[Lemma 6.26]{BraunMcCann+}.

\begin{proposition}\label{evolution of subsets} Let $(X,d,\m,\ell)$ be a forward $p$-enb gh LLS that satisfies the $TMCP^+(K,N)$ condition, and let $u:Z\rightarrow \R$ be a continuous reverse-Lipschitz function. For any analytic set $A\subset \T_u\setminus b_u$ with $\m(A)>0$, there exists $s>0$ and a compact subset $B\subset A$ such that its forward evolution $B_t$ defined by \eqref{Evolution} satisfies\begin{equation}
    \bigcup_{t\in[0,s]}B_t\Subset X,\quad B_t\subset \T_u\setminus b_u,\quad and\quad \m(B_t)>0\quad \forall t\in[0,s).
\end{equation}

\noindent In particular, $|\{t\in [0,\infty):\m(A_t)>0 \}|>0.$
\end{proposition}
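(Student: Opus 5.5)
We plan to follow the strategy of Cavalletti--Mondino \cite[Prop. 4.14]{CavallettiMondino2024}: first localize $A$ to a compact piece on which every point can be moved forward along its ray by a uniform amount, then use $TMCP^+(K,N)$ to get that the forward translates keep positive measure.

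\textbf{Step 1 (uniform forward room).} For $x\in A\subset \T_u\setminus b_u$ there is $y\neq x$ with $(x,y)\in\Gamma_u$, hence $\ell(x,y)>0$. Set $L(x):=\sup\{\ell(x,y):(x,y)\in\Gamma_u\}>0$. This map is measurable, since its superlevel sets are projections of the $\sigma$-compact set $\Gamma_u\cap\{\ell>r\}$. Choose $\delta>0$ with $\m(\{x\in A: L(x)>2\delta\})>0$. By inner regularity of $\m$, and after removing the null set $A_{+,u}$ (Theorem \ref{endpoints measure zero}), we take a compact $B$ in this set with $\m(B)>0$. Since $Z$ is compact, $\bigcup_{t\le s}B_t\subset Z\Subset X$ is automatic. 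Moreover, for $t<2\delta$ each $x\in B$ has a point at $\ell$-distance $t$ along a ray, by Lemma \ref{geodesics align} applied to a geodesic from $x$ to a $y$ with $\ell(x,y)>2\delta$. That point also lies strictly before the endpoint $y$, so it is not in $b_u$. Taking $s=\delta$ gives $B_t\subset\T_u\setminus b_u$ and $B_t\neq\emptyset$ for all $t\in[0,s]$. Using a measurable selection (Theorem \ref{measurable selection}) together with Lusin's theorem, we may shrink $B$ so that there is a continuous selection $x\mapsto y_x$ with $(x,y_x)\in\Gamma_u$ and $\ell(x,y_x)\ge 2\delta$.

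\textbf{Step 2 (positive measure via $TMCP^+$).} This is the main step. We do not transport to a single Dirac mass. Instead, we shrink $B$ further (Lusin plus small diameter) so that the points $y_x$ all lie close together and $u(y_x)$ is nearly constant. We then pick a point $x_1$ in the timelike future of all the $y_x$, on the ray through one of them. The obstacle is that the $TMCP^+$ geodesics toward $\delta_{x_1}$ need not follow the rays of $u$. The approach of \cite{CavallettiMondino2024} avoids this by arguing by contradiction. Suppose $\m(B_t)=0$ for some $t\in(0,s)$. Consider the $\ell^p$-optimal plan from $\mu_0=\m_B$ to the pushforward along rays to level $t$. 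By Lemma \ref{p-cyclically monotone set}, this plan is $\ell^p$-cyclically monotone, because $u$ increases by exactly $t$ along each pair. Hence it is optimal, and by Theorem \ref{Brenier Map} it is the unique $\ell^p$-optimal plan, induced by the map $x\mapsto g(f(x),\,g^{-1}(x)_2+t)$.

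The $TMCP^+(K,N)$ entropy inequality, applied from $\m_B$ toward points further along the rays, then gives $S_{N}(\mu_t)<0$ for intermediate measures. The reason is that the right-hand side of \eqref{Renyi Entropy Inequality} is strictly negative when $\tau$ is bounded below on the compact set where $\ell\in[\delta,C]$. Strictly negative entropy forces the absolutely continuous part of $\mu_t$ to be nonzero. Since $\mu_t$ is concentrated on $B_t$ (the interpolating geodesics stay on rays by uniqueness and Lemma \ref{geodesics align}), we get $\m(B_t)>0$, a contradiction.

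The delicate point is matching $TMCP^+$, which is stated with a Dirac target, to ray translations. To handle it, we would use the version of $TMCP^+$ with a target at a point $x_1$ on a fixed ray, and a disintegration over a small neighbourhood. In doing so we follow exactly the argument of \cite[Lemma 6.26]{BraunMcCann+}.

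\textbf{Step 3.} Steps 1 and 2 give $\m(B_t)>0$ for all $t\in[0,s)$. Since $B_t\subset A_t$, we conclude $|\{t:\m(A_t)>0\}|\ge s>0$.
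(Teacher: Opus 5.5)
\textbf{There is a genuine gap, and it is in Step 2.} Step 1 is fine. Your choice of $\delta$ via $L(x)$ is equivalent to the paper's choice of $r$ with $\m(P_1(\Lambda_r))>0$ by monotone convergence.

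The $TMCP^+(K,N)$ entropy inequality controls the interpolation $\tilde\mu_t$ from $\m_B$ to a Dirac mass $\delta_{x_1}$. As you note yourself, for $x\in B$ not on the ray through $x_1$ we have $(x,x_1)\notin\Gamma_u$. So $\tilde\mu_t$ is not concentrated on $B_t$, and $S_N(\tilde\mu_t)<0$ says nothing about $\m(B_t)$.

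Your contradiction argument then switches silently to a different curve: the interpolation $\mu_t$ of the ray-translation plan. That curve is concentrated on $B_t$, but no entropy bound has been applied to it. Uniqueness from Theorem \ref{Brenier Map} only tells you this plan is induced by a map; it does not give $\mu_t\ll\m$. The sentence deferring to \cite[Lemma 6.26]{BraunMcCann+} is exactly where the missing argument would have to be. A single Dirac target cannot do the job.

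\textbf{How the paper closes it, with no contradiction.} Take $T$ to be the fixed-length-$r$ forward translation on a compact $B$ (continuous by Lusin). Then $(\mathrm{Id},T)_\#\m_B$ is concentrated on $\{(x,y)\in\Gamma_u:\ell(x,y)=r\}$. On this set $u(y)-u(x)=r$ is constant, so it is $\ell^p$-cyclically monotone by Lemma \ref{p-cyclically monotone set}, and the coupling is the unique optimal one. The paper then takes from Theorem \ref{Brenier Map} (the Lorentzian version of the Cavalletti--Mondino absolute-continuity result) the following fact: in a forward $p$-enb $TMCP^+(K,N)$ space, the unique $\ell^p$-geodesic from an absolutely continuous $\mu_0$ to an \emph{arbitrary} timelike $p$-dualisable target satisfies $\mu_t\ll\m$ for all $t\in[0,1)$. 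Since $\mu_t$ lives on $B_{tr}$, this gives $\m(B_{tr})>0$ for $t<1$ immediately.

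\textbf{If you want to use only the Dirac-target form of $TMCP^+$,} you need the finite-target mechanism from the proof of Proposition \ref{TMCP along sets}:
\begin{itemize}
\item split $B$ into cells $E_i$, each sent to a single point $y_i$;
\item apply $TMCP^+$ cell by cell to get $\m(\{\rho^i_t>0\})\geq c\,\m(E_i)$;
\item use forward non-branching and uniqueness to make these supports pairwise $\m$-disjoint;
\item pass to a Hausdorff limit of the intermediate sets, which lands inside $B_t$, and use upper semicontinuity of $\m$ on compact sets.
\end{itemize}

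\textbf{Two minor points.}
\begin{itemize}
\item In Step 1, showing that \emph{every} point of $B_t$ avoids $b_u$, not just the one you construct, uses $B\cap A_{+,u}=\emptyset$. Any forward point $z$ of $x$ at distance $t$ is $R_u$-related to the far point $y$, and $u(z)<u(y)$ forces $(z,y)\in\Gamma_u$.
\item Define the translation map directly as the unique forward point at distance $t$. The ray map $g$ lives on $\T_u^{nb}$, which also removes $A_{-,u}$, and that set is not assumed null here.
\end{itemize}
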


\begin{proof}
    Let $A\subset \T_u\setminus (b_u\cup A_{+,u})$ with $\m(A)>0,$ whereby removing the set $A_{+,u}$ is without loss of generality since $\m(A_{+,u})=0.$ Let $s\in[0,\infty)$ and consider the following $\ell^p$-cyclically monotone set $$\Lambda_s = \{(x,y)\in (A\times(\T_u\setminus A_{+,u}))\cap \Gamma_u:\ell(x,y) = s \}.$$ We claim that the projection of this set is monotone: $$0\leq s_1\leq s_2\implies P_1(\Lambda_{s_2})\subset P_1(\Lambda_{s_1})\subset A.$$ Indeed, for $x\in P_1(\Lambda_{s_2})$ there exists a $y$ with $\ell(x,y)=s_2$ and $(x,y)\in \Gamma_u$. Since $s_1\leq s_2,$ by Lemma \ref{geodesics align} we can take a point $y'$ on the same geodesic that is of distance $s_1$ from $x$, i.e. $\ell(x,y')=s_1.$ It follows that $y'\ll y$ and $(x,y')\in \Gamma_u$, and we have that $(x,y')\in \Lambda_{s_1}$ which implies that $x\in P_1(\Lambda_{s_1}).$

Moreover, since $A\subset \T_u\setminus (b_u\cup A_{+,u}),$ this means that every $x\in A$ is not a final point, so there exists some  point $y$ with $\ell(x,y)>0,$ that is: there exists an $r>0$ such that $x\in P_1(\Lambda_r)$. It follows that $$\bigcup_{r>0} P_1(\Lambda_r)=A,$$ and by monotone convergence we can conclude that $$\lim_{s\downarrow 0}\m(P_1(\Lambda_s)) = \m(A)>0.$$ Take $r>0$ small enough such that $\m(P_1(\Lambda_r))>0$, and fix the set $B=P_1(\Lambda_r).$ By inner regularity of $\m$, there exists a compact subset of $B$ of positive measure, that we still denote by $B$, and a measurable map $T:B\rightarrow \T_u$ such that $(x,T(x))\in \Lambda_r$ for all $x\in B.$  Lusin's theorem allows us to find a compact subset of $B$ of positive measure, that we still denote by $B$, on which $T$ is continuous and thus $T(B)$ is compact.

Define the $\mu_0$ and $\mu_1$ as $$\mu_0:=\frac{1}{\m(B)}\m\restrict_B,\quad \mu_1:=T_\#\mu_0.$$ By construction, the coupling $\pi_T:=(Id,T)_\#\mu_0$ is $\ell^p$-cyclically monotone since $\spt \pi_T\subset \Lambda_r$ and so $\ell(x,T(x))=r$ for $\mu_0$-a.e. $x$, and additionally $$\int \ell(x,y)^p d\pi_T = r^p\in (0,\infty).$$

Since $\pi_T$ is $\ell^p$-cyclically monotone, and $\pi_T(\{ \ell>0\}=1,$ arguing as in Proposition 2.8 in \cite{CavallettiMondino2024} we conclude that $\pi_T$ is an $\ell^p$-optimal coupling, and Theorem \ref{Brenier Map} produces a unique $\ell^p$-geodesic $(\mu_t)_{t\in[0,1]}$ between $\mu_0$ and $\mu_1$ that satisfies $\mu_t\ll\m$ for all $t\in[0,1).$ Additionally, global hyperbolicity gives us that  $\spt \mu_t $ is compactly contained in $J^+(\spt \mu_0)\cap J^-(\spt \mu_1)$.

By construction, the map $T$ moves mass for a constant $\ell$-distance $r$ along the rays, and the geodesic $\mu_t$ is concentrated on $B_{tr}$, where $B_{tr}$ is the forward evolution of $B$ at fraction $t$ along the geodesics from $x$ to $T(x).$ Since $\mu_t\ll\m$, and $\mu_t$ is a probability measure with $\mu_t(B_{tr}) = 1,$ it follows that $\m(B_{tr})>0$ for all $t\in[0,1).$
Thus, setting $s=tr$, we have that $\m(B_s)>0$ for all $s\in[0,r)$. By global hyperbolicity, we get that $\cup_{s\in[0,r]}B_s$ is contained in a compact set, and $B_s \subset \T_u\setminus (b_u\cup A_{+,u}).$ Finally, since $B\subset A$, it follows that $B_s\subset A_s$, and hence $\m(A_s)\geq \m(B_s)>0$ for all $s\in [0,r),$ which proves that $|\{t\in[0,\infty):\m(A_t)>0] \}|>0.$
\end{proof}

We  now show that the forward evolution along the transport set preserves the $TMCP^+(K,N)$ condition. 
The following proof is based on on Proposition 6.4 from \cite{Cavalletti2014}

\begin{proposition}\label{TMCP along sets}
    Under the same assumptions as Proposition \ref{evolution of subsets}, consider $B\subset \T_u\setminus b_u$. Let $r>0$ and $(B_t)_{t\in [0,1)}$ be the fixed length forward evolution given by Proposition \ref{evolution of subsets}. Then for every fixed $t\in [0,1),$ we have $$\m(B_t)\geq \big(\tau_{K,N}^{(1-t)}(r)\big)^N\m(B).$$ Moreover, $B_t\subset \T_u\setminus b_u$   for every $t\in[0,1)$, and $\bigcup_{0\leq t<1}B_t$ is relatively compact. 
\end{proposition}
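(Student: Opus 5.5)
The plan is to reduce the claim to the $TMCP^+(K,N)$ Rényi entropy inequality \eqref{Renyi Entropy Inequality}. Because that condition is only formulated for a Dirac target, I will first localize the fixed-length evolution onto targets that are (almost) single points. I will argue as in the proof of Proposition \ref{evolution of subsets}, shrinking $B$ to a compact set that still has positive measure. On this set there is a continuous map $T:B\to\T_u$ with $(x,T(x))\in\Gamma_u$ and $\ell(x,T(x))=r$. Moreover $B_t$ is the set of points at proper time $tr$ along the rays from $x$ to $T(x)$, and this is well defined because $A_{+,u}$ has been removed. It suffices to prove the inequality for such $B$; the general case follows by exhausting $B$ with compact pieces (inner regularity), since $B\mapsto B_t$ is monotone.

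The key step is a partition argument. Fix $\varepsilon>0$ and cover the compact set $T(B)$ by finitely many disjoint Borel pieces $E_j$ of diameter less than $\varepsilon$. Set $B^j:=T^{-1}(E_j)\cap B$ and pick points $x_1^j\in I^+(\cdot)$ slightly to the future of $E_j$. Concretely, I take $x_1^j$ on the continuation of a ray through a point of $E_j$. If a ray cannot be extended, I will instead use the endpoints and continuity of $\ell_+$, so that $\ell(x,x_1^j)$ is within $o(1)$ of $r$ for $x\in B^j$.

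For each $j$, apply $TMCP^+(K,N)$ to $\mu_0^j=\m(B^j)^{-1}\m\restrict_{B^j}$ and $\mu_1=\delta_{x_1^j}$. By Theorem \ref{Brenier Map} the optimal plan is unique and induced by a map, and by Lemma \ref{geodesics align} the $t$-intermediate measure $\mu_t^j$ is concentrated on a set $B^j_{t,\varepsilon}$ that lies in an $o(1)$-neighbourhood of $B_t$ as $\varepsilon\to0$. Jensen's inequality applied to the Rényi entropy gives $S_N(\mu_t^j)\ge -\m(\spt\mu^j_t)^{1/N}$. Combining this with \eqref{Renyi Entropy Inequality} and $\rho_0=\m(B^j)^{-1}$ yields
\[
\m(B^j_{t,\varepsilon})^{1/N}\ \ge\ \inf_{x\in B^j}\tau^{(1-t)}_{K,N}(\ell(x,x_1^j))\,\m(B^j)^{1/N}.
\]
Summing over $j$, using that the $B^j_{t,\varepsilon}$ are essentially disjoint by non-branching, and using the continuity of $\tau$ in $\theta$, gives $\m(U_\varepsilon(B_t))\ge(\tau^{(1-t)}_{K,N}(r)-o(1))^N\m(B)$. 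Letting $\varepsilon\to0$, compactness of $B_t$ (Lusin plus continuity of $T$) and outer regularity give $\m(B_t)\ge\tau^{(1-t)}_{K,N}(r)^N\m(B)$.

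The remaining claims are short. The inclusion $B_t\subset\T_u\setminus b_u$ for $t<1$ holds because each point of $B_t$ is interior to the ray segment from $x$ to $T(x)$. Relative compactness of $\bigcup_{t<1}B_t$ follows from global hyperbolicity, since this union lies in $J^+(B)\cap J^-(T(B))$.

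The step I expect to be the main obstacle is making the Dirac-target approximation compatible with the transport order. The geodesics from $B^j$ to $x_1^j$ need not lie in $\Gamma_u$, so the intermediate points need not land near $B_t$. To handle this I would choose $x_1^j:=T(\bar x^j)$ for some $\bar x^j\in B^j$. I would then use the uniform continuity of $\ell_+$ and of $u$ on $Z$, together with the compactness of the space of geodesics (the Beran et al.\ compactness already used in Lemma \ref{measurable correspondence branching}), to show that any $\ell$-geodesic from $x\in B^j$ to $x_1^j$ is uniformly close to the ray segment $[x,T(x)]$ as $\varepsilon\to0$.
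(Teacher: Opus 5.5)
\paragraph{Verdict.} Your architecture is the paper's: a finite Dirac-target approximation of the fixed-length relation, the $TMCP^+(K,N)$ bound plus H\"older on each cell, summation over cells, then a limit. However, the summation step has a genuine gap.

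\paragraph{The gap.} You assert that the sets $B^j_{t,\varepsilon}$ are essentially disjoint ``by non-branching''. Forward $p$-enb, and Theorem \ref{Brenier Map}, only constrain $\ell^p$-\emph{optimal} plans. The glued plan $\sum_j \frac{\m(B^j)}{\m(B)}\eta_j$ built from your proximity partition $B^j=T^{-1}(E_j)$, with targets $x_1^j=T(\bar x^j)$, need not be optimal:
\begin{itemize}
\item the pairs $(x,x_1^j)$ lie off $\Gamma_u$;
\item your cells are not the contact cells of any $\ell^p$-potential;
\item so $\bigcup_j B^j\times\{x_1^j\}$ is in general not $\ell^p$-cyclically monotone.
\end{itemize}
Without cyclical monotonicity, geodesics from different cells may meet at time $t$. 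The reverse triangle inequality then only shows that the swapped pairing is at least as good, which is a contradiction only under cyclical monotonicity.

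Positive-measure overlaps really occur. Already in $1{+}1$ Minkowski space with $u(x)=x^0$, the time-$t$ image of a cell is $(1-t)B^j+t\bar x^j+tre_0$. Take interleaved pieces such as $B^1=([0,\tfrac{\varepsilon}{3}]\cup[\tfrac{2\varepsilon}{3},\varepsilon])\times[0,\delta]$ and $B^2=[\tfrac{\varepsilon}{3},\tfrac{2\varepsilon}{3}]\times[0,\delta]$, contracted toward $\bar x^1=(0,0)$ and $\bar x^2=(\tfrac{\varepsilon}{2},0)$. At $t=\tfrac12$ the images overlap in a set of positive measure. Repeating this self-similarly at every scale gives a defect that is a fixed fraction of $\m(B)$. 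The fact that each $B^j_{t,\varepsilon}$ is $o(1)$-close to $B_t$ does not help, since that closeness is only at the scale of the cells themselves.

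\paragraph{The missing idea.} The cells must be dictated by an $\ell^p$-dual potential, not by proximity of $T(x)$ to the targets. The paper sets $\lambda_r=pr^{p-1}$ and combines strict concavity of $s\mapsto s^p$ with the reverse-Lipschitz bound to get
\[
\ell(x,y)^p\le \lambda_r\bigl(u(y)-u(x)\bigr)+(1-p)r^p ,
\]
with equality exactly on $\{(x,y)\in\Gamma_u:\ \ell(x,y)=r\}$. It then takes $(y_i)$ dense in $T(B)$ and the Laguerre cells
$E_{i,I}=\{x\in B:\ \ell(x,y_i)^p-\lambda_r u(y_i)=F_I(x)\}$, where $F_I=\max_{j\le I}\bigl(\ell(\cdot,y_j)^p-\lambda_r u(y_j)\bigr)$.

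This buys three things:
\begin{itemize}
\item By construction $\bigcup_i E_{i,I}\times\{y_i\}$ is $\ell^p$-cyclically monotone, so the glued plan is $\ell^p$-optimal.
\item Restricted to $[t,1]$, that plan is optimal from an absolutely continuous marginal, so Theorem \ref{Brenier Map} makes it map-induced. A time-$t$ point therefore cannot continue to two different $y_i$, which is precisely the source of $\m(E_{i,I,t}\cap E_{j,I,t})=0$.
\item Uniform convergence $F_I\to\varphi_r$ on $B$ gives $\sup_{\Lambda_I}|\ell(x,y)-r|\to0$, and limits of cell pairs are $(x,T(x))$.
\end{itemize}
The last point also replaces your geodesic-closeness step: Hausdorff limits of the intermediate sets lie in $B_t$ by the reverse-Lipschitz sandwich $u(z)-u(x)\ge tr$ and $u(T(x))-u(z)\ge(1-t)r$.

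\paragraph{What stands as written.} With the partition replaced as above, the rest of your plan goes through: the per-cell H\"older estimate, the limit via compactness and semicontinuity of $\m$, and the two closing claims.
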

\begin{proof}
Since $\m(A_{+,u})=0,$ by inner regularity it is enough to prove the claim for a compact subset $C\subset B\setminus A_{+,u}$ and then let $\m(C)\uparrow \m(B).$ We therefore assume throughout the proof that $B$ is compact and $B\subset \T_u\setminus(A_{+,u}\cup b_u).$ Proposition \ref{evolution of subsets} then gives the fixed length relation $\Lambda_r^B$, and since $B\cap A_{+,u}=\emptyset$ we get that the distance $r$ is unique, and hence $\Lambda_r^B$ is the graph of a measurable map $T$.

Consider therefore the set $\Lambda_r^B$ from Proposition \ref{evolution of subsets} $$\Lambda_r^B:=\{(x,y)\in \Gamma_u:x\in B,\ell(x,y)=r\}.$$  Since $B\subset \T_u\setminus b_u,$ we have that for a.e. $x\in B$ there is only one point $y$ at distance $r$ belonging to $\Gamma_u,$ and we thus have that up to a null set we can write this relation as the graph of a measurable map $T$: $$(x,T(x))\in \Gamma_u,\quad \ell(x,T(x))=r.$$  

Let $A=T(B)$ and choose a dense sequence $\{y_i\}_{i\in\N}$ in the compact closure of $A$. The key point is that this relation $\ell^p$-cyclically monotone set associated to a Kantorovich potential, and we will approximate the fixed length relation by finite-target transports. First, define $\lambda_r:=pr^{p-1}$ for $p\in (0,1)$ whereby strict concavity gives us the inequality $s^p\leq \lambda_r s+(1-p)r^p.$ Additionally, using the reverse Lipschitz bound on $u$, namely $u(y)-u(x)\geq \ell(x,y)$  gives us the combined inequality  \begin{equation}\label{combined inequality}\ell(x,y)^p\leq \lambda_r(u(y)-u(x))+(1-p)r^p,\end{equation} where we note that this is $\ell^p$-Kantorovich duality for the $p$-Kantorovich potential $\varphi_r(x):=-pr^{p-1}u(x)+(1-p)r^p$ associated with the fixed length $r$ transport together with its dual function  $\psi_r(y):=pr^{p-1}u(y)$, reducing the inequality to $\ell(x,y)^p\leq \varphi_r(x)+\psi_r(y).$ Equality  holds precisely when $(x,y)\in \Gamma_u$ and $\ell(x,y)=r$, and therefore $\ell^p$-cyclically monotone set of this $p$-Kantorovich dual pair is exactly $\{(x,y)\in \Gamma_u:\ell(x,y)=r\}.$ 

We now approximate this potential by finitely many target points. For $I\in \N,$ define $$F_i(x):=\max_{1\leq j\leq I}\ell(x,y_j)^p-\lambda_ru(y_j),$$ and for each $i\in 1,\dots I$ define $$E_{i,I}=\{ x\in B:\ell(x,y_i)^p-\lambda_ru(y_i)=F_I(x)\}.$$ Define $\Lambda_I:=\bigcup_{i=1}^IE_{i,I}\times \{y_i\},$ which is $\ell^p$-cyclically monotone by construction. Taking the limit $I\rightarrow \infty$ on $B$ we see $$F_I\rightarrow \varphi_r(x)\quad \textrm{uniformly on $B$},$$ which follows since $F_I(x)\leq \varphi_r(x),$ and for the points that satisfy $(x,T(x))\in \Gamma_u, \ell(x,T(x))=r$ we get  equality in  the combined inequality \eqref{combined inequality}  and therefore we have $\ell(x,T(x))^p-\lambda_ru(T(x)) = \varphi_r(x)$. Since the points $y_i$ are dense in the compact set $Y=T(B),$ we get the uniform convergence of the finite maxima $F_I\rightarrow \varphi_r$ on $B$. In particular, we have \begin{equation}\label{square brackets varphi}
\sup_{(x,y)\in \Lambda_I} \varphi_r(x)-(\ell(x,y)^p-\lambda_ru(y))=\sup_{(x,y)\in \Lambda_I}\lambda_r[u(y)-u(x)-\lambda(x,y)]+[\lambda_r\ell(x,y)+(1-p)r^p-\ell(x,y)^p]\rightarrow 0.\end{equation}  The terms in the second equality in the square brackets are non-negative since $u$ is reverse Lipschitz, and $s^p$ is strictly concave, and hence we conclude that \begin{equation}\label{Lambda sup bound}\sup_{(x,y)\in \Lambda_I}|\ell(x,y)-r|\rightarrow 0.\end{equation} Furthermore, considering the pair such that that $(x_I,y_I)\in \Lambda_I,$ and $(x_I,y_I)\rightarrow (x,y),$ we get that the non-negative  temrs in \eqref{square brackets varphi} vanish in the limit, so we get $u(y)-u(x)=\ell(x,y)=r$, and thus we recover $(x,y)\in \Gamma_u,$ and $\ell(x,y)=r$. Since $x\not \in A_{+,u}$ for a.e. $x\in B,$ the forward point at distance $r$ is thus unique and we get $y=T(x)$ for a.e. $x\in B$.

We now apply the $TMCP^+(K,N)$ condition to the finite approximation. The sets $E_{i,I}$ form a finite partition of $B$ may overlap on their boundaries, but we can regard it as a disjoint finite partition up to a null set since $B\subset \T_u\setminus A_{+,u}$. Set $T_I(x):=y_i$ for $x\in E_{i,I},$ for which the graph of $T_I$ is contained in $\Lambda_I,$ and for every $i$ with $\m(E_{i,I})>0$ define $$\mu_0^{i,I} = \frac{1}{\m(E_{i,I})}\m\restrict_{E_{i,I}},\quad \mu_1^{i,I}:=\delta_{y_i}.$$ By \eqref{Lambda sup bound}, for sufficiently large $I,$ all pairs we have $(\spt \mu_0^{i,I},\spt \mu_1^{i,I})\subset X_\ll^2$, and we may Apply the $TMCP^+(K,N)$ condition to every such pair to obtain the $\ell^p$-dynamical plan $\eta_{i,I}$  that satisfies the $TMCP^+(K,N)$ entropy inequality, whereby by  Theorem \ref{Brenier Map} this dynamical plan is unique and moreover   $\mu_t^{i,I}:=(e_t)_\#\nu_{i,I}\ll \m$ for all $t\in [0,1).$ Writing the density $\mu_t^{i,I}=\rho_{i,I,t}\m$, and $E_{i,I,t}:=\{ \rho_{i,I,t}>0\}$ the $TMCP^+(K,N)$ inequality gives us $$S_N(\mu_t^{i,I})\leq -\frac{1}{\m(E_{i,I})^{1-1/N}}\int_{E_{i,I}}\tau_{K,N}^{(1-t)}(\ell(x,y_i))d\m(x)\leq -\m(E_{i,I})^{1/N}\inf_{x\in E_{i,I}}\tau_{K,N}^{(1-t)}(\ell(x,y_i)),$$ where on the other hand, Holder's inequality gives us $$-S_N(\mu_t^{i,I})=\int_X\rho^{1-1/N}_{i,I,t}d\m \leq \m(E_{i,I,t})^{1/N}.$$ Combining the above two inequalities gives us \begin{equation}\label{summed cells}
\m(E_{i,I,t})\geq \inf_{x\in E_{i,I}}[\tau_{K,N}^{(1-t)}(\ell(x,y_i)]^N\m(E_{i,I}),
\end{equation}

We now sum these estimates. Define $\mu_0:=\frac{1}{\m(B)}\m\restrict_B,$ $\mu_1:=(T_I)_\#\mu_0,$ and let $\pi = (Id,T_I)_\#\mu_0$ be their $\ell^p$-optimal coupling (since the graph of $T_I$ lies in an $\ell^p$-cyclically monotone set $\Lambda_I$.) Setting $$\alpha_{i,I}=\frac{\m(E_{i,I})}{\m(B)},\quad \eta_I=\sum_{i=1}^I\alpha_{i,I}\eta_{i,I},$$ we see by Theorem \ref{Brenier Map} that $\eta_I$ is unique and $\mu_{I,t}:=(e_t)_\#\eta_I\ll \m$ for all $t\in [0,1).$ Reparametrizing and restricting $\eta_I$ to a subinterval from $t$ to 1 as in \cite{CavallettiMondino2024}, this still yields an $\ell^p$-optimal dynamical plan from $\mu_{I,t}$ to $\mu_{1,I}$, we may apply Kell-Braun one more time to conclude that the corresponding endpoint coupling is induced by a map. Therefore, an intermediate point can not continue to two different endpoints $y_i$ and $y_j$, and tus we have that $$\m(E_{i,I,t}\cap E_{j,I,t})=0\quad \textrm{for $i\neq j$}.$$ Therefore, we may sum \eqref{summed cells} to obtain $$\m(\cup_{i=1}^IE_{i,I,t})=\sum_{i=1}^I\m(E_{i,I,t})\geq \inf_{x\in B}[\tau_{K,N}^{(1-t)}(\ell(x,T_I(x))]^N\sum_{i=1}^I \m(E_{i,I})=\inf_{x\in B}[\tau_{K,N}^{(1-t)}(\ell(x,y_i)]^N\m(B).$$ Since $\ell(x,T_I(x))\rightarrow r$ uniformly on $B$ by \eqref{Lambda sup bound}, we get that $$\inf_{x\in B}[\tau_{K,N}^{(1-t)}(\ell(x,T_I(x))]^N\rightarrow [\tau_{K,N}^{(1-t)}(r)]^N.$$

We now show that the approximation recovers $B_t$ as we send $I\rightarrow \infty$. Let $B_{I,t}$ denote the set of $t$-intermediate points along maximizing timelike geodesics corresponding to pairs in $\Lambda_I$, i.e. $z\in B_{I,t}$ if there exists $(x,y)\in \Lambda_I$ such that $\ell(x,z)=t\ell(x,y)$ and $\ell(z,y)=(1-t)\ell(x,y).$ The dynamical plans $\eta_{i,I}$ are concentrated on such maximizing timelike geodesics, and therefore, we have up to an $\m$-null set $\cup_{i=1}^IE_{i,I,t}\subset B_{I,t}$ and consequentially $\m(B_{I,t})\geq C_{I,t}\m(B).$ By global hyperbolicity and Proposition \ref{evolution of subsets} the sets $B_{I,t}$ are contained in a common compact causal diamond, and after passing to a subsequence, Blaschke's Theorem \cite[Theorem 7.3.8]{MR1835418} gives us $$B_{I_n,t}\rightarrow \Theta_t$$ in Hausdorff distance for some compact set $\Theta_t.$ We claim that $\Theta_t\subset B_t.$ Indeed, let $z_n\in B_{I_n,t}$ with $z_n\rightarrow z$ and choose $(x_n,y_n)\in \Lambda_{I_n}$ such that $$\ell(x_n,z_n)=t\ell(x_n,y_n),\quad \ell(z_n,y_n)=(1-t)\ell(x_n,y_n).$$ Passing to a subsequence we get $x_n\rightarrow x \in B$ and $y_n\rightarrow y$, and by convergence of the finite contact sets we get $$y=T(x),\quad (x,T(x))\in \Gamma_u,\quad \ell(x,T(x))=r,$$ so we get $$\ell(x_n,z_n)\rightarrow tr,\quad \ell(z_n,y_n)=\ell(z_n,T(x_n))\rightarrow (1-t)r.$$ This implies that $u(T(x))-u(x)=r$, while the reverse Lipschitz conditions give $u(T(x))-u(z)\leq \ell(x,T(x))=(1-t)r$ and  $u(z)-u(x)\geq \ell(x,z)=tr$. Adding these gives $$r=u(T(x))-u(x)\geq tr +(1-t)r=r,$$ and hence quality holds throughout and we get that $(x,z),(z,T(x))\in \Gamma_u$ and since $\ell(x,z)=tr$ we have that $z\in B_t.$  Hence $\Theta_t\subset B_t$. Since $B_{I_n,t}\rightarrow \Theta_t$ in Hausdorff distance, upper semicontinuity of $\m$ on compact sets gives us $$\limsup_{n\rightarrow \infty}\m(B_{I_n,t})\leq \m(\Theta_t)\leq \m(B_t),$$ which lets us conclude that $$\m(B_t)\geq [\tau_{K,N}^{(1-t)}(r)]^N\m(B).$$

Finally, if $z\in B_t$ and $t<1,$ then $z$ has a nontrivial forward continuation to $T(x)$ with proper time $(1-t)r>0$. Hence $z\not \in b_u.$ Since also $z\in \T_u$, we therefore have $$B_t\subset \T_u\setminus b_u\quad \textrm{for every $t\in[0,1)$},$$ and moreover all of the slices $B_t$ lie in the compact set given by Proposition \ref{evolution of subsets}, and hence $\bigcup_{0\leq t<1}B_t$ is relatively compact.

\end{proof}

\begin{remark}\label{tau sigma TMCP evolution}
    Equivalently, the bound of proposition \ref{TMCP along sets} can be stated in terms of the distortion coefficients as $$\m(A_t)\geq\tau_{K,N}^{(1-t)}(r)^N\m(A)=(1-t)\inf_{x\in A}\sigma_{K,N-1}^{(1-t)}(r)^{N-1}\m(A),$$ where $r = \ell(x,T(x))$ for every $x\in A$ is provided by Proposition \ref{evolution of subsets} for which this infimum is constant. 
\end{remark}

We now combine the previous two propositions to show that forward $p$-enb spacetimes that satisfy the $TMCP^+(K,N)$ condition must have that their initial points $a_u$ are measure zero. Crucially, the proof of $\m(a_u)=0$ only requires that $\m(A_{+,u})=0$ which follows from the forward $p$-enb $TMCP^+(K,N)$ condition.

\begin{proposition}\label{m(a)=0} Let $(X,d,\m,\ell)$ be a forward $p$-enb gh LLS satisfying the $TMCP^+(K,N)$ condition. Let $u$ be a continuous reverse Lipschitz function. Then $$\m(a_u)=0.$$ By considering the causally reversed hypotheses $TMCP^-(K,N)$ with backwards $p$-enb, we get that $\m(b_u)=0.$
\end{proposition}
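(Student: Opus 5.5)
We argue by contradiction, in the spirit of \cite[Prop.~4.17]{CavallettiMondino2024} and \cite{Cavalletti2014}, by combining Proposition \ref{evolution of subsets} with Proposition \ref{TMCP along sets}. Suppose $\m(a_u)>0$. Every $z\in a_u\subset\T_u$ has some $w\neq z$ with $(z,w)\in R_u$. Since $z$ is initial, necessarily $(z,w)\in\Gamma_u$ with $\ell(z,w)>0$, so $z\notin b_u$. Hence $a_u\subset\T_u\setminus b_u$. Since $a_u$ is co-analytic, it is $\m$-measurable. By inner regularity, and using $\m(A_{+,u})=0$ from Theorem \ref{endpoints measure zero}, we pick a compact $A\subset a_u\setminus A_{+,u}$ with $\m(A)>0$. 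Proposition \ref{evolution of subsets} then produces a compact $B\subset A$ and some $r>0$ with the fixed-length map $x\mapsto T(x)$, $\ell(x,T(x))=r$. Proposition \ref{TMCP along sets} gives, for every $t\in[0,1)$,
\[
\m(B_{tr})\geq \tau^{(1-t)}_{K,N}(r)^N\,\m(B)\geq c\,\m(B),\qquad t\in[0,\tfrac12],
\]
for a constant $c>0$, because $\tau^{(1-t)}_{K,N}(r)$ is bounded below for $t$ away from $1$. Moreover, all the $B_{tr}$ lie in one compact set $C$, so $\m(C)<\infty$.

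The contradiction will come from showing that the slices $B_{tr}$, $t\in(0,\tfrac12]$, essentially do not overlap, so that their measures cannot all stay bounded below inside the finite-measure set $C$. I will use a Fubini argument rather than exact disjointness:
\[
\int_0^{1/2}\m(B_{tr})\,dt=\int_C \mathcal L^1\big(\{t\in(0,\tfrac12]: z\in B_{tr}\}\big)\,d\m(z).
\]
The left-hand side is at least $\tfrac c2\m(B)>0$, so it suffices to show that for $\m$-a.e.\ $z$ the set of admissible $t$ is Lebesgue-null. Joint measurability of $\{(t,z):z\in B_{tr}\}$ is clear, since it is the projection of a $\sigma$-compact set built from $\Gamma_u$ and $u$.

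Fix $z$ and suppose $z\in B_{tr}\cap B_{t'r}$ with $t\neq t'$. Then there are $x,x'\in B$ with $(x,z),(x',z)\in\Gamma_u$ and $u(z)-u(x)=tr\neq t'r=u(z)-u(x')$. If $z\notin A_{-,u}$, the definition of $A_{-,u}$ forces $(x,x')\in R_u$. Say $(x,x')\in\Gamma_u$; since $u(x)\neq u(x')$, this pair is timelike, which contradicts $x'\in a_u$. So each $z\notin A_{-,u}$ lies in at most one slice, and on $C\setminus A_{-,u}$ the integrand vanishes.

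The main obstacle is the set $A_{-,u}$: the statement assumes only forward $p$-enb, so we do not know $\m(A_{-,u})=0$. I would try first to exploit that the points $z\in B_{tr}$ with $t>0$ lie strictly inside forward rays. Such a $z$ has a continuation to $T(x)$, and a predecessor $x\notin A_{+,u}$ with $(x,z)\in\Gamma_u$. I would then aim to show that backward branching at $z$ among predecessors lying in $B$ forces forward branching at one of those predecessors, using Lemma \ref{same geodesic} and the affine behaviour of $u$ along geodesics in $G$. Concretely, take two predecessors $x,x'$ that are not $R_u$-related. Because the parametrisation is affine and $u(x)\neq u(x')$, one can slide $x'$ along its ray to obtain two points with the same $u$-value on distinct geodesics through $z$. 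Then the measurable-selection construction of Theorem \ref{endpoints measure zero}, run on a positive-measure set of such $x$, builds an $\ell^p$-optimal plan from an absolutely continuous measure that is not induced by a map, contradicting Theorem \ref{Brenier Map}.

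If this reduction fails, the fallback is to show directly that $\{(t,z): z\in B_{tr}\cap A_{-,u}\}$ is $\mathcal L^1\otimes\m$-null. For this I would use the uniqueness in Theorem \ref{Brenier Map} of the dynamical plan $\eta$ from $\m\llcorner_B$, which forces $\mu_{tr}$-a.e.\ point to come from a single $x$. Finally, the statement for $b_u$ follows verbatim under causal reversal, using $TMCP^-(K,N)$ and backward $p$-enb.
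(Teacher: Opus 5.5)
\textbf{Verdict: there is a genuine gap.} It is the one you flag, and neither of your repairs closes it.

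Your Fubini argument needs that $\m$-a.e.\ $z$ lies in only a Lebesgue-null set of slices $B_{tr}$. You can certify this only off $A_{-,u}$. A point $z\in B_{tr}\cap B_{t'r}$ with $t\neq t'$ is exactly a backward branching point with two predecessors in $B\subset a_u$ that are not $R_u$-related. Forward $p$-enb together with $TMCP^+(K,N)$ gives no control on $\m(A_{-,u})$; this is why the paper has to assume $\m(A_{-,u})=0$ separately elsewhere.

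Your first repair does not produce forward branching. Sliding one predecessor forward until the $u$-values agree gives two distinct points on one $u$-level set with a common \emph{future} point $z$: two sources and one target. Running the construction of Theorem \ref{endpoints measure zero} on this configuration yields a plan that is still induced by a map from the absolutely continuous source. So Theorem \ref{Brenier Map} is not contradicted; you would need its causal reversal, i.e.\ backward hypotheses. Rays that merge at $z$ and then continue together do not branch forward at either predecessor.

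Your fallback misreads Theorem \ref{Brenier Map}. Being induced by a map $x\mapsto\gamma_x$ from $\mu_0$ is a forward statement and says nothing about injectivity of $e_t$ on the support. Even fixed-time injectivity would not stop $z$ from being reached at two different times $t\neq t'$ from different sources, which is exactly what your integrand counts.

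The missing idea is that you do not need the slices to be mutually disjoint; disjointness from $B$ suffices. Since $B\subset a_u$, every point of $B_{tr}$ with $t>0$ has a nontrivial predecessor. Hence $B\cap B_{tr}=\emptyset$, with no branching hypothesis at all.

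Moreover, for every $\eps>0$ one has $B_{tr}\subset B^\eps:=\{z:d(z,B)<\eps\}$ for all small $t>0$. Otherwise there are $z_n=\gamma_n(t_n)$ with $t_n\downarrow0$ and $d(z_n,B)\geq\eps$, along fixed-length geodesics $\gamma_n$ in a common compact causal diamond. By compactness $\gamma_n\to\gamma$ uniformly, which forces $z_n\to\gamma(0)\in B$.

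Choose $t_n\downarrow 0$ with $B_{t_nr}\subset B^{1/n}$, and let $C\supset B$ be your compact set. Disjointness and Proposition \ref{TMCP along sets} give
\[
\m(B^{1/n}\cap C)\ \geq\ \m(B)+\m(B_{t_nr})\ \geq\ \big(1+\tau^{(1-t_n)}_{K,N}(r)^N\big)\m(B)\ \longrightarrow\ 2\m(B).
\]
On the other hand, $B^{1/n}\cap C\downarrow B$ because $B$ is compact, and $\m(C)<\infty$, so the left side tends to $\m(B)>0$. This is a contradiction. It is the paper's argument, and it uses nothing about $A_{-,u}$.
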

\begin{proof}
    Assume that $\m(a_u)>0$. By inner regularity of $\m,$ there is a compact set $\hat{A}\subset a_u$ with $\m(\hat{A})>0.$ We claim that $\hat{A}\subset \T_u^{nb}\setminus b_u$  $\m$-a.e on $\hat{A}$. Indeed, this follows by noting that $a_u\cap b_u=\emptyset,$ and $a_u\cap A_{-,u}=\emptyset $ since otherwise if $x\in A_{-,u}$ then there exists a $z\neq x$ with $(z,x)\in \Gamma_u$ which contradicts that $x\in a_u.$ Recalling that forward $p$-enb and $TMCP^+(K,N)$ gives us that $\m(A_{+,u})=0$, we have that $\m(\hat{A}\cap A_{+,u})=0$. Thus, we have that $$\hat{A}\subset \T_u\setminus (A_{+,u}\cup A_{-,u}\cup b_u)=\T_u^{nb}\setminus b_u.$$

We now apply the forward evolution to the set $\hat{A}.$ By Proposition \ref{evolution of subsets}, there exists an $r>0$ and a compact set $B\subset \hat{A}$ with $\m(B)>0$,  together with its fixed length forward evolution $(B_t)_{t\in[0,1)}$ such that $$\m(B_t)\geq [\tau_{K,N}^{(1-t)(r)}]^N\m(B)\quad \textrm{for all $t\in [0,1)$},$$ and $\cup_{0\leq t<1}B_t$ is relatively compact, and we note that $\lim_{t\rightarrow 0}\tau_{K,N}^{(1-t)}(r)^N=1.$ We also have that $B\cap B_t=\emptyset$ for every $t\in (0,1).$ Indeed, if $z\in B_t,$ with $t>0$ then there exists $x\in B\subset a_u$ such that $x\neq z$ and $(x,z)\in \Gamma_u$ which means that $z$ has a nontrivial predecessor, and hence $z\not \in a_u.$ Therefore $B\cap B_t=\emptyset$.

We now claim that $B_t\rightarrow B$ as $t\downarrow 0$, meaning that for every $\eps>0,$ $B_t\subset B^\eps$ for all sufficiently small $t>0$: here, $B^\eps:=\{z:d(z,B)<\eps\}.$ We argue this by contradiciton: if it were not the case, then there would exist $t_n\downarrow 0$ and $z_n\in B_{t_n}$ with $d(z_n,B)\geq \eps.$ Choosing $x_n\in B$ and a maximizing timelike geodesic $\gamma_n$ such that $z_n=\gamma_n(t_n)$, by construction, the initial and final points of these geodesics lie in a fixed compact set. By global hyperbolicity, after passing to a subsequence, the geodesics converge to a limiting causal geodesic $\gamma$ with $x_n=\g_n(0)\rightarrow \g(0)=x\in B,$ and since $t_n\downarrow0$, we get $z_n=\g_n(t_n)\rightarrow \g(0)=x\in B,$ contradicting that $d(z_n,B)\geq \eps.$

Now choose a sequence $t_n\downarrow 0$ such that for $\eps = \frac{1}{n}$ we have $B_{t_n}\subset B^\eps = B^{1/n},$ and let $K$ be a compac set containing $B$ and all $B_{t_n}$. Since $B\cap B_{t_n}=\emptyset,$ we have $$\m(B_{1/n}\cap K)\geq \m(B)+\m(B_{t_n}),$$ and by applying Propositon \ref{TMCP along sets} we get $$\m(B_{1/n}\cap K)\geq \big(1+[\tau_{K,N}^{(1-t_n)}(r)]^N\big)\m(B).$$ Sending $n\rightarrow \infty$ gives that the right hand side converges to $2\m(B)$, whereas $B_{1/n}\cap K\downarrow B$ so by the left hand side converges to $\m(B),$ which gives a contradiction.

The proof of $\m(b_u)=0$ follows from causally reversing the hypotheses.

\end{proof}

With the above result, we now focus on the transport ray $(X_\alpha,\ell\restrict_{X_\alpha},\m_\alpha)$. As shown by Cavaletti-Mondino \cite{CavallettiMondino2024}, the transport rays are  absolutely continuous. The argument under the foward $p$-enb $TMCP^+(K,N)$ assumption is identical up to accounting for the one-sidedness of forward $p$-enb, but we crucially require that the transport set is non branching, in particular that $\m(A_{+,u})=\m(A_{-,u})=0.$ Forward $p$-enb $TMCP^+(K,N)$ spacetimes imply that $\m(A_{+,u})=0$ by Theorem \ref{endpoints measure zero}, however it does not follow that the spacetime satsifies $\m(A_{-,u})=0,$ as this result requires the causal reversal of the hypotheses. We thus include $\m(A_{-,u})=0$  as an assumption.

\begin{theorem}\label{a.c}  Let $(X,d,\m,\ell)$ be a forward $p$-enb gh LLS satisfying the $TMCP^+(K,N)$ condition, and let $u:X\rightarrow\R$ be a continuous reverse Lipschitz function. Assume that $\m(A_{-,u})=0.$ Then the non-branched transport set $\T_u^{nb}$ admits a disintegration $$\m\restrict_{\T^{nb}_u}=\int_Q\m_\alpha \q(d\alpha)$$ where for $\q$-a.e. $\alpha$, the transport ray $(X_\alpha,\ell\restrict_{X_\alpha},\m_\alpha)$ satisfy \begin{equation}\label{endpoint restriction}\m_\alpha\restrict_{X_\alpha\setminus b_u} \ll\L^1,\end{equation} where $\L^1$ is the one-dimensional Lebesgue measure along the timelike geodesic $X_\alpha.$ Equivalently, we may write \eqref{endpoint restriction} as \begin{equation}\label{atomic conditional}
\m_\alpha = h(\alpha,\cdot)\L^1\restrict_{[a_\alpha,b_\alpha)}+c_\alpha\delta_{b_\alpha}
\end{equation} where $c_\alpha\geq 0$, and the final atomic term is omitted whenever $b_u$ is not a part of the ray as in \eqref{endpoint restriction}. If we additionally assume the casually reversed hypotheses, we get that $\m(b_u)=0$ and hence \eqref{endpoint restriction} becomes $$\m_\alpha\ll \L^1$$ on the entire ray $X_\alpha$.\hfill\(\square\)

\end{theorem}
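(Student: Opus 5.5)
The plan follows the Cavalletti / Cavalletti--Mondino scheme: a Lebesgue-measure argument on the quotient, fed by the forward-evolution estimate of Proposition \ref{TMCP along sets}.

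\emph{Step 1: the disintegration.} Under the hypotheses, Theorem \ref{endpoints measure zero} gives $\m(A_{+,u})=0$, and we assume $\m(A_{-,u})=0$. Hence $\m(\T_u\setminus \T_u^{nb})=0$. By Proposition \ref{equivalence relation} and Lemma \ref{same geodesic}, $R_u^{nb}$ is an equivalence relation whose classes are timelike geodesics. Proposition \ref{quotient map} and the discussion preceding \eqref{disintegration} then give the strongly consistent disintegration $\m\restrict_{\T_u^{nb}}=\int_Q\m_\alpha\,\q(d\alpha)$. Using the ray map $g$ of Proposition \ref{ray map properties}, we push everything to $Q\times\R$. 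We set $\hat\m:=(g^{-1})_\#\m\restrict_{\T_u^{nb}\setminus b_u}$ and regard $\m_\alpha$ as a measure on an interval $I_\alpha\subset\R$, using that $t\mapsto g(\alpha,t)$ is an $\ell$-isometry.

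\emph{Step 2: absolute continuity by contradiction.} It suffices to show that for every Borel $C\subset\T_u^{nb}\setminus b_u$ with $\m(C)>0$, the set $C\cap X_\alpha$ has positive $\L^1$-measure for a $\q$-positive set of $\alpha$. By the standard Fubini argument on $Q\times\R$, this is equivalent to ruling out a set $C$ with $\m(C)>0$ but $\L^1(g^{-1}(C)\cap(\{\alpha\}\times\R))=0$ for $\q$-a.e.\ $\alpha$.

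Given such a $C$, we consider its forward evolutions $C_t$ from \eqref{Evolution}, which translate each ray section by $t$. Applying Fubini on $Q\times\R$ to $\int_0^{\infty}\m(C_t)\,dt$ gives
\[
\int_0^\infty\hat\m(C_t)\,dt=\int_Q\int_{\R}\L^1\big(\{t: x-t\in C\cap X_\alpha\}\big)\,\m_\alpha(dx)\,\q(d\alpha)=0.
\]
The inner measure vanishes because the section $C\cap X_\alpha$ is $\L^1$-null. Joint measurability of $\{(t,x):x\in C_t\}$ follows from analyticity of $\Gamma_u$.

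On the other hand, Proposition \ref{evolution of subsets} yields a compact $B\subset C$, an $r>0$, and evolutions $B_t\subset C_t$. By Proposition \ref{TMCP along sets} these satisfy $\m(B_t)\ge \tau^{(1-t)}_{K,N}(r)^N\m(B)>0$ for $t\in[0,1)$. Hence $t\mapsto\m(C_{tr})$ is positive on an interval, which contradicts the vanishing integral.

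\emph{Step 3: structure of the conditionals.} This yields \eqref{endpoint restriction}. Since the ray $X_\alpha$ is a geodesic segment, $b_u\cap X_\alpha$ is at most the single final point $b_\alpha$. This gives the decomposition \eqref{atomic conditional} with $c_\alpha=\m_\alpha(\{b_\alpha\})$. Proposition \ref{m(a)=0} shows $\m(a_u)=0$, so no atom sits at $a_\alpha$. Under the causally reversed hypotheses, the same proposition gives $\m(b_u)=0$, hence $c_\alpha=0$ for $\q$-a.e.\ $\alpha$ and $\m_\alpha\ll\L^1$.

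\emph{Main obstacle.} The delicate point is the measurability and Fubini bookkeeping in Step 2. One must check that the forward evolution $C_t$ corresponds exactly to a translation in the $t$-coordinate of $g^{-1}$. This is where the non-branching of $\T_u^{nb}$ enters: it makes the translate unique and keeps it inside the same ray. One must also ensure that restricting to $\T_u\setminus b_u$ keeps the evolutions nonempty.

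I would first establish the identity $g^{-1}(C_t)=\{(\alpha,s+t):(\alpha,s)\in g^{-1}(C)\}\cap Dom(g)$, using Lemma \ref{geodesics align}. Given that identity, Fubini applies directly.
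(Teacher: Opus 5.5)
Your proposal is correct and follows essentially the route the paper relies on. The paper gives no separate proof here and defers to Cavalletti--Mondino, whose proof is this same Fubini computation in ray coordinates: translates of $\L^1$-null sections give $\int_0^\infty \m(C_t)\,dt=0$, which contradicts the positivity $|\{t:\m(C_t)>0\}|>0$ from Proposition \ref{evolution of subsets}. Three small points: Proposition \ref{evolution of subsets} alone already gives $\m(B_t)>0$ with $B_t\subset\T_u\setminus(b_u\cup A_{+,u})$, so you do not need Proposition \ref{TMCP along sets}, nor $\m(a_u)=0$ for the atom at $a_\alpha$. You should write $\hat\m(g^{-1}(C_t))$ rather than $\hat\m(C_t)$. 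Finally, use $\m(A_{-,u})=0$ explicitly to pass from $\m(B_t)$ to the restricted measure $\hat\m$.
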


We will now show that each transport ray $(X_\alpha,\ell\restrict_{X_\alpha},\m_\alpha)$ inherits the $TMCP^+(K,N)$ condition. We first use the ray map to express absolute continuity of $\m_\alpha$ in ray coordinates  as $$g(\alpha,\cdot):[a_\alpha,b_\alpha]\rightarrow X_\alpha,$$ where Proposition \ref{ray map properties} gives us that this map is an $\ell$-isometry. Since $\m_\alpha$ are absolutely continuous on the proper interior of $X_\alpha$ by Theorem \ref{a.c} there exists a density $h(\alpha,\cdot):[a_\alpha,b_\alpha]\rightarrow \R_+$ such that $$\m_\alpha=h(\alpha,\cdot)\L^1\restrict_{[a_\alpha,b_\alpha]}.$$ In terms of the disintegration of measure, we can write $$\m\restrict_{\T_u^{nb}} =\int_Q g_\#(h(\alpha,\cdot)\L^1)d\q(\alpha)\iff \m \restrict_{\T_u^{nb}}=g_{\#}(h\q \otimes\L^1).$$ 

We finally localize the estimate from Proposition \ref{TMCP along sets} to a pointwise $TMCP^+(K,N)$ bound onto each transport ray $(X_\alpha,\ell\restrict_{X_\alpha},\m_\alpha)$.

\begin{theorem}\label{Big TMCP+ theorem} Let $(X,d,\m,\ell)$ be a forward $p$-enb gh LLS satisfying the $TMCP^+(K,N)$ condition, and let $u:X\rightarrow\R$ be a continuous reverse Lipschitz function. Assume that $\m(A_{-,u})=0.$ Then the density $h(\alpha,\cdot)$ on the proper interior part of the transport ray  $(X_\alpha,\ell\restrict_{X_\alpha},\m_\alpha)$ for $\q$-a.e. $\alpha \in Q$  satisfies \begin{equation}\label{TMCP+ density}\frac{h(\alpha,\tau)}{h(\alpha,s)}\geq\bigg(\frac{s_K((\sigma_+-\tau)\sqrt{K/(N-1)}}{s_K((\sigma_+-s)\sqrt{K/(N-1)}}\bigg) ^{N-1}\end{equation} for all $a_\alpha<s\leq \tau<\sigma_+$, and $\sigma_+\in(s, b_\alpha]$  expressed in ray coordinates along the ray $X_\alpha$ at points of the chosen representative where $h(\alpha,s)>0$.

In particular, positivty propagates forward along the ray: if $h(\alpha,s)>0,$ then $h(\alpha,\tau)>0$ for every future point $\tau>s.$
\

By causal reversal of the hypotheses, backwards $p$-enb spacetime satisfying $TMCP^-(K,N)$ condition with $\m(A_{+,u})=0$, we have that the density $h(\alpha,\cdot)$ of $(X_\alpha,\ell\restrict_{X_\alpha},\m_\alpha)$ for $\q$-a.e. $\alpha \in Q$ satisfies \begin{equation}\label{TMCP^- density} \frac{h(\alpha,\tau)}{h(\alpha,s)} \leq \bigg(\frac{s_K((\tau - \sigma_-)\sqrt{K/(N-1)}}{s_K((s-\sigma_-)\sqrt{K/(N-1)}}\bigg)^{N-1}
\end{equation}
for all $\sigma_-<s\leq \tau<b_\alpha$ where $\sigma_-\in[a_\alpha,s)$ is any point in the past of $s$ on the transport ray expressed in ray coordinates, and the density is locally upper semi-continuous. 

Finally, for $p$-enb spacetimes that satisfy the $TMCP(K,N)$ condition we have that the density  $h(\alpha,\cdot)$ satisfies 

\begin{equation}\label{TMCP density}\bigg(\frac{s_K((\sigma_+-\tau)\sqrt{K/(N-1)}}{s_K((\sigma_+-s)\sqrt{K/(N-1)}}\bigg) ^{N-1}\leq\frac{h(\alpha,\tau)}{h(\alpha,s)}\leq \bigg(\frac{s_K((\tau - \sigma_-)\sqrt{K/(N-1)}}{s_K((s-\sigma_-)\sqrt{K/(N-1)}}\bigg)^{N-1},\end{equation}
for all $a_\alpha<\sigma_-<s\leq \tau<\sigma_+<b_\alpha$ and the density $h(\alpha,\cdot)$ is locally Lipschitz and strictly positive on the ray interior.

\end{theorem}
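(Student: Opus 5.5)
Working in ray coordinates $g(\alpha,\cdot)$, the plan is to convert the set estimate of Proposition \ref{TMCP along sets} into a pointwise statement for $\q$-a.e.\ ray. The key point is that the forward evolution maps $B$ to $B_t$ by a translation along each ray. Since the sets $B$ are arbitrary, the estimate then localizes: Fubini gives a ray-wise inequality, and Lebesgue differentiation turns it into a density inequality.

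\textbf{Forward bound.} Fix $0<r$ and $\sigma_+$. Take $B=g(\{(\alpha,\tau): \alpha\in C,\ \tau\in J_\alpha\})$, where $C\subset Q$ is a Borel set of indices and $J_\alpha$ is an interval whose points lie at ray-distance exactly $r$ before $\sigma_+$. Since $\ell$-isometry and non-branching (Proposition \ref{ray map properties}, Lemma \ref{same geodesic}) hold, the fixed-length forward evolution $B_t$ is $g(\{(\alpha,\tau+tr)\})$, and both $B$ and $B_t$ are unions of pieces of the same rays. Combining Proposition \ref{TMCP along sets} with the disintegration \eqref{disintegration} and Theorem \ref{a.c} gives
\[
\int_C\int_{J_\alpha+tr} h(\alpha,\cdot)\,d\L^1\,d\q \ \ge\ \tau_{K,N}^{(1-t)}(r)^N\int_C\int_{J_\alpha}h(\alpha,\cdot)\,d\L^1\,d\q .
\]
Because $C$ is arbitrary, the inequality holds ray-wise for $\q$-a.e.\ $\alpha$. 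We restrict to a countable family of rational intervals and rational $r,t$ to avoid an uncountable union of null sets. The transported interval $J_\alpha+tr$ has length $(1-t)|J_\alpha|$ once we use the version of the evolution contracting towards $\sigma_+$; equivalently, by Remark \ref{tau sigma TMCP evolution}, the factor $(1-t)$ accounts for the Jacobian. Shrinking intervals and applying Lebesgue differentiation at points where $h(\alpha,s)>0$ then yields
\[
h(\alpha,s+t(\sigma_+-s))\ge \sigma_{K,N-1}^{(1-t)}(\sigma_+-s)^{N-1}h(\alpha,s),
\]
which is exactly \eqref{TMCP+ density} after the substitution $\tau=s+t(\sigma_+-s)$ and writing out $\sigma$ via $s_K$. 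Positivity propagation is immediate, since the right-hand side is positive for $\tau<\sigma_+\le b_\alpha$.

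\textbf{Main obstacle: the contraction structure.} The hardest step is realising that structure on a ray-localized set. The evolution in Proposition \ref{TMCP along sets} is a fixed-length translation, whereas the model TMCP estimate contracts towards a point $\sigma_+$. I would first approximate via the finite-target argument already in that proof. On each ray I would take targets $y_i=g(\alpha,\sigma_+)$, so that $TMCP^+(K,N)$ applies directly to $\mu_0=\m\restrict_{B}$ normalized and $\mu_1$ concentrated on these points. The potential is $u$ itself, so optimality holds by Lemma \ref{p-cyclically monotone set}. Uniqueness of the plan from Theorem \ref{Brenier Map} forces the geodesics to stay inside the rays. A second technical issue is choosing a good representative of $h$ so that the inequality holds for \emph{all} $s\le\tau$ rather than a.e. This is settled by monotonicity: the bound gives that $h^{1/(N-1)}/s_K(\sigma_+-\cdot)$ is essentially nondecreasing, so it admits a canonical representative.

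\textbf{Backward bound and combination.} For \eqref{TMCP^- density} I would apply the same argument to the causally reversed spacetime $(X,d,\m,\bar\ell)$, with $-u$ as the reverse Lipschitz potential. The rays are unchanged, and Theorem \ref{a.c} uses $\m(A_{+,u})=0$ in the reversed role; the inequality then reverses because $s$ and $\tau$ swap roles. The monotone quotient gives local upper semicontinuity. Finally, under $TMCP(K,N)$ the two-sided bound \eqref{TMCP density} follows. Letting $\tau\downarrow s$ shows that $\log h$ has Dini derivatives bounded by those of $(N-1)\log s_K$ on compact subintervals, which gives local Lipschitz continuity. Positivity on the interior then comes from forward propagation combined with $\m(a_u)=0$ (Proposition \ref{m(a)=0}).
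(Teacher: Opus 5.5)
Your route is the paper's. You contract $g(C\times I)$ along each ray onto $g(\alpha,\sigma_+)$, use the set estimate of Proposition~\ref{TMCP along sets} with Remark~\ref{tau sigma TMCP evolution}, localize using that $C$ is arbitrary, divide by $|I_t|=(1-t)|I|$ and differentiate, then reverse causality. Your care with countable rational families and with the choice of representative goes beyond what the paper writes.

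However, the step ``the potential is $u$ itself, so optimality holds by Lemma~\ref{p-cyclically monotone set}'' fails as stated. Since $u(g(\alpha,t))=u(\alpha)+t$, for pairs $x_i=g(\alpha_i,s_i)$, $y_i=g(\alpha_i,\sigma_+)$ the quantity in that lemma is $(u(\alpha_1)-u(\alpha_0))\,(u(\alpha_1)-u(\alpha_0)+s_1-s_0)$. This is negative as soon as two representatives sit on nearby but different $u$-levels. The failure is genuine, not just of a sufficient condition. Take Minkowski space with $u$ the time coordinate, so the rays are vertical. Put representatives at times $0$ and $0.1$ on nearby rays, with $s_0=1.9$, $s_1=1.1$, $\sigma_+=5$. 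Swapping the targets replaces the lengths $3.1,3.9$ by lengths arbitrarily close to $3.2,3.8$, which is strictly better for $\ell^p$ by strict concavity. Without optimality you lose both Theorem~\ref{Brenier Map} and the $\m$-disjointness of the evolved pieces. The paper avoids this by splitting $Q=\bigcup_j Q_j$ with $u\equiv c_j$ on $Q_j$, so all targets lie on the single level $c_j+\sigma_+$. Equivalently, parametrize each ray by $u$; this only shifts $h(\alpha,\cdot)$ and leaves the shift-invariant inequality unchanged.

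Your finite-target approximation also does not work as described. Once $\mu_1$ is replaced by finitely many Diracs $\delta_{y_i}$, each cell contains points of many rays, whose geodesics to $y_i$ leave those rays, and the pairs are not in $\Gamma_u$. So neither Lemma~\ref{p-cyclically monotone set} nor ``uniqueness keeps geodesics on rays'' applies to the approximants.
\begin{itemize}
\item Monotonicity of the approximants must come from the $c$-transform cells $E_{i,I}$ in the proof of Proposition~\ref{TMCP along sets}. For a common target level $c$, use the potential $x\mapsto (c-u(x))^p$ in place of $\varphi_r$.
\item Ray confinement only reappears in the Hausdorff limit $\Theta_t\subset A_t$.
\end{itemize}
You are right that this proposition is stated for fixed-length evolutions, while the paper applies it with the infimum over the variable lengths $\sigma_+-s$. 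Rerunning its proof as above is what closes that.

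Two smaller points.
\begin{itemize}
\item Drop the translation version entirely. Translation preserves $|J_\alpha|$, so it only yields $h(\alpha,s+tr)\ge(1-t)\,\sigma_{K,N-1}^{(1-t)}(r)^{N-1}h(\alpha,s)$.
\item Interior positivity under $TMCP(K,N)$ does not follow from $\m(a_u)=0$, which says nothing about $h(\alpha,\cdot)$ vanishing on an initial subinterval. Use the backward bound instead: it gives $h(\alpha,s)\ge c\,h(\alpha,\tau)$ with $c>0$ for $\sigma_-<s\le\tau$, so positivity propagates to the past. Together with $\m_\alpha\neq 0$, this gives positivity on the whole interior.
\end{itemize}
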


\begin{proof}
By Propostion \ref{ray map properties}, the ray map $g(\alpha,\cdot):[a_\alpha,b_\alpha]\rightarrow X_\alpha$ is an $\ell$-isometry, and on the proper interior we have $\m_\alpha = h(\alpha,s)ds$.
The disintegration in ray coordinates becomes $$\m = g_\#(h(\alpha,s)\q (d\alpha)\otimes ds)$$ on the nonbranched transport set. Because the measurable quotient construction can be decomposed into countably many pieces on which the representative lies on a fixed $u$-level set, it is enough to work on one such piece $$Q = \cup^\infty_{j=1}Q_j,\quad u(\alpha) = c_j\quad \forall \alpha \in Q_j.$$
We fix one such $Q_j$ and suppress $j$ below. Fix $\sigma_+\in \R$ and a bounded interval $I\Subset (a_\alpha,\sigma_+),$ and choose a measurable  $C\subset Q$ such that $I\cup\{ \sigma_+\}\subset _\alpha$ for all $\alpha\in C$  define $A= g(C\times I).$ Then for every $x=g(a,s)\in A,$ define $T(x)=g(\alpha,\sigma_+)$ for which we have $\ell(x,T(x))=\sigma_+-s.$ Since for every $x=g(\alpha,s)$ we have $u(T(x))=c_j+\sigma_+$,
the target $u$-value is constant, and hence by Lemma \ref{p-cyclically monotone set} we have that $\{ (x,T(x)): x\in A\}$ is an $\ell^p$-cyclically monotone set. Therefore, we can apply Theorem \ref{TMCP along sets} together with Remark \ref{tau sigma TMCP evolution} gives us $$\m(A_t)\geq \inf_{x\in A}\big[ \tau_{K,N}^{(1-t)}(\ell(x,T(x))\big]^N\m(A)=\inf_{s'\in I}\big[ \tau_{K,N}^{(1-t)}(\sigma_+-s')\big]^N\m(A),$$
since $\ell(x,T(x))=\sigma_+-s.$ Moreover, for $x=g(\alpha,s)$, the $t$-intermediate point between $x$ and $T(x)$ in ray coordinates is $s_t = (1-t)s+t\sigma_+$, and thus, setting $I_t: = (1-t)I+t\sigma_+$ we have $$A_t = g(C\times I_t).$$
Using the disintegration in ray coordinates the above estimate becomes $$\int_C\int _{I_t}h(\alpha,\tau)d\tau \q(d\alpha) \geq \inf_{s'\in I}\big[ \tau_{K,N}^{(1-t)}(\sigma_+-s')\big]^N\int_C\int_Ih(\alpha,s')ds'\q(d\alpha).$$
Now using Remark \ref{tau sigma TMCP evolution}, we conclude that $$\int_C\int _{I_t}h(\alpha,\tau)d\tau \q(d\alpha) \geq \inf_{s'\in I}\big[ \sigma_{K,N-1}^{(1-t)}(\sigma_+-s')\big]^{N-1}\int_C\int_Ih(\alpha,s')ds'\q(d\alpha),$$ and since $C\subset Q_j$ is arbitrary, we conclude that for $\q$-a.e. $\alpha \in Q_j,$ we have  $$\int_{I_t}h(\alpha,\tau)d\tau \q(d\alpha) \geq \inf_{s'\in I}\big[ \sigma_{K,N-1}^{(1-t)}(\sigma_+-s')\big]^{N-1}\int_Ih(\alpha,s')ds'\q(d\alpha).$$
Now letting $I\downarrow\{s\}$ where $s$ is a Lebesgue point of $h(\alpha,\cdot)$, the interval $I_t$ shrinks to $\tau = (1-t)s+t\sigma_+,$ and $|I_t|=(1-t)|I|.$ Dividing the above inequality by $(1-t)|I|$ and applying the Lebesgue differentiation theorem gives us $$h(\alpha,\tau)\geq \big[ \sigma_{K,N-1}^{(1-t)}(\sigma_+-s')\big]^{N-1}h(\alpha,s).$$ Finally, since $1-t = \frac{\sigma_+-\tau}{\sigma_+-s}$,  the curvature coefficients $\sigma_{K,N-1}^{(1-t)}$ reduce to $$\frac{h(\alpha,\tau)}{h(\alpha,s)}\geq \bigg(\frac{s_{K/(N-1)}(\sigma_+-\tau)}{s_{K/(N-1)}(\sigma_+-s)}\bigg)^{N-1},$$ which holds for $a_\alpha,s\leq \tau<\sigma_+\leq b_\alpha,$ and $h(\alpha,s)>0,$ and in particular $h(\alpha,s)>0$ implies that $h(\alpha,\tau)>0$ for every future interior point $\tau>s$.

Under causal reversal, the same argument gives the backward estimate \eqref{TMCP^- density}, and combining the two yields the full two sided $TMCP(K,N)$ estimate \eqref{TMCP density}.
\end{proof}

\begin{remark}
    The $TMCP^+(K,N)$ bound \eqref{TMCP+ density} on $h_\alpha$ can be written more compactly as \begin{equation}\label{TMCP+ compact} \frac{h(\alpha,\tau)}{h(\alpha,s)}\geq \sigma_{K,N-1}^{\big(\frac{\sigma_+-\tau}{\sigma_+-s}\big)}(\sigma_+-s)^{N-1},
    \end{equation}  and the $TMCP^-(K,N)$ bound \eqref{TMCP^- density} as \begin{equation}\label{TMCP- compact} \frac{h(\alpha,\tau)}{h(\alpha,s)}\leq \sigma_{K,N-1}^{\big(\frac{\tau-\sigma_-}{s-\sigma_-}\big)}(s-\sigma_-)^{N-1},\end{equation} 
    where $a_\alpha<\sigma_-<s\leq \tau<\sigma_+<b_\alpha$.
\end{remark}

\section{Solution to the Lorentzian Monge problem}

In this section, we prove Theorem \ref{Solution to Monge} by constructing an optimal map for the Lorentzian Monge problem.  The results of this section are based on the metric measure space construction first presented by Cavalletti \cite{Cavalletti2014}. The strategy is to use structure of the non-branched transport set to perform a reduction of the full problem onto timelike geodesics. The ray map $g$ will the allow us to treat each timelike geodesic as an open interval in $\R$, where the optimal transport map can be constructed explicitly. This construction thus provides the solution to the original problem by providing a Lorentzian Monge map along each ray.

We will assume throughout this section that  $(X,d,\m,\ell)$ is a gh LLS that is forward $p$-enb, and satisfies the $TMCP^+(K,N)$ condition with fixed $p\in(0,1)$. $K\in\R,$ $N\in(1,\infty).$ 

Starting with a  timelike 1-dualisable pair $(\mu_0,\mu_1)\in \P_c(X)^2$ with $\mu_0\ll \m,$ Theorem \ref{Kantorovich Potential} produces a continuous reverse-Lipschitz Kantorovich potential $u:X\rightarrow \R$ for which we utilize the results of the Section \ref{Disintegration section} to obtain the non-branched transport set $$\T_u^{nb}=\T_u\setminus A_{+,u}\cup A_{-,u}.$$ The forward $p$-enb and $TMCP^+(K,N)$ conditions give $\m(A_{+,u})=0$ by Theorem \ref{endpoints measure zero}. To get a clean disintegration of $\m$ onto each transport ray $(X_\alpha,\ell\restrict_{X_\alpha},\m_\alpha)$, we additionally require that $\m(A_{-,u})=0$, that is, that the backwards branching happens on a set of measure zero. Since this does not follow from the forward $p$-enb and the $TMCP^+(K,N)$ assumption, we will explicitly assume that \begin{equation}\label{backwards branching measure zero} \m(A_{-,u})=0.
\end{equation}
At the end of this section, we will provide sufficient conditions for which this condition is satisfied.

With the above assumptions, Theorem \ref{a.c}   gives us a disintegration of $\m\restrict_{\T_u^{nb}}$, whereby each equivalence class is a transport ray $(X_\alpha,\ell\restrict_{X_\alpha},\m_\alpha)$: 

 $$\m\restrict_{\T^{nb}_u}=\int_Q\m_\alpha \q(d\alpha)$$ where for $\q$-a.e. $\alpha$, the transport ray $(X_\alpha,\ell\restrict_{X_\alpha},\m_\alpha)$ satisfy $$\m_\alpha\restrict_{[a_\alpha,b_\alpha)} \ll\L^1,$$ where $\L^1$ is the one-dimensional Lebesgue measure along the timelike geodesic $X_\alpha.$

The following Lemma shows that we can restrict the optimal plan $\eta$ for the timelike 1-dualisable pair $(\mu_0,\mu_1)$ to the transport set $\T_u$.

\begin{lemma}[Restriction to the transport set]\label{Restriction to the transport set} If $\eta\in \Pi_{\ll}^{\rm{1-opt}}(\mu_0,\mu_1)$, and  $u$ is the dual Kantorovich potential from Theorem \ref{Kantorovich Potential}. Then  $$\eta ((\T_u\times \T_u))=1.$$
\end{lemma}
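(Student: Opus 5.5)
The idea is to show that $\eta$ is concentrated on pairs $(x,y)$ with $x\neq y$ and $(x,y)\in\Gamma_u$. By definition, the first coordinate of such a pair lies in $P_1(\Gamma_u\setminus\Delta_Z)\subset\T_u$. The second coordinate lies in $P_1(\Gamma_u^{-1}\setminus\Delta_Z)\subset\T_u$, because $(y,x)\in\Gamma_u^{-1}\subset R_u$ and $y\neq x$. So the whole statement reduces to the inclusion
\begin{equation*}
\spt\eta\subset\{(x,y)\in(Z\times Z)\cap X^2_\leq : u(y)-u(x)=\ell(x,y)>0\}.
\end{equation*}

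\textbf{Step 1: setting up the potential.} Since $(\mu_0,\mu_1)$ is timelike $1$-dualisable with compact supports, strong Kantorovich duality (Corollary \ref{duality}, or the $1$-separated setting of Theorem \ref{Kantorovich Potential}) provides a pair $(\varphi,\psi)$ with:
\begin{itemize}
\item $\varphi\oplus\psi\geq\ell$ on $\spt\mu_0\times\spt\mu_1$;
\item equality on a set $S\subset\{\ell>0\}$ carrying every optimal plan.
\end{itemize}
I take $u$ to be the continuous reverse-Lipschitz function built from this pair on the transport interpolation set $Z$ associated with the given optimal plan. For any other optimal $\eta$, duality gives
\begin{equation*}
\int\ell\,d\eta=\ell_1(\mu_0,\mu_1)=\int\psi\,d\mu_1+\int\varphi\,d\mu_0=\int(\varphi(x)+\psi(y))\,d\eta(x,y).
\end{equation*}
Combined with $\varphi\oplus\psi\geq\ell$, this forces $\eta(S)=1$. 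Since $S$ is closed by continuity of $\varphi,\psi$ and of $\ell_+$ on the relevant compact sets, we get $\spt\eta\subset S$. Using $\eta\in\Pi_\ll$, we also get $\ell>0$ $\eta$-a.e.

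\textbf{Step 2: passing to $\Gamma_u$.} Take $(x,y)\in\spt\eta\subset S$. Every point of $S$ is a pair at parameters $s=0$ and $s=1$ of itself, so $x\in Z_0(S)$ and $y\in Z_1(S)$. This needs $Z$ to be defined from $\spt\eta$ or from $S$, and is the main subtlety: if $Z$ is built only from $\spt\pi$ for a specific $\pi$, I enlarge $Z$ to $\bigcup_s Z_s(S)$. This set is still compact by global hyperbolicity, since $S$ is compact, and the proof of Theorem \ref{Kantorovich Potential} goes through verbatim. Then Theorem \ref{Kantorovich Potential} gives $u(x)=-\varphi(x)$ and $u(y)=\psi(y)$. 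Hence
\begin{equation*}
u(y)-u(x)=\varphi(x)+\psi(y)=\ell(x,y)>0,
\end{equation*}
so $(x,y)\in\Gamma_u$ and $x\neq y$.

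\textbf{Step 3: conclusion.} By the projection observation in the first paragraph, $x,y\in\T_u$. Therefore
\begin{equation*}
\eta(\T_u\times\T_u)\geq\eta(\spt\eta)=1.
\end{equation*}
Measurability is not an issue, because $\T_u$ is $\sigma$-compact.

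The only real obstacle is the bookkeeping in Step 2. One must ensure that the potential $u$ and its domain $Z$ are chosen to cover the supports of \emph{all} optimal plans, not just the one used to construct $u$. Closedness of $S$ is what makes this work.
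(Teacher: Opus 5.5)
Your proof is correct and follows the paper's route: you show $\Gamma_u\setminus\Delta_Z\subset\T_u\times\T_u$ by projecting $R_u\setminus\Delta_Z$, then conclude because $\eta$ is concentrated on timelike pairs of $\Gamma_u$. Your Step 1 usefully justifies $\eta(\Gamma_u)=1$ for an arbitrary optimal $\eta$: by duality, $\eta$ must saturate $\varphi\oplus\psi\geq\ell$, so it lives on the contact set $S$. The paper simply asserts this fact.

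The enlargement of $Z$ in Step 2 is harmless but unnecessary. Since $\spt\mu_0=P_1(\spt\pi)\subset Z_0(\spt\pi)$ and $\spt\mu_1=P_2(\spt\pi)\subset Z_1(\spt\pi)$, the original $Z$ already gives $u=-\varphi$ on all of $\spt\mu_0$ and $u=\psi$ on all of $\spt\mu_1$. So your argument proves the statement for the potential $u$ exactly as constructed, rather than for an extension.
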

\begin{proof}
    Let $(z,w)\in \Gamma_u,$ with $z\neq w.$ This pair $(z,w)$ is in $R_u\setminus \{ x=y\}$, and by the definition of the transport relation $R_u = \Gamma_u\cup \Gamma_u^{-1}$ we get that $z\in P_1(R_u\setminus \{ x=y\}=\T_u.$  Additionally, $w\in \T_u$ as $(w,z)\in \Gamma_u^{-1}.$ Thus:
    $$(z,w)\in \T_u\times \T_u,$$ which implies that $\Gamma_u\setminus \Delta_X\subset \T_u\times \T_u$. Since $\eta$ is supported on $\Gamma_u$, it follows that $$\eta(\T_u\times \T_u\cup \Delta_X)\geq \eta(\Gamma_u)=1.$$ Since $(\mu_0,\mu_1)$ is timelike 1-dualisable, $\eta$ is concentrated on strictly timelike pairs. Hence $\eta(\Delta_X)=0,$ and we conclude that $\eta(\T_u\times \T_u)=1.$
\end{proof}

From this lemma, since $\eta(\T_u\times \T_u)=1$ we have $\mu_0(\T_u)=\mu_1(\T_u)=1.$ Moreover, we have that $\mu_0(b_u)=0$, since if $x\in b_u$ and $(x,y)\in\Gamma_u$ is timelike, then $y\neq x$ would be a nontrivial forward continuation of $x$, contradicting the definition of $b_u$. Analogously, we have $\mu_1(a_u)=0.$ Since we are assuming that $\mu_0\ll\m,$ and since $\m(a_u)=0$ by Proposition \ref{m(a)=0}, we have that $\mu_0(a_u)=0.$  Since $\m(A_{+,u})=0$ by Theorem \ref{endpoints measure zero} and $\m(A_{-,u})=0$ by our standing assumption \eqref{backwards branching measure zero},
we get $\mu_0(A_{+,u}\cup A_{-,u})=0$, and in particular $\mu_0(\T_u^{nb}) =1$. We can therefore use the disintegration of $\m\restrict_{\T_u^{nb}}$ to obtain a disintegration of  $\mu_0$: \begin{equation}\label{mu0 disintegration}\mu_0=\rho_0\m\restrict_{\T^e_u}=\rho_0\m\restrict_{\T_u}=\int_Q\rho_0\m_{\alpha}\q(d\alpha)=\int_Q \mu_{0,\alpha}\q_{\mu_0}(d\alpha),
\end{equation}
\noindent where $\mu_{0,\alpha},=c(\alpha)\rho_0\m_{\alpha}$ and $c(\alpha)$ is a normalizing constant chosen to that $\q_{\mu_0}=\frac{\q}{c(\alpha)}.$  Since $\mu_0(b_u)=0$, disintegration gives us $\mu_{0,\alpha}(\{ b_u\})=0$ for $\q_{\mu_0}$-a.e. $\alpha$, and combining this with the absolute continuity of $\m_\alpha$ on the proper ray interior yields $\mu_{0,\alpha}\ll \L^1$ for $\q_{\mu_0}$-a.e. $\alpha$.

Note that it is possible that $\mu_1(b_u)>0$, since we are only assuming that $\mu_0\ll\m$, where in particular $\mu_1(\T_u^{nb}\setminus \T_u)>0$ may occur. We will instead obtain a disintegration of $\mu_1$ that is compatible with the disintegration of $\mu_0$ by disintegrating the optimal coupling $\eta\in \Pi_{\ll}^{1\rm{-opt}}(\mu_0,\mu_1)$ in the following Lemma.

\begin{lemma}\label{joint coupling}
    For $\eta\in \Pi_{\ll}^{\rm{1-opt}}(\mu_0,\mu_1)$, we have the following disintegration:

    $$\eta =\int_Q\eta_\alpha \q_{\mu_0}(d\alpha),\quad \eta_\alpha\in \P_c((R_u^{nb}(\alpha)\cap \T_u)\times R_u(\alpha)),$$ with $(P_1)_\#\eta_\alpha=\mu_{0,\alpha}.$
\end{lemma}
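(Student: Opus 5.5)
The plan is to disintegrate $\eta$ with respect to the quotient map applied to the first coordinate, i.e.\ with respect to $f\circ P_1$. First, by Lemma \ref{Restriction to the transport set} and the discussion following it, $\mu_0$ is concentrated on $\T_u^{nb}$. Since $(P_1)_\#\eta=\mu_0$, $\eta$ is therefore concentrated on the set $\Gamma_u\cap(\T_u^{nb}\times \T_u)$, up to removing the diagonal, which is $\eta$-null because $\eta(X^2_\ll)=1$. On this set the map $F:=f\circ P_1$ is $\eta$-measurable, because $f$ is $\m$-measurable by Proposition \ref{quotient map} and $\mu_0\ll\m$. Moreover
\[
F_\#\eta=f_\#\mu_0=\q_{\mu_0},
\]
by \eqref{mu0 disintegration}. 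I would then apply the disintegration theorem (in the strongly consistent form of Bianchini--Cavalletti already used for \eqref{disintegration}), using the $\sigma$-compact set $Q$ as quotient, to write
\[
\eta=\int_Q \eta_\alpha\,\q_{\mu_0}(d\alpha),\qquad \eta_\alpha\big(F^{-1}(\alpha)\big)=1 \quad \text{for }\q_{\mu_0}\text{-a.e. }\alpha .
\]

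Next I identify the support of $\eta_\alpha$. If $(x,y)\in F^{-1}(\alpha)\cap\Gamma_u$, then $x\in\T_u^{nb}$ and $f(x)=\alpha$, so $x\in R_u^{nb}(\alpha)\cap\T_u$. Since $(x,y)\in\Gamma_u$ and $(\alpha,x)\in R_u^{nb}\subset R_u$, the point $y$ lies on the same geodesic as $x$ by Lemma \ref{geodesics align}. Because $x$ is not a forward branching point, transitivity as in Case 1 and Case 3 of Proposition \ref{equivalence relation} gives $(\alpha,y)\in R_u$. Only transitivity in $R_u$ is needed here, not in $R_u^{nb}$, since $y$ may fall in $b_u$ or in a branching set. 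This yields $\eta_\alpha$ concentrated on $(R_u^{nb}(\alpha)\cap\T_u)\times R_u(\alpha)$.

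For the marginal identity, push the disintegration forward by $P_1$: $\mu_0=\int_Q (P_1)_\#\eta_\alpha\,\q_{\mu_0}(d\alpha)$, with $(P_1)_\#\eta_\alpha$ concentrated on $f^{-1}(\alpha)$. This is a strongly consistent disintegration of $\mu_0$ with respect to $f$. By essential uniqueness of such disintegrations, $(P_1)_\#\eta_\alpha=\mu_{0,\alpha}$ for a.e.\ $\alpha$.

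Compactness of the support comes from $\spt\eta\subset\spt\mu_0\times\spt\mu_1$, which is compact, so each $\eta_\alpha$ lies in $\P_c$.

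The step I expect to be the main obstacle is the measurability bookkeeping. $F$ is only $\eta$-measurable, and $R_u(\alpha)$ is merely analytic. I would handle this by restricting to a $\sigma$-compact subset of full $\mu_0$-measure on which $f$ is Borel (by Lusin), and by replacing $Q$ with a $\sigma$-compact set of full $\q_{\mu_0}$-measure, exactly as in the construction of \eqref{disintegration}.
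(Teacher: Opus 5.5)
Your proposal is correct and follows essentially the same route as the paper: you disintegrate $\eta$ along the partition of $(\T_u^{nb}\times X)\cap\Gamma_u$ induced by the ray of the first coordinate (i.e.\ with respect to $f\circ P_1$), and you identify the quotient measure with $\q_{\mu_0}$ via $\eta(f^{-1}(A)\times X)=\mu_0(f^{-1}(A))$. You also supply two steps the paper only asserts: that $y\in R_u(\alpha)$, which follows from the non-branching of $x$ as in Cases 1 and 3 of Proposition \ref{equivalence relation}, and that $(P_1)_\#\eta_\alpha=\mu_{0,\alpha}$, which follows from essential uniqueness of the disintegration of $\mu_0$ with respect to $f$.
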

\begin{proof}

By Lemma \ref{Restriction to the transport set} and the preceding discussion, we in fact have that $\eta$ is concentrated on $\T_u^{nb}\times \T_u$. We obtain a partition of  $(\T_u^{nb}\times X)\cap \Gamma_u$ through the partition of $\T_u^{nb}$ given by the equivalence classes $\{R_u^{nb}(\alpha)\}_{\alpha\in Q}:$

    $$(\T_u^{nb}\times X)\cap \Gamma_u = \bigcup_{\alpha\in Q}(R_u^{nb}(\alpha)\times X)\cap\Gamma_u,$$
    and the disintegration theorem  gives us the following disintegration of $\eta:$
    $$\eta=\int_Q \eta_\alpha \q_{\eta}(d\alpha),\quad \eta_\alpha\in \P_c(((R_u^{nb}(\alpha)\times X)\cap\Gamma_u).$$ Since $(R_u^{nb}(\alpha)\times X)\cap\Gamma_u\subset (R_u^{nb}(\alpha)\cap \T_u)\times R_u(\alpha),$ all we are left to show is that $$\q_{\mu_0}=\q_{\eta}.$$ Indeed, if we consider any Borel $A\subset Q,$ we see that $$\mu_0(f^{-1}(A))=\eta(f^{-1}(A)\times X)=\eta(\{(x,y)\in\Gamma_u:x\in \T_u^{nb}, f(x)\in A\}),$$ where $f:\T_u^{nb}\rightarrow Q$ is the quotient map from \ref{quotient map} associated with the disintegration of $\T_u^{nb}$. The right hand side is exactly $\q_\eta(A)$ and the left hand side is $\q_{\mu_0}(A)$, and thus $\q_{\eta}=\q_{\mu_0}$ which provides the desired disintegration.
\end{proof}

This lemma gives us the following disintegration of $\mu_1:$ \begin{equation}\label{mu1 disintegration}
\mu_1 = (P_2)_{\#}\eta = \int_Q (P_2)_{\#}\eta_\alpha \q_{\mu_0}(d\alpha)=:\int_Q \mu_{1,\alpha}\q_{\mu_0}(d\alpha)\end{equation}

\noindent and we have that the coupling $\eta_\alpha\in \Pi_{\ll}^{\rm{1-opt}}(\mu_{0,\alpha},\mu_{1,\alpha})$ is timelike 1-dualizable, and hence optimal for $\q_{\mu_0}$-a.e. $\alpha\in Q.$ Since $\mu_0(\T_u^{nb})=1,$ it suffices to construct a borel map $T$ on $\T_u^{nb}$ whose values are allowed to lie at future ednpoints of the corresponding completed rays such that $T_\#\mu_0=\mu_1.$ Once this map $T$ is defined $\mu_0$-a.e. on $\T_u^{nb}$, it may be extended arbitrarly to $X\setminus \T_u^{nb},$ for example by setting it to the identity map on $X\setminus \T_u^{nb}.$

To solve the Lorentzian Monge problem, we construct a map which moves mass monotonically forward in time along each transport ray. Recall that for each equivalence class $\alpha\in Q$, the transport ray $(X_,\ell\restrict_{X_\alpha}, \m_\alpha)$ is $\ell$-isometric via the ray map parametrization $g(\alpha,\cdot):[a_\alpha,b_\alpha]\rightarrow X_\alpha$, thus whenever $a_\alpha\leq s\leq t\leq b_\alpha$ we have $\ell(g(\alpha,s),g(\alpha,t))=t-s.$ We can thus identify the measures $\mu_{0,\alpha}$ and $\mu_{1,\alpha}$ on the transport ray with their cumulative distribution functions on  $\R$: 

$$H(\alpha,t):= \mu_{0,\alpha}((\infty ,t)),\quad F(\alpha,t):=\mu_{1,\alpha}((-\infty,t)).$$ Since $\mu_{0,\alpha}\ll \L^1,$ the map $t\mapsto H(\alpha,t)$ is continuous for $\q_{\mu_0}$-a.e. $\alpha$. The one dimensional solution to the Monge problem is then given by the following monotone map $T$ \cite[Theorem 2.18]{VillaniTopics}: \begin{equation}\label{monotone map}
T(\alpha,s):= \big(\alpha,\,\sup \{t: F(\alpha,t)\leq H(\alpha,s) \} \big)
\end{equation}
We need to ensure that $\mu_0$ and $\mu_1$ assign the same amount of mass to each transport ray. To that end, we first require the following definition adapted from \cite{CaffarelliFeldmanMcCann2002}:

\begin{definition} A set $A\subset \T_u^{nb}$ is called \emph{ray-saturated} if it is a union of complete transport rays. Equivalently, there exists a Borel set $Y\subset Q$ such that $A = f^{-1}(Y).$ A Borel set $A^+\subset \T_u^{nb}$ is called a \emph{positive end} if whenever $z\in A^+$, every point of the same transport ray lying in the future of $z$ also belongs to $A^+$. In ray coordinates, it is a union of sets of the form $\{ \alpha\}\times [s_0(\alpha),b_\alpha]$,  where $s_0 \in [a_u(\alpha), b_u(\alpha)]$.

\end{definition}

The following Lemma is an extension of Lemma 27 of Caffarelli-Feldman-McCann \cite{CaffarelliFeldmanMcCann2002} that provides the required mass balance of rays, and additionally shows that the map $T(\alpha,s)$ from \eqref{monotone map} satisfies $T(\alpha,s)\geq s.$ The result $T(\alpha,s)\geq s$ is crucial, as it shows that along each transport ray the Lorentian Monge map moves mass monotonically forward in time towards the future endpoint $b_\alpha$, consistent with the Lorentzian cost $\ell\restrict_{X_\alpha}(g(\alpha,s),g(\alpha,t))=t-s$ for $t\geq s.$

\begin{lemma}\label{mass balance} Let $\mu_0,\mu_1\in \P_c(X)$ be a timelike 1-dualisable  pair of measures. If $A$ is ray-saturated, then $$\mu_0(A)=\mu_1(A).$$ More generally, if a Borel set $A^+\subset X$ forms the positive end, then $$\mu_0(A^+)\leq \mu_1(A^+).$$

    Additionally, for $\q_{\mu_0}$-a.e. $\alpha$ transport ray, we have $$F(\alpha,s)\leq H(\alpha,s),$$ and consequently the map $T$ defined by \eqref{monotone map} satisfies $T(\alpha,s)\geq s$ for $\mu_{0,\alpha}$-a.e. $s.$
\end{lemma}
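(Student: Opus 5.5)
The plan is to follow the Caffarelli--Feldman--McCann argument, using the structure of $\Gamma_u$ to see that an optimal coupling never moves mass between different rays and only moves it forward along a ray. Fix $\eta\in\Pi_{\ll}^{1\text{-opt}}(\mu_0,\mu_1)$. By Theorem \ref{Kantorovich Potential}, Lemma \ref{Restriction to the transport set} and the discussion after it, $\eta$ is concentrated on $\Gamma_u\cap\{\ell>0\}$ and $\mu_0(\T_u^{nb})=1$. So $\eta$-a.e.\ pair $(x,y)$ satisfies
\[
x\in\T_u^{nb},\qquad (x,y)\in\Gamma_u,\qquad u(y)-u(x)=\ell(x,y)>0 .
\]
By Proposition \ref{equivalence relation} and Lemma \ref{same geodesic}, such a $y$ lies on the completed ray $R_u(f(x))$ and is strictly in the future of $x$. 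In ray coordinates, $x=g(\alpha,s)$ forces $y=g(\alpha,t)$ with $t>s$. One must allow $y$ to be the endpoint $b_\alpha$, or a point of $A_{\pm,u}$; this is harmless because $\mu_0$ never charges these sets.

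For the ray-saturated statement, write $A=f^{-1}(Y)$ and compute
\[
\mu_0(A)=\eta(A\times X)=\eta\big(\{(x,y)\in\Gamma_u: f(x)\in Y\}\big).
\]
By the previous paragraph, $\eta$-a.e.\ such $y$ lies on the same ray, so $y$ belongs to the completion of $A$. Conversely, if $y\in A$ then $x$ lies on the ray through $y$, hence $x\in A$, up to the $\mu_0$-null branching and endpoint sets. Therefore $\eta(A\times X)=\eta(X\times A)=\mu_1(A)$.

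The step I expect to need the most care is interpreting ray-saturation so that it includes the future endpoints, where $\mu_1$ may have atoms. I will define $A$ through the completed rays $R_u(\alpha)$, as in Lemma \ref{joint coupling}, and use the identity $\q_\eta=\q_{\mu_0}$ established there.

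For a positive end $A^+$, forward motion gives $x\in A^+\Rightarrow y\in A^+$ for $\eta$-a.e.\ pair. Hence $\{x\in A^+\}\subset\{y\in A^+\}$ up to $\eta$-null sets, and
\[
\mu_0(A^+)=\eta(A^+\times X)\le\eta(X\times A^+)=\mu_1(A^+).
\]

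For the cumulative inequality, I will work on single rays with Lemma \ref{joint coupling}, which gives $(P_1)_\#\eta_\alpha=\mu_{0,\alpha}$ and $(P_2)_\#\eta_\alpha=\mu_{1,\alpha}$ for $\q_{\mu_0}$-a.e.\ $\alpha$. For such $\alpha$, $\eta_\alpha$ is concentrated on pairs $(s,t)$ with $t\ge s$. Then
\[
F(\alpha,s)=\eta_\alpha(\{t<s\})\le\eta_\alpha(\{s'<s\})=H(\alpha,s),
\]
because $t<s$ forces $s'\le t<s$. This holds for each fixed $s$ outside a $\q_{\mu_0}$-null set of $\alpha$. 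Using countably many rational $s$ and left-continuity of $F$ and $H$ in $s$, the null set can be chosen independent of $s$.

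Finally, for the map \eqref{monotone map}, put $\tau(s)=\sup\{t:F(\alpha,t)\le H(\alpha,s)\}$. Since $F(\alpha,s)\le H(\alpha,s)$, the point $t=s$ belongs to this set, so $\tau(s)\ge s$. This holds for every $s$, hence in particular $\mu_{0,\alpha}$-a.e.; continuity of $H(\alpha,\cdot)$ from $\mu_{0,\alpha}\ll\L^1$ is only needed later, to verify the pushforward property.
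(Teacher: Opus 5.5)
The parts on positive ends and on $F(\alpha,\cdot)\le H(\alpha,\cdot)$, hence $T(\alpha,s)\ge s$, follow the paper's argument. The ray-saturated equality has a genuine gap; you sensed it, but your proposed fix does not close it.

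The justification ``harmless because $\mu_0$ never charges these sets'' only controls the first coordinate. The landing point $y$ is distributed by $\mu_1$, which is not absolutely continuous and can charge $A_{-,u}$ and $b_u$. Writing $\bar A$ for the completion of $A=f^{-1}(Y)$ along the completed rays, your two inclusions therefore give only
\[
\mu_1(A)\le \mu_0(A)\le \mu_1(\bar A).
\]
Replacing $A$ by $\bar A$ does not help. Completed rays of different $\alpha$ are disjoint only inside $\T_u^{nb}$ and can share points of $A_{-,u}$, so $y\in\bar A$ no longer forces $f(x)\in Y$.

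Concretely, work in Minkowski space with $\mu_0$ uniform on a small ball in $I^-(z)$ and $\mu_1=\delta_z$. Every ray ends at $z\in A_{-,u}$. For $\q_{\mu_0}(Y)\in(0,1)$ this gives $\mu_0(f^{-1}(Y))=\q_{\mu_0}(Y)$, $\mu_1(f^{-1}(Y))=0$ and $\mu_1(\bar A)=1$. So the equality cannot be proved at the level of sets in this generality.

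What does hold is mass balance ray by ray, and this is what the paper's proof actually computes through Lemma \ref{joint coupling}. There $\mu_{1,\alpha}=(P_2)_\#\eta_\alpha$ is a probability measure on the completed ray $R_u(\alpha)$, and $\mu_1=\int_Q\mu_{1,\alpha}\,\q_{\mu_0}(d\alpha)$, i.e.\ $\q_\eta=\q_{\mu_0}$. The paper's ``$\mu_1(A)=\q_{\mu_0}(Y)$'' is really $\int_Y\mu_{1,\alpha}(R_u(\alpha))\,\q_{\mu_0}(d\alpha)=\q_{\mu_0}(Y)$, and that is all Theorem \ref{Lorentzian Monge} uses.

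If you want the literal set identity, add the hypothesis $\mu_1(A_{-,u})=0$. If $x\notin A_{+,u}$, $y\neq x$ and $(x,y)\in\Gamma_u$, then $y\notin A_{+,u}$, since forward branches at $y$ would be forward branches at $x$. Hence $y\in\T_u^{nb}$ and $f(x)=f(y)$ for $\eta$-a.e.\ pair, and your argument then closes.

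The same caveat applies to positive ends. Your step $x\in A^+\Rightarrow y\in A^+$ needs $A^+$ to contain the future completion of each ray, including $b_\alpha$ and points of $A_{-,u}$, as in the paper's description $\{\alpha\}\times[s_0(\alpha),b_\alpha]$. With the literal choice $A^+=\T_u^{nb}$, the example above violates the inequality.
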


\begin{proof}

If $x\in A^+,$ and $(x,y)\in \Gamma_u$, the $y$ lies in the future of $x$ along the same transport ray. By definition of a positive end, we have $y\in A^+.$ Since $\eta$ is concentrated on $\Gamma_u,$ we have $$\mu_0(A^+)=\eta(A^+\times X)=\eta(A^+\times A^+)\leq \eta(X\times A^+)=\mu_1(A^+).$$ 

We now show the CDF inequality. For $\q_{\mu_0}$-a.e. $\alpha,$ the coupling $\eta_\alpha$ is timelike 1-dualisable, so in ray coordinates $s\leq t$  $\eta_\alpha$-a.e. Therefore $\{ t<r\}\subset \{ s\subset r\}$, and hence we get $$F(\alpha, r)=\eta_\alpha (\{ t<r\})\leq \eta_\alpha(\{ s<r\})=H(\alpha,r).$$  Since $F(\alpha,s)\leq H(\alpha,s)$,  the point $t=s$ belongs to $\{t:F(\alpha,t)\leq H(\alpha,s)\}$, and by recalling the definition of \ref{monotone map} we conclude that $T(\alpha,s)\geq s$.

Finally, if $A=f^{-1}(Y)$ is ray saturated, then $$\mu_0(A)=\int_Y\mu_{0,\alpha}(X_\alpha)\q_{\mu_0}(d\alpha)=\q_{\mu_0}(Y),\quad \mu_1(A)=\int_Y\mu_{0,\alpha}(X_\alpha)\q_{\mu_1}(d\alpha)=\q_{\mu_0}(Y),$$ and hence $\mu_0(A)=\mu_1(A).$
\end{proof}

We now construct an optimal map for the Lorentzian Monge problem.

\begin{theorem}\label{Lorentzian Monge}
    Let $(X,d,\m,\ell)$ be a gh LLS, that is forward $p$-enb and satisfies the $TMCP^+(K,N)$ for $K\in \R, N\in(1,\infty), p\in (0,1)$, and assume that $(\mu_0,\mu_1)\in \P_c(X)^2$ with $\mu_0\ll \m$ is a timelike 1-dualizable pair, with $\eta \in \Pi_{\ll}^{\rm{1-opt}}(\mu_{0},\mu_{1}).$ Assume that $\m(A_{-,u})=0$. Then there exists a Borel map $T:X\rightarrow X$ such that $T_{\#}\mu_0=\mu_1$, and $$\int_X \ell(x,T(x)d\mu_0(x)=\int_{X\times X} \ell(x,y)d\eta(x,y).$$
\end{theorem}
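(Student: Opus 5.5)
We will assemble $T$ ray by ray from the one-dimensional monotone maps \eqref{monotone map} and then glue them using the ray map $g$ and the quotient map $f$. By Lemma \ref{Restriction to the transport set} and the discussion after it, $\mu_0(\T_u^{nb})=1$ and $\mu_0$ disintegrates as in \eqref{mu0 disintegration}, with $\mu_{0,\alpha}\ll\L^1$ for $\q_{\mu_0}$-a.e. $\alpha$. Lemma \ref{joint coupling} gives the compatible disintegration \eqref{mu1 disintegration} of $\mu_1$ over the same quotient measure $\q_{\mu_0}$, with $\eta_\alpha$ optimal between $\mu_{0,\alpha}$ and $\mu_{1,\alpha}$ and supported on the completed ray $X_\alpha$ in ray coordinates $[a_\alpha,b_\alpha]$.

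On each ray, $H(\alpha,\cdot)$ is continuous. The classical one-dimensional theory (\cite[Theorem 2.18]{VillaniTopics}) then shows that the monotone rearrangement $t_\alpha(s):=\sup\{t:F(\alpha,t)\le H(\alpha,s)\}$ pushes $\mu_{0,\alpha}$ forward to $\mu_{1,\alpha}$. We will define
\[
T(x):=g\big(f(x),\,t_{f(x)}(\pi_2 g^{-1}(x))\big)\qquad x\in\T_u^{nb},
\]
and set $T=\mathrm{Id}$ off $\T_u^{nb}$. Values at the future endpoint $b_\alpha$ are allowed, using the closure of $g(\alpha,\cdot)$. Then $T_\#\mu_0=\int_Q (T_\alpha)_\#\mu_{0,\alpha}\,\q_{\mu_0}(d\alpha)=\int_Q\mu_{1,\alpha}\,\q_{\mu_0}(d\alpha)=\mu_1$.

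The optimality claim uses Lemma \ref{mass balance}: $t_\alpha(s)\ge s$ for $\mu_{0,\alpha}$-a.e. $s$. Since $g(\alpha,\cdot)$ is an $\ell$-isometry (Proposition \ref{ray map properties}), this gives $\ell(x,T(x))=t_\alpha(s)-s=u(T(x))-u(x)$. Hence
\[
\int\ell(x,T(x))\,d\mu_0=\int u\,d\mu_1-\int u\,d\mu_0=\int (u(y)-u(x))\,d\eta=\int\ell\,d\eta,
\]
where the last equality holds because $\eta(\Gamma_u)=1$. The map is therefore optimal. Because $\eta$ is a maximizer, the one-dimensional optimality of the monotone map is not even needed; only $T(x)\in\Gamma_u(x)$ is used.

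The main obstacle will be measurability: showing that $T$ is Borel, or at least $\mu_0$-measurable and then modified on a null set. The plan is as follows. Prove that $(\alpha,t)\mapsto F(\alpha,t)$ and $(\alpha,s)\mapsto H(\alpha,s)$ are measurable in $\alpha$ and monotone and left-continuous in $t$, using the fact that the disintegration kernels $\alpha\mapsto\mu_{i,\alpha}(B)$ are $\q_{\mu_0}$-measurable. The supremum defining $t_\alpha(s)$ can then be taken over rational $t$, which preserves joint measurability. Composing with the Borel ray map $g$ and the $\m$-measurable quotient map $f$ (Proposition \ref{quotient map}) gives a $\mu_0$-measurable $T$. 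Finally, use Lusin's theorem and inner regularity to replace $T$ by a Borel map agreeing $\mu_0$-a.e., which changes neither $T_\#\mu_0$ nor the cost. A secondary point is verifying that the endpoint atom $c_\alpha\delta_{b_\alpha}$ of $\mu_{1,\alpha}$ is handled correctly by the extended ray map; this should follow because $F$ treats the atom through left limits.
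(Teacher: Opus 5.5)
Your proposal is correct and follows essentially the same route as the paper. You disintegrate $\mu_0$, $\eta$ and $\mu_1$ over $Q$, take the one-dimensional monotone rearrangement on each ray, use Lemma \ref{mass balance} ($F\le H$) to get $T(\alpha,s)\ge s$, and glue the pieces via $f$ and $g$. The only variation is that you get the cost identity globally from $\ell(x,T(x))=u(T(x))-u(x)$, $T_\#\mu_0=\mu_1$ and $\eta(\Gamma_u)=1$, whereas the paper invokes per-ray optimality of $T_\alpha$ and $\eta_\alpha$; both rest on the same linearity of $\ell$ along forward pairs on a ray.
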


\begin{proof}

By Lemma \ref{Restriction to the transport set}, $\eta(\T_u\times \T_u)=1,$  so $\mu_0(\T_u)=\mu_1(\T_u)=1,$ and since $\mu_0\ll \m$ $\m(a_u)=\m(A_{+,u})=\m(A_{-,u})=0,$ we have that $\mu_0(\T_u^{nb})=1.$ It therefore suffices to construct an optimal map on $\T_u^{nb}$ and extend it by identity to all of $X.$

By Theorem \ref{a.c}, the reference meausre $\m$ disintegrates to $\T_u^{nb}$ into transport rays $(X_\alpha,\ell\restrict_{X_\alpha},\m_\alpha)$ with $\m_\alpha\restrict_{[a_\alpha,b_\alpha)}\ll\L^1,$ with at most a possible atom at the endpoint $b_\alpha$. Since we have established that $\mu_0(b_u)=0,$ we have that $\mu_{0,\alpha}\ll \L^1$ for $\q_{\mu_0}$-a.e. $\alpha$. By \eqref{mu0 disintegration} and \eqref{mu1 disintegration}, we get the disintegrations $$\mu_0 = \int_Q\mu_{0,\alpha}\q_{\mu_0}(d\alpha),\quad \mu_1 = \int_Q\mu_{1,\alpha}\q_{\mu_0}(d\alpha),$$ where each $\mu_{0,\alpha}\ll \L^1$ on each ray transport ray $X_\alpha$, and the optimal coupling $\eta$ also disintegrates as $\eta = \int_Q\eta_\alpha \q_{\mu_0}(d\alpha)$ with $$\eta_\alpha = \Pi^{\rm{1-opt}}_\ll(\mu_{0,\alpha},\mu_{1,\alpha})\quad \textrm{for $\q_{\mu_0}$-a.e. $\alpha$}. $$ 

We now use the Lorentzian cost on each transport ray. In particular if $s\leq t$, then the $\ell$-isometry via the ray map gives us that \begin{equation}\label{cost}\ell(g(\alpha,s),g(\alpha,t))=t-s.
\end{equation}

The disintegrations of $\mu_0$ and $\mu_1$ give us that the maps $\alpha \mapsto \mu_{0,\alpha}, \mu_{1,\alpha}$ are measurable, and consequently the following maps are Borel $$(\alpha,t)\mapsto H(\alpha,t):=\mu_{0,\alpha}((-\infty,t)),\quad (\alpha,t)\mapsto F(\alpha,t):=\mu_{1,\alpha}((-\infty,t)),$$  In particular these maps are increasing in $t$, and $H(\alpha,\cdot)$ is also continuous in $t$ for $\q_{\mu_0}$-a.e. $\alpha\in Q$ due to the absolute continuity of $\mu_{0,\alpha}$. 

Define the map $T(\alpha,s)$ from \eqref{monotone map} $$T(\alpha,s):=\bigg(\alpha,\sup\{t:F(\alpha,t)\leq H(\alpha,s)\}\bigg),$$
This map is Borel, and crucially satisfies $T(\alpha,s)\geq s$ for $\mu_{0,\alpha}$-a.e. $s$ by Lemma \ref{mass balance}. Moreover, for every Borel $A$, we have $$T^{-1}(A\times [t,\infty))=\{(\alpha,s):\alpha\in A, H(\alpha,s)\geq F(\alpha,t)\}\in \B(Q,\R).$$

\noindent Since $\mu_{0,\alpha}$ has no atoms for $\q_{\mu_0}$-a.e. $\alpha\in \T_u^{nb},$ the map $T$ is  an optimal transport map between the measures $\mu_{0,\alpha}$ and $\mu_{1,\alpha}$ with the cost $\ell$ given by \eqref{cost}. Since the measure $\eta_\alpha$ is also the optimal coupling between $\mu_{0,\alpha}$ and $\mu_{1,\alpha},$ it follows by Lemma \ref{joint coupling} that

\begin{align*}
\int_X \ell(x,T(x))d\mu_0(x)= & \int_Q\int_{\R} (T(\alpha,s)-s) d\mu_{0,\alpha}(s)d\q_{\mu_0}(\alpha)\\
=&\int_Q\int_{X\times X} \ell(x,y)d\eta_\alpha(x,y)d\q_{\mu_0}(\alpha)\\
=& \int_{X\times X}\ell(x,y) d\eta(x,y), 
\end{align*}

\noindent where we have used Lemma \ref{mass balance} to ensure $T(\alpha,s)\geq s$ in the first equality.

\end{proof}

We now extend the above construction to general compactly supported measures given by Theorem \ref{decomposing pi}, where the compactly supported initial and final measures $\mu_0,\mu_1$ admit an $\ell^1$-optimal coupling $\pi$ such that $\ell(x,y)>0$ for $\pi$-almost every $(x,y).$ Such a coupling can be decomposed into countably many compactly supported pieces. However, rather than solving the Monge problem separately on these pieces, whose source marginals need not be mutually singular, we work directly with an $\ell^1$-cyclically monotone relation generating this coupling This extension is inspired by the works of Kell-Suhr \cite{Kell-Suhr}.

We first extend the definitions of a transport relation, and branching sets. Let   $\pi\in \Pi_\leq ^{1-opt}(\mu_0,\mu_1)$ satisfy $\ell>0$ $\pi$-a.e. By optimality, we may fix an $\ell^1$-cyclically monotone Borel set that satisfies $$\Gamma_0\subset \{ \ell>0)\},\quad \pi(\Gamma_0)=1.$$ Let $\Gamma \supset \Gamma_0$ be a maximal $\ell$-cyclically monotone relation (with respect to set inclusion) and define $$\Gamma_\ll:=\{ (x,y)\in \Gamma:\ell(x,y)>0\},\quad R:=\Gamma_\ll \cup \Gamma^{-1}_\ll \cup \Delta_X.$$ Additionally define the \emph{forward and backward branching sets} associated to $\Gamma$ respectively as  $$A_+:=\{ x:\exists y_1,y_2\neq x\textrm{ with $(x,y_i)\in \Gamma_\ll, (y_1,y_2)\not\in R$} \},$$
$$A_-:=\{ x:\exists x_1,x_2\neq y\textrm{ with $(x_i,y)\in \Gamma_\ll, (x_1,x_2)\not\in R$} \}.$$ We can now state the theorem.

\begin{theorem}{Lorentzian Monge maps} Let $(X,d,\m,\ell)$ be a gh LLS which is forward $p$-enb and satisfies the $TMCP^+(K,N)$ condition for some $p\in(0,1).$ Let $\mu_0,\mu_1$ be compactly supported probability measures with $\mu_0\ll \m$, and let $\pi\in \Pi_\leq ^{1-opt}(\mu_0,\mu_1)$ satisfy $\pi[\ell(x,y)>0]=1$. Assume that $\m(A_-)=0$. Then there exists a Borel map $T:X\rightarrow X$ such that $T_\#\mu_0=\mu_1,$ and $$\int_X\ell(x,T(x))d\mu_0(x)=\int_{X\times X}\ell(x,y)d\pi(x,y).$$ In particular, $(Id,T)_\#\mu_0$ is an $\ell^1$-optimal coupling.
    
\end{theorem}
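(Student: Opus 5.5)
The plan is to rerun the construction of Theorem \ref{Lorentzian Monge}, with the transport order $\Gamma_u$ induced by a continuous potential replaced by the maximal $\ell$-cyclically monotone relation $\Gamma$. This replacement avoids any gluing of Kantorovich potentials across the countably many pieces of Theorem \ref{decomposing pi}, whose source marginals may overlap.

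\textbf{Step 1: a reverse-Lipschitz potential from $\Gamma$.} By a Rockafellar-type construction, maximality of $\Gamma$ yields a function $u:P_1(\Gamma)\cup P_2(\Gamma)\to\R$ with
\[
u(y)-u(x)=\ell(x,y) \quad \text{on } \Gamma_\ll .
\]
Concretely, take $u(y)=\sup$ over chains $(x_0,y_0),\dots,(x_k,y_k)\in\Gamma$ ending at $y$ of $\sum_i [\ell(x_i,y_i)-\ell(x_{i+1},y_i)]$, normalized at a base point. I do not expect global reverse-Lipschitz continuity of $u$. Instead I would localize to the compact pieces $\pi^i$ of Theorem \ref{decomposing pi}. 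On each piece, Theorem \ref{Kantorovich Potential} gives a continuous reverse-Lipschitz potential $u^i$ on its interpolation set $Z_i$, and the transport order $\Gamma_{u^i}$ is contained in the $\ell$-cyclically monotone hull, hence in $\Gamma$ by maximality.

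\textbf{Step 2: the global relation $R$ and its rays.} The key observation is that the relation $R$ built from $\Gamma$ is the common refinement of all the $\Gamma_{u^i}$. Lemma \ref{geodesics align} and Proposition \ref{equivalence relation} apply to $\Gamma$ directly, because their proofs only used cyclical monotonicity together with the reverse triangle inequality. The proof that $\m(A_+)=0$ goes as in Theorem \ref{endpoints measure zero}:
\begin{enumerate}
\item Write $A_+\subset\bigcup_i A_{+,u^i}$ up to branching realized inside one piece.
\item Alternatively, run the Step 1--3 argument of that theorem directly, building a two-valued $\ell^p$-optimal plan from a synchronizing level of $u$ along two branches, and contradict Theorem \ref{Brenier Map}.
\end{enumerate}
$\m(A_-)=0$ is assumed. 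On $X\setminus(A_+\cup A_-)$ the relation $R$ is therefore an equivalence relation whose classes are timelike geodesics (Lemma \ref{same geodesic}). Propositions \ref{quotient map} and \ref{ray map properties} give a disintegration $\m=\int \m_\alpha\,\q(d\alpha)$. I expect absolute continuity of $\m_\alpha$ off the future endpoints to follow as in Theorem \ref{a.c}, since Propositions \ref{evolution of subsets}--\ref{m(a)=0} only use $\ell^p$-monotone fixed-length evolutions. These evolutions can be taken inside one compact piece, where Lemma \ref{p-cyclically monotone set} applies with $u^i$.

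\textbf{Step 3: one-dimensional problems and assembly.} With $\mu_0\ll\m$ we get $\mu_0$-a.e. $x$ nonbranched and not an initial point, and $\mu_0(b)=0$. Then:
\begin{enumerate}
\item Disintegrate $\pi$ along rays as in Lemma \ref{joint coupling}. Because $\pi(\Gamma_\ll)=1$, each $\pi_\alpha$ lives on the ray $\alpha$ with $s\le t$.
\item Prove the CDF inequality $F(\alpha,\cdot)\le H(\alpha,\cdot)$ by the argument of Lemma \ref{mass balance}.
\item Define the monotone map \eqref{monotone map} ray by ray. On each ray the cost is $t-s$, which is affine in $t$ when $t\ge s$, so every coupling of $(\mu_{0,\alpha},\mu_{1,\alpha})$ supported in $\{s\le t\}$ has the same cost $\int t\,d\mu_{1,\alpha}-\int s\,d\mu_{0,\alpha}$. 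The monotone map satisfies $T\ge s$, hence has the same cost as $\pi_\alpha$.
\end{enumerate}
Integrating over $\q_{\mu_0}$ gives the cost identity and $T_\#\mu_0=\mu_1$. Borel measurability of $T$ follows as in Theorem \ref{Lorentzian Monge}, and $T$ is extended by the identity off the transport set.

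\textbf{Main obstacle.} The hardest step is Step 2's transfer of the measure-theoretic regularity (branching null, $\m(a)=0$, absolute continuity) from the compact, potential-based setting to the noncompact relation $\Gamma$. I would first try to show that every local configuration used in those proofs lies in a single compact piece $\Gamma_{u^i}$, via second countability and a countable exhaustion. The remaining global consistency needed is that distinct pieces induce the same ray structure, and this is exactly what maximality of $\Gamma$ is meant to supply.
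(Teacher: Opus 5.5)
There is a genuine gap in how you transfer regularity to the maximal relation $\Gamma$. Steps 1--2 route everything through the piecewise potentials $u^i$, and the link you rely on does not hold: ``$\Gamma_{u^i}$ is contained in the $\ell$-cyclically monotone hull, hence in $\Gamma$ by maximality''.

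\begin{itemize}
\item Maximality only says that $\Gamma$ has no proper $\ell$-cyclically monotone extension. It gives no reason for $\Gamma\cup\Gamma_{u^i}$ to be cyclically monotone.
\item The potential $u^i$ of Theorem \ref{Kantorovich Potential} is a dual solution for the sub-problem $(\hat\mu^i,\hat\nu^i)$ only. Such potentials are in general not unique, and different choices have different contact sets outside $\spt\pi^i$.
\item So nothing forces $\Gamma_{u^i}\subset\Gamma$. The set $\Gamma_{u^i}$ may contain pairs joining different rays of $\Gamma$. This is exactly the gluing problem that passing to a maximal relation is meant to avoid.
\end{itemize}

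Consequently three of your claims are unsupported:
\begin{itemize}
\item that $R$ is the common refinement of the $\Gamma_{u^i}$;
\item the inclusion $A_+\subset\bigcup_i A_{+,u^i}$, since a branch point of $\Gamma$ can have its two branches in different pieces, or in $\Gamma\setminus\Gamma_0$;
\item that the evolutions can be taken ``inside one compact piece where Lemma \ref{p-cyclically monotone set} applies with $u^i$''.
\end{itemize}
Your closing sentence leans on precisely the consistency that maximality does not supply. The Rockafellar fallback does not rescue this either. As written, the chain sums are $+\infty$ whenever a cross pair is non-causal. Without a reverse-Lipschitz bound on the relevant set, you cannot control the cross terms that Lemma \ref{p-cyclically monotone set} controls.

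The missing idea is that no potential is needed. Suppose $\Lambda\subset\Gamma$ has $\ell\equiv r$ on $\Lambda$. For any cycle in $\Lambda$, $\ell$-cyclical monotonicity gives $\sum_i\ell(x_{i+1},y_i)\le Nr$. Concavity of $s\mapsto s^p$ then gives $\sum_i\ell(x_{i+1},y_i)^p\le N^{1-p}(Nr)^p=\sum_i\ell(x_i,y_i)^p$. So fixed-length subrelations of $\Gamma$ are automatically $\ell^p$-cyclically monotone.

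The paper uses this to prove $\m(A_+)=0$ directly on $\Gamma$: shorten both branches to a common length $r$ and contradict Theorem \ref{Brenier Map}. It also never proves $\m_\alpha\ll\mathcal{L}^1$, or that initial points are $\m$-null, for $\Gamma$. It only shows that $\mu_{0,\alpha}$ is nonatomic, which is all the monotone rearrangement needs. The argument runs as follows.
\begin{itemize}
\item Select one atom per ray over a set of rays of positive $\q$-measure. This gives a set of positive $\mu_0$-measure, hence of positive $\m$-measure.
\item Take a compact $B$ in it with a uniform forward length $r$, and evolve it by the fixed-length relation $\Lambda_r$.
\item The $TMCP^+$ bound of Proposition \ref{TMCP along sets} gives $\m(B_t)\ge c\,\m(B)$ for small $t$.
\item For distinct $t_j$ the sets $B_{t_j}$ are pairwise disjoint, since there is one point per ray at distinct proper-time distances. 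Yet they all lie in the compact diamond $J^+(B)\cap J^-(\spt\mu_1)$, which has finite measure. This is a contradiction.
\end{itemize}

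Your Step 3 is fine. The observation that $t-s$ is affine on $\{s\le t\}$, so every forward coupling on a ray has the same cost, justifies optimality more cleanly than the paper does.
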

\begin{proof}

    We first note that the maximal relation $\Gamma$ has the same saturation properties as the relations $\Gamma_u$ in the preceeding theorem. Indeed, if $(x,y)\in \Gamma_\ll$, and $z$ is on a maximizing timelike geodesic from $x$ to $y,$ then  maximal $\ell^1$-cyclical monotonicity and the reverse triangle inequality give $(x,z),(z,y)\in \Gamma.$ Similarly, if $(x,y),(y,z)\in \Gamma_\ll$, then $(x,z)\in \Gamma$ and $\ell(x,z)=\ell(x,y)+\ell(y,z).$ 

    We now show that $\m(A_+)=0.$ This argument is identical to the one given by Theorem \ref{endpoints measure zero}: if $\m(A_+)>0,$ then after measureable selection and restriction to a compact subset, the two branches can be shortened to a common timelike length $r>0$. The resulting fixed length relation is $\ell^p$-cyclically monotone, and the forward $p$-enb argument used in \ref{endpoints measure zero} then excludes such a branching configuration, where the contradiction comes from $\ell^p$ transport plans being induced by maps. Hence, we get $\m(A_+)=0.$ 

    Since by assumption, $\m(A_-)=0,$ we therefore get $\m(A_+\cup A_-)=0$, and since $\mu_0\ll \m$, we get that $\mu_0(A_+\cup A_-)=0.$ Define the nonbranched transport set $$\T^{nb}:=P_1(R\setminus \Delta_X)\setminus (A_+\cup A_-),$$ on which by the same argument as in Proposition \ref{equivalence relation} we have that $R$ is an equivalence relation, where each equivalence class is a totally ordered timelike transport ray. We may therefore construct a measurable quotient map $f:\T^{nb}\rightarrow Q$ and a measurable family of transport rays $X_\alpha:=f^{-1}(\alpha)$ for $\alpha \in Q$ that are parametrized by the ray map $g(\alpha,\cdot):I_\alpha\rightarrow X_\alpha.$ In particular, we have $$\ell(g(\alpha,s),g(\alpha,t))=t-s,\quad s\leq t.$$ 
    We now disintegrate the source measure $\mu_0$ with respect to the quotient map $f$: $$\mu_0=\int_Q\mu_{0,\alpha}\q(d\alpha),$$ and we argue that $\mu_{0,\alpha}$ is nonatomic for $\q$-a.e. $\alpha.$ Suppose not, i.e. that $\mu_{0,\alpha}$ has an atom on  $\bar{Q}\subset Q$, which has positive $\q$-measure. Then there exists some $n\in \N$ such that the set of rays carrying an atom of mass at least $1/n$ has positive $\q$-measure. By measurable selection, we can choose one such atom $x_\alpha$ on each of these rays. Define $S:=\{x_\alpha\}_{\alpha \in\bar{Q}},$ and note that we can write $$\mu_0(S)=\int \mu_{0,\alpha}(\{x_\alpha\})\q(d\alpha)>0,$$ where moreover $\mu_0\ll \m$ implies that $\m(S)>0$. We now extract a uniform positive transport length. Disintegrating $\pi$  with respect to its first marginal as $$\pi = \int \pi_x\mu_0(dx),$$ and since $\ell(x,y)>0$ for $\pi$-a.e. $(x,y),$ we have that for $\mu_0$-a.e. $x\in S$ there exists a point in the transport relation with $\ell(x,y)>0$. For $k\in \N,$ let $S_k$ consist of elements $x\in S$ such that we can choose a $y$ with $\ell(x,y)\geq \frac{1}{k}.$ Then we have that up to a $\mu_0$-null set that $S=\cup_{k=1}^\infty S_k$, and hence for some $k$ we have $\mu_0(S_k)>0$ and $\m(S_k)>0$.
    Fix such a $k$, and choose $r$ such that $0<r<\frac{1}{k}.$ By inner regularity, we can choose a compact $B\subset S_k$ of positive measure. For each $x\in B$, choose a point $y_x$ such that $$(x,y_x)\in \Gamma_0,\quad \ell(x,y_x)\geq \frac{1}{k}>r.$$ Taking a maximizing timelike geodesic from $x$ to $y_x$, and letting $z_x$ be a point of timelike distance $r$ from $x$, we get that for the maximal relation $\Gamma_\ll$: $$(x,z_x)\in \Gamma_\ll, \quad \ell(x,z_x)=r.$$ We can thus define $$\Lambda_r=\{ (x,z_x)\in \Gamma_\ll:x\in B\}$$ as in Theorem \ref{evolution of subsets}, and this $\Lambda_r$ is automatically $\ell^p$-cyclically monotone. Following the proof of Theorem \ref{evolution of subsets}, we denote by $B_t$ the evolution of $B$ obtained by moving each $x\in B$ a fraction $t$ of the distance $r$ along its corresponding transport ray. Then noting the bound in Theorem \ref{TMCP along sets} $\m(B_t)\geq [\tau _{K,N}^{(1-t)}(r)]^N\m(B),$ since $\tau _{K,N}^{(1-t)}(r)\rightarrow 1$ as $t\downarrow 0,$ there exists a $t_0$ and a $c>0$ such that $$\m(B_t)\geq c\m(B)>0\quad \textrm{for every $0<t<t_0$,}$$ and arguing as in Theorem \ref{evolution of subsets}  we get that $\m(B_s\cap B_t)=0.$

    Now choose an infinitely many distinct times $t_j\in (0,t_0)$. Since all of the sets $B_{t_j}$ are contained in a single compact causal diamond $B_{t_j}\subset J^+(B)\cap J^-(\spt \mu_1)$, we get that $$\m(J^+(B)\cap J^-(\spt \mu_1))<\infty.$$ However, the pairwise $\m$-disjointness of $B_{t_j}'s$ givs $$\m(J^+(B)\cap J^-(\spt \mu_1))\geq \m(\cup_{j=1}^\infty B_{t_j})=\sum_{j=1}^\infty\m(B_{t_j})\geq \sum_{j=1}^\infty c \m(B)=+\infty,$$ which is a contradiction. Therefore $\mu_{0,\alpha}$ is nonatomic for $\q$-a.e. $\alpha.$

    From here, the proof proceeds analogously as in the previous Theorem. We disintegrate $\pi$ via the quotient map $$\pi =\int_Q\pi_\alpha \q(d\alpha),\quad (P_1)_\#\pi_\alpha =\mu_{0,\alpha},$$ and define $\mu_{1,\alpha}:=(P_2)_\#\pi_\alpha.$ We thus have the disintegration $$\mu_q =\int _Q {\mu_{1,\alpha}}\q(d\alpha),$$ and since $\pi$ is concentrated on $\Gamma_\ll,$ each $\pi_\alpha$ transports mass monotonically forward along the corresponding transport ray. We moreover have that the conditional densities satisfy $$F(\alpha,t)\leq H(\alpha,t),$$ and thus the one-dimensional monotone rearrangement  \ref{monotone map} defines the map $T(\alpha,s)$ such that $(T_\alpha)_\#\mu_{0,\alpha}=\mu_{1,\alpha},$ and $T(\alpha,s)\geq s.$ From this point, the proof follows verbatim the previous Theorem \ref{Lorentzian Monge}.
\end{proof}

We now discuss the assumption $\m(A_{-,u})=0$ appearing in the hypothesis of Theorem \ref{Lorentzian Monge}.  If we assume that the spacetime satisfies both forward and backwards $p$-enb condition together with the full $TMCP(K,N)=TCMP^+(K,N)\cap TMCP^-(K,N)$, then it follows by Theorem \ref{endpoints measure zero} that $\m(A_{+,u})=\m(A_{-,u})=0$ and the full proof of Theorem \ref{Lorentzian Monge} follows. We note that by the proof of Theorem \ref{forward p-enb implies backward p-enb} in the Appendix, the backward $p$-enb assumption is redundant in the $TCD_p(K,N)$ setting, since  we have proved that forward $p$-enb $TCD_p(K,N)$  spactimes are automatically backward $p$-enb.

In the forward $p$-enb $TMCP^+(K,N)$ setting, we only get that $\m(A_{+,u})=0$ by Theorem \ref{endpoints measure zero}, that $\m(a_u)=0$ by Theorem \ref{a.c}, as well as  the forward evolution estimates from Theorem \ref{evolution of subsets} and Theorem \ref{TMCP along sets}. In particular, the $\m(A_{-,u})=0$ condition is only required to ensure that we have a clean disintegration of $\T_u^{nb}$ into transport rays $(X_\alpha,\ell\restrict_{X_\alpha},\m_\alpha)$ and crucially that $\m_\alpha\ll \L^1,$ so the proof utilizes the $TMCP^+(K,N)$ condition with the forward $p$-enb assumption only to ensure the existence and uniqueness of forward $\ell^p$ optimal transport maps.

\section{Applications of $TMCP^+(K,N)$ needle decomposition}

In this section, we will show the following applications of localization of $TMCP^+(K,N)$ condition.  We first show that under the assumption of $\m(A_-,u)$ for every reverse Lipschitz $u$, the  $TMCP^+(K,N)$ condition is equivalent to the $TMCP^+_{rLip}(K,N)$ condition, for which curvature is defined in terms of gradient flow curves of reverse-Lipschitz $u:Z\rightarrow \R$, where $Z\Subset X$ is a compact timelike interpolation set that we will define below. Using this equivalence, we provide an alternative interpretation $TMCP^+_{rLip}(K,N)$ condition using Raychaudhuri's ODE. 

We will also consider model forward $p$-enb $TMCP^+(K,N)$ spacetimes, in which the $TMCP^+(K,N)$ inequality is saturated. These model spacetimes correspond to the synthetic analogues of forward Minkowski, , and anti-de Sitter spacetimes. Remarkably, forward rigidity propagates backwards: if the $TMCP^+(K,N)$ bound is saturated, the spacetime is automatically non-branching in both directions--- $\m(A_{-,u})=0$ for every continuous reverse-Lipschitz potential $u.$

The decoupling of $TMCP^+(K,N)$ and $TMCP^-(K,N)$ conditions allows us to define \emph{blended $TMCP$ spacetimes}, where we consider different curvature parameters $K_1\neq K_2$ for forward and backward causal structures. We classify the Bonnet-Myers timelike diameter bounds for such blended spacetimes. 

Finally, we will provide an equivalence of the corresponding timelike curvature dimension bounds arising from the Lorentzian $L^1$ optimal transport problem. We show that the Localized $TCD^1_{rLip}(K,N)$ condition is equivalent to the Limiting $TCD_1(K,N)$ condition, where limiting is interpreted  as the $TCD_p(K,N)$ condition with $p\rightarrow1^-$.

\subsection{Equivalence of $TMCP^+(K,N)$ conditions}

Throughout this subsection, by a reverse-Lipschitz function, we mean a continuous reverse Lipschitz $u:Z\rightarrow \R$ defined on a \emph{compact timelike interpolation set} $Z\Subset X.$.

The following definition is a natural adaption of Theorem \ref{Big TMCP+ theorem}.

\begin{definition}\label{TMCPrLip}{($TMCP^+_{rLip}(K,N)$ spacetimes).}
 Let $(X,d,\m,\ell)$ be a gh LLS. Let  $\Gamma\subset X_\ll^2$ be compact and define the timelike interpolation set $Z\Subset X$. Let $u:Z\rightarrow\R$ be a continuous reverse 1-Lipschitz function, and define $$\Gamma_u:=\{(x,y)\in Z\times Z:u(y)-u(z)=\ell(x,y)>0\}.$$ We say that the metric spacetime satisfies the $TMCP^+_{u}(K,N)$ condition if there exists a family $\{(X_\alpha,\ell\restrict_{X_\alpha},\m_\alpha) \}_{\alpha \in Q}$ such that that:

 \begin{enumerate}
     \item There exists a disintegration of $\m\restrict_{\T_u}$ on $\{X_\alpha\}_{\alpha\in Q}$: $$\m\restrict_{\T_u} = \int_Q \m_\alpha \q(d\alpha),\quad \m_\alpha(X_\alpha)=1, \textrm{for $\q$-a.e. $\alpha\in Q$}.$$
     \item For $\q$-a.e. $\alpha \in Q$, $(X_\alpha,\ell\restrict_{X_\alpha},\m_\alpha) $ is a transport ray.
     \item For $\q$-a.e. $\alpha \in Q$, $(X_\alpha,\ell\restrict_{X_\alpha},\m_\alpha) $ verifies $TMCP^+(K,N)$;  i.e. the density $\m_\alpha = h(\alpha,\cdot) d\L^1$ satisfies \eqref{TMCP+ compact}\begin{equation}\frac{h(\alpha,\tau)}{h(\alpha,s)}\geq \sigma_{K,N-1}^{\big(\frac{\sigma_+-\tau}{\sigma_+-s}\big)}(\sigma_+-s)^{N-1},\end{equation} for all $a_\alpha<s\leq \tau<\sigma_+$, and $\sigma_+\in(s, b_\alpha]$ expressed in ray coordinates along the ray $X_\alpha$.
 \end{enumerate}
 We say that the spacetime verifies the $TMCP^+_{rLip}(K,N)$ condition if  $TMCP^+_{u}(K,N)$ holds for every compact $\Gamma\subset X_\ll^2$ defining its timelike interpolation set $Z$, and every  continuous reverse 1-Lipschitz $u:Z\rightarrow \R.$
 \end{definition}

We prove that $TMCP^+_{rLip}(K,N)$ implies $TMCP^+(K,N)$. This proof is a direct Lorentzian adaptation of Proposition 8.9  from \cite{CavallettiMilman2021}. This proof does not require any non-branching assumptions.
\begin{proposition}
    Let $(X,d,\m,\ell)$ be a gh LLS satisfying the $TMCP^{+}_{rLip}(K,N)$ condition with $K\in \R, N\in (1,\infty).$ Then it satisfies $TMCP^+(K,N)$ condition.
\end{proposition}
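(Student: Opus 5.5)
The plan is to adapt the Cavalletti--Milman argument: given $\mu_0=\rho_0\m\in\P_c(X)$ with $\spt\mu_0\subset I^-(x_1)$, pick a reverse-Lipschitz potential whose transport rays all point at $x_1$, transport along those rays, and read off the Rényi entropy inequality \eqref{Renyi Entropy Inequality} from the one-dimensional bound \eqref{TMCP+ compact}.

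\textbf{Step 1 (the potential).} Take $\Gamma:=\spt\mu_0\times\{x_1\}$. It is compact and contained in $X^2_\ll$, and its timelike interpolation set $Z$ is compact by global hyperbolicity. On $Z$ set
$$u(z):=-\ell(z,x_1).$$
The reverse triangle inequality gives $u(y)-u(x)=\ell(x,x_1)-\ell(y,x_1)\geq\ell(x,y)$ for $x\leq y$ in $Z$, so $u$ is reverse 1-Lipschitz. It is continuous because $\ell_+$ is continuous and $Z\subset J^-(x_1)$. Equality $u(y)-u(x)=\ell(x,y)>0$ holds exactly when $x\ll y$ and $y$ lies on a maximizing geodesic from $x$ to $x_1$. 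Hence every $x\in\spt\mu_0$ with $x\neq x_1$ belongs to $\T_u$, and $x_1$ is the common final point $b_\alpha$ of every ray.

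\textbf{Step 2 (localize and transport).} Apply $TMCP^+_u(K,N)$ to obtain $\m\restrict_{\T_u}=\int_Q h(\alpha,\cdot)\L^1\,\q(d\alpha)$ with each $h(\alpha,\cdot)$ satisfying \eqref{TMCP+ compact}, taking $\sigma_+=b_\alpha$ (the ray coordinate of $x_1$). Disintegrate $\mu_0=\int_Q\rho_0\,h(\alpha,\cdot)\L^1\,\q(d\alpha)$. On each ray define the contraction toward $x_1$,
$$s\mapsto s_t:=(1-t)s+tb_\alpha .$$
Let $\eta$ be the push-forward of $\mu_0$ by $x\mapsto$ the ray-geodesic from $x$ to $x_1$, and set $\mu_t:=(e_t)_\#\eta$.

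This $\eta$ lies in $OptTGeo^\ell_p(\mu_0,\delta_{x_1})$. Any coupling into a Dirac mass is unique and hence optimal, and the geodesics are $\ell$-maximizing by the ray isometry (Proposition \ref{ray map properties}). This is why no non-branching assumption is needed. Measurability of the selection follows from the Borel ray map $g$.

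\textbf{Step 3 (density computation).} The map $s\mapsto s_t$ has Jacobian $(1-t)$, so in ray coordinates
$$\mu_t=\int_Q \frac{\rho_0(g(\alpha,s))\,h(\alpha,s)}{(1-t)\,h(\alpha,s_t)}\,\m_\alpha\restrict_{s_t}\;\q(d\alpha).$$
This is a genuine density with respect to $\m$, because distinct rays are disjoint in $\T_u$ away from the endpoint $x_1$, which is $\m$-null when $t<1$. Now apply \eqref{TMCP+ compact} with $\tau=s_t$, using $\frac{\sigma_+-\tau}{\sigma_+-s}=1-t$ and $\sigma_+-s=\ell(x,x_1)$:
$$h(\alpha,s_t)\ \geq\ \sigma^{(1-t)}_{K,N-1}\big(\ell(x,x_1)\big)^{N-1}\,h(\alpha,s).$$
Change variables back and integrate:
$$-S_{N'}(\mu_t)=\int\rho_t^{1-1/N'}d\m=\int\rho_0^{1-1/N'}(1-t)^{1/N'}\Big(\frac{h(\alpha,s_t)}{h(\alpha,s)}\Big)^{1/N'}d\m \ \geq\ \int\tau^{(1-t)}_{K,N'}(\ell(x,x_1))\,\rho_0^{1-1/N'}\,d\m .$$
For the last inequality one also needs $\sigma_{K,N-1}^{(N-1)/N'}\geq\sigma_{K,N'-1}^{1-1/N'}$ when $N'\geq N$, which is the standard monotonicity of distortion coefficients in $N$. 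This is exactly \eqref{Renyi Entropy Inequality}.

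\textbf{Main obstacle.} The hardest step is Step 3, specifically justifying that $\mu_t\ll\m$ with the stated density, i.e. that the ray decomposition is a genuine partition of $\T_u\setminus\{x_1\}$ so the disintegration can be inverted. I would use strong consistency of the disintegration in item 1 of Definition \ref{TMCPrLip}, which says the $X_\alpha$ are disjoint up to their common endpoint. I would also need to check that $\mu_0(\T_u)=1$, i.e. that $\mu_0$ does not charge $Z\setminus\T_u$. This holds because every point of $\spt\mu_0$ lies in $I^-(x_1)$ and so starts a nontrivial ray.
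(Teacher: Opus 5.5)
Your proposal is correct and follows essentially the same route as the paper. Both use $\Gamma=\spt\mu_0\times\{x_1\}$ and $u=-\ell(\cdot,x_1)$, disintegrate $\mu_0$ along the rays given by $TMCP^+_u(K,N)$ (all ending at $x_1$), contract each ray toward $x_1$ and glue, with optimality automatic for a Dirac target. The only difference is that you carry out explicitly the change-of-variables entropy computation (including the monotonicity of $N\mapsto\tau^{(1-t)}_{K,N}(\theta)^N$ for $N'\geq N$, and using disjointness of the rays away from $x_1$), which the paper delegates to the remainder of the proof of Proposition 8.9 in Cavalletti--Milman.
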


\begin{proof}

Fix any $o\in X$ and consider any $\mu_0\in \P_c(X)$ such that $\mu_0\ll \m$ and $\spt\mu_0\subset I^-(o).$ Set $$\Gamma:=\spt \mu_0\times \{o\}\subset X_\ll^2,\quad Z:=\cup_{s\in [0,1]}Z_s(\Gamma),$$ where since $\Gamma$ is compact, we have $Z\Subset X$ by global hyperbolicity. Define the following continuous and reverse Lipschitz funciton $u:Z\rightarrow \R$ as $u(z):=-\ell(z,o).$ From the $TMCP^+_{rLip}(K,N)$ condition, we deduce a disintegration of $\m$ on $\T_u$ along transport rays $\{ X_\alpha\}_{\alpha \in Q}:$ $$\m\restrict_{\T_u} = \int_Q \m_\alpha \q(d\alpha),\quad \m_\alpha(X_\alpha) = 1, \textrm{for $\q$-a.e. $\alpha \in Q,$ }$$ where each $X_\alpha$ is a transport ray for $\Gamma_u$, and $(X_\alpha,\ell\restrict_{X_\alpha},\m_\alpha)$ verifies the $TMCP^+(K,N)$ condition. In particular, we have that $(x,o)\in \Gamma_u$ for every $x\in \spt \mu_0$.
Now consider any $\mu_0\in\P_c(X)$ such that $\mu_0\ll\m,$ and $\spt \mu_0\subset I^-(o).$ Then the pair $(\mu_0,\delta_o)$ is timelike $p$-dualisable for some $p\in (0,1)$ that we fix. Writing $\mu_0 = \rho_0 d\m$, the measurability of the disintegration gives us that the function $Q\ni \alpha\mapsto z_\alpha:=\int\rho_0(x) d\m_\alpha(dx)$ is $\q$-measurable, and hence the set $\bar{Q}:=\{ \alpha \in Q:z_\alpha\in (0,\infty)\}$ is $\q$-measurable. Moreover, since $z_\alpha<\infty $ for $\q$-a.e. $\alpha \in Q$, we have that $$\int_{\bar{Q}} z_\alpha \q(d\alpha) =\int _Q z_\alpha \q(d\alpha)=1.$$

Define $\mu_0^{\alpha}:=\frac{1}{z_{\alpha}}\rho_0\m_\alpha\in \P(X_{\alpha})$ for every $\alpha \in \bar{Q},$ and consider the $\ell^p$-optimal transport problem on $(X_\alpha,\ell\restrict_{X_\alpha},\m_\alpha)$ between $\mu_0^{\alpha}$ and $\delta_o.$ By the $TMCP^+(K,N)$ condition on $(X_\alpha,\ell\restrict_{X_\alpha},\m_\alpha)$, there exists an element $\nu^{\alpha}\in OptTGeo_p^{\ell}(\mu_0^{\alpha},\delta_o)$. Define $\nu: = \int_{\bar{Q}}\nu^{\alpha} z_\alpha\q(d\alpha),$ which satisfies $(e_0)_\#\nu = \mu_0$ and $(e_1)_\#\nu = \delta_o.$ We now show that $\nu \in OptTGeo_p^{\ell}(\mu_0,\delta_o)$ by showing that $t\mapsto (e_t)_\# \nu:=\mu_t$ is in fact a $\ell^p$-geodesic. Indeed, for any $0\leq s\leq t\leq 1$, consider $(e_s,e_t)_\#\nu$ which is a timelike coupling between $\mu_s$ and $\mu_t$, which gives us  

\begin{equation} 
\begin{split}
\ell_p(\mu_s,\mu_t)^p & \geq \int_{\bar{Q}}\int_{X_\alpha \times X_\alpha} \ell^p(x,y)(e_s,e_t)_\#\nu^{\alpha}(dxdy)z_\alpha \q(d\alpha) \\
 & = \int_{\bar{Q}}\int_{X_\alpha\times X_\alpha}(t-s)^p\ell^p(x,y)(e_0,e_1)_\#\nu^\alpha (dxdy)z_\alpha \q (d\alpha)\\
& = (t-s)^p\int_Q\int_{X_\alpha} \ell^p(x,o)\mu_0^{\alpha}(dx)z_\alpha \q (d\alpha)\\
&=(t-s)^p\int_Q\int_{X_\alpha} \ell^p(x,o)\rho_0(x)\m_\alpha(dx)\q(d\alpha) \\ 
& = (t-s)^p\int_X \ell^p(x,o)\rho_0\m(dx) \\ 
& = (t-s)^p \ell_p(\mu_0,\delta_o)^p.
\end{split}
\end{equation}

By the reverse triangle inequality, we get the reverse inequality, which gives us that $\nu\in OptTGeo^\ell_p(\mu_0,\delta_o).$

The fact that $\mu_t = (e_t)_\#\nu$ verifies the $TMCP^+(K,N)$ condition follows verbatim by following the remainder of the proof of Proposition 8.9 from \cite{CavallettiMilman2021}. 
\end{proof}

We now argue the converse, that forward $p$-enb $TMCP^+(K,N)$ spacetimes that satisfy $\m(A_{-,u})=0$ for every continuous reverse 1-Lipschitz $u:Z\rightarrow \R$ are in fact $TMCP^+_{rLip}(K,N)$. This result was already established in Theorem \ref{Big TMCP+ theorem}, but we provide an alternative proof based on displacement convexity of the Renyi entropy $S_N:\P(X)\rightarrow [-\infty,0].$   We will require this approach in the equivalence of $TCD(K,N)$ conditions in the last subsection.

\begin{proposition}
    Let $(X,d,\m,\ell)$ be a gh LLS forward $p$-enb $TMCP^+(K,N)$  spacetime with $K\in \R, N\in (1,\infty)$ and $p\in(0,1).$  Assume that  $\m(A_{-,u})=0$ for every continuous reverse-Lipschitz $u:Z\rightarrow \R$. Then it satisfies the $TMCP^{+}_{rLip}(K,N)$ condition.
\end{proposition}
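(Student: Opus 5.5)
To prove the statement, fix a compact $\Gamma\subset X^2_\ll$, its timelike interpolation set $Z\Subset X$, and a continuous reverse 1-Lipschitz $u:Z\rightarrow\R$. We must produce the three items of Definition \ref{TMCPrLip}. Items (1) and (2) come directly from Section \ref{Disintegration section}. Theorem \ref{endpoints measure zero} gives $\m(A_{+,u})=0$, and the standing hypothesis gives $\m(A_{-,u})=0$. Hence $\m(\T_u\setminus\T_u^{nb})=0$. Proposition \ref{equivalence relation} and Lemma \ref{same geodesic} show that the equivalence classes of $R_u^{nb}$ are timelike geodesics. The quotient map of Proposition \ref{quotient map} then yields the strongly consistent disintegration \eqref{disintegration}, which we normalize so that $\m_\alpha(X_\alpha)=1$ by absorbing $\|\m_\alpha\|$ into $\q$. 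Proposition \ref{m(a)=0} gives $\m(a_u)=0$. Theorem \ref{a.c} gives $\m_\alpha=h(\alpha,\cdot)\L^1$ on the ray interior, with at most an atom at $b_\alpha$.

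For item (3) we argue through the Renyi entropy rather than quoting Theorem \ref{Big TMCP+ theorem}. The aim is to obtain, for each target level, a one-dimensional entropy inequality on a.e. ray. Fix $\sigma_+$ and a measurable $C\subset Q$ on which the point $g(\alpha,\sigma_+)$ exists, and take a bounded interval $I$ with $I\subset(a_\alpha,\sigma_+)$ for all $\alpha\in C$. We restrict to a piece $Q_j$ on which $u(\alpha)$ is constant, so that the target level $u=c_j+\sigma_+$ is fixed. Set
\[
\mu_0=\m(g(C\times I))^{-1}\,\m\restrict_{g(C\times I)}, \qquad T(g(\alpha,s))=g(\alpha,\sigma_+).
\]
By Lemma \ref{p-cyclically monotone set}, the graph of $T$ is $\ell^p$-cyclically monotone, so $(\mu_0,T_\#\mu_0)$ is timelike $p$-dualisable. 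By Theorem \ref{Brenier Map}, the unique $\ell^p$-geodesic $(\mu_t)$ moves along the rays, with $\mu_t$ supported on $g(C\times I_t)$, where $I_t=(1-t)I+t\sigma_+$.

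We approximate the target $T_\#\mu_0$ by finitely many Dirac masses, exactly as in the proof of Proposition \ref{TMCP along sets}. On each cell we apply \eqref{Renyi Entropy Inequality} with target $\delta_{y_i}$, and we sum, using the $\m$-disjointness of the intermediate cells. Passing to the limit with lower semicontinuity of $S_N$ gives
\[
S_{N}(\mu_t)\le -\int \tau^{(1-t)}_{K,N}(\sigma_+-s)\,\rho_0^{1-1/N}\,d\m .
\]
We then disintegrate both sides along the rays. Because $\mu_t$ is the ray-wise translate-contraction of $\mu_0$, its density on $X_\alpha$ is
\[
\rho_{t}(g(\alpha,s_t))=\frac{\rho_0(g(\alpha,s))\,h(\alpha,s)}{(1-t)\,h(\alpha,s_t)}.
\]
Rewriting the entropy inequality ray by ray, and using that $C$ is arbitrary, gives the inequality for $\q$-a.e. $\alpha$ with $I$ in a countable family. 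Letting $I\downarrow\{s\}$ at Lebesgue points and using $\tau^N=(1-t)\,\sigma^{N-1}$ yields \eqref{TMCP+ compact}. The countable family consists of rational $\sigma_+$ and rational intervals; continuity of $h$ in $\sigma_+$ then extends the bound to all parameters.

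The main obstacle is justifying the ray-wise localization of the entropy inequality. The global inequality integrates over rays, so we can extract pointwise ray information only by testing against arbitrary $C$ and arbitrary densities. Using the uniform density on $g(C\times I)$ and shrinking $I$ is what reduces this to the measure inequality $\m(A_t)\ge\tau^N\m(A)$. Concretely, Hölder converts the entropy bound into $\m(A_t)\ge\tau^N\m(A)$, as in Proposition \ref{TMCP along sets}. Disintegration then turns it into
\[
\int_C\int_{I_t}h\,\ge\,\inf\sigma^{N-1}\int_C\int_I h .
\]
This last step is the delicate one: we must make sure the null sets depend only countably on $(I,\sigma_+)$, and that $h(\alpha,s)>0$ where the estimate is applied.
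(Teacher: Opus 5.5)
Your proposal is correct and follows essentially the paper's route. You use the disintegration of Section~\ref{Disintegration section}, then the set-evolution bound of Proposition~\ref{TMCP along sets} on ray-cylinders over an arbitrary base with a fixed $u$-level target, then Lebesgue differentiation; your ordering (localize in $\alpha$ for a countable family of intervals first, then differentiate) matches the more careful version in Theorem~\ref{Big TMCP+ theorem}. One small correction: the disintegrated inequality should carry a factor $(1-t)$, namely $\int_C\int_{I_t}h\ge(1-t)\inf\sigma^{N-1}\int_C\int_I h$, and this factor is exactly what cancels against $|I_t|=(1-t)|I|$ when you differentiate.
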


\begin{proof}
    Fix a continuous reverse-Lipschitz $u:Z\rightarrow \R$, such that $$Z=\cup_{s\in[0,1]}Z_s(\Gamma),\quad \Gamma\subset X_\ll^2.$$ The $TMCP^+(K,N)$ condition together with the forward $p$-enb and the $\m(A_{-,u})=0$ assumptions allows us to invoke the disintegration of $\T_u^{nb}$ from Theorem \ref{a.c}, whereby  $$\m\restrict_{\T^{nb}_u}=\int_Q\m_\alpha \q(d\alpha),$$ where for $\q$-a.e. $\alpha$, the transport ray $(X_\alpha,\ell\restrict_{X_\alpha},\m_\alpha)$ satisfy $$\m_\alpha =h(\alpha,\cdot)d\L^1.$$  Our goal is to use displacement convexity to show that $h(\alpha,\cdot)$ is in fact a $TMCP^+(K,N)$ density. We first utilize the ray coordinates $g$ to obtain a uniform estimate across transport rays.

Fix a Borel set $\bar{Q}\subset Q$ with $\q(\bar{Q})>0$, and choose $R_0,s>0$ and $\eps>0$ such that $[R_0,R_0+s+\eps]$ lies in the domains of the rays in $\bar{Q}$. Define the initial strip $$B_\eps:=\cup_{\alpha \in \bar{Q}}g(\alpha,[R_0,R_0+\eps])\implies \m(B_\eps)=\int_{\bar{Q}}\int_{R_0}^{R_0+\eps}h(\alpha,r)dr \q(d\alpha).$$ If we now evolve every point in the strip $B_\eps$ forward along their transport rays towards the target $R_0+s,$ then for the initial point $r\in [R_0,R_0+\eps]$, its remaining distance to the target is $R_0+s-r\in [s-\eps,s],$ and therefore the interpolation time $t$ has the coordinate $(1-t)r+t(R_0+s).$ Hence, the evolved strip $(B_\eps)_t$ and its measure are $$(B_\eps)_t = \cup_{\alpha \in \bar{Q}}g(\alpha,[R_0+t_s,R_0+ts+(1-t)\eps])\implies m((B_\eps)_t)=\int_{\bar{Q}}\int_{R_0+ts}^{R_0+ts+(1-t)\eps}h(\alpha,r)dr \q(d\alpha).$$ The set evolution estimate from Theorem \ref{TMCP along sets} gives us that $\m((B_\eps)_t)\geq \inf_{r\in[s-\eps,s]}[\tau_{K,N}^{(1-t)}(r)]^N\m(B_\eps),$ which we can rewrite as \begin{equation}\label{key inequality TMCPrlip} \int_{\bar{Q}}\int_{R_0+ts}^{R_0+ts+(1-t)\eps}h(\alpha,r)dr \q(d\alpha) \geq \inf_{r\in[s-\eps,s]}[\tau_{K,N}^{(1-t)}(r)]^N \int_{\bar{Q}}\int_{R_0}^{R_0+\eps}h(\alpha,r)dr \q(d\alpha).
\end{equation}

Dividing the left hand side by $\eps$ and utilizing the Lebesgue differentiation theorem (since the interval is of length $(1-t)\eps$, we get $$\frac{1}{\eps} \int_{\bar{Q}}\int_{R_0+ts}^{R_0+ts+(1-t)\eps}h(\alpha,r)dr \q(d\alpha)\rightarrow (1-t)\int_{\bar{Q}}h(\alpha,R_0+ts)\q(d\alpha).$$ Similarly, the right hand side becomes $$\frac{1}{\eps} \int_{\bar{Q}}\int_{R_0}^{R_0+\eps}h(\alpha,r)dr \q(d\alpha)\rightarrow \int_{\bar{Q}}h(\alpha,R_0)\q(d\alpha).$$ The continuity of the distortion coefficients gives us that $$\inf_{r\in[s-\eps,s]}[\tau_{K,N}^{(1-t)}(r)]^N \rightarrow \tau_{K,N}^{(1-t)}(s)^N,$$ and consequently, inequality \eqref{TMCPrLip} gives us $$(1-t)\int_{\bar{Q}}h(\alpha,R_0+ts)\q(d\alpha)\geq \tau_{K,N}^{(1-t)}(s)^N \int_{\bar{Q}}h(\alpha,R_0)\q(d\alpha).$$ Since this holds for every Borel $\bar{Q}\subset Q,$ we get $$(1-t)h(\alpha,R_0+ts)\geq \tau_{K,N}^{(1-t)}(s)^Nh(\alpha,R_0)\quad \textrm{for $\q$-a.e. $\alpha$}.$$ Using the rescaling $\tau_{K,N}^{(1-t)}(s)^N=(1-t)\sigma_{K,N-1}^{(1-t)}(s)^{N-1}$, we get $$h(\alpha,R_0+ts)\geq \sigma_{K,N-1}^{(1-t)}(s)^{N-1}h(\alpha,R_0),$$ which is precisly the one dimensional $TMCP^+(K,N)$ density inequality.
\end{proof}

We summarize the result of this subsection in the following Theorem.

\begin{theorem}
        Let $(X,d,\m,\ell)$ be a gh LLS satisfying the $TMCP^{+}_{rLip}(K,N)$ condition with $K\in \R, N\in (1,\infty).$ Then it satisfies $TMCP^+(K,N)$ condition, without any non-branching assumptions.
        
Conversely, assume that the spacetime satisfies the forward $p$-enb $TMCP^+(K,N)$ condition with $K\in \R, N\in (1,\infty)$ and $p\in(0,1).$  Assume that  $\m(A_{-,u})=0$ for every continuous reverse-Lipschitz $u:Z\rightarrow \R$. Then it satisfies $TMCP^{+}_{rLip}(K,N)$ condition.

By causal reversal of the hypotheses, we get the equivalence of $TMCP^-(K,N)$ and $TMCP^-_{rLip}(K,N).$
\end{theorem}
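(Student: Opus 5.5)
This theorem summarizes the two propositions proved just above, so the plan is to assemble them and then handle the reversed statement.

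\textbf{Forward implication.} For $TMCP^+_{rLip}(K,N)\Rightarrow TMCP^+(K,N)$, I will invoke the first proposition of this subsection directly. Fix $o\in X$ and $\mu_0\ll\m$ with $\spt\mu_0\subset I^-(o)$. Use the reverse-Lipschitz potential $u(z)=-\ell(z,o)$ on the interpolation set of $\spt\mu_0\times\{o\}$. The hypothesis then supplies a disintegration of $\m\restrict_{\T_u}$ into rays that each satisfy the one-dimensional $TMCP^+(K,N)$.

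Next I glue the ray-wise optimal dynamical plans into $\nu=\int\nu^\alpha z_\alpha\,\q(d\alpha)$. This plan is $\ell_p$-optimal by the computation $\ell_p(\mu_s,\mu_t)\ge (t-s)\ell_p(\mu_0,\delta_o)$ together with the reverse triangle inequality.

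The entropy inequality \eqref{Renyi Entropy Inequality} should follow ray by ray. The Rényi entropy splits along the disintegration as $S_{N'}(\mu_t)=-\int_Q\int(\rho_t^\alpha)^{1-1/N'}\,d\m_\alpha\, z_\alpha^{1-1/N'}\,d\q$, and on each ray Jensen applied to the one-dimensional $TMCP$ density gives the bound. No non-branching is needed anywhere in this direction, since every step takes place on single rays.

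\textbf{Converse.} The converse is the second proposition. Under forward $p$-enb, $TMCP^+(K,N)$ and $\m(A_{-,u})=0$, Theorem \ref{a.c} gives the ray disintegration with $\m_\alpha=h(\alpha,\cdot)\L^1$.

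I then apply Proposition \ref{TMCP along sets} to strips $B_\eps=g(\bar Q\times[R_0,R_0+\eps])$ evolving toward the coordinate $R_0+s$. Dividing by $\eps$, using Lebesgue differentiation, and exploiting the arbitrariness of $\bar Q$ gives $h(\alpha,R_0+ts)\ge\sigma^{(1-t)}_{K,N-1}(s)^{N-1}h(\alpha,R_0)$ for $\q$-a.e.\ $\alpha$.

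The main obstacle is making this hold simultaneously for all $R_0,s,t$ rather than only for a.e.\ $\alpha$ at each fixed choice. I would first restrict to rational parameters to get a single full-measure set of rays, then extend to all parameters by continuity of $\sigma$ and by choosing Lebesgue-point representatives. Alternatively, Theorem \ref{Big TMCP+ theorem} already yields \eqref{TMCP+ density} directly, and it serves as a fallback.

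\textbf{Reversed statement.} The equivalence of $TMCP^-$ and $TMCP^-_{rLip}$ follows by applying both directions to the causally reversed spacetime $(X,d,\m,\bar\ell)$. Under reversal, forward $p$-enb becomes backward $p$-enb and $A_{-,u}$ becomes $A_{+,u}$, so the hypotheses translate correctly. Reverse-Lipschitz functions for $\bar\ell$ are exactly $-u$ for $u$ reverse-Lipschitz for $\ell$, so the class of admissible potentials is preserved.
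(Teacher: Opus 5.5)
Your proposal is correct and follows the paper's route. The forward implication uses the potential $u=-\ell(\cdot,o)$, glues the ray-wise plans into $\nu=\int\nu^\alpha z_\alpha\,\q(d\alpha)$, checks that $(e_t)_\#\nu$ is an $\ell_p$-geodesic, and obtains the entropy bound ray by ray as in Proposition 8.9 of \cite{CavallettiMilman2021}. The converse uses the strip evolution of Proposition \ref{TMCP along sets} with Lebesgue differentiation, or directly Theorem \ref{Big TMCP+ theorem}, and the $TMCP^-$ statement follows by causal reversal.

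Two minor remarks. First, the ray-wise entropy bound is a pointwise change of variables along the contraction $s\mapsto(1-t)s+t b_\alpha$, so Jensen is not needed: the identity $z_\alpha^{1-1/N'}(\rho_0^\alpha)^{1-1/N'}=\rho_0^{1-1/N'}$ does the integration. Second, to make the density inequality hold simultaneously in all parameters, pass to countably many rational intervals at the level of the integrated inequality before differentiating, because prescribed rational points need not be Lebesgue points of $h(\alpha,\cdot)$ for $\q$-a.e.\ $\alpha$.
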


\subsection{Localization Theorem via Raychaudhuri ODE}
We will give a natural interpretation of the $TMCP^+(K,N)$ condition in terms of Raychaudhuri's equation along gradient flow curves associated to reverse 1-Lipschittz functions. We first recall the derivation of Raychaudhuri's equation. The formulation below follows the presentation from Wald \cite{Wald1984}.

Let $(M,g)$ be a smooth Lorentzian manifold of dimension $n=N,$ fix a timelike geodesic $\gamma(t)$ parametrized by proper time $t$, and let $\dot{\gamma}$ be the unit timelike tangent vector. We now consider a timelike geodesic congruence around $\gamma$, that is, consider a one-parameter family of geodesics $\gamma_s(t)$ with $\gamma_0=\gamma.$ The variation vector field $J(t)=\frac{\partial}{\partial s}\gamma_s(t)\big|_{s=0}$ is then a  Jacobi field along $\gamma,$ and satisfies the Jacobi equation $$\nabla_{\dot{\gamma}}\nabla_{\dot{\gamma}} J+R(J,\dot{\gamma})\dot{\gamma}=0$$ where $R$ is the Riemann curvature tensor. The components of the Jacobi field $J$ parallel to $\dot{\gamma}$ does not describe geodesic deviation of the congruence, so we focus on the remaining $N-1$ normal directions, i.e. those that satisfy $g(J,\dot{\gamma})=0$. The normal bundle $\dot \gamma^\perp$ has dimension $N-1$, so we can choose an orthonormal basis  $\{ E_1(t),\dots,E_{N-1}(t)\}$ of spacelike vectors orthogonal along $\dot{\gamma},$ i.e.: $$g(E_i,E_j)=\delta_{ij},\quad g(E_i,\dot{\gamma})=0,\quad \nabla_{\dot{\gamma}}E_i = 0.$$ Now, denoting by $J_{(1)},\dots J_{(N-1)}$ then $N-1$ linearly independent normal Jacobi fields, we can express each one in the parallel basis above as $$J_{(j)}(t) = \sum_{i=1}^{N-1}A^i_j(t)E_i(t),$$ where the $(N-1)\times (N-1)$ matrix $A(t)$ satisfies the matrix Jacobi equation $$\ddot{A}(t)+\mathcal{R}(t)A(t)=0,$$ where $\mathcal{R}(t)$ is known as the \emph{tidal tensor} $\mathcal{R}_{ij}(t)=g(R(E_i,\dot{\gamma})\dot{\gamma},E_j)$, symmetric by the symmetries of the Riemann tensor.

The Jacobi fields $J_{(j)}$ describe how an infinitesimal spacelike volume evolves along $\gamma.$ The volume density is defined as $h(t):=|\det(A(t))|$ on an interval where $A(t)$ is nonsingular, where geometrically, the evolution of  an infinitesimal spacelike parallelepiped at $t_0$ spanned by $J_{(1)}(t_0),\dots J_{(N-1)}(t_0)$ with volume $V_0$ will have volume at time $t$ given by $V_0 \frac{h(t)}{h(t_0)}.$ Defining the \emph{expansion tensor} $B(t)$ as  $$B(t) = \dot{A}(t)A(t)^{-1},$$ and the \emph{expansion scalar} $\theta(t)$ as its trace $$\theta(t) = \tr(B(t))=\tr(\dot{A}(t)A(t)^{-1}),$$ we can use the Jacobi formula on the determinant $\frac{d}{dt}\det(A(t)) = \det(A(t))\tr(\dot{A(t)}A(t)^{-1}),$ which gives us that $$\dot{h}(t)=h(t)\theta(t)\implies \theta(t)=(\log h(t))' = \frac{\dot{h}(t)}{h(t)}.$$

Decomposing the expansion tensor $B(t)$  into into its trace, symmetric, and anti-symmetric parts as $$B=\frac{\theta}{N-1}I +\sigma+\omega,$$ with $$\sigma = \frac{1}{2}(B+B^T)-\frac{\theta}{N-1}I,\quad \tr(\sigma)=0,\quad \sigma^T=\sigma,$$ $$\omega =\frac{1}{2}(B-B^T),\quad \omega^T=-\omega$$ and these quantities are known respectively as the shear $\sigma$ and vorticity $\omega$.

The evolution of the expansion tensor $B(t)=\dot{A}A^{-1}$ obeys the Riccati equation: $$\dot{B} = \ddot{A}A^{-1}-\dot{A}A^{-1}\dot{A}A^{-1} = \ddot{A}A^{-1}-B^2 =-\mathcal{R}-B^2, $$ where we have used the Jacobi equation $\ddot{A}=-\mathcal{R}A.$ Now taking the trace throughout of the equation, we get $$\tr(\dot{B})+\tr(B^2) = -\tr(\mathcal{R}),$$ where we have that $\tr{\dot{B}} = \dot{\theta}$, $\tr(\mathcal{R}) = \Ric(\dot{\gamma},\dot{\gamma})$, and the decomposition of $B^2$ into its trace, shear and vorticity parts after expansion of the trace $$\tr(B^2) = \frac{\theta^2}{N-1}+\tr(\sigma^2)+\tr(\omega^2) +\textrm{cross terms},$$ where the trace of the cross terms $\tr(\sigma\omega)=0$ vanish since $\tr(\sigma)=\tr(\omega)=0$ with $\sigma$  symmetric and $\omega$ assymetric. Furthermore, $\tr(\sigma^2) = \| \sigma\|^2\geq0, \tr(\omega^2)=-\|\omega\|^2\leq 0$, and we get the full Raychaudhuri equation \begin{equation}\label{Raychaudhri}
    \dot{\theta} +\frac{\theta^2}{N-1}=-\Ric(\dot{\gamma},\dot{\gamma})-\| \sigma\|^2+\|\omega \|^2.
\end{equation}

We now make the key assumption that the geodesic congruence is vorticity free: \begin{equation} \omega = 0,\end{equation}
which follows whenever the geodesic congruence is \marginpar{Singularity theorem needed here} hypersurface orthogonal. In the smooth transport setting relevant here, the geodesic congruence is locally generated by a potential, and is therefore hypersurface orthogonal, and hence its vorticity vanishes. Since the shear $\| \sigma\|^2$ is manifestly non-negative, the Raychaudhuri under the Ricci curvature bound $\Ric(\dot{\gamma},\dot{\gamma})\geq K$ reduces to $$\dot{\theta} +\frac{\theta^2}{N-1}\leq -K.$$

The Model ODE with equality is exactly the Riccati ODE $$\dot{\theta}_K+ \frac{\theta_K^2}{N-1}=-K,$$ with the solution $\theta_K(t)=\log'(s_K(t\sqrt{K/(N-1)})^{N-1}).$ Recalling that our distortion profile functions $s_K$ from \eqref{profile function} satisfy $s''_K+Ks_K=0$, for a congruence focusing to a Dirac at the future point $b$, both $\theta$ and the model solution $\theta^+_K, $ where$$\theta_K^+:=\frac{d}{dt}\log\bigg[s_K\bigg(\frac{b-t}{\sqrt{N-1}}\bigg)^{N-1}\bigg] = -\sqrt{N-1}\frac{\dot{s}_K\big(\frac{b-t}{\sqrt{N-1}}\big)}{s_K\big(\frac{b-t}{\sqrt{N-1}}\big)}.$$ Both $\theta$ and $\theta_K^+$ diverge to $-\infty$ at the future endpoint $b.$ The Riccati comparison principle then yields $$\theta(t)\geq \theta^+_K(t),$$ and since $\theta = (\log h)'$ this is exactly $$(\log h(t))'\geq \frac{d}{dt}\log\bigg[s_K\bigg(\frac{b-t}{\sqrt{N-1}}\bigg)^{N-1}\bigg].$$ Integrating from $s$ to $\tau$ gives us exactly $$\frac{h(\tau)}{h(s)}\geq \bigg(\frac{s_K((b-\tau)\sqrt{K/(N-1)})}{s_k((b-s)\sqrt{K/(N-1)})}\bigg)^{N-1}=\sigma_{K,N-1}^{\big(\frac{b-\tau}{b-s}\big)}(b-s)^{N-1}$$ which is precisely the localized $TMCP^+(K,N)$ density bound from \ref{TMCP+ compact}. Analogously, the $TMCP^-(K,N)$ density bound follows by applying the same argument after reversing the time orientation. With the Dirac at the past endpoint $a,$  the Model solution being $$\theta_K^-:=\log'(s_K(t-a)\sqrt{K/(N-1)})^{N-1}),$$ which diverges to $+\infty$ at $a,$ and the Riccati comparison principle gives us $\theta\leq \theta^-_K,$ which is precisely \eqref{TMCP- compact}

The preceeding smooth computation was carried out for the integer dimension $N=dim(M).$ Since the resulting one-dimensional density inequality can be applied for every synthetic dimension parameter $N\in (1,\infty),$ and recalling the equivalence of $TMCP^+_{rLip}(K,N)$ with $TMCP^+(K,N)$ from the previous subsection, we obtain the following Raychaudhuri characterization along almost every transport ray $(X_\alpha,\ell\restrict_{X_\alpha},\m_\alpha).$ where volume density $h(\alpha,\cdot)$ satisfies the logarithmic derivative bound $$(\log h(\alpha,t))'\geq -\sqrt{N-1}\frac{\dot{s}_K\big(\frac{b-t}{\sqrt{N-1}}\big)}{s_K\big(\frac{b-t}{\sqrt{N-1}}\big)}.$$ 

We now turn to model spacetimes. We call a spacetime a \emph{forward Model $TMCP^+(K,N)$ spacetime} if the entropy inequality \eqref{Entropic Inequality} defining $TMCP^+(K,N)$ is attained as an equality for every admissible contraction $\mu_t$ of an absolutely continuous measure $\mu_0$ to a future Dirac point $\delta_x$, at every time $t\in[0,1)$. These correspond to the synthetic analogues of spacetimes whose forward behaviour is Minkowski $(K=0),$ scaled de Sitter $(K<0)$ and scaled anti-de Sitter $(K>0)$, \cite[Remark 5.11]{CavallettiMondino2024}. Such spacetimes are forced to be backward non-branching, where our argument is inspired by Rajala-Sturm \cite{RajalaSturm2014}.

\begin{proposition}[Rigidity of forward model spacetimes]
 Assume that the forward $p$-enb $TMCP^+(K,N)$ spacetime is Model, i.e. the $TMCP^+(K,N)$ density bound is an equality. Then we have $\m(A_{-,u})=0$ for every continuous reverse-Lipschitz $u:Z\rightarrow \R$, for $Z$ as in Definition \ref{TMCPrLip}
\end{proposition}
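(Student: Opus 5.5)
We argue by contradiction, following the Rajala--Sturm strategy: backward branching on a set of positive measure forces two disjoint pieces of mass to contract onto a common region, which is incompatible with equality in the $TMCP^+(K,N)$ entropy inequality. Fix a continuous reverse-Lipschitz $u:Z\rightarrow\R$ and suppose $\m(A_{-,u})>0$.

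\textbf{Step 1: a positive-measure set of backward-branching configurations.} By Remark \ref{causal reversal backwards branching synchronization} (the causally reversed version of Lemma \ref{measurable correspondence branching}), there is an $\m$-measurable selection $x\mapsto(\gamma^1_x,\gamma^2_x)$ of two past-directed geodesics in $G$ ending at $x$. These satisfy $u(\gamma^1_{x,s})=u(\gamma^2_{x,s})$ and $(\gamma^1_{x,s},\gamma^2_{x,s})\notin R_u$. Using Lusin's theorem and Step 1 of Theorem \ref{endpoints measure zero}, we restrict to a compact $B\subset A_{-,u}$ of positive measure on which the selection is continuous. On $B$ we also fix a common past $u$-level $c$ reached by both branches. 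This gives two measurable maps $S_1,S_2$, with $S_i(x):=\gamma^i_{x,s(x)}$ on the level $\{u=c\}$.

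\textbf{Step 2: the two pieces have positive measure.} We thicken the sets $S_1(B)$ and $S_2(B)$ backward along their own rays. Concretely, we take the past evolutions $D_i$ of small segments below the level $c$, for instance via the causal reversal of the set-evolution construction. Since $\m(A_{+,u})=0$ and the two branches are not $R_u$-related, $D_1\cap D_2=\emptyset$ up to null sets. The forward evolutions of the two pieces under the fixed-length relation of Proposition \ref{evolution of subsets} meet along the common continuation through $B$. To show that each $D_i$ has positive $\m$-measure, we apply Proposition \ref{TMCP along sets} in the forward direction from $D_i$ toward a point $o$ far in the future on the shared geodesics. This requires choosing $B$ inside a small neighbourhood so that all continuations pass near one point. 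We then approximate by finite targets exactly as in the proof of Proposition \ref{TMCP along sets}.

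\textbf{Step 3: contradiction with equality.} Set $\mu^i_0:=\m(D_i)^{-1}\m\restrict_{D_i}$ and $\mu_0:=\frac12(\mu^1_0+\mu^2_0)$, and contract each toward a single future point $o$ (or toward finite targets $y_j$, as before). In a Model spacetime the entropy inequality \eqref{Renyi Entropy Inequality} is an equality for $\mu^1_0$, for $\mu^2_0$, and for $\mu_0$. By Theorem \ref{Brenier Map} each contraction is induced by a map, so the intermediate measures $\mu^{1}_t$ and $\mu^2_t$ must be $\m$-a.e. disjoint. On the other hand, equality in the Hölder step used to derive \eqref{summed cells} forces $\rho_t$ to be constant on its support, with $\m(\{\rho_t>0\})$ given exactly by the $\tau$-volume. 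Hence the support of $\mu_t$ has measure $\m(\{\rho^1_t>0\})+\m(\{\rho^2_t>0\})$ while being a subset of the union. This is consistent, so the real contradiction must come from the overlap: for $t$ beyond the branching time, both families lie on the common geodesic segments through $B$. Those points would then be images of both $D_1$ and $D_2$, contradicting the map property of Theorem \ref{Brenier Map}, unless $\m(B_t)=0$. But $\m(B_t)=0$ contradicts the Model equality lower bound $\m(B_t)=\tau^{(1-t)}_{K,N}(r)^N\m(D_1\cup D_2)>0$.

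\textbf{Main obstacle.} The hardest part is making rigorous that the forward images of $D_1$ and $D_2$ genuinely overlap in a set of positive $\m$-measure. Equality alone seems insufficient without a non-branching input, so I would try to exploit the volume identity. Equality forces $\m(\text{evolution of }D_1\cup D_2)=\m(\text{evol }D_1)+\m(\text{evol }D_2)$, while after the branching time the evolutions coincide with the evolution of $B$ counted once. Comparing the two exact volumes computed from $\tau^{(1-t)}_{K,N}$ would then give a factor-$2$ discrepancy, as in the proof of Proposition \ref{m(a)=0}.
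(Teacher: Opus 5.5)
Your Step 3 locates the contradiction in the wrong place. Theorem \ref{Brenier Map}, and the forward $p$-enb hypothesis behind it, only forbids a source point from splitting towards two different targets. Many-to-one merging, i.e.\ two disjoint pieces of mass flowing onto the same points, is exactly the backward branching under study and is compatible with the map property. So neither ``$\mu^1_t\perp\mu^2_t$'' nor the contradiction from points lying in the image of both $D_1$ and $D_2$ follows.

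The volume bookkeeping does not rescue this. The Model hypothesis is an identity for $S_N$; it does not give equality in the H\"older step. For uniform $\mu_0$ on $E$ with constant length $r$ you still only get $\m(\{\rho_t>0\})\geq\tau^{(1-t)}_{K,N}(r)^N\m(E)$, never the exact volumes used in Step 3 and in your closing paragraph.

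Two further steps fail as stated. First, forward $TMCP^+(K,N)$ bounds the measure of a forward image from below by that of its source. Proposition \ref{TMCP along sets} therefore cannot give $\m(D_i)>0$ from $\m(B)>0$. Second, contracting $D_1,D_2$ towards a Dirac $\delta_o$ does not make the two evolutions meet, since geodesics towards $o$ need not follow the $u$-rays through $B$.

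Your closing paragraph points in the right direction. The missing idea is to do the factor-two comparison with the entropy itself, after building the two branches so that their future marginals coincide. The paper proceeds as follows.
\begin{itemize}
\item It fixes a backward proper time $a$ and lets $P^L(x),P^R(x)$ be the points at time $a$ before $x$ on the two branches. By Lusin these are placed in disjoint balls.
\item It attaches a common forward continuation $F(x)$ of fixed length $r$, taken from Proposition \ref{evolution of subsets}. All pairs $(P^L(x),F(x))$ and $(P^R(x),F(x))$ then lie in $\Gamma_u$ with the same length $a+r$, so they are $\ell^p$-cyclically monotone by Lemma \ref{p-cyclically monotone set}.
\item It pushes the \emph{same} $\eta\in\P(B)$ along both concatenated geodesics. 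This gives $\mu_0^L\perp\mu_0^R$ but identical marginals $\nu_t$ after the merging time.
\end{itemize}
Mutual singularity gives $S_N(\tfrac12\mu_0^L+\tfrac12\mu_0^R)=2^{1/N}E_0$. Equality for each branch and for the mixture, at a time after merging, gives $\tau E_0=S_N(\nu_t)=2^{1/N}\tau E_0$, which forces $2^{1/N}=1$.

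More generally, strict subadditivity of $r\mapsto r^{1-1/N}$ shows that equality for the mixture forces the two evolved pieces to be mutually singular. Any positive-measure overlap is therefore already a contradiction, which is the correct form of your ``factor two''.

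The paper simply selects $\eta$ with $(P^L)_\#\eta,(P^R)_\#\eta\ll\m$ without further argument. Whatever justification is given for that, it cannot come from a forward estimate as in your Step 2.
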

\begin{proof}
    
We already know that such spacetimes satisfy $\m(A_{+,u})=0$ by Theorem \ref{endpoints measure zero}.
Suppose for the sake of contradiction that $\m(A_{-,u})>0,$ and note that $A_{-,u}\subset \T_u$. By the backward version of the synchronization Lemma \ref{branching geodesics}, together with Remark \ref{causal reversal backwards branching synchronization}, there exists an $\m$-measurable map $$S:A_{-,u}\rightarrow G\times G,\quad S(x)=(\g_x^L,\g_x^R)$$ such that for every $x\in A_{-,u},$ the geodesics $\g_x^L,\g_x^R$ are nonconstant backwards transport rays terminating at $x,$ and the two geodesics have the same $u$-values despite not belonging to the same transport ray. More specifically, we reparametrize these two geodesics by proper time with their common endpoint $x$ at time $0$, we get  $$\gamma_x^L[-a_x^L,0]\rightarrow Z,\quad \gamma_x^R[-a_x^R,0]\rightarrow Z,$$ such that $\g_x^L(0)=\g_x^R(0)=x$ and the synchronization by the $u$ level sets gives us $$u(\g_x^L(-s))=u(\g_x^R(-s)).$$ Since $(\g_x^L(-s),x),(\g_x^R(-s),x)\in \G_u$, we have $$u(x)-u(\g_x^i(-s))=\ell(\g_x^i(-s),x)=s\quad \textrm{for $i\in \{ L,R\}$}.$$ Since both geodesics are nonconstant, we have $a_x^L,a_x^R>0,$ and for $n\in \N$ we set $$E_n = \{ x\in A_{-,u}:a_x^L>\frac{1}{n},a_x^R>\frac{1}{n}\},$$ and in particular we have $$A_{-,u}=\cup_{n=1}^\infty E_n.$$ Since$\m(A_{-,u})>0$, there exists an $n$ such that $\m(E_n)>0$. Fix an $a$ such that $0<a<\frac{1}{n}.$ For $x\in E_n,$ define $$P^L(x):=\g_x^L(-a),\quad P^R(x):=\g_x^R(-a),$$ for which we have $\ell(P^L(x),x)=\ell(P^R(x),x)=a,$ and moreover we have $(P^L(x),P^R(x))\not\in R_u$ by construction. The maps $P^L$ and $P^R$ are $\m$-measurable, since the selection map $S$ and the evaluation map at proper time $a$ are both measurable. By Lusin's theorem, there exists a compact set $B_0\subset E_n$ with $\m(B_0)>0$ such that both $P^L:B_0\rightarrow Z$ and $P^R:B_0\rightarrow Z$ are continuous.

Choose $x_0\in \spt \m\restrict_{B_0}$, and since $P^L(x_0)\neq P^R(x_0),$ set $$\delta :=d(P^L(x_0),P^R(x_0))>0.$$ By continuity of $P^L,P^R$, there exist a neighbourhood $W\subset B_0$ of $x_0$ such that $$P^L(W)\subset B_{\delta/r}(P^L(x_0)),\quad P^R(W)\subset B_{\delta/3}(P^R(x_0)).$$ Finally, choose a compact $B\subset W$ with $\m(B)>0$, and set $$U^L:=B_{\delta/3}(P^L(x_0)),\quad U^R:=B_{\delta/3}(P^R(x_0)),$$ which are both disjoint. In particular, we have $P^i(B)\subset U^i$ for $i\in \{ L,R\},$ and these two synchronized backward branches form two spatially separated sets at proper time $a$ from $B$.

We now choose a common future continuation of the points of $B$. Since $B\subset A_{-,u}\subset \T_u,$ we get by Proposition \ref{evolution of subsets} (after restricting $B$ once again if needed) an $r>0$ together with a measurable map $F:B\rightarrow Z$ such that for every $x\in B$ $$(x,F(x))\in\Gamma_u,\quad \ell(x,F(x))=r.$$ Define $L:=a+r,$ then for every $x\in B,$ we have $u(x)-u(P^L(x))=a,$ and $u(F(x))-u(x)=r$, and hence we have that $u(F(x)-u(P^L(x))=a+r=L.$ The reverse triangle inequality gives us $$\ell(P^L(x),F(x))\geq \ell(P^L(x),x)+\ell(x,F(x))=L,$$ and the reverse Lipschitz inequality gives us $\ell(P^L(x),F(x))\leq u(F(x))-u(P^L(x))=L.$ We therefore have $$\ell(P^L(x),F(x))=L,\quad (P^L(x),F(x))\in \Gamma_u,$$ and running the same argument for $P^R$ gives us $$\ell(P^R(x),F(x))=L,\quad (P^R(x),F(x))\in \Gamma_u.$$ For each $x\in B,$ we choose a maximizing timelike segment from $x$ to $F(x)$, and concatenating this same forward segment with the left and right backwards branching, after rescaling to $[0,1],$ we now get the backwards branching geodesics that we still denote by $\g_x^L,$ and $\g_x^R$ that now both have the total timelike length $L$.  Defining the merge time as $t_m:=\frac{a}{L},$ we get that $$\g_x^L(t_m)=\g_x^R(t_m)=x,$$ and moreover, since the same future segment $x\rightarrow F(x)$ was chosen for both geodesics, we have that $\g_x^L(t)=\g_x^R(t)$ for all $t\in [t_m,1].$

Restricting the synchronized positive measure family once more, we choose a probability measure $\eta\in \P(B)$ such that we have $$\mu_0^L:=(P^L)_\#\eta\ll m,\quad \mu_0^R:=(P^R)_\#\eta\ll m.$$ Since by construction we have $P^L(B)\subset U^L,P^R(B)\subset U^R$ and $U^L, U^R$ are disjoint, we have $\mu_0^L\perp\mu_0^R.$ Now we define the two dynamical measures $${\pi}^L:=(x\mapsto \g_x^L)_\#\eta,\quad \pi^R:=(x\mapsto \g_x^R)_\#\eta,$$ their where their  marginals are $\mu_0^L,\mu_0^R$ respectively. Since the same measure $\eta$ is used in both constructions, at the merging time $t_m$ we have $$(e_{t_m})_\#\pi^L=(x\mapsto \g_x^L(t_m))_\#\eta = (x\mapsto x)_\# \eta=\eta,$$ and similarly $$(e_{t_m})_\# \pi^R=\eta.$$ For $t\geq t_m$, since the geodesics have merged, we have $$(e_t)_\# \pi^L=(e_t)_\#\pi^R,$$ and we denote this common marginal by $\nu_t.$

Now consider the endpoint relations $$\Lambda^L=\{ (P^L(x),F(x)):x\in B\},\quad \Lambda^R=\{ (P^R(x),F(x)):x\in B\}.$$ We have by construction that $\Lambda^L\cup \Lambda^R\subset \Gamma_u$, and every pair has distance $L$. By Lemma \ref{p-cyclically monotone set}, it follows that $\Lambda^L\cup\Lambda^R$ is $\ell^p$-cyclically monotone, and hence both $\pi^L$ and $\pi^R$ are $\ell^p$-optimal. Since the spacetime is Model, and every geodesic on $\pi^L$ has length $L,$ we have for $t\in (t_m,1):$   $$S_N(\nu_t)=\tau_{K,N}^{(1-t)}(L)S_N(\mu_0^L) = \tau_{K,N}^{(1-t)}(L)S_N(\mu_0^R).$$ Set $E_0:=S_N(\mu_0^L)=S_N(\mu_0^R)>0,$ we thus have \begin{equation}\label{first model entropy}
    S_N(\nu_t)=\tau_{K,N}^{(1-t)}(L)E_0.
\end{equation}
Now defining $\mu_0:=\frac{1}{2}\mu_0^L+\frac{1}{2}\mu_0^R,$ because $\mu_0^L\perp \mu_0^R$, we have $$S_N(\mu_0)=\bigg(\frac{1}{2}\bigg)^{1-1/N}(S_N(\mu_0^L)+S_N(\mu_0^R))=2^{1/N}E_0.$$ Hence \begin{equation}\label{second model entropy}
    S_N(\mu_t)=2^{1/N}E_0.
\end{equation}

Define finally the dynamical plan  $$\eta:=\frac{1}{2}\pi^L+\frac{1}{2}\pi^R,$$ where its initial marginal is $\mu_0,$ its final marginal is $\mu_1$, it is $\ell^p$-optimal since  its endpoint relation is contained in the fixed-length cyclically monotone set $\Lambda^L\cup \Lambda^R.$ the uniqueness of Theorem \ref{Brenier Map} applies, and so $\pi$ is an optimal dynamical plan associated to $(\mu_0,\mu_1).$

For $t\geq t_m,$ we have $$(e_t)_\#\eta = \frac{1}{2}(e_t)_\#\pi^L+\frac{1}{2}(e_t)_\#\pi^R=\frac{1}{2}\nu_t+\frac{1}{2}\nu_t=\nu_t,$$ and thus we have $\mu_t=\nu_t.$ Applying the Model equality we get $$S_N(\mu_t)=\tau_{K,N}^{(1-t)}(L)S_N(\mu_0).$$ Thus, since $\mu_t=\nu_t,$ we get by combining \ref{first model entropy} and \ref{second model entropy} that $$2^{1/N}\tau_{K,N}^{(1-t)}(L)E_0=\tau_{K,N}^{(1-t)}(L)E_0,$$ and since $E_0>0$ and $\tau_{K,N}^{(1-t)}(L)>0$, we get a contradiction since we are concluding that $2^{1/N}=1$ for $N>1$. Hence $\m(A_{-,u})=0.$
\end{proof}

\begin{remark}
    In the Appendix, we use a similar construction to show that the  forward $p$-enb $TCD_p(K,N)$ spacetimes are in fact backward $p$-enb. The argument crucially uses that the $TCD_p(K,N)$ entropy inequality is between two absolutely continuous measures, and so that proof does not directly extend to the $TMCP^+(K,N)$ setting, where the entropy inequality is from an absolutely continuous measure to a Dirac
\end{remark}

In particular, together with the conclusion $\m(A_{+,u})=0$ from Theorem \ref{endpoints measure zero}, the transport relation $R_u$ is an equivalence relation on a set of full measure, and the disintegration into transport rays is well defined for every continuous reverse-Lipschitz $u:Z\rightarrow \R$. Thus, the $L^1$ transport relation associated to every admissible reverse-Lipschitz potential is nonbranching in both forward and backward in time outside an $\m$-null set. We can thus apply the equivalence of Model $TMCP^+(K,N)$ and $TMCP^+_{rLip}(K,N)$ condition at the level of localization without the assumption of $\m(A_{-,u})=0$. In particular, the Model $TMCP^+(K,N)$ condition holds with equality along the transport ray $(X_\alpha,\ell\restrict_{X_\alpha},\m_\alpha)$, this means that $$h(\alpha,t) = C_\alpha\cdot s_K\bigg(\frac{b_\alpha - t}{\sqrt{N-1})}\bigg)^{N-1}.$$  

The Bonnet-Myers type consequences depend on the sign of $K.$  

\begin{enumerate}
    \item \textbf{Case $K>0$}. Set $c= \sqrt{\frac{K}{N-1}},$ and write $$h(\alpha,t)=C_\alpha\sin(c(b_\alpha-t))^{N-1}.$$ Since the density is positive in the interior of the transport ray, the $\sin$ function can not pass through its first zero before reaching the past endpoint. Hence, we get the corresponding Bonnet-Myers type bound $$L_\alpha:=b_\alpha - a_\alpha \leq \pi \sqrt{\frac{N-1}{K}},$$ and if equality holds, the $a_\alpha$ lies exactly at the next zero of the model density, and therefore$$\lim_{t\downarrow a_\alpha}h_\alpha(t)=0.$$  The saturation of the Bonnet-Myers bound is precisely the two-endpoint focusing case. In the smooth Jacobi-field interpretation $h_\alpha(t)=|\det(A(t)|\rightarrow 0$  means that the past endpoint $a_\alpha$ is conjugate to the future endpoint $b_\alpha$ along the corresponding timelike geodesic.

Adding the non-strict $TMCP^-(K,N)$ bound to the Model $TMCP^+(K,N)$ spacetime forces no additional constraints on $L_\alpha.$ Indeed, for $a_\alpha<s<\tau<b_\alpha$, we get the following for conditions on the density ratios for Model $TMCP^+(K,N)$ and backward $TMCP^-(K,N)$ respectively: 
$$\frac{h(\alpha,\tau)}{h(\alpha,s)}=\bigg[\frac{\sin(c(b_\alpha-\tau))}{\sin(c(b_\alpha-s))}\bigg]^{N-1},\quad  \frac{h(\alpha,\tau)}{h(\alpha,s)}=\bigg[\frac{\sin(c(\tau-a_\alpha))}{\sin(c(s-a_\alpha))}\bigg]^{N-1}.$$ 
Since all of the $\sin$ terms are positive in the interior of rays, this is equivalent to 
$$\sin(c(b_\alpha-s))\sin(c(\tau-a_\alpha))-\sin(c(b_\alpha-\tau))\sin(c(s-a_\alpha))=\sin(cL_\alpha)\sin(c(\tau-s))\geq 0.$$ 
The forward Bonnet-Myers bounds already gives $0<L_\alpha\leq \frac{\pi}{c},$ and since $0<\tau-s<L_\alpha$, we see that both factors are nonnegative. 
Thus, the non-strict backward $TMCP^-(K,N)$ condition is  compatible with every forward Model ray, and gives no sharper length bound. If we now consider the backward $TMCP^-(K,N)$ as a strict inequality, then saturation of the Bonnet-Myers bounds becomes impossible . Indeed, the strict inequality forces 
$$\sin(cL_\alpha)\sin(c(\tau-s))>0,$$ 
and we must have that $\sin(cL_\alpha)>)$. The inequality $0<cL_\alpha\leq \pi$ is thus forced to be $cL_\alpha<\pi,$ and hence $$L_\alpha <\pi \sqrt\frac{N-1}{K}.$$ 
Moreover, the density does not vanish at the past endpoint since $$\lim_{t\downarrow a_\alpha}h(\alpha,t)=C_\alpha\sin(cL_\alpha)^{N-1}>0,$$ and the smooth Jacobi-field interpretation the congruence does not focus at $a_\alpha$, and so $a_\alpha$ is not conjugate to $b_\alpha.$ 

Finally, suppose that the backward $TMCP^-(K,N)$ holds with equality, then for every $s<\tau$ we have  $$\sin(cL_\alpha)\sin(c(\tau-s))=0\implies \sin(cL_\alpha)=0,$$ and together with the Bonnet-Myers bound, this forces $$L_\alpha = \pi\sqrt\frac{N-1}{K}.$$ The density is then symmetric with respect to its two endpoints $$h(\alpha,t)=C_\alpha \sin(c(b_\alpha-t))^{N-1}=C_\alpha\sin(c(t-a_\alpha))^{N-1},$$ which vanishes at both endpoints. In the smooth Jacobi field interpretation, the congruence focuses at both endpoints and the two  endpoints form a conjugate pair. This is the complete  anti-de Sitter model along transport rays.

\item \textbf{Case $K=0$}. The model profile function is $s_0(r)=r$, and hence the forward Model density is 
$$h(\alpha,t)=C_\alpha(b_\alpha-t)^{N-1}.$$ 
There is no Bonnet-Myers bound,  since the argument from the $K>0$ case forces $$L_\alpha(\tau-s)>0.$$ Hence, the backward $TMCP^-(0,N)$ comparison is automatically strict on every finite nontrivial forward Model ray. 
In particular, strict backward $TMCP^-(0,N)$ imposes no further restriction on the ray length. If $b_\alpha=+\infty$, we define the forward Model coefficient by taking the limit $b\rightarrow\infty:$ $$\lim_{b\rightarrow \infty}\bigg(\frac{b-\tau}{b-s}\bigg)^{N-1}=1,$$
and thus the forward Model equality with $b_\alpha =+\infty$ becomes $\frac{h(\alpha,\tau)}{h(\alpha,s)}=1,$ and therefore $$h(\alpha,t)=C_\alpha.$$ Similarly, sending the past endpoint to $-\infty$ forces the backward Model equality to be $h(\alpha,t)=C_\alpha)$. 

We therefore obtain a complete classification in the $K+0$ case. On a finite nontrivial ray, the simultaneous forward and backward Model equality is impossible. The same incompatibly persists if exactly one endpoint is finite, as the model at the finite endpoint is non constant, whereas the model at the infinite endpoint is constant. The only nontrivial possibility is $$a_\alpha = -\infty,\quad b_\alpha =+\infty,\quad h_\alpha(t)=C_\alpha,$$ which recovers the one-dimensional Minkowski model which has a bi-infinite ray with constant density along transport rays.

\item \textbf{Case $K<0$}. $c= \sqrt{\frac{-K}{N-1}},$ and write $$h(\alpha,t)=C_\alpha\sinh(c(b_\alpha-t))^{N-1}.$$ If $b_\alpha = +\infty,$, then the corresponding model ratio is obtained by letting $b\rightarrow +\infty:$ $$\lim_{b\rightarrow +\infty}\frac{\sinh(c(b-\tau))}{\sinh(c(b-s))}=e^{-c(\tau-s)},$$ which gives that the density is $$h(\alpha,t)=C_\alpha e^{-(N-1)ct}.$$ Similarly, the backward Model equality with $a_\alpha = -\infty$g gives $$h(\alpha,t)=C_\alpha e^{(N-1)ct}.$$  We can now classify all of the possibilities of $b_\alpha$ and $a_\alpha$ either finite or infinite. 
Since every  forward $K<0$ model density is strictly decreasing in $t$ for both $b_\alpha<+\infty$ and $b_\alpha =+\infty,$ and every backward model density is strictly increasing in $t$ for  $a_\alpha>-\infty$ and $a_\alpha = -\infty$, consequently, no positive nonconstant density can be both a forward and backward model density on a nontrivial interval.

Consider now a bi-infinite ray. The $TMPC^\pm(K,N)$ conditions become $$e^{-(N-1)c(\tau-s)}\leq \frac{h(\alpha,\tau)}{h(\alpha,s)}\leq e^{(N-1)c(\tau-s)}.$$ Written as a logarithmic derivative, this is equivalent to $$-(N-1)c \leq \frac{d}{dt}(\log h(\alpha,t))\leq (N-1)c,$$ where the two model equalities are the two extremal exponential solutions $h_+(t)=Ce^{-(N-1)ct},$ $h_-(t)=Ce^{(N-1)ct}.$ A global de Sitter-type density has the for $$h_{dS}(t)=C\cosh(ct)^{N-1},$$ whereby taking the logarithmic derivative gives us $$-(N-1)c<\frac{d}{dt}\log h_{dS}(t)<(N-1)c$$ for every finite $t$. Thus, the global de Sitter profile satisfies the two sided comparison strictly in the interior, while approaching the Model extremals asymptotically: $$\frac{d}{dt}\log h_{dS}(t)\rightarrow \pm(N-1)c\quad \textrm{as $t\rightarrow \pm\infty$}.$$
\end{enumerate}

We also consider the blended $TMCP(K,N)$ conditions. Given $K_1,K_2\in \R$ and $N_1,N_2\in(1,\infty),$  assume that the spacetime is both forward and backwards $p$-enb. We say that the spacetime satisfies the \emph{blended $TMCP$ condition}  provided it satisfies both $TMCP^+(K_1,N_1)$ and $TMCP^-(K_2,N_2)$. For such spacetimes, we consider an admissible continuous reverse-Lipschitz function $u:Z\rightarrow \R$ where by Theorem \ref{endpoints measure zero}, we get a disintegration of $\m\restrict_{\T_u^{nb}}$. Theorem \ref{Big TMCP+ theorem} gives us that every transport ray $(X_\alpha,\ell\restrict_{X_\alpha},\m_\alpha)$ satisfies the blended $TMCP$ bound \begin{equation}\bigg[\frac{s_{K_1}\big(\frac{b_\alpha -\tau}{\sqrt{N_1-1}}\big)}{s_{K_1}\big(\frac{b_\alpha -s}{\sqrt{N_1-1}}\big)}\bigg] ^{N_1-1}\leq\frac{h(\alpha,\tau)}{h(\alpha,s)}\leq \bigg[\frac{s_{K_2}\big(\frac{\tau - a_\alpha }{\sqrt{N_2-1}}\big)}{s_{K_2}\big(\frac{s-a_\alpha }{\sqrt{N_1-1}}\big)}\bigg] ^{N_1-1},\end{equation}
for all $a_\alpha<s\leq \tau<b_\alpha$ and the density $h(\alpha,\cdot)$ is locally Lipschitz continuous in the interior of the ray.

The  Bonnet-Myers type bounds depend on the signs of $K_1$ and $K_2$. \begin{enumerate}
    \item If both $K_1,K_2>0$, then each side provides a Bonnet-Myers bound. Therefore $$(b_\alpha-a_\alpha)\cdot\max\bigg\{ \sqrt{\frac{K_1}{N_1-1}},\sqrt{\frac{K_2}{N_2-1}}\bigg\}\leq\pi,$$  and the shorter of the two focusing scales control the total length of the transport ray.
    \item If $K_1>0$ and $K_2\leq 0$, only the forward comparison gives a finite Bonnet-Myers bound and we get $$b_\alpha-a_\alpha \leq \pi\sqrt \frac{N_1-1}{K_1}.$$ Thus, both endpoints are again finite. In this case, the future oriented comparison has a finite focusing length whereas the past oriented comparison does not.
    
    If $K_1\leq 0$ and $K_2>0$, the only the backward comparison gives a finite focusing length, and $$b_\alpha-a_\alpha \leq \pi\sqrt \frac{N_2-1}{K_2}.$$  Again, both endpoints are finite. These mixed-sign conditions are interpreted as different comparison geometries based at the past and future endpoints of the same transport ray.
    \item If $K_1, K_2\leq 0$, then neither side yield a Bonnet-Myers bound, and in particular the blended inequalities alone do not determine whether either endpoint $a_\alpha$ or $b_\alpha$ configuration are either of the type one finite and one infinite (i.e. either $a_\alpha = -\infty$ and $b_\alpha<+\infty$, or $a_\alpha > -\infty$ and $b_\alpha=+\infty$ ), or the bi-infinte case $a_\alpha=-\infty,$ $b_\alpha = +\infty $.

    However, if both endpoints are infinite,  by following the classification for the case $K<0$ in the previous Bonnet-Myers bound calculation gives us $$-\lambda_1\leq \frac{d}{dt}\log h(\alpha,t)\leq \lambda_2$$ for almost every $t$ on the bi-infinite transport ray, where $\lambda_i:=\sqrt{-(N_i-1)K_i}.$ Thus, for $K_1,K_2\leq 0$, instead of a diameter bound, the blended $TMCP$ condition gives a two-sided exponential bound on the ray of volume expansion or contraction along the transport ray.
\end{enumerate}

\subsection{Equivalence of $TCD_p(K,N)$ conditions for $p=1$ via needle decomposition}

In this subsection, we will define the two different notions of the Timelike Curvature Dimension condition corresponding to the limit exponent $p=1$, and prove their equivalence by means of needle decomposition.
These two notions are based on the $L^1$ optimal transportation
problem. Throughout this section, we impose the assumption $\m(X)=1$, $\spt(\m)=X$, which is without loss of generality by the works of Cavaletti-Mondino \cite{CavallettiMondino2017} which allows to extend the constructions
to measures $\m$ that are $\sigma$-finite.
 We also assume that $X$ is proper.

Given a measured gh LLS $(X,d,\m, \ell)$, together with the Renyi entropy functional $S_N(\cdot|\m):\P(X)\rightarrow [-\infty,0]$ from definition \ref{Renyi},
where $K\in \R, N\in [1,\infty)$ correspond to the curvature and the dimension parameters respectively and are fixed. We first recall the $TCD_p(K,N)$ condition for $p\in(0,1)$ that appeared in Braun \cite{BraunTCD}, see also \cite{McCann2020,CavallettiMondino2024,Cavalletti-Mondino2025+, }.
\begin{definition}($TCD_p (K,N)$) condition) Let $p\in(0,1)$.
 A gh LLS $(X,d,\m,\ell)$ spacetime satisfies the $TCD_p (K,N)$ condition if for any absolutely continuous pair $(\mu_0,\mu_1)\in \P_c(X)^2$ that is  timelike $p$-dualisable by  $\pi\in \Pi_{\ll}^{p\rm{-opt}}(\mu_0,\mu_1)$, there exists an $\ell_p$-optimal dynamical plan $\nu$ such that $(e_0,e_1)_\#\nu = \pi$ and setting   $\mu_t:=(e_t)_{\#}\nu $ the following entropy inequality holds:
 
 $$S_{N'}(\mu_t| \m) \leq -\int_{X\times X} \big{(}  \tau_{K,N'}^{(1-t)}(\ell(x_0,x_1) \rho_0^{-1/N'}(x_0)+\tau_{K,N'}^{(t)}(\ell(x_0,x_1) \rho_1^{-1/N'}(x_1) ) \\ \big{)} d\pi(x_0,x_1)$$  for all $N'\geq N,$ and $t\in[0,1].$ 
\end{definition}

The following definition of $TCD_1(K,N)$ also uses displacement convexity, and is interpreted as the limiting case where $p\rightarrow1^-.$
\begin{definition} (Limiting  $TCD_1(K,N)$){\label{Limiting TCD definition}}
Let $(X,d,\m,\ell)$ be a 
gh LLS. The
spacetime satisfies the $TCD_{1}(K,N)$ $condition$ if for
every absolutely continuous pair $(\mu_{0},\mu_{1})\in\mathcal{P}_{c}(X)^2$ that is timelike-1-dualisable,
there exists a Borel probability measure $\eta\in\mathcal{P}(C([0,1],X))$
concentrated on constant speed timelike geodesics such that $\int \ell(\gamma_{0},\gamma_{1})d\eta(\gamma)=\ell_{1}(\mu_{0},\mu_{1})$
and such that 

\begin{equation}{\label{Entropic Inequality}}
S_{N'}(\mu_{t}|\m)\leq-\int_{TGeo^{\ell}} \tau_{K,N'}^{(1-t)}(\ell(\gamma_{0},\gamma_{1}))\rho_{0}^{-1/N'}(\gamma_{0})+\tau_{K,N'}^{(t)}(\ell(\gamma_{0},\gamma_{1}))\rho_{1}^{-1/N'}(\gamma_{1})d\eta(\gamma)
\end{equation}
holds for all $t\in[0,1]$ and all $N'\geq N$, where $\mu_{t}:=(e_{t})_{\#}\eta$,
$\mu_{t}\ll\m$ and $(e_{i})_{\#}\eta=\mu_{i}$ for $i=0,1.$
\end{definition}

Te additional requirement at $p=1$ is that the optimal transport is dynamically realized by a plan concentrated on the constant speed timelike geodesics. Unlike the case $0<p<1$, the cost $\ell$ is linear, rather than strictly concave, so the $\ell^1$-optimaltiy alone does not select the same dynamical structure.

We additionally define the Localized $TCD^1_{rLip}(K,N)$ condition which is based on integral curves of reverse-Lipschitz functions  following the works of\cite{CavallettiGigliSantarcangelo2021, CavallettiMondino2024,CavallettiMilman2021} 
in the Riemannian setting, and \cite{CavallettiMondino2024,BraunMcCann+} in the Lorentzian signature.

\begin{definition}
(Localized $TCD^1_{rLip}(K,N)$)
Let $(X,d,\m,\ell)$ be a gh LLS. Let
$u:Z\rightarrow \R$ be a continuous reverse-Lipschitz function, where $Z$ is a timelike interpolation set in the sense of the previous section. The space satisfies
the $TCD_{u}^{1}(K,N)$ condition if there exists a family $\{X_{\alpha}\}_{\alpha\in Q}\subset X$
such that:

\begin{enumerate}
\item There is a disintegration of $\m\llcorner{}_{\T_{u}}=\int_{Q}\m_{\alpha}\q(d\alpha)$,
\item for $\q$-a.e. $\alpha\in Q,$ $f^{-1}(\alpha) =:X_{\alpha}$ is a transport ray for
$\Gamma_{u}$, where $f$ is from Definition \eqref{quotient map},
\item for $\q$-a.e. $\alpha\in Q,$ we have $\m_{\alpha}=h(\alpha,\cdot)\L^{1}\llcorner_{X_{\alpha}}$,
and metric spacetime $(X_\alpha, \ell\restrict_{X_\alpha},\m_\alpha)$  satisfies the $CD(K,N)$ density condition: $$h(\alpha,(1-t)r_0+tr_1)^{\frac{1}{N-1}}\geq \sigma_{K,N-1}^{(1-t)}(r_1-r_0)h(\alpha,r_0)^{\frac{1}{N-1}}+\sigma_{K,N-1}^{(t)}(r_1-r_0)h(\alpha,r_1)^{\frac{1}{N-1}}.$$
\end{enumerate}

Furthermore, we say that the space satisfies the $TCD^{1}_{rLip}(K,N)$ $condition$
if it satisfies the $TCD_{u}^{1}(K,N)$ condition for every continuous
reverse-Lipschitz function $u:X\rightarrow\R$.

\end{definition}

We now state the equivalence of the Localized and the Limiting $TCD$ conditions.
\begin{theorem}
Let $(X,d,\m,\ell)$ be an $p$-enb
gh LLS for some $p\in(0,1)$, and let $K\in \R, N\in(1,\infty)$. Then the spacetime satisfies
the $TCD_{1}(K,N)$ condition if and only it satisfies the $TCD^{1}_{rLip}(K,N)$
condition.
\end{theorem}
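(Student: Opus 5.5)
We prove the two implications separately. In both directions we follow the Riemannian scheme of Cavalletti--Gigli--Santarcangelo and Cavalletti--Milman, with the localization machinery of Section \ref{Disintegration section} replacing the metric needle decomposition.

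\emph{Limiting $\Rightarrow$ Localized.} First we check that $TCD_1(K,N)$ implies $TMCP^+(K,N)$. Take $\mu_1$ to be a normalized restriction of $\m$ to a small ball around the target point, apply the entropy inequality \eqref{Entropic Inequality}, and let the ball shrink, exactly as in the $TCD_p\Rightarrow TMCP^+$ reduction of \cite{BraunTCD}. Together with the $p$-enb hypothesis, this lets us invoke Theorem \ref{endpoints measure zero} and its causal reversal, which give $\m(A_{+,u})=\m(A_{-,u})=0$. Proposition \ref{m(a)=0} then gives $\m(a_u)=\m(b_u)=0$. With these in hand, Theorem \ref{a.c} yields the disintegration $\m\restrict_{\T_u}=\int_Q h(\alpha,\cdot)\L^1\,\q(d\alpha)$ with absolutely continuous needles, for every continuous reverse-Lipschitz $u$. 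This settles items 1 and 2.

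For item 3 we repeat the strip argument used in the second proof of $TMCP^+\Rightarrow TMCP^+_{rLip}$. Fix $\bar Q\subset Q$ with $\q(\bar Q)>0$ and levels $r_0<r_1$ lying in the interiors of the rays. Let $\mu_0$ and $\mu_1$ be the normalized restrictions of $\m$ to the strips $g(\bar Q\times[r_0,r_0+\eps])$ and $g(\bar Q\times[r_1,r_1+\eps])$. The map that translates each ray by $r_1-r_0$ has graph inside $\Gamma_u$, so by Lemma \ref{p-cyclically monotone set} its graph is $\ell$- (indeed $\ell^p$-) cyclically monotone, and the pair is timelike $1$-dualisable.

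Here the key difficulty appears. The $TCD_1$ plan $\eta$ is only $\ell_1$-optimal, so a priori it may transport mass across rays or along non-ray geodesics. We plan to show that $\eta$ is concentrated on $G$. Since $u(y)-u(x)\ge\ell(x,y)$, with equality on the support of any optimal plan (by duality with potential $u$), every $\eta$-geodesic has endpoints in $\Gamma_u$. Lemma \ref{geodesics align} then places the whole geodesic inside $\Gamma_u$, and the non-branching of $\T_u^{nb}$ (Lemma \ref{same geodesic}) forces it to stay on a single ray. Consequently $\mu_t$ is concentrated on $g(\bar Q\times[(1-t)r_0+tr_1,\cdot])$, and we may disintegrate it along the needles.

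On each ray, monotonicity of the one-dimensional transport forces the map to be the translation. The entropy inequality, written in ray coordinates, becomes a $\q$-integrated inequality. We then let $\eps\to0$, apply Lebesgue differentiation, and use the arbitrariness of $\bar Q$. Optimizing over $N'\ge N$ and over the relative weights of the two strips, exactly as in \cite[Thm.~3.x]{CavallettiGigliSantarcangelo2021}, turns this into the pointwise $CD(K,N)$ density inequality for $h(\alpha,\cdot)^{1/(N-1)}$.

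\emph{Localized $\Rightarrow$ Limiting.} Let $(\mu_0,\mu_1)$ be an absolutely continuous, timelike $1$-dualisable pair. Theorem \ref{Kantorovich Potential} gives a continuous reverse-Lipschitz potential $u$ on the interpolation set $Z$. The $TCD^1_u(K,N)$ hypothesis supplies a needle decomposition with $CD(K,N)$ densities, and by Lemma \ref{joint coupling} the measures $\mu_0$ and $\mu_1$ disintegrate into $\mu_{i,\alpha}\ll\m_\alpha$. On each needle we take the monotone rearrangement \eqref{monotone map}, which by Lemma \ref{mass balance} moves mass forward ($T\ge s$). Interpolating linearly in ray coordinates gives $\eta_\alpha$, supported on constant-speed subsegments of the ray, and these are timelike $\ell$-geodesics by Proposition \ref{ray map properties}.

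The one-dimensional $CD(K,N)$ density condition gives the Rényi entropy inequality on each needle. This is the classical one-dimensional computation: the Jacobian of the monotone map interpolates, and one applies the $\sigma$-concavity of $h^{1/(N-1)}$ together with Hölder's inequality to pass to the $\tau$ coefficients. We then set $\eta=\int\eta_\alpha\,\q_{\mu_0}(d\alpha)$. Since the needles are $\m$-disjoint and $\rho_t$ restricts to $\rho_{t,\alpha}$ on $X_\alpha$, integrating the needle inequalities against $\q$ gives \eqref{Entropic Inequality}. Optimality $\int\ell\,d\eta=\ell_1(\mu_0,\mu_1)$ follows from the computation at the end of Theorem \ref{Lorentzian Monge}, and $\mu_t\ll\m$ holds because each $\mu_{t,\alpha}\ll\L^1$.

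The main obstacle is the concentration claim in the first direction: showing that the $TCD_1$ plan stays on the needles, without the strict concavity of the cost that is available for $p<1$.
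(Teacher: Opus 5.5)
Your treatment of the reverse implication (Localized $\Rightarrow$ Limiting) follows the paper. So does the first step of this one: $TCD_1\Rightarrow TMCP^{\pm}$ by shrinking balls, then the needle decomposition with $\m(A_{\pm,u})=0$. The gap is in your test measures for item 3.

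Take $\mu_0,\mu_1$ to be the normalized restrictions of $\m$ to $g(\bar Q\times[r_0,r_0+\eps])$ and $g(\bar Q\times[r_1,r_1+\eps])$. Then needle $\alpha$ gets mass proportional to $\int_{r_0}^{r_0+\eps}h(\alpha,s)\,ds$ under $\mu_0$, but proportional to $\int_{r_1}^{r_1+\eps}h(\alpha,s)\,ds$ under $\mu_1$. In general the ratio of these depends on $\alpha$. Hence the translation by $r_1-r_0$ does not push $\mu_0$ to $\mu_1$. It fails even on a single needle, where it carries $h(\alpha,\cdot)$ to a shifted copy of itself rather than to $h(\alpha,\cdot)$.

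Worse, no coupling of $(\mu_0,\mu_1)$ is concentrated on $\Gamma_u$ at all. As in Lemma \ref{mass balance}, such a coupling would have to give equal mass to every ray-saturated set $f^{-1}(Y)$. So $u$ is not a Kantorovich potential for this pair, and your step ``$u(y)-u(x)=\ell(x,y)$ on the support of any optimal plan'' fails. Optimal plans are then forced to move mass across needles, which is exactly the obstacle you flagged, now unavoidable.

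The fix is to prescribe the needle masses rather than inherit them from $\m$, as the paper does:
\[
\mu_i=\frac{1}{\q(\bar Q)}\int_{\bar Q}g(\alpha,\cdot)_\#\Bigl(\frac{1}{\eps L_i}\L^1\restrict_{[R_i,R_i+\eps L_i]}\Bigr)\q(d\alpha),\qquad i=0,1,\quad R_0+\eps L_0<R_1 .
\]
These are absolutely continuous because $h(\alpha,\cdot)>0$ in the needle interiors. Each needle now carries equal mass in both marginals, and the needle-wise monotone map $s\mapsto R_1+\frac{L_1}{L_0}(s-R_0)$ has graph in $\Gamma_u$.

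Your duality argument then works as stated. For every causal coupling, $\int\ell\,d\pi\le\int u\,d\mu_1-\int u\,d\mu_0$, with equality for the needle-wise coupling, so every optimal plan lives on $\Gamma_u$. Since the sources lie in $\T_u^{nb}$, hence outside $A_{+,u}$, the $TCD_1$ geodesics cannot leave their needle. This is a more direct concentration argument than the paper's, which argues by contradiction through the branching sets $A_{\pm,v}$ of an auxiliary potential $v$.

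The independent widths $L_0,L_1$ are also indispensable. With equal widths, the limit $\eps\to0$ gives only $h(R_t)^{1/N}\ge\tau^{(1-t)}_{K,N}h(R_0)^{1/N}+\tau^{(t)}_{K,N}h(R_1)^{1/N}$, which is strictly weaker than $CD(K,N)$. It is the optimization over $L_0/L_1$ (a H\"older step) in \cite[Proposition 3.4]{CavallettiGigliSantarcangelo2021} that upgrades this to the $\sigma$-concavity of $h^{1/(N-1)}$. Your equal-width strips, whose weights are fixed by $\m$, have no such free parameter. So ``optimizing over the relative weights of the two strips'' is not available as written.
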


We will separately present the two implications in the following two subsections.

\subsubsection{$TCD_1(K,N)\implies TCD^1_{rLip}(K,N):$ ``Limiting" implies ``Localized".}\label{limiting implies localized}

Fix $u:X\rightarrow\R$, an admissible continuous reverse-Lipschitz function,
and assume that the spacetime satisfies the $TCD_{1}(K,N)$ condition. Additionally, assume that the spacetime is forward and backwards $p$-enb.

\textbf{Step 1:} We first show that the $TCD_{1}(K,N)$ condition implies both the $TMCP^+(K,N)$ and the 
 $TMCP^-(K,N)$ conditions.

\begin{lemma}
 Let $(X,d,\m,\ell)$ be gh LLS $p$-enb spacetime satisfying $TCD_{1}(K,N)$ condition. Then for
any $\mu_{0}\in\P_{c}(X)$ with $\mu_{0}\ll\m$ and $\spt \mu_0\subset I^-(x_1), $
 there exists a dynamical plan $\eta$ with $\mu_{t}:=(e_t)_\#\eta$ which is an $\ell_{p}$ geodesic for
any $p\in(0,1)$ such that $\mu_{t}=\rho_{t}\m+\mu_{t}^{\rm{sing}}$ for all
$t\in[0,1)$, and for all $t\in[0,1)$ and $N'\geq N$ we have
\begin{equation} \label{TMCP inequality}
\int_{X}\rho_{t}^{-1/N'}d\mu_{t}\geq\int_{X}\tau_{K,N'}^{(1-t)}(\ell(x,x_{0}))\rho_{0}^{-1/N'}(x)d\mu_{0}(x).
\end{equation}

\end{lemma}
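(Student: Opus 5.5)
The plan is to follow the Riemannian argument that $CD(K,N)$ implies $MCP(K,N)$ (Ohta, Sturm), adapted to the timelike setting. The Dirac target is approximated by absolutely continuous measures, $TCD_1(K,N)$ is applied to each approximation, and the entropy inequality is passed to the limit; the second (target) term is then discarded.

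\textbf{Construction of the approximating targets.} Fix $\mu_0\ll\m$ with $\spt\mu_0\subset I^-(x_1)$; I write $x_1$ for the tip, since the statement's $x_0$ is a typo. Because $\spt\mu_0$ is compact and $I^-(x_1)$ is open, and $\ell_+$ is continuous, there is $\delta>0$ such that $\spt\mu_0\times \bar B_\delta(x_1)\subset X^2_\ll$. For $\eps<\delta$ set
\[
\mu_1^\eps:=\m(B_\eps(x_1))^{-1}\m\restrict_{B_\eps(x_1)} .
\]
This is positive since $\spt\m=X$. The pair $(\mu_0,\mu_1^\eps)$ is then compactly supported with $\spt\mu_0\times\spt\mu_1^\eps\subset X^2_\ll$. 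By Corollary \ref{duality} it is timelike $1$-dualisable, and also $p$-dualisable.

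\textbf{Applying $TCD_1(K,N)$.} The condition gives a plan $\eta^\eps$ on constant-speed timelike geodesics. It is $\ell_1$-optimal and satisfies \eqref{Entropic Inequality}. The density of $\mu_1^\eps$ is $\rho_1^\eps=\m(B_\eps(x_1))^{-1}$, so $\rho_1^\eps(\gamma_1)^{-1/N'}=\m(B_\eps(x_1))^{1/N'}\to0$. Since $\tau^{(t)}_{K,N'}\ge0$ is bounded on the compact range of lengths, the target term is $o(1)$, leaving
\[
\int\rho_t^{\eps,-1/N'}d\mu^\eps_t\ \ge\ \int \tau^{(1-t)}_{K,N'}(\ell(\gamma_0,\gamma_1))\rho_0^{-1/N'}(\gamma_0)\,d\eta^\eps-o(1).
\]

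\textbf{Passing to the limit $\eps\to0$.} The geodesics of $\eta^\eps$ have endpoints in the fixed compact set $\spt\mu_0\times\bar B_\delta(x_1)$. By global hyperbolicity and the compactness of timelike $\ell$-geodesics (as in Lemma \ref{measurable correspondence branching}), the family $\{\eta^\eps\}$ is tight. A subsequence therefore converges weakly to some $\eta$ with $(e_0)_\#\eta=\mu_0$ and $(e_1)_\#\eta=\delta_{x_1}$.

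Optimality survives in the limit. For each $p\in(0,1)$ there is only one coupling of $\mu_0$ and $\delta_{x_1}$, so optimality of the endpoint coupling is automatic. What needs checking is that $(e_t)_\#\eta$ is an $\ell_p$-geodesic. This follows from $\ell(\gamma_s,\gamma_t)=(t-s)\ell(\gamma_0,\gamma_1)$ on $\spt\eta$, as in the computation in the proof that $TMCP^+_{rLip}$ implies $TMCP^+$.

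For the right-hand side, $\tau$ is continuous and $\ell$ is continuous on the compact timelike region. I would handle $\rho_0^{-1/N'}(\gamma_0)$ by approximating with continuous bounded functions, using that $(e_0)_\#\eta^\eps=\mu_0$ is fixed. This gives convergence to $\int\tau^{(1-t)}_{K,N'}(\ell(x,x_1))\rho_0^{-1/N'}\,d\mu_0$. For the left-hand side, $\mu\mapsto -S_{N'}(\mu|\m)=\int\rho^{-1/N'}d\mu$ is weakly upper semicontinuous on measures with supports in a common compact set. Hence $\limsup_\eps\int\rho_t^{\eps,-1/N'}d\mu_t^\eps\le\int\rho_t^{-1/N'}d\mu_t$, where $\rho_t$ is the density of the absolutely continuous part of $\mu_t$. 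This yields \eqref{TMCP inequality}. Positivity of the right-hand side for $t<1$ shows $\mu_t$ has a nontrivial absolutely continuous part; the singular part is permitted by the statement.

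\textbf{Obstacles and the $TMCP^-$ direction.} The main obstacle I expect is this limit. One difficulty is that the statement requires a single plan $\eta$ working for all $t$ and $N'$ simultaneously. I would obtain this by taking one subsequence and a countable dense set of pairs $(t,N')$, then extending by continuity in $t$ of both sides, which also needs justification. The other difficulty is ensuring the limit plan is still concentrated on timelike, not merely causal, geodesics. The uniform lower bound $\ell\ge c>0$ on $\spt\mu_0\times\bar B_\delta(x_1)$ handles this. The $TMCP^-(K,N)$ statement follows by the same argument after causal reversal, approximating a past Dirac by normalized balls.
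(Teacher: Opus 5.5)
Your proposal is correct and follows the paper's proof: approximate $\delta_{x_1}$ by normalized restrictions of $\m$ to $B_\eps(x_1)$, apply $TCD_1(K,N)$, extract a limit plan by compactness, get the $\ell_p$-geodesic property from the fact that $\mu_0$ and a Dirac target admit only one coupling, and conclude by lower semicontinuity of the entropy (the target term is nonnegative and can simply be dropped, so you do not need $\m(B_\eps(x_1))\to 0$, which would fail if $\m$ had an atom at $x_1$). Your concern about obtaining a single plan for all $(t,N')$ is unnecessary: each $\eta^\eps$ already satisfies the inequality for all $(t,N')$ at once, and one convergent subsequence $\eta^{\eps_k}\to\eta$ gives $(e_t)_\#\eta^{\eps_k}\to(e_t)_\#\eta$ for every $t$, so the limit inequality holds for each $(t,N')$ directly, with no density or continuity-in-$t$ argument.
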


\begin{proof}
Given an absolutely continuous $\mu_{0}\in\P_{c}(X)$ and an $x_1$ such that $\mu_0(I^-(x_1))=1$,
since $\spt(\m)=X$ we can consider $\mu_{1,\epsilon}:=c_{\epsilon}\m\llcorner_{B_{\epsilon}(x_{1})}$
with $c_{\epsilon}>0$ a normalization constant. By the definition
of $TCD_{1}(K,N)$ there exists a dynamical plan $\nu_{\epsilon}$
between $\mu_{0},\mu_{1,\epsilon}$, and we let $\mu_{t,\epsilon}:=(e_{t})_{\#}\nu_{\epsilon}$.
Since $x_{1}\in I^{+}(\text{spt}(\mu_{0}))$,  it follows that there exists a small $\epsilon>0$ such that $\spt\mu_0\times \spt\mu_{1,\epsilon} \Subset \{ \ell>0\}$ and so the pair $(\mu_{0},\mu_{1,\epsilon})$  is timelike
1-dualisable.

Since the space is globally hyperbolic, define the following set $Z$, where we recall \eqref{Midpoints}:  $$Z:=Z(\spt \mu_0, \spt \mu_1)=\cup_{s\in [0,1]}Z_s(\spt \mu_0, \spt \mu_1).$$ Since $Z\subset J^{+}(\spt(\mu_{0}))\cap J^{-}(\spt(\mu_{1,\epsilon}))$, we conclude that $Z$ is compact. Therefore the family $\nu_{\epsilon}$ is pre-compact
by Prokhorov's theorem \cite[Theorem 2.54]{BraunMcCann+}, and we can obtain a limit dynamical plan $\nu$
that is concentrated on constant speed timelike geodesics with marginals
at times $t=0,1$ given by $\mu_{0}$ and $\mu_{1}=\delta_{x_{1}}$
respectively. 

Let $\mu_{t}:=(e_{t})_{\#}\nu$, we show that $\mu_{t}$ is a $\ell_{p}$-geodesic for all $p\in(0,1).$
This follows by nothing that the coupling $(e_s,e_t)_\#\nu$ is admissible between $\mu_s$ and $\mu_t$, and then directly computing 

\begin{align*}
\ell_{p}(\mu_{s},\mu_{t})^p &\geq \left(\int_{TGeo^{\ell}}\text{\ensuremath{\ell^{p}}}(\gamma_{s},\gamma_{t})d(e_{s},e_{t})_{\#}\nu\right)\\
&=(t-s)^p\left(\int\ell^{p}(\gamma_{0},\gamma_{1}) d\nu(\gamma)\right) \\
&=(s-t)^p\ell_{p}(\mu_{0},\mu_{1})^p,
\end{align*}

\noindent where in the first inequality we have used that $(e_{t},e_{s})_{\#}\nu$
is an admissible plan from $\mu_{s}$ to $\mu_{t}$, in the second
line we have used that $\gamma$ are geodesics, and in the last step
we have used that $(e_{0},e_{1})_{\#}\nu$ is the only transport plan
from $\mu_{0}$ to $\delta_{x_{1}}$, and thus optimal for every $p\in(0,1).$ The reverse inequality follows by the reverse triangle inequality of $\ell_p$. Hence, we conclude that $\mu_t$ is an $\ell^p$-geodesic for every $p\in(0,1).$

The inequality \eqref{TMCP inequality} then follows by the lower semicontinuity of
the entropy functional $S_N$.
\end{proof}

In the above Lemma, the $TMCP^-(K,N)$ condition follows by causal reversal of the hypothesis. Thus, we get that the spacetime satisfies the full $TMCP(K,N)$ condition, which together with the assumption of $p$-enb allows us to invoke
the results of Section \ref{Disintegration section} to obtain a strongly consistent disintegration formula

\[
\m\restrict_{\T_{u}^{nb}}=\int_{Q}\m_{\alpha}\q(d\alpha),
\]

\noindent where the first equality holds by Theorem \ref{endpoints measure zero}. Hence we are only left to show that ($X_{\alpha},\ell\restrict_{X_\alpha},\m_{\alpha})$
satisfies the $CD(K,N)$ condition.

\vspace{1 mm}

\textbf{Step 2:}  Since the space is gh LLS and $p$-essentially timelike non-branching for some $p\in(0,1),$ and satisfies the $TMCP(K,N)$ condition for given
$K,N$, then for $\q$-a.e. $\alpha$ we have $\m_{\alpha}=h(\alpha,\cdot)\L^{1}\restrict_{X_{\alpha}}$
and that the one dimensional metric measure space ($X_{\alpha},\ell\restrict_{X_\alpha},\m_{\alpha})$
satisfies the $TMCP(K,N)$ condition. In particular, $h(\alpha,\cdot)$
is strictly positive in the relative interior of $X_{\alpha}$, and
it is locally Lipschitz by Proposition \ref{Big TMCP+ theorem}. 
Recall the \emph{ray map} $g:Dom(g)\subset Q\times\R\rightarrow\T_{u}^{nb}$ from Definition \eqref{ray map definition}, and its properties from Proposition \eqref{ray map properties}.  The following Lemma is the adaptation of the construction by Cavalletti-Gigli-Santarcangelo \cite[Lemma 3.3]{CavallettiGigliSantarcangelo2021}. The only
thing that needs to be checked is that the constructed measures $\mu_{0},\mu_{1}$
are timelike 1-dualisable.
\begin{lemma}\label{TMCP sup inf inequality}
For any $\bar{Q}\subset Q$ Borel sets with positive $\q$-measure
and $R_{0},R_{1},L_{0},L_{1}\in\R$ with $R_{0}<R_{1},~L_{0},L_{1}>0$
and $[R_{0},R_{1}+L_{1}]$ belonging to the domain of $h(\alpha,\cdot)$ for $\q$-a.e. $\alpha \in Q,$
we have

\[
(L_{t})^{1/N}\sup_{\bar{Q}}h(\alpha,R_t)^{1/N}\geq(L_{0})^{1/N}\tau_{K,N}^{(1-t)}(R_1-R_0)\inf_{\bar{Q}}h(\alpha,R_0)^{1/N}
\]

\[
+(L_{1})^{1/N}\tau_{K,N}^{(t)}(R_1-R_0)\inf_{\bar{Q}}h(\alpha,R_1)^{1/N}
\] for every $t\in[0,1],$ where $R_{t}=(1-t)R_{0}+tR_{1},$ and $L_{t}=(1-t)L_{0}+tL_{1}.$
\end{lemma}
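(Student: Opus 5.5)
Following Cavalletti--Gigli--Santarcangelo, I would build two absolutely continuous ``strips'' of rays and feed them into the $TCD_1(K,N)$ entropy inequality. Fix $\bar Q$, $R_0<R_1$, $L_0,L_1>0$. For small $\eps>0$ I would set
\[
\mu_0:=c_0\, g_\#\big(\q\restrict_{\bar Q}\otimes \L^1\restrict_{[R_0,R_0+\eps L_0]}\big)\restrict \quad\text{(normalized, density w.r.t. } \m=g_\#(h\,\q\otimes\L^1)),
\]
that is, $\mu_0$ spreads mass $\q(\bar Q)^{-1}$ per ray uniformly in ray coordinates over $[R_0,R_0+\eps L_0]$. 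Similarly $\mu_1$ is spread over $[R_1,R_1+\eps L_1]$. Their $\m$-densities are $\rho_i(g(\alpha,r))=(\eps L_i\,\q(\bar Q)\,h(\alpha,r))^{-1}$. Both are supported in $\T_u^{nb}$ and are absolutely continuous by Theorem~\ref{a.c}.

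The first point to check, as the paper notes, is timelike $1$-dualisability. Since $u$ increases by exactly $\ell$ along rays, the monotone ray-wise coupling $\pi$ is concentrated on $\Gamma_u$. Every pair has $\ell\ge R_1-R_0-\eps L_0>0$ for small $\eps$. Using $u$ as a potential gives $\ell\le u(y)-u(x)$ on $\spt\mu_0\times\spt\mu_1$ with equality on $\spt\pi$, so $\pi$ is $\ell_1$-optimal. Compactness of supports gives the integrability bound, and $\spt\mu_0\times\spt\mu_1\Subset\{\ell>0\}$ after possibly shrinking $\bar Q$ so that the endpoints stay in a compact set. Corollary~\ref{duality} then gives dualisability.

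Next I apply $TCD_1(K,N)$ to obtain $\eta$. The key structural claim, which I expect to be the main obstacle, is that any $\ell_1$-optimal dynamical plan between these marginals is concentrated on geodesics lying inside single transport rays and preserving the ray index $\alpha$. Optimality forces $(\gamma_0,\gamma_1)\in\Gamma_u$, and Lemma~\ref{geodesics align} puts the whole geodesic in $\Gamma_u$. Because $\mu_0$ charges only the non-branched set, Lemma~\ref{same geodesic} forces $\gamma$ to stay on $X_{f(\gamma_0)}$. Within a ray, however, $\ell_1$-optimality does not force monotone matching. I would therefore avoid needing it and only use that $\gamma_t$ lies on ray $\alpha$ at coordinate $(1-t)s+t\sigma$ with $s\in[R_0,R_0+\eps L_0]$ and $\sigma\in[R_1,R_1+\eps L_1]$. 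Hence $\mu_t$ is concentrated on the strip $g(\bar Q\times[R_t,R_t+\eps L_t'])$ with $L_t'\le (1-t)L_0+tL_1=L_t$, up to $O(\eps)$. Since $\eta$ preserves $\alpha$, $\mu_t$ gives mass $\q(\bar Q)^{-1}$ to each ray.

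Jensen's inequality then bounds the left side. Writing $-S_N(\mu_t)=\int\rho_t^{1-1/N}d\m\le \m(\text{strip}_t)^{1/N}$, we get
\[
-S_N(\mu_t)\le \big(\eps L_t\,\q(\bar Q)\sup_{\bar Q,\,[R_t,R_t+\eps L_t]}h\big)^{1/N}.
\]
The right side of the entropy inequality is at least
\[
\tau^{(1-t)}_{K,N}(\cdot)\,(\eps L_0\q(\bar Q)\inf h)^{1/N}+\tau^{(t)}_{K,N}(\cdot)\,(\eps L_1\q(\bar Q)\inf h)^{1/N},
\]
with $\tau$ evaluated at lengths within $O(\eps)$ of $R_1-R_0$. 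Dividing by $(\eps\q(\bar Q))^{1/N}$ and letting $\eps\to0$, I would use local Lipschitz continuity and positivity of $h(\alpha,\cdot)$ (Theorem~\ref{Big TMCP+ theorem}) to pass the sup and inf to the points $R_t,R_0,R_1$, and continuity of $\tau_{K,N}$ to handle the lengths. This gives the claimed inequality. To make the limit uniform in $\alpha$, I would first restrict $\bar Q$ so that the Lipschitz constants are uniformly bounded, which is harmless since the statement concerns sup and inf over $\bar Q$ and can be recovered by exhaustion.
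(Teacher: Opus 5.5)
Your proposal is correct and has the same skeleton as the paper: the Cavalletti--Gigli--Santarcangelo strips along the rays of $\bar Q$, the $TCD_1(K,N)$ entropy inequality, H\"older on the left, infima on the right, then $\eps\to0$. Where it differs is the key step, that every optimal dynamical plan stays on the rays.

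The paper argues by contradiction. It averages a hypothetical cross-ray optimal plan with the monotone one, passes to a new Kantorovich potential $v$ for $(\mu_0,\mu_1)$, and invokes $\m(A_{\pm,v})=0$ from Theorem~\ref{endpoints measure zero}.

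You instead let the fixed $u$ calibrate the problem:
\begin{itemize}
\item $\ell\le u(y)-u(x)$ on causal pairs, while $\int u\,d\mu_1-\int u\,d\mu_0$ is independent of the coupling and is attained by the ray-wise coupling. So every optimal plan is concentrated on $\Gamma_u$.
\item Both marginals live on $\T_u^{nb}$, so the endpoints are $R_u^{nb}$-equivalent.
\item An intermediate point $\gamma_t$ different from the ray point at coordinate $(1-t)s+t\sigma$ would have the same $u$-value, and would make $\gamma_0$ a forward branching point.
\end{itemize}
This uses only the non-branching already built into $\T_u^{nb}$ and needs no second potential or second null-set theorem. It also makes clear, as you note, that non-monotone matching within a ray is harmless, since $(1-t)s+t\sigma\in[R_t,R_t+\eps L_t]$ regardless.

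Two further differences work in your favour. Your global bound $-S_N(\mu_t)\le\m(\text{strip}_t)^{1/N}$ sidesteps the paper's ray-by-ray mass bookkeeping. Restricting $\bar Q$ to a positive-measure subset with uniform Lipschitz control supplies the uniformity in $\alpha$ that the paper's ``sending $\eps\to0$'' leaves implicit. This restriction is legitimate because shrinking $\bar Q$ lowers the $\sup$ and raises the $\inf$.

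One minor correction: the appeal to Corollary~\ref{duality} is unnecessary, and its justification is wrong. Compactness alone does not make cross-ray pairs in $\spt\mu_0\times\spt\mu_1$ timelike. Nothing is lost, however, because you had already checked Definition~\ref{dualisable} directly with $a=-u$, $b=u$, which are bounded on the compact set $Z$.
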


\begin{proof}
\emph{Step 1}:  Fix $\bar{Q}\subset Q$ Borel with positive $\q$-measure and consider
$R_{0},R_{1},L_{0},L_{1}$ given in the hypothesis. Let $\epsilon>0$
and for $i=0,1$ define the following probability measures:

\begin{equation}\label{ray measures}
\mu_{i}:=\frac{1}{\q(\bar{Q})}\int_{\bar{Q}}g(\alpha,\cdot)_{\#}\big(\frac{1}{\epsilon L_{i}}\mathcal{L}^{1}\restrict_{[R_{i},R_{i}+\epsilon L_{i}]}\big)\q(d\alpha),
\end{equation} and choose $\epsilon>0$ sufficiently small so that $R_0+\epsilon L_0<R_1$. Thus, every source point on a ray lies strictly in the chronological past of every target point on the same ray, and for such a pair of measures, the transport has to be performed in
the timelike direction along the rays $\{X_{\alpha}\}_{\alpha\in\bar{Q}}$,
due to the disintegration associated to $\T_{u}^{nb}$. 

Therefore, the measures $(\mu_0,\mu_1)$ 
are  timelike 1-dualisable, as the
transport plan $\pi$ which rearranges the mass monotonically along
each ray is optimal. Hence $\spt(\pi)\subset\Gamma_{u}$ and so this
plan is $\ell$-cyclically monotone and thus $\ell_{1}$-optimal.

We will show that all other optimal plans enjoy this same property. If not, there would exist at least one optimal plan $\bar{\pi}$ such
that for some $\bar{Q}_{1}\subset\bar{Q}$ of positive measure and
for some $S\subset\R$ , with $\bar{Q}_{1}\times S\subset\text{Dom}(g)$,
we have
\[
\bar{\pi}(\{(g(\alpha,s),g(\alpha',s'):\alpha,\alpha'\in\bar{Q}_{1},s,s'\in S,~\text{and}~\alpha\neq\alpha'\})>0.
\]

\noindent Consider the plan 
\[
\pi^{*}=\frac{\pi+\bar{\pi}}{2},
\]
which is trivially optimal for $\mu_{0},\mu_{1}.$ We will construct
a set of branching points with positive measure, which will lead to
a contradiction. 

Let $v$ be the Kantorovich potential associated to the $\ell_{1}$
optimal transport problem between $\mu_{0},\mu_{1}$ from \eqref{ray measures} (possibly different
from the fixed $u$). Theorem \ref{endpoints measure zero} then implies that $\m(A_{\pm,v})=0.$
However, we see that 
\[
P_{1}(\{(g(\alpha,s),g(\alpha',s'):\alpha,\alpha'\in\bar{Q}_{1},s,s'\in S,~\text{and}~\alpha\neq\alpha'\})\subset A_{\pm,v},
\]
and since $\mu_{0}\ll\m$ this contradicts that $\bar{\pi}(\{(g(\alpha,s),g(\alpha',s'):\alpha,\alpha'\in\bar{Q}_{1},s,s'\in S,~\text{and}~\alpha\neq\alpha'\})>0.$

It follows that every every optimal plan will have its support contained
in the set 
\[
A_{\bar{Q}}^{\epsilon}:=\cup_{\alpha\in\bar{Q}} \{(g(\alpha,[R_{0},R_{0}+\epsilon L_{0}])\times g(\alpha,[R_{1},R_{1}+\epsilon L_{1}]))\}
\]

\emph{Step 2}: Since absoutely continuous measures $\mu_{0},\mu_{1}$ from \eqref{ray measures} were constructed to be 
timelike 1-dualisable and have compact support, by the $TCD_{1}(K,N)$
condition there exists an optimal dynamical plan $\nu$ that is concentrated
on $TGeo^{\ell}(X)$ such that for $\mu_{t}:=(e_{t})_{\#}\nu$ the limiting entropy inequality from Definition \ref{Limiting TCD definition} is satisfied. By construction of $\mu_0,\mu_1$, and since
$\nu$ is concentrated on timelike geodesics, we see that for $\q$-a.e.
$\alpha\in Q,$ that $\mu_{t}=(e_t)_\#\nu $ is $0$ for $\m_{\alpha}$-a.e. outside
of the image defined by the ray map $g(\alpha,[R_{t},R_{t}+\epsilon L_{t}]).$

The rest of the the proof follows by estimating from 
 both sides of the Entropic inequality in the $TCD_{1}(K,N)$
condition using the disintegration theorem and Jensen's inequality. We include the argument for completeness. We estimate the the left hand side of \ref{Entropic Inequality} by the inequalities above, where we set $\mu_t = \rho_t\m$:

\begin{align*}
\int_X \rho_t^{1-\frac{1}{N}}d\m & = \int_{\bar{Q}}\int_{X_{\alpha}} \rho_t(x)^{1-\frac{1}{N}} \m_{\alpha}(dx)\q (d\alpha) = \int_{\bar{Q}}\int_{R_t}^{R_t+\epsilon L_t} \rho_t(g(\alpha,s))^{1-\frac{1}{N}} h(\alpha,s) ds\q (d\alpha) \\
 & \leq  \int_{\bar{Q}} \sup_{[R_t,R_t+\epsilon L_t]} h(\alpha,\cdot )^{\frac{1}{N}} \int_{R_t}^{R_t+\epsilon L_t} (\rho_t(g(\alpha,s) h(\alpha,s))^{1-\frac{1}{N}}  ds\q (d\alpha)\\
 &  \leq  \int_{\bar{Q}} \sup_{[R_t,R_t+\epsilon L_t]} h(\alpha,\cdot )^{\frac{1}{N}} \bigg(\int_{R_t}^{R_t+\epsilon L_t} (\rho_t(g(\alpha,s)h(\alpha,s )  ds\bigg) ^{1-\frac{1}{N}} \bigg(\int_{R_t}^{R_t+\epsilon L_t}  ds\bigg)^{\frac{1}{N}}\q (d\alpha)\\
&=\int_{\bar{Q}} \sup_{[R_t,R_t+\epsilon L_t]}h(\alpha,\cdot )^{\frac{1}{N}} \bigg(\frac{1}{\q(\bar{Q})} \bigg)^{1-1/N} (\epsilon L_t)^{1/N} \q(d\alpha)\\
 &\leq (\epsilon L_t \q(\bar{Q})^{\frac{1}{N}}\sup_{\alpha \in \bar{Q}}\bigg(\sup_{[R_t,R_t+\epsilon L_t]} h^{\frac{1}{N}}(\alpha,\cdot) \bigg).
 \end{align*}

\noindent Similarly, the right hand side of \ref{Entropic Inequality} is estimated as follows, where $\pi = (e_0,e_1)_{\#} \nu$:

\begin{align*}
\int_{X\times X} &\tau_{K,N}^{(1-t)}(\ell(x,y))\rho_{0}^{-1/N}(x)+\tau_{K,N}^{(t)}(\ell(x,y))\rho_{1}^{-1/N'}(y)d\pi(x,y)\\
&\geq \inf_{A_{\bar{Q}}^{\epsilon}} \tau_{K,N}^{(1-t)}(\ell(x,y))\int_X\rho_{0}^{-1/N}(x)d\mu_0(x)+\inf_{A_{\bar{Q}}^{\epsilon}} \tau_{K,N}^{(t)}(\ell(x,y))\int_X \rho_{1}^{-1/N}(y)d\mu_1(y) \\
&\geq (\epsilon \q(\bar{Q}))^{\frac{1}{N}}\bigg[\inf_{A_{\bar{Q}}^{\epsilon}} \tau_{K,N}^{(1-t)}(\ell(x,y)) \inf_{\bar{Q}}\bigg( \inf_{[R_0,R_0+\epsilon L_0]}h(\alpha,\cdot )^{\frac{1}{N}} \bigg)(L_0)^{\frac{1}{N}} \\ 
&\quad \quad +\inf_{A_{\bar{Q}}^{\epsilon}} \tau_{K,N}^{(t)}(\ell(x,y)) \inf_{\bar{Q}}\bigg( \inf_{[R_1,R_1+\epsilon L_1]}h(\alpha,\cdot )^{\frac{1}{N}} \bigg)(L_1)^{\frac{1}{N}} \bigg].
 \end{align*}

Combining both of these estimates, we get:

\begin{align*}
(L_t)^{\frac{1}{N}}\sup_{\bar{Q}}\bigg(\sup_{[R_t,R_t+\epsilon L_t]} h(\alpha,\cdot )^{\frac{1}{N}}\bigg) \geq &\inf_{A_{\bar{Q}}^{\epsilon}} \tau_{K,N}^{(1-t)}(\ell(x,y)) \inf_{\bar{Q}}\bigg( \inf_{[R_0,R_0+\epsilon L_0]}h(\alpha,\cdot )^{\frac{1}{N}} \bigg)(L_0)^{\frac{1}{N}} \\ 
&\quad \quad +\inf_{A_{\bar{Q}}^{\epsilon}} \tau_{K,N}^{(t)}(\ell(x,y)) \inf_{\bar{Q}}\bigg( \inf_{[R_1,R_1+\epsilon L_1]}h(\alpha,\cdot )^{\frac{1}{N}} \bigg)(L_1)^{\frac{1}{N}},
\end{align*}

\noindent and sending $\epsilon\rightarrow 0,$ we conclude that 

\begin{equation}\label{estimate}
(L_t)^{\frac{1}{N}}\sup_{\bar{Q}} h(\alpha,\cdot )^{\frac{1}{N}}(R_t) \geq (L_0)^{\frac{1}{N}} \inf_{A_{\bar{Q}}} \tau_{K,N}^{(1-t)}(\ell(x,y)) \inf_{\bar{Q}}h(\alpha,\cdot )^{\frac{1}{N}}(R_0)
+(L_1)^{\frac{1}{N}}\inf_{A_{\bar{Q}}} \tau_{K,N}^{(t)}(\ell(x,y)) \inf_{\bar{Q}}h(\alpha,\cdot )^{\frac{1}{N}}(R_1),
\end{equation} where $A_{\bar{Q}}:=\cup_{\alpha\in\bar{Q}}\{(g(\alpha,R_0)\times g(\alpha,R_1))\}$ and the desired conclusion follows by recalling that $g(\alpha,\cdot)$ is an $\ell$-isometry.
\end{proof}

With this uniform estimate on the one dimensional density $h(\alpha,\cdot)$, the results of  Cavalletti-Gigli-Santarcangelo \cite[Proposition 3.4]{CavallettiGigliSantarcangelo2021} apply and it follows that  $h(\alpha,\cdot )$ is in fact a $CD(K,N)$ density on $X_{\alpha}$. Their proof goes through an appropriate choice of $L_0, R_0, L_1, R_1$ in equation \eqref{estimate}. Since a timelike transport ray is $\ell$-isometric to an interval in $\R$, optimal transport between two chronologically ordered measures on the ray is the monotone one-dimensional transport. The corresponding $TCD_p(K,N)$ entropy inequality is therefore characterized by the same density inequality, and is independent of the exponent $p\in(0,1)$. Hence, each ray satisfies the $TCD_p(K,N)$ condition for every $p\in (0,1)$. 

\begin{proposition} (\cite[Proposition 3.4]{CavallettiGigliSantarcangelo2021})
For $\q$-a.e. $\alpha\in Q,$ the  space $(X_{\alpha},\ell\restrict_{X_\alpha},\m_{\alpha})$
is a transport ray that satisfies the $TCD_p(K,N)$ condition, which is independent of $p\in(0,1)$
\end{proposition}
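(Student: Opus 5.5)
The plan is to extract from the uniform estimate of Lemma \ref{TMCP sup inf inequality} a pointwise concavity inequality for $h(\alpha,\cdot)^{1/(N-1)}$ on $\q$-a.e. ray. That inequality is the one-dimensional $CD(K,N)$ density condition. We then observe that, on an interval, this density condition is equivalent to $TCD_p(K,N)$ for every $p\in(0,1)$.

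\textbf{Step 1: pass from the uniform estimate to a single ray.} We use that $h(\alpha,\cdot)$ is locally Lipschitz and strictly positive in the ray interior (Theorem \ref{Big TMCP+ theorem}). By Lusin's theorem applied to $\alpha\mapsto h(\alpha,\cdot)$, viewed as a map into continuous functions on compact subintervals, we restrict to sets $\bar Q$ of arbitrarily small oscillation. More precisely, we fix a countable dense family of rational $R_0<R_1$, $L_0,L_1>0$, $t\in[0,1]\cap\mathbb{Q}$. For $\q$-a.e. $\alpha$ that is a density point of the good sets, we shrink $\bar Q$ around $\alpha$. Then $\sup_{\bar Q}h(\cdot,R_t)$ and $\inf_{\bar Q}h(\cdot,R_i)$ both converge to $h(\alpha,R_t)$ and $h(\alpha,R_i)$. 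This yields, for $\q$-a.e. $\alpha$ and all rational parameters,
\[
L_t^{1/N}h(\alpha,R_t)^{1/N}\geq L_0^{1/N}\tau^{(1-t)}_{K,N}(R_1-R_0)h(\alpha,R_0)^{1/N}+L_1^{1/N}\tau^{(t)}_{K,N}(R_1-R_0)h(\alpha,R_1)^{1/N}.
\]
Continuity of $h(\alpha,\cdot)$ and of the distortion coefficients then extends this to all real parameters.

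\textbf{Step 2: optimize over $L_0,L_1$.} This step follows \cite[Proposition 3.4]{CavallettiGigliSantarcangelo2021}. Write $\tau^{(s)}_{K,N}=s^{1/N}\sigma^{(s)}_{K,N-1}{}^{1-1/N}$. Setting $a_0=\sigma^{(1-t)}_{K,N-1}h(\alpha,R_0)^{1/(N-1)}$ and $a_1=\sigma^{(t)}_{K,N-1}h(\alpha,R_1)^{1/(N-1)}$, the right-hand side becomes $\big((1-t)L_0\big)^{1/N}a_0^{(N-1)/N}+\big(tL_1\big)^{1/N}a_1^{(N-1)/N}$. The left-hand side is $\big((1-t)L_0+tL_1\big)^{1/N}h(\alpha,R_t)^{1/N}$. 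Choose $L_0/L_1$ so that $(1-t)L_0:tL_1=a_0:a_1$; this is the equality case of Hölder's inequality. With this choice the right-hand side equals $\big((1-t)L_0+tL_1\big)^{1/N}(a_0+a_1)^{(N-1)/N}$. Cancelling the common factor gives $h(\alpha,R_t)^{1/(N-1)}\geq a_0+a_1$, which is exactly the $CD(K,N)$ density inequality of the definition of $TCD^1_{rLip}(K,N)$. The endpoint cases $t\in\{0,1\}$ are trivial. We also need to check that the rescaled intervals still lie in the ray domain, which holds after shrinking $\epsilon$.

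\textbf{Step 3: identify the density condition with $TCD_p(K,N)$ on the ray.} Along the ray, $g(\alpha,\cdot)$ is an $\ell$-isometry onto an interval (Proposition \ref{ray map properties}). For chronologically separated absolutely continuous $\mu_0,\mu_1$ on the ray, the $\ell^p$-optimal plan is the monotone rearrangement $T$ for every $p\in(0,1)$, by strict concavity of $s\mapsto s^p$. The entropy inequality of $TCD_p(K,N)$ is then checked pointwise along $T$. We have $\rho_t(T_t x)=\rho_0(x)h(\alpha,x)/\big(T_t'(x)h(\alpha,T_tx)\big)$, where $T_t'=(1-t)+tT'$. We combine the $CD(K,N)$ inequality for $h^{1/(N-1)}$ with the linear interpolation of $T_t'$ via Hölder, exactly as in the Riemannian one-dimensional argument. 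The resulting inequality contains no $p$, which gives independence of $p$.

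\textbf{Main obstacle.} I expect Step 1 to be the main difficulty. The estimate in Lemma \ref{TMCP sup inf inequality} involves $\sup$ and $\inf$ over $\bar Q$, and converting it into a pointwise statement requires a measurable selection of parameters. Lusin's theorem must be applied in a way compatible with the ray-map domains, which vary with $\alpha$. My plan is to work on countably many pieces $Q_j$ on which the domain of $h(\alpha,\cdot)$ contains a fixed rational interval, and on which $h$ is jointly continuous after a Lusin restriction.
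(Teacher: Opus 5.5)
Your proposal is correct and follows the paper's route. The paper also just applies \cite[Proposition 3.4]{CavallettiGigliSantarcangelo2021} to the uniform estimate of Lemma \ref{TMCP sup inf inequality}, which is your Steps 1--2; your H\"older-equality choice $(1-t)L_0:tL_1=a_0:a_1$ is exactly the ``appropriate choice'' of parameters it refers to. It then uses that optimal transport on an $\ell$-isometric interval is the monotone rearrangement for every $p\in(0,1)$ to get $p$-independence, which is your Step 3.

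Two small precisions. In Step 1 you need $\alpha$ to be a support point of $\q$ restricted to the Lusin set, not a density point. In Step 3 the entropy inequality must hold for all timelike $p$-dualisable absolutely continuous pairs on the ray, including those with overlapping supports; your strict-concavity argument still covers these, because crossing two causal pairs with $x<x'$, $y'<y$ again gives causal pairs.
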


To conclude, we have shown that the Limiting timelike curvature dimension $TCD_1(K,N)$ implies the Localized timelike curvature dimension condition $TCD^1(K,N)$ whenever the space is $p$-enb. 

\subsubsection{$TCD^1_{rLip}(K,N)\implies TCD_1(K,N):$ ``Localized" implies ``Limiting".}

Assume that the space satisfies the $TCD^{1}_{rLip}(K,N)$ condition. Given  an absolutely continuous pair
$(\mu_{0},\mu_{1})$ that is strongly timelike 1-dualisable with
$\mu_{0},\mu_{1}\in\P_{c}(X)$, we construct an $\ell_{1}$-geodesic
that satisfies the Entropy inequality from the definition of $TCD_{1}(K,N)$

By the results of Section \ref{Section 2}, the  timelike dualisable pair $(\mu_{0,}\mu_{1})$
admits a continuous and reverse Lipschitz potential $u:Z\rightarrow\R$ which solves the Kantorovich
Dual problem on the associated compact transport set $Z$. Then any optimal transport plan $\pi$ will be concentrated
in the set $\Gamma_{u}.$ 

Since $u$ is continuous and reverse-Lipschitz,  the $TCD^{1}_{rLip}(K,N)$
condition provides the existence a family of transport rays $\{X_{\alpha}\}_{\alpha\in Q}\subset X$
and a disintegration of $\m\llcorner_{\T_{u}}$ on $\{X_{\alpha}\}_{\alpha\in Q}$
such that 
\[
\m\restrict_{\T_{u}}=\int_{Q}\m_{\alpha}\mathfrak{q}(d\alpha),
\]

\noindent with $\m_{\alpha}(X_{\alpha})=1$ for $\q$-a.e. $\alpha\in Q,$ and
$(X_{\alpha},\ell\restrict_{X_\alpha},\m_{\alpha})$ satisfies the $CD(K,N)$ condition.

It follows that $$\mu_{0}=\rho_{0}\m=\int_{Q}\rho_{0}\m_{\alpha}\q(d\alpha)=\int_{Q}\mu_{0,\alpha}\q_{0}(d\alpha)$$
where $\mu_{0}$ is concentrated on $\T_{u}$, with $$\mu_{0,\alpha}=\rho_{0}\m_{\alpha}\cdot(\int\rho_{0}\m_{\alpha})^{-1},$$
and $\q_{0}:=(f)_{\#}\mu_{0}$ where $f$ is the quotient map. Since $f$ is the qoutient map associated with the transport ray equivalence, relation, for every $(x,y)\in (\Gamma_u\setminus \Delta_X)\cap (\T_u\times \T_u)$ we have $f(x)=f(y).$ Hence, for any Borel $C\subset Q$, we have the following equality of sets:
\[
(X\times f^{-1}(C))\cap(\Gamma_{u}\backslash\{x=y\})\cap(\T_{u}\times\T_{u})
=
(f^{-1}(C)\times X)\cap(\Gamma_{u}\backslash\{x=y\})\cap(\T_{u}\times\T_{u}).
\]

\noindent Since the pair is timelike dualisable, and the optimal transport is timelike $\mu_{0}(\T_{u}^{e})=\mu_{1}(\T_{u}^{e})=1$
gives that $$\pi((\Gamma_{u}\backslash\{x=y\})\cap\T_{u}\times\T_{u})=1.$$

This implies that for any Borel $C\subset Q$, we have the following mass balance condition:

\begin{align*}
\mu_{0}(f^{-1}(C)))  & = \pi(\left(f^{-1}(C)\times X\right)\cap(\Gamma_{u}\backslash\{x-y\})) \\
  & =  \pi(\left(X\times f^{-1}(C)\right)\cap(\Gamma_{u}\backslash\{x-y\})) \\
 & =  \mu_{1}(f^{-1}(C))
\end{align*}

\noindent and in particular $\q_{0}=\q_{1}:=(f)_{\#}\mu_{1}.$ We can now decompose $\mu_{1}$ as 

\[
\mu_{1}=\rho_{1}\m=\int_{Q}\rho_{1}\m_{\alpha}\q(d\alpha)=\int_{Q}\mu_{1,\alpha}\q_{0}(d\alpha),
\]

\noindent with $$\mu_{1,\alpha}=\rho_{1}\m_{\alpha}\cdot(\int\rho_{1}\mu_{\alpha})^{-1}.$$ 

By construction, for $\q$-a.e. $\alpha\in Q$, $\mu_{0,\alpha},\mu_{1,\alpha}$
are absolutely continuous with respect to $\m_\alpha$ on $X_{\alpha}$, By the $TCD_{rLip}^{1}(K,N)$ condition
on $(X_{\alpha},\ell\restrict_{X_\alpha},\m_{\alpha})$, there exists an optimal dynamical
plan $\nu_{\alpha}$ such that $\rho_{t,\alpha}\m_{\alpha}=\mu_{t,\alpha}=(e_{t})_{\#}\nu_{\alpha}$
is an $\ell_{1}$-geodesic that interpolates $\mu_{0,\alpha}$ and
$\mu_{1,\alpha}$, and satisfies
\begin{equation} \label{pointwise}
\rho_{t,\alpha}^{-1/N'}(\gamma_{t})\geq\tau_{K,N'}^{(1-t)}(\ell(\gamma_{0},\gamma_{1}))\rho_{0,\alpha}^{-\frac{1}{N'}}(\gamma_{0})+\tau_{K,N'}^{(t)}(\ell(\gamma_{0},\gamma_{1}))\rho_{1,\alpha}^{-\frac{1}{N'}}(\gamma_{1})\text{ }
\end{equation}

\noindent $\ensuremath{\text{for}}~\ensuremath{\nu_{\alpha}~\text{a.e}.\gamma.}$

We can combine the 1-dimensional geodesics by defining $\nu:=\int_{Q}\nu_{\alpha}\q_{0}(d\alpha).$
Then define $\mu_{t}=(e_{t})_{\#}\nu$. We claim that $\{\mu_{t}\}$
is an $\ell_{1}$-geodesic interpolating between $\mu_{0}$ and $\mu_{1}.$
Indeed, for $0\leq s\leq t\leq1$, since $(e_0,e_1)_\#\nu$ is $\ell^1$- optimal, we have:

\[
\begin{aligned}\ell_{1}(\mu_{s},\mu_{t}) & \geq\int_{X\times X}\ell(x,y)(e_{t},e_{s})_{\#}\nu(dxdy)\\
 & =\int_{Q}\int_{X_{\alpha}\times X_{\alpha}}\ell(x,y)(e_{t},e_{s})_{\#}\nu_{\alpha}(dxdy)\q_{0}(d\alpha)\\
 & =(t-s)\int_{Q}\int_{X_{\alpha}\times X_{\alpha}}\ell(x,y)(e_{0},e_{1})_{\#}\nu_{\alpha}(dxdy)\q_{0}(d\alpha)\\
 & =(t-s)\int_{X\times X}\ell(x,y)(e_{0},e_{1})_{\#}\nu(dxdy)\\
 & =(t-s)\ell_{1}(\mu_{0},\mu_{1}).
\end{aligned}
\]

Now we show that $\mu_{t}$ satisfies the entropy convexity inequality from the definition \eqref{Limiting TCD definition} of $TCD_1(K,N)$.
Indeed since $\mu_{t}=\rho_{t}\m$, this implies that $\rho_{t,\alpha}=\rho_{t}\cdot(\int\rho_{0}\m_{\alpha})^{-1}$. Plugging this into \eqref{pointwise} gives us

$$\rho_{t}^{-1/N'}(\gamma_{t})\geq\tau_{K,N}^{(1-t)}(\ell(\gamma_{0},\gamma_{1}))\rho_{0}^{-\frac{1}{N'}}(\gamma_{0})+\tau_{K,N}^{(t)}(\ell(\gamma_{0},\gamma_{1}))\rho_{1}^{-\frac{1}{N'}}(\gamma_{1})$$

\noindent for $\nu_{\alpha}$-a.e. $\gamma.$ Since for $\q_{0}$-a.e. $\alpha$
then inequality above holds for $\nu_{\alpha}$-a.e. $\gamma$, a
fortiori it holds true for $\nu$-a.e. $\gamma$; by integrating against $\nu$ the claim
is proved.\hfill $\square$

This subsection shows that $TCD_{rLip}^1(K,N)\implies TCD_1(K,N).$ Combined with the previous subsection, we conclude that $$TCD_{rLip}^1(K,N)\iff TCD_1(K,N).$$

\section{Appendix}

We now provide the proof of Theorem \ref{Brenier Map}. The theorem is an extension of the results of Kell \cite{Kell2017} and Cavalletti-Huesmann \cite{CavallettiHuesmann2015}, as well as the Lorentzian construction of Braun \cite{BraunTCD}.

\begin{theorem}
    Set $K\in \R, N\in [1,\infty), p\in(0,1),$ and let $(X,d,\m,\ell)$ be a forward $p$-enb gh LLS that satisfies the $TMCP^+(K,N)$ condition. Let $\mu_0$ be absolutely continuous and let $(\mu_0,\mu_1)\in \P_c(X)^2$ be timelike $p$-dualisable. Then, there exists a unique $\ell^p$-optimal coupling $\Pi_{\ll}(\mu_0,\mu_1)$ and moreover there exists a $\mu_0$-measurable map $T:X\rightarrow X$ such that $\pi = (Id,T)_\#\mu_0$ and $$\ell_p(\mu_0,\mu_1)^p=\int \ell(x,T(x))^pd\mu_0(x).$$ Additionally, there exists a unique chronological timelike $\ell^p$ optimal dynamical plan $\nu \in OptTGeo_p(\mu_0,\mu_1)$ that is induced by a $\mu_0$-measurable map.
    \end{theorem}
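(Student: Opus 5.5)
We follow the Cavalletti--Huesmann / Kell scheme, in the Lorentzian form used by Braun for $TCD_p$ spacetimes. Fix an $\ell^p$-optimal $\pi\in\Pi_\ll^{p\text{-opt}}(\mu_0,\mu_1)$; existence follows from timelike $p$-dualisability and compactness. By Corollary \ref{duality}, $\pi$ is concentrated on an $\ell^p$-cyclically monotone set $\Gamma\subset X^2_\ll$. We may take $\Gamma$ $\sigma$-compact: intersect with $\{\ell\ge 1/n\}$ and use the continuity of $\ell_+$ together with the continuity of the $\ell^p$-convex potentials on compact sets, as in Theorem \ref{Kantorovich Potential}. The aim is to show that for $\mu_0$-a.e.\ $x$ the fibre $\Gamma(x)$ is a singleton. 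A measurable selection (Theorem \ref{measurable selection}) then gives $T$ with $\pi=(Id,T)_\#\mu_0$. Uniqueness of $\pi$ is then standard: if $\pi_1,\pi_2$ are optimal, $\frac12(\pi_1+\pi_2)$ is optimal and concentrated on a graph, which forces $\pi_1=\pi_2$.

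To show the fibres are a.e.\ singletons, argue by contradiction. Suppose $E:=\{x:\Gamma(x)\text{ has at least two points}\}$ has $\m(E)>0$. By measurable selection and Lusin, find a compact $B\subset E$ with $\m(B)>0$ and continuous maps $T_1,T_2$ on $B$ such that $(x,T_i(x))\in\Gamma$, $T_1(x)\ne T_2(x)$, and in fact $d(T_1(B),T_2(B))>0$ after localising. Set $\mu_0^B:=\m(B)^{-1}\m\restrict_B$ and $\pi^B:=\frac12\big((Id,T_1)_\#\mu_0^B+(Id,T_2)_\#\mu_0^B\big)$. This plan is supported in $\Gamma$, hence $\ell^p$-optimal between its marginals, and since $\ell\ge c>0$ on its support the pair is timelike $p$-dualisable.

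Lift $\pi^B$ to a dynamical plan $\eta\in OptTGeo_p^\ell$ using measurable selection of timelike $\ell$-geodesics, as in Lemma \ref{measurable correspondence branching}. The key input is that $TMCP^+(K,N)$, combined with forward $p$-enb, yields absolute continuity of the intermediate marginals $(e_t)_\#\eta\ll\m$ for $t\in[0,1)$, at least for small $t$. This step I expect to be the main obstacle. The plan there is Braun's argument: approximate $\mu_1$ by finitely many Diracs as in the proof of Proposition \ref{TMCP along sets}, apply $TMCP^+(K,N)$ to each cell to get the entropy bound $S_N(\mu_t)\le -\tau\,\m(B)^{1/N}$ up to constants, and use forward $p$-enb (plans are concentrated on forward non-branching geodesics) to show the cells' evolutions are essentially disjoint. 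Lower semicontinuity of $S_N$ then passes the bound to the limit and excludes a singular part.

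With absolute continuity of $\mu_t$ available, conclude the contradiction. Consider the two families of geodesics from $x$ to $T_1(x)$ and to $T_2(x)$. Cyclical monotonicity, together with the fact that mass from a set of positive measure is split at time $0$, produces on a set of positive $\m$-measure at a small time $t$ two geodesics in the support of an optimal plan that agree on $[0,t]$ but have different endpoints. To see this, restrict $\eta$ to $[t,1]$ and note that $\mu_t\ll\m$, so the restricted plan from $\mu_t$ is again admissible. This violates forward non-branching. Equivalently, one can follow Kell's route: a plan starting from an absolutely continuous measure whose every restriction has absolutely continuous interpolants must be induced by a map. Finally, uniqueness of the dynamical plan follows the same way: any two optimal dynamical plans average to an optimal plan, which by forward $p$-enb and the above is induced by a map $x\mapsto\gamma_x$, which gives both uniqueness and the claimed measurable map.
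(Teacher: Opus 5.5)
\textbf{There is a genuine gap in the contradiction step.} Your reduction matches the paper's: a $\sigma$-compact $\ell^p$-cyclically monotone $\Gamma$, two selected targets $T_1,T_2$ on a compact $B$ with $d(T_1(B),T_2(B))>0$, and uniqueness by averaging. The step that is supposed to produce the contradiction, however, does not work.

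The geodesics from a single $x$ to $T_1(x)\neq T_2(x)$ need only coincide at time $0$. Forward non-branching \eqref{forward-branching} only forbids geodesics that coincide on some $[0,t]$ with $t>0$. So ``mass split at time $0$'' is not a branching configuration. Restricting $\eta$ to $[t,1]$ with $\mu_t\ll\m$ does not create one either, because in general $\gamma^1_x(t)\neq\gamma^2_x(t)$.

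Qualitative absolute continuity of the interpolants is not what drives the argument. Kell's criterion, for instance, is quantitative: it needs $\m(A_{t,x})\ge f(t)\m(A)$ with $\limsup_{t\downarrow 0}f(t)>1/2$, not mere absolute continuity. In addition, an arbitrary measurable lift of $\pi^B$ to geodesics need not have absolutely continuous interpolants at all, since $TMCP^+(K,N)$ only supplies one good plan per Dirac target.

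\textbf{The missing idea is the measure count.} Take $\mu_0=\m(B)^{-1}\m\restrict_B$ and a Dirac target. Then $TMCP^+(K,N)$ together with H\"older gives $\m(\{\rho_t>0\})\ge c_{K,N,D}(t)\,\m(B)$, where $c_{K,N,D}(t)=\inf_{0\le r\le D}\tau^{(1-t)}_{K,N}(r)^N\to 1$ as $t\downarrow 0$.
\begin{itemize}
\item Treat the two branches, towards $T_1$ and towards $T_2$, first for finitely many Dirac targets and then approximate. Each branch has time-$t$ marginals whose support has measure close to $\m(B)$.
\item Both supports lie in a neighbourhood of $B$ whose measure tends to $\m(B)$. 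Hence for small $t$ the two branches must overlap on a set of positive measure.
\item Branching appears only at such an overlap point $y=\gamma^1_x(t)=\gamma^2_{x'}(t)$, typically with $x\neq x'$. Cyclical monotonicity and strict concavity of $s\mapsto s^p$ force $\ell(x,T_1(x))=\ell(x',T_2(x'))$.
\item Consequently $\gamma^1_x|_{[0,t]}*\gamma^2_{x'}|_{[t,1]}$ is an optimal timelike geodesic that agrees with $\gamma^1_x$ on $[0,t]$ but ends in $T_2(B)$. The resulting plan between absolutely continuous marginals contradicts forward $p$-enb.
\end{itemize}

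The paper runs the same logic in the opposite order. First, crossing plus forward $p$-enb gives $\nu^1_t\perp\nu^2_t$. Then $2c_{K,N,D}(t)\m(B)\lesssim\m(G_t(B))\to\m(B)$ is impossible.

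Your ``main obstacle'' paragraph already contains the needed ingredients: $TMCP^+$ applied to cells, and disjointness of the cell evolutions via forward $p$-enb. You use them only to obtain absolute continuity, instead of running this volume count on the two branches.
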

\begin{proof}
    Let $E\subset X$ be a compact set with $0<\m(E)<\infty$, and choose $z$ such that $E\subset I^-(z)$.  Set $\mu_0=\frac{1}{\m(E)}\m\restrict_{E}$, and consider the admissible $TMCP^+(K,N)$ pair $(\nu_0,\delta_z).$

    The $\ell^p$-optimal transport problem between the pair $(\nu_0,\delta_z)$ a unique coupling $\pi = (Id,z)_\# \mu_0$. Moreover, the definition of $TMCP^+(K,N)$ applied to the pair $(\mu_0,\delta_z)$ gives a probability measure $$\boldsymbol{\alpha}\in \P(TGeo^\ell(X)$$ concentrated on maximal timelike geodesics such that $$(e_0)_\# \boldsymbol{\alpha}=\mu_0\quad  (e_1)_\# \boldsymbol{\alpha}=\delta_z,\quad (e_0,e_1)_\#\boldsymbol{\alpha}=(Id,z)_\# \mu_0.$$ Set $\nu_t = (e_t)_\#\boldsymbol{\alpha}$, for which $\nu_t = \rho_t\m+\mu_t^{\textbf{sing}}$ for $0\leq t<1$, and the absolutely continuous part satisfies the $TMCP^+(K,N)$ entropy estimate. Setting $D:=\sup_{x\in E}\ell(x,y)<\infty$, we define $$c_{K,N,D}(t):=\inf_{0\leq r\leq D} \tau_{K,N}^{(1-t)}(r)^N,$$ and the $TMCP^+(K,N)$ inequality applied to $\mu_0$ gives \begin{align*}
        \int_X\rho_t^{1-\frac{1}{N}}d\m &\geq  \int_E \tau_{K,N}^{(1-t)}(\ell(x,z))\rho_0(x)^{-1/N}d\mu_0(x)\\
        &=\m(E)^{1/N}\int_E\tau_{K,N}^{(1-t)}(\ell(x,z))\rho_0(x)^{-1/N}d\mu_0(x)\\
        &\geq c_{K,N,D}(t)\m(E)^{1/N}.
    \end{align*}

    On the other hand, H\"older's inequality gives \begin{align*}
    \int_X\rho_t^{1-1/N}d\m &= \int_{\{ \rho_t>0\}}\rho_t^{1-1/N}\\
    &\leq \bigg(\int_X\rho_t d\m\bigg)^{1-1/N} \m(\{ \rho_t>0\})^{1/N}\\
    &\leq \m(\{ \rho_t>0\})^{1/N},
    \end{align*}
and combining these two estimates we get $$\m(\{ \rho_t>0\})\geq c_{K,N,D}(t)\m(E),$$ where we note that $c_{K,N,D}(t)\rightarrow 1$ as $t\downarrow 0$.

Suppose now that we have two chronological dynamical couplings $\boldsymbol{\alpha}^1, \boldsymbol{\alpha}^2$ emanating from the same absolutely continuous $\mu_0$, but $z_1,z_2\in X$ are distinct points satisfying $E\subset I^-(z_1)\cap I^-(z_2).$ Assume moreover that $\Gamma:=(E\times \{ z_1\})\cup (E\times \{ z_2\})$ is contained in a common $\ell^p$-cyclically monotone set. Applying the $TMCP^+(K,N)$ condition to the pairs $(\mu_0,\delta_{z_1})$ and $(\mu_0,\delta_{z_2})$ gives two  timelike $p$-optimal dynmailcal plans $$\boldsymbol{\alpha}^1, \boldsymbol{\alpha}^2$$ such that $$(e_0)_\#\boldsymbol{\alpha}^i=\mu_0,\quad (e_1)_\#\boldsymbol{\alpha}^i=\delta_{z_i},\quad \nu_r^i:=(e_r)_\# \boldsymbol{\alpha}^i=\rho_r^i\m,\quad \textrm{for $i=1,2,$}$$ where we now claim that $\nu_t^1\perp \nu_t^2$ for every $t\in (0,1).$ 

Suppose for the sake of contradiction that for some $t\in (0,1)$ we have $\nu_t^1\not\perp\nu_t^2$. Since $\nu_t^i=\rho_t^i\m$, the measure $\sigma:=\min{\{\rho_t^1,\rho_t^2\}}$ is nonzero, and normalizing $\eta:=\frac{\sigma}{\sigma(X)}$ gives $\eta\ll\nu_t^i$ for $i={1,2}$. We now disintegrate the two dynamical plans w.r.t $e_t:$ $$\boldsymbol{\alpha}^i=\int_X\boldsymbol{\alpha}_x^i\nu_t^i(dx),\quad \boldsymbol{\alpha}_x^i(\{ \g:\g_t=x\})=1,\quad \textrm{for $\nu_t^i$-a.e. $x$},$$ and if we further replace the measure $\nu_t^i$ by the common measure $\eta$, and define $$\boldsymbol{\beta}^i:=\int_X\boldsymbol{\alpha}_x^i\eta(dx),$$ then we get $(e_t)_\#\boldsymbol{\beta}^1=(e_t)_\#\boldsymbol{\beta}^2=\eta$, and moreover $\boldsymbol{\beta}^i\ll\boldsymbol{\alpha}^i$. Consequently, because $\frac{d\eta}{d\nu_i^t}=\frac{\min\{ \rho_t^1,\rho_t^2\}}{\sigma(X)\rho_t^i}\leq \frac{1}{\sigma(X)},$ we get $$(e_r)_\#\boldsymbol{\beta}^i\ll \m\quad \textrm{for every $r<1$}.$$

We now couple the two conditional families over their common position at time $t.$ Defining $$\boldsymbol{q}:=\int_X\boldsymbol{\alpha}_x^1\otimes \boldsymbol{\alpha}_x^2\eta(dx),$$ where we have that $\g_t^1=\g_t^2$ for $\boldsymbol{q}-a.e.$ pair. Setting $L_i = \ell(\g_0^i,\g_1^i),$ a standard calculation using the reverse triangle inequality and that the two endpoint pairs $(\g_0^1,\g_1^1), (\g_0^2, \g_1^2)$ belong to the common $\ell^p$-cyclically monotone relation gives us that $L_1=L_2=:L$. Therefore, we can define the two, and therefore \begin{equation}\label{crossed lengths}
\ell(\g_0^1,\g_1^2)=\ell(\g_0^2,\g_0^1)=L.\end{equation} The crossed concatenations $$\g^1\restrict_{[0,t])}*\g^2\restrict_{[t,1])},\quad \g^2\restrict_{[0,t])}*\g^1\restrict_{[t,1])}$$ which are again proper-time maximizing timelike geodesics. These crossed geodesics remain optimal as well, since if we let $(\phi,\psi)$ be the dual potentials for the common $\ell^p$ optimal relation, . $\phi(x)+\psi(y)\geq \ell(x,y)^p$ Therefore,  since for the original two pairs: $$\phi(\g_0^1)+\psi(\g_1^1)=\phi(\g_0^2)+\psi(\g_1^2)=L^p,$$ and summing thee two relations and rearranging gives us $$ \phi(\g_0^1)+\psi(\g_1^2)=\ell(\g_0^1,\g_1^2),\quad  \phi(\g_0^2)+\psi(\g_1^1)=\ell(\g_0^2,\g_1^1) $$, where we have used \eqref{crossed lengths}. Thus, the crossed geodesics are still contained in the $p$-optimal dual relation.

We can now construct a forward-branching optimal plan. For the pair $(\g^1,\g^2)$ defined by $\boldsymbol{q},$ define $$\tilde \g^1:=\g^1,\quad \tilde \g^2:=\g^1\restrict_{[0,t]}*\g^2\restrict_{[t,1]}.$$ We thus have $\tilde \g_r^1 =\tilde \g_r^2$ for $0\leq r\leq t$, where the two curves have the same initial segment,  and terminate at different $z_1$ and $z_2$. Since the two cures are $p$-optimal . define $$\boldsymbol{\tilde\alpha}^i:=(\tilde\g^i)_\#\boldsymbol{q},\quad \boldsymbol{\alpha}:=\frac{1}{2}\boldsymbol{\tilde \alpha}^1+\frac{1}{2}\boldsymbol{\tilde \alpha}^2.$$ The preceding $p$-cyclical monotonicty argument gives us that $\boldsymbol{\tilde\alpha}$ is again  an $\ell^p$-optimal dynamical plan, where moreover its two branches agree on the interval $[0,t]$ and have different terminal points $z_1\neq z_2$. Choosing $s\in (t,1)$ such that $\tilde \g_s^1\neq \tilde \g_s^2$ on a set of $\boldsymbol{q}$-positive measure, and restricting the plan to $[0,s]$, we obtain that the intial and final (time $s$) marginals are absolutely continuous. We have thus produced and $\ell^p$-optimal dynamical plans between absolutely continuous marginals which contain two geodesics agreeing on a nontrivial initial interval, and then separating towards the future, which contradicts the forward $p$-enb assumption. Therefore $$\nu_t^1\perp \nu_t^2\quad \forall t\in(0,1).$$

The rest of the argument follows from \cite[Theorem 4.16,4.17]{BraunTCD}. We record the main steps for completeness. For a finite target $$\mu_1= \Sigma_{j=1}^m\lambda_j\delta_{z_j},$$ suppose that an optimal coupling is not induced by a map. Then there are distinct $z_i,z_j$ and a positive measure set $E$ of source points which admit both targets in the same $\ell^p$ cyclically monotone relation. Setting $$\mu_0:=\frac{1}{\m(E)}\m\restrict_E,\quad \nu_t^i=\rho_t^i\m,\quad \nu_t^j=\rho_t^j\m,$$ the above estimate gives us that $$(\m(\{ \rho_t^k>0\})\geq c_{K,N,D}(t)\m(E),\quad k=i,j,$$ together with $c_{K,N,ND}(t)\rightarrow 1$ as $t\downarrow0$. By the forward mixing argument, we conclude that $\nu_t^i\perp \nu_t^j$, and hence for sufficiently small $t$ we have $$\m(\{ \rho_t^i>0\})+\m(\{ \rho_t^j>0\})>(2-\eps)\m(E),$$ while both supports are contained in the small disjoint forward neighbourhood $G_t(E)$ satisfying $\m(G_t(E))\rightarrow \m(E)$ as $t\downarrow 0$. Since the two supports are disjoint, we get $$\m(\{ \rho_t^i>0\})+\m(\{ \rho_t^i>0\})=\m(\{ \rho_t^j>0\})\cup \{ \rho_t^j>0\})\leq \m(G_t(E))$$ which is impossible for $t>0$ sufficiently small. Thus the optimal coupling is induced by a map for finite Dirac targets.

For a general compactly supported target, let $\Gamma\subset X_\ll^2$ be a Borel $\ell^p$-cyclically monotone set  that contains the support of the optimal coupling $\pi$. If $\pi$ were not induced by a map, then the set $$E:=\{ x:\#\Gamma(x)\ge 2\}$$ would satisfy $\mu_0(E)>0$. Measurable selection gives existence of two Borel maps $$T_1,T_2:E\rightarrow X,\quad T_1(x)\neq T_2(x),\quad (x,T_i(x))\in \Gamma,$$ which by Lusins's theorem we may assume that their images lie in disjoint compact sets $K_1,K_2$. The above finite Dirac target approximation the applies to $\nu_1^i:=(T_i)_\#\mu_0$ for $i=1,2,$ and the preceding measure counting contradiction passes to the limit. Hence $$\pi = (Id,T)_\#\mu_0$$ for some $\mu_0$ measurable map $T.$

We get uniqueness by averaging. If $\pi^1\neq \pi^2$ were two optimal couplings, then $$\tilde \pi :=\frac{1}{2}(\pi^1+\pi^2)$$ would again be optimal. But $\tilde \pi$ would fail to be induced by a map where the disintegrations of $\pi^1$ and $\pi^2$ differ, which contradicts the above result. Therefore: $$\Pi_\ll^{p-opt}(\mu_0,\mu_1)=\{ \pi\}.$$ Finally, let $\nu$ be the corresponding $p$-optimal dynamical plan. If $\eta$ were not induced by a map, then for some rational $s\in (0,1)$, he coupling $(e_0,e_s)_\#\nu$ woulf fail to be induced by a map. Since the first marginal is absolutely continuous, this contradicts the above result. Hence $$\eta = G_\#\mu_0$$ for some measureable $G:X\rightarrow TGeo^\ell(X).$ If $\eta^1,\eta^2$ were two distinct optimal dynamical plans, then their average would again be optimal, but would not be induced by a single map. Thus, $$OptTGeo_p(\mu_0,\mu_1)=\{ \eta\}.$$
\end{proof}

We now show adapt the arguments of Kell \cite{Kell2017} and Rajala-Sturm \cite{RajalaSturm2014} to show that forward $p$-enb $TCD_p(K,N)$ spactimes are also backward $p$-enb.

\begin{theorem}\label{forward p-enb implies backward p-enb}
    Let $p\in (0,1),$ $K\in \R,$ $N\in (1,\infty)$, and let $(X,d,\m,\ell)$ be a forward $p$-enb gh LLS satisfying the $TCD_p(K,N)$ condition. Then $X$ is also backwards $p$-enb.
\end{theorem}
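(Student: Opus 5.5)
We argue by contradiction, along the lines of Kell and Rajala--Sturm and mirroring the rigidity argument for forward model spacetimes given above, but with the two-sided $TCD_p(K,N)$ entropy inequality in place of the Dirac-target $TMCP^+$ inequality. Suppose backward $p$-enb fails. Then there is a timelike $p$-dualisable pair $(\mu_0,\mu_1)$ with $\mu_0,\mu_1\ll\m$ and some $\eta\in OptTGeo_p^\ell(\mu_0,\mu_1)$ that is not concentrated on backward non-branching geodesics. By measurable selection (Theorem \ref{measurable selection}) and Lusin's theorem, as in Lemma \ref{measurable correspondence branching}, we extract the following data. There is a time $t_m\in(0,1)$ and a set of positive $\eta$-measure of pairs $(\gamma^L,\gamma^R)$ in $\spt\eta$ with $\gamma^L_r=\gamma^R_r$ for all $r\in[t_m,1]$ and $\gamma^L_0\neq\gamma^R_0$. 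After restricting to a compact piece, the initial points $\gamma^L_0$ and $\gamma^R_0$ lie in two disjoint balls $U^L,U^R$. Since $\eta$-a.e. geodesic has its endpoints in the common $\ell^p$-cyclically monotone set $\Gamma$ supporting $(e_0,e_1)_\#\eta$, all the selected endpoint pairs lie in $\Gamma$.

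Next we build two optimal plans that merge. Let $\boldsymbol\beta$ be the selected coupling on pairs, and set $\pi^L:=(P_1)_\#\boldsymbol\beta$ and $\pi^R:=(P_2)_\#\boldsymbol\beta$, normalised. These are restrictions of $\eta$ (after symmetrising the selection), so both are $\ell^p$-optimal, being concentrated on $\Gamma$. Their time-$0$ marginals $\mu_0^L,\mu_0^R$ satisfy $\mu_0^L,\mu_0^R\ll\mu_0\ll\m$ with bounded densities, and $\mu_0^L\perp\mu_0^R$. For $t\ge t_m$ the two plans have the same marginal $\nu_t:=(e_t)_\#\pi^L=(e_t)_\#\pi^R$, and in particular the same endpoint $\mu_1'\ll\m$. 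The mixture $\boldsymbol\pi:=\tfrac12(\pi^L+\pi^R)$ is again concentrated on $\Gamma$, hence $\ell^p$-optimal between $\mu_0':=\tfrac12(\mu_0^L+\mu_0^R)$ and $\mu_1'$. By forward $p$-enb, and because each marginal here is absolutely continuous, the Brenier-type uniqueness of Theorem \ref{Brenier Map} applies. Its proof in the Appendix uses only forward $p$-enb together with $TMCP^+$, and $TCD_p$ implies $TMCP^+$. Applying it to $(\mu_0^L,\mu_1')$, $(\mu_0^R,\mu_1')$ and $(\mu_0',\mu_1')$, the plans $\pi^L,\pi^R,\boldsymbol\pi$ are the unique optimal dynamical plans. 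Consequently the $TCD_p(K,N)$ interpolants for these pairs must be $(e_t)_\#\pi^L$, $(e_t)_\#\pi^R$ and $(e_t)_\#\boldsymbol\pi$, and all three equal $\nu_t$ for $t\ge t_m$.

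The contradiction comes from entropy counting. Apply the $TCD_p(K,N)$ inequality to the mixed pair $(\mu_0',\mu_1')$ at times $t$ close to $1$. The right-hand side involves $\rho_0'^{-1/N}=2^{1/N}\rho_0^{L,R\,-1/N}$ on the disjoint supports, together with the common $\rho_1$ term. In the limit $t\uparrow1$, however, $\nu_t\to\mu_1'$, and this says nothing about the initial mass. So instead we exploit that the time-reversed problem is an $\ell^p$-optimal transport from $\mu_1'$ backwards which splits. Consider the reversed plans on $[t_m,1]\to[0,t_m]$. From the single absolutely continuous measure $\nu_{t_m}$, the $TCD_p$ inequality for $(\mu_0^L,\nu_{t_m})$ and for $(\mu_0^R,\nu_{t_m})$ forces, by H\"older as in the proof of Theorem \ref{Brenier Map}, the lower bounds $\m(\{\rho^L_s>0\}),\m(\{\rho^R_s>0\})\ge c(s)\,\m(\{\rho_{t_m}>0\})$ for $s\uparrow t_m$, with $c(s)\to1$. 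At the same time $\nu_{t_m}$ is the common marginal at time $t_m$ of both plans, so the densities on the two branches combine. The interpolant of $\boldsymbol\pi$ at time $s<t_m$ is $\tfrac12(\rho^L_s+\rho^R_s)$, and its two pieces have disjoint supports for $s<t_m$ close to $t_m$, because the branches are separated. Hence
\[
-S_N\big((e_s)_\#\boldsymbol\pi\big)=2^{-(1-1/N)}\Big(\int(\rho_s^L)^{1-1/N}\,d\m+\int(\rho_s^R)^{1-1/N}\,d\m\Big).
\]
As $s\uparrow t_m$, continuity of the entropy along these geodesics makes this converge to $2^{1/N}\bigl(-S_N(\nu_{t_m})\bigr)$, whereas along $\pi^L$ alone it converges to $-S_N(\nu_{t_m})$. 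Since $N>1$, this gives $2^{1/N}=1$, a contradiction.

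The main obstacle is this last limiting step: we need lower semicontinuity of $S_N$ together with a matching upper bound coming from $TCD_p$ on the interval $[s,t_m]$. The lower bound comes from $TCD_p$ applied to the pair $(\mu_s^{L},\nu_{t_m})$, which is absolutely continuous at both ends. This is exactly where $TCD_p$ is needed rather than $TMCP^+$.
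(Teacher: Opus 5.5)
Your mechanism matches the paper's. Theorem \ref{Brenier Map} forces the $TCD_p(K,N)$ interpolants to be your restricted and mixed plans, and mutual singularity of the two branches produces the factor $2^{1/N}$ in $S_N$ of the mixture. But the step your argument rests on is unsupported. You need $\mu^L_s\perp\mu^R_s$ for all $s<t_m$ close to $t_m$, and you justify this only by ``the branches are separated''. Your construction separates them only at time $0$.

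\begin{itemize}
\item Each selected pair merely agrees on $[t_m,1]$. Its actual first merge time can lie anywhere in $(0,t_m]$, and for $s$ past it the two branches coincide and give no gain.
\item If the first merge time has no atom at $t_m$, the mass still separated at time $s$ tends to zero as $s\uparrow t_m$. Your limit then yields nothing.
\item Even for pairs merging exactly at $t_m$, the branches converge to each other as $s\uparrow t_m$, so no fixed disjoint neighbourhoods separate them.
\item The left branch of one pair may also pass through points of the right branch of another pair.
\end{itemize}

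Your limiting argument ($TCD_p$ on the branches, plus lower semicontinuity of $S_N$ for the mixture) would go through if a single selection kept the branches mutually singular at every $s<t_m$ and merged from $t_m$ on. Such a selection need not exist. Separately, your H\"older bound $\m(\{\rho^L_s>0\})\ge c(s)\,\m(\{\rho_{t_m}>0\})$ would require $\nu_{t_m}$ to have constant density on its support; it is not used later.

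What is missing is a time localization together with estimates that are uniform in it, which is how the paper argues.
\begin{itemize}
\item Confine the first merge times to a window $(a,b]$ with $b-a<\varepsilon$, with $a,b$ in a fixed compact subinterval of $(0,1)$.
\item Separate the branches at time $a$, not at time $0$, using Lusin's theorem. This gives $\mu^L_a\perp\mu^R_a$ and $\mu^L_t=\mu^R_t$ for $t\ge b$.
\item These plans now depend on $\varepsilon$, so a continuity-of-entropy limit along one fixed plan is no longer available. You must apply $TCD_p$ quantitatively on both sides.
\item For each branch on $[0,b]$, keeping only the final-marginal term, you get $A_a:=\tfrac12\big(S_N(\mu^L_a)+S_N(\mu^R_a)\big)\le c_-(a,b)\,S_N(\mu_b)$.
\item For the averaged plan on $[a,1]$, keeping only the initial-marginal term and using $\mu^L_a\perp\mu^R_a$, you get $S_N(\mu_b)\le c_+(a,b)\,2^{1/N}A_a$.
\item Since $A_a<0$, these combine to $2^{1/N}c_-(a,b)c_+(a,b)\le1$. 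But $c_\pm(a,b)\to1$ uniformly as $b-a\to0$, which gives the contradiction.
\end{itemize}
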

\begin{proof}
    Since $TCD_p(K,N)$ impleis $TMCP^+(K,N),$ the above theorem \ref{Brenier Map} applies. Consequently, whenever $\mu_0\ll \m$, every $p$-optimal transport problem from $\mu_0$ admits a unique coupling and a unique optimal dynamical plan.

    Assume for the sake of contradiction that the space is not backward $p$-enb. Then there exist $\mu_0,\mu_1\in \P_c(X)$, both absolutely continuous, together with a $p$-optimal dynamical plan which is not backward nonbranching. By the above theorem, we know that $\eta$ is the unique $p$-optimal dynamical plan between $\mu_0,\mu_1.$

    By standard measurable-selection argument, for every $\eps>0$, we can find $0<a<b<1$, with $b-a<\eps$ and two normalized optimal dynamical subplans of $\eta$ which we denote  $\eta^L$ and $\eta^R$ such that $$\mu_t^i:=(e_t)_\#\eta^i,\quad i=1,2,$$ and such that at time $t=a$ we have $\mu_a^L\perp \mu_a^R$, and on the times $t\in [b,1]$ onward we have $\mu_t^L=\mu_t^R$.

    The restricted and averaged dynamical plans are still $TCD_p(K,N)$ dynamical plans. Indeed, this follows since $\eta$ is $p$-optimal, and every restriction is $p$-optimal between its marginals. In particular $\eta^i\restrict_{[0,b]}$ is $p$-optimal between its endpoint marginals. Likewise $\bar{\eta}:=\frac{1}{2}(\eta^L+\eta^R)$ restricted to $[a,1]$ is $p$-optimal betwen $\bar\mu_a:=\frac{1}{2}(\mu_a^L+\mu_a^R)$ and their common final measure. Since all of these optimal transport problems have absolutely continuous initial measure, their $p$-optimal dynamical plans are unique. Consequently, the dynamical plans given by the $TCD_p(K,N)$ condition must coincide with the restricted and averaged plans above, respectively. We now apply the $TCD_p(K,N)$ entropy inequality to these restricted and averaged plans.

Defining $$A_a :=\frac{1}{2}(S_N(\mu_a^L)+S_N(\mu_a^R)),$$ we see that $A_a<0$. For $\mu_a^i=\rho_a^i\m$ with $i\in \{L,R\}$, which are mutually singular, we have that $\rho_a^L$ and $\rho_a^R$ have disjoint supports. Hence, we have for $\bar{\mu}_a=\frac{1}{2}(\mu_a^L+\mu_a^R)$: \begin{equation}
    \label{TCD equality}
S_N(\bar{\mu}^a) =-\int_X\bigg(\frac{\rho_a^L+\rho_a^R}{2}\bigg)^{1-1/N}d\m=2^{1/N}\frac{1}{2}(S_N(\mu_a^L)+S_N(\mu_a^R))=2^{1/N}A_a.\end{equation} 

We now apply the $TCD_p(K,N)$ condition to the two incoming branches. By the previous theorem, the plans $\eta^i$ restricted to the interval $[0,b]$ is the unique $p$-optimal dynamical plan between its endpoint marginals. Hence, applying the $TCD_p(K,N)$ entropy inequality we get $$S_N(\mu_a^i)\leq -\int \tau_{K,N}^{(a/b)}(\ell(\g_0,\g_a))\rho_b(\g_b)^{-1/N}d\eta^i(\g).$$ Defining $c_-(a,b):=\inf_\g \tau_{K,N}^{(a/b)}(\ell(\g_0,\g_b))$, we get $$S_N(\mu_0^i)\leq c_-(a,b)S_N(\mu_b),$$ whereby averaging over $i=L,R$ and recalling the definition of $A_a$ we get \begin{equation}\label{TCD inequality 1} A_a\leq c_-(a,b)S_n(\mu_b).
\end{equation}

We now apply the $TCD_p(K,N)$ entropy inequality to the two averaged future plans. Consider $\bar \eta = \frac{1}{2}(\eta^L+\eta^R)$ restricted to the interval $[a,1],$ and let $\bar \mu_a :=\frac{1}{2}(\mu_a^L+\mu_a^R)$, and at time $b$ the two branches have merged, so the marginal is $\mu_b$. Once again, the above Theorem gives us that $\bar \eta$ is the unique $p$-optimal dynamical plan between these marginals, and applying the $TCD_p(K,N)$ entropy inequality at time $b$ and keeping only the contribution from the initial marginal gives $$S_N(\mu_b)\leq -\int \tau_{K,N}^{\big(\frac{1-b}{1-a}\big)}(\ell(\g_a,\g_1))\bar\rho_a(\g_a)^{-1/N}d\bar\eta(\g).$$ Defining $c_+(a,b): = \inf_\g \tau_{K,N}^{\big(\frac{1-b}{1-a}\big)}(\ell(\g_a,\g_1))$, we get $$S_N(\mu_b)\leq c_+(a,b)S_N(\bar\mu_a),$$ and the mutual singularity at time $a$ gives us that $S_N(\bar \mu_a)=2^{1/N}A_a$. We therefore obtain \begin{equation}\label{TCD inequality 2} S_N(\mu_b)\leq 2^{1/N}c_+(a,b)A_a.
\end{equation}

Combining the equations \eqref{TCD equality}, \eqref{TCD inequality 1} and \eqref{TCD inequality 2}, we get since $A_a<0$: \begin{equation}\label{final TCD inequality}
2^{1/N}c_-(a,b)c_+(a,b)\leq 1.
\end{equation} The construction was made with $a<b$, and can be chosen so that $b-a\rightarrow 0$, and consequently, $$\frac{a}{b}\rightarrow 1,\quad \frac{1-b}{1-a}\rightarrow 1,\quad \tau_{K,N}^{(1)}(r)=1\implies c_+(a,b),c_-(a,b)\rightarrow 1.$$ Therefore, the left hand side of \eqref{final TCD inequality}
converges to $2^{1/N}>1$, which is a contradiction  for $b-a$ sufficiently small.
\end{proof}

\printbibliography

@article {KunzingerSaemann2018,
    AUTHOR = {Kunzinger, Michael and S\"amann, Clemens},
     TITLE = {Lorentzian length spaces},
   JOURNAL = {Ann. Global Anal. Geom.},
  FJOURNAL = {Annals of Global Analysis and Geometry},
    VOLUME = {54},
      YEAR = {2018},
    NUMBER = {3},
     PAGES = {399--447},
      ISSN = {0232-704X,1572-9060},
   MRCLASS = {53C23 (53B30 53C50 53C80)},
  MRNUMBER = {3867652},
MRREVIEWER = {Benjam\'in\ Olea},
       DOI = {10.1007/s10455-018-9633-1},
       URL = {https://doi.org/10.1007/s10455-018-9633-1},
}

@book{Wald1984,
  author    = {Wald, Robert M.},
  title     = {General Relativity},
  publisher = {University of Chicago Press},
  address   = {Chicago},
  year      = {1984}
}

@article {CavallettiGigliSantarcangelo2021,
    AUTHOR = {Cavalletti, Fabio and Gigli, Nicola and Santarcangelo, Flavia},
     TITLE = {Displacement convexity of entropy and the distance cost
              optimal transportation},
   JOURNAL = {Ann. Fac. Sci. Toulouse Math. (6)},
  FJOURNAL = {Annales de la Facult\'e{} des Sciences de Toulouse.
              Math\'ematiques. S\'erie 6},
    VOLUME = {30},
      YEAR = {2021},
    NUMBER = {2},
     PAGES = {411--427},
      ISSN = {0240-2963,2258-7519},
   MRCLASS = {53C23 (49Q22 53C21)},
  MRNUMBER = {4297384},
MRREVIEWER = {Daniele\ Semola},
       DOI = {10.5802/afst.1679},
       URL = {https://doi.org/10.5802/afst.1679},
}

@article {McCann2020,
    AUTHOR = {McCann, Robert J.},
     TITLE = {Displacement convexity of {B}oltzmann's entropy characterizes
              the strong energy condition from general relativity},
   JOURNAL = {Camb. J. Math.},
  FJOURNAL = {Cambridge Journal of Mathematics},
    VOLUME = {8},
      YEAR = {2020},
    NUMBER = {3},
     PAGES = {609--681},
      ISSN = {2168-0930,2168-0949},
   MRCLASS = {53C50 (49Q22 53C21 58Z05 82C35 83C99)},
  MRNUMBER = {4192570},
       DOI = {10.4310/CJM.2020.v8.n3.a4},
       URL = {https://doi.org/10.4310/CJM.2020.v8.n3.a4},
}

@article {CavallettiMondino2024,
    AUTHOR = {Cavalletti, Fabio and Mondino, Andrea},
     TITLE = {Optimal transport in {L}orentzian synthetic spaces, synthetic
              timelike {R}icci curvature lower bounds and applications},
   JOURNAL = {Camb. J. Math.},
  FJOURNAL = {Cambridge Journal of Mathematics},
    VOLUME = {12},
      YEAR = {2024},
    NUMBER = {2},
     PAGES = {417--534},
      ISSN = {2168-0930,2168-0949},
   MRCLASS = {53C23 (49Q22 53C50 53C80 83C75)},
  MRNUMBER = {4779676},
%%% RMK: THE ARTICLE CITES arxiv.org/abs/2004.0893
}

@article {CaffarelliFeldmanMcCann2002,
    AUTHOR = {Caffarelli, Luis A. and Feldman, Mikhail and McCann, Robert
              J.},
     TITLE = {Constructing optimal maps for {M}onge's transport problem as a
              limit of strictly convex costs},
   JOURNAL = {J. Amer. Math. Soc.},
  FJOURNAL = {Journal of the American Mathematical Society},
    VOLUME = {15},
      YEAR = {2002},
    NUMBER = {1},
     PAGES = {1--26},
      ISSN = {0894-0347,1088-6834},
   MRCLASS = {49Q20 (58E17 90C48)},
  MRNUMBER = {1862796},
MRREVIEWER = {J.\ E.\ Brothers},
       DOI = {10.1090/S0894-0347-01-00376-9},
       URL = {https://doi.org/10.1090/S0894-0347-01-00376-9},
}

@book {Srivastava1998,
    AUTHOR = {Srivastava, S. M.},
     TITLE = {A course on {B}orel sets},
    SERIES = {Graduate Texts in Mathematics},
    VOLUME = {180},
 PUBLISHER = {Springer-Verlag, New York},
      YEAR = {1998},
     PAGES = {xvi+261},
      ISBN = {0-387-98412-7},
   MRCLASS = {04A15 (28A05 54-01 54H05)},
  MRNUMBER = {1619545},
MRREVIEWER = {Marek\ Balcerzak},
       DOI = {10.1007/978-3-642-85473-6},
       URL = {https://doi.org/10.1007/978-3-642-85473-6},
}

@article {Cavalletti2014,
    AUTHOR = {Cavalletti, Fabio},
     TITLE = {Monge problem in metric measure spaces with {R}iemannian
              curvature-dimension condition},
   JOURNAL = {Nonlinear Anal.},
  FJOURNAL = {Nonlinear Analysis. Theory, Methods \& Applications. An
              International Multidisciplinary Journal},
    VOLUME = {99},
      YEAR = {2014},
     PAGES = {136--151},
      ISSN = {0362-546X,1873-5215},
   MRCLASS = {49Q20 (53C23 58E30)},
  MRNUMBER = {3160530},
MRREVIEWER = {Luca\ Granieri},
       DOI = {10.1016/j.na.2013.12.008},
       URL = {https://doi.org/10.1016/j.na.2013.12.008},
}

@article {BianchiniCavalletti2013,
    AUTHOR = {Bianchini, Stefano and Cavalletti, Fabio},
     TITLE = {The {M}onge problem for distance cost in geodesic spaces},
   JOURNAL = {Comm. Math. Phys.},
  FJOURNAL = {Communications in Mathematical Physics},
    VOLUME = {318},
      YEAR = {2013},
    NUMBER = {3},
     PAGES = {615--673},
      ISSN = {0010-3616,1432-0916},
   MRCLASS = {58E35 (28A75 49Q20 58C35)},
  MRNUMBER = {3027581},
MRREVIEWER = {Luca\ Granieri},
       DOI = {10.1007/s00220-013-1663-8},
       URL = {https://doi.org/10.1007/s00220-013-1663-8},
}

@article {CavallettiMilman2021,
    AUTHOR = {Cavalletti, Fabio and Milman, Emanuel},
     TITLE = {The globalization theorem for the curvature-dimension
              condition},
   JOURNAL = {Invent. Math.},
  FJOURNAL = {Inventiones Mathematicae},
    VOLUME = {226},
      YEAR = {2021},
    NUMBER = {1},
     PAGES = {1--137},
      ISSN = {0020-9910,1432-1297},
   MRCLASS = {49Q22 (49Q20 53C23)},
  MRNUMBER = {4309491},
MRREVIEWER = {Luca\ Granieri},
       DOI = {10.1007/s00222-021-01040-6},
       URL = {https://doi.org/10.1007/s00222-021-01040-6},
}

@article {CavallettiMondino2017,
    AUTHOR = {Cavalletti, Fabio and Mondino, Andrea},
     TITLE = {Sharp and rigid isoperimetric inequalities in metric-measure
              spaces with lower {R}icci curvature bounds},
   JOURNAL = {Invent. Math.},
  FJOURNAL = {Inventiones Mathematicae},
    VOLUME = {208},
      YEAR = {2017},
    NUMBER = {3},
     PAGES = {803--849},
      ISSN = {0020-9910,1432-1297},
   MRCLASS = {53C23 (49Q05 53C21)},
  MRNUMBER = {3648975},
MRREVIEWER = {Renjin\ Jiang},
       DOI = {10.1007/s00222-016-0700-6},
       URL = {https://doi.org/10.1007/s00222-016-0700-6},
}

@article {McCann2024,
    AUTHOR = {McCann, Robert J.},
     TITLE = {A synthetic null energy condition},
   JOURNAL = {Comm. Math. Phys.},
  FJOURNAL = {Communications in Mathematical Physics},
    VOLUME = {405},
      YEAR = {2024},
    NUMBER = {2},
     PAGES = {Paper No. 38, 24},
      ISSN = {0010-3616,1432-0916},
   MRCLASS = {51K10 (53C50 83C75)},
  MRNUMBER = {4703452},
MRREVIEWER = {Barry\ Minemyer},
       DOI = {10.1007/s00220-023-04908-1},
       URL = {https://doi.org/10.1007/s00220-023-04908-1},
}

@article {LottVillani2009,
    AUTHOR = {Lott, John and Villani, C\'edric},
     TITLE = {Ricci curvature for metric-measure spaces via optimal
              transport},
   JOURNAL = {Ann. of Math. (2)},
  FJOURNAL = {Annals of Mathematics. Second Series},
    VOLUME = {169},
      YEAR = {2009},
    NUMBER = {3},
     PAGES = {903--991},
      ISSN = {0003-486X,1939-8980},
   MRCLASS = {53C23 (49Q15)},
  MRNUMBER = {2480619},
MRREVIEWER = {Alessio\ Figalli},
       DOI = {10.4007/annals.2009.169.903},
       URL = {https://doi.org/10.4007/annals.2009.169.903},
}

@article {SturmI,
    AUTHOR = {Sturm, Karl-Theodor},
     TITLE = { On the geometry of metric measure spaces. I},
   JOURNAL = {Acta Math.},
    VOLUME = {196},
      YEAR = {2006},
     PAGES = {133-177},
}

@misc{BraunMcCann+,
  author    = {Braun, Mathias and McCann, Robert J.},
  title     = {\textnormal{"Causal convergence conditions through variable timelike Ricci curvature bounds"}},
  howpublished = {To appear in \textit{Memoirs of the European Mathematical Society.}},
  %eprint    = {2312.17158},
  %eprinttype = {arXiv},
  %doi       = {10.4171/xxxxxxx}, % %PLACEHOLDER - ADD ACTUAL DOI WHEN PUBLISHED
  %url       = {https://arxiv.org/abs/2312.17158}
  
}

@article {MondinoSuhr,
    AUTHOR = {Anrdrea Mondino and Stefan Suhr},
     TITLE = {An optimal transport formulation of the Einstein equations of general relativity},
   JOURNAL = {J. Eur. Math. Soc. 25, 933--994 (2023).},
  FJOURNAL = {Journal of European Mathematical Society},
    VOLUME = {25},
      YEAR = {2023},
     PAGES = {933--994},
       DOI = {DOI 10.4171/JEMS/1188},
}

@incollection{Brenier-Extended-MK,
  author    = {Brenier, Y.},
  title     = {Extended Monge-Kantorovich theory},
  booktitle = {Optimal transportation and applications},
  editor    = {Not specified},
  series    = {Lecture Notes in Mathematics},
  volume    = {1813},
  pages     = {91--121},
  year      = {2003},
  publisher = {Springer},
  address   = {Berlin},
  note      = {From the conference held in Martina Franca, 2001}
}

@article{Bertrand-Pratelli-Puel,
  author    = {Bertrand, J. and Pratelli, A. and Puel, M.},
  title     = {Kantorovich potentials and continuity of total cost for relativistic cost functions},
  journal   = {Journal de Math{\'e}matiques Pures et Appliqu{\'e}es},
  series    = {9},
  volume    = {110},
  pages     = {93--122},
  year      = {2018}
}

@article{Bertrand-Puel,
  author    = {Bertrand, J. and Puel, M.},
  title     = {The optimal mass transport problem for relativistic costs},
  journal   = {Calculus of Variations and Partial Differential Equations},
  volume    = {46},
  number    = {1-2}, 
  pages     = {353--374},
  year      = {2013}
}

@article{McCann-Puel,
  author    = {McCann, R. J. and Puel, M.},
  title     = {Constructing a relativistic heat flow by time transport steps},
  journal   = {Annales de l'Institut Henri Poincar{\'e} - Analyse Non Lin{\'e}aire},
  volume    = {26},
  pages     = {2539--2580},
  year      = {2009}
}

@article{Suhr,
  author    = {Suhr, S.},
  title     = {Theory of optimal transport for Lorentzian cost functions},
  journal   = {M{\"u}nster Journal of Mathematics},
  volume    = {11},
  pages     = {13--47},
  year      = {2018},
}

@misc{Cavalletti-Mondino2025+,
      title={A sharp isoperimetric-type inequality for Lorentzian spaces satisfying timelike Ricci lower bounds}, 
      author={Fabio Cavalletti and Andrea Mondino},
      year={2025},
      eprint={2401.03949},
      archivePrefix={arXiv},
      primaryClass={math.MG},
      url={https://arxiv.org/abs/2401.03949}, 
}

@article{Kell-Suhr,
  author    = {Kell, M. and Suhr, S.},
  title     = {On the existence of dual solutions for Lorentzian cost functions},
  journal   = {Annales de l'Institut Henri Poincar{\'e} - Analyse Non Lin{\'e}aire},
  volume    = {37},
  number    = {2},
  pages     = {343--372},
  year      = {2020},
}

@article{Eckstein-Miller,
  author    = {Eckstein, M. and Miller, T.},
  title     = {Causality for nonlocal phenomena},
  journal   = {Annales Henri Poincar{\'e}},
  volume    = {18},
  pages     = {3049--3096},
  year      = {2017},
}

@inbook{Cavalletti-Overview,
url = {https://doi.org/10.1515/9783110550832-003},
title = {An Overview of L1 optimal transportation on metric measure spaces},
booktitle = {Measure Theory in Non-Smooth Spaces},
author = {Fabio Cavalletti},
publisher = {De Gruyter Open Poland},
address = {Warsaw, Poland},
pages = {98--144},
doi = {doi:10.1515/9783110550832-003},
isbn = {9783110550832},
year = {2017},
lastchecked = {2026-01-17}
}

@article {EvansGangbo,
    AUTHOR = {Evans, L. C. and Gangbo, W.},
     TITLE = {Differential equations methods for the {M}onge-{K}antorovich
              mass transfer problem},
   JOURNAL = {Mem. Amer. Math. Soc.},
  FJOURNAL = {Memoirs of the American Mathematical Society},
    VOLUME = {137},
      YEAR = {1999},
    NUMBER = {653},
     PAGES = {viii+66},
      ISSN = {0065-9266,1947-6221},
   MRCLASS = {35Q99 (49J10)},
  MRNUMBER = {1464149},
MRREVIEWER = {John\ Urbas},
       DOI = {10.1090/memo/0653},
       URL = {https://doi.org/10.1090/memo/0653},
}

@ARTICLE{BraunTCD,
  title     = "R{\'e}nyi's entropy on Lorentzian spaces. Timelike
               curvature-dimension conditions",
  author    = "Braun, Mathias",
  journal   = "J. Math. Pures Appl.",
  publisher = "Elsevier BV",
  volume    =  177,
  pages     = "46--128",
  month     =  sep,
  year      =  2023,
  language  = "en"
}

@book{Monge,
  author = {Monge, Gaspard},
  description = {Mentioned by Filippo Simini, Marta C. Gonz{\'a}lez, Amos Maritan, and Albert-L{\'a}szl{\'o} Barab{\'a}si in "A universal model for mobility and migration patterns" to mention the gravity model.},
  publisher = {De l'Imprimerie Royale},
  title = {M{\'e}moire sur la th{\'e}orie des d{\'e}blais et des remblais},
  year = 1781
}

@article{Kantorovich,
  author = {Kantorovich, Leonid},
  JOURNAL = {(Doklady) Acad. Sci. URSS (N.S.)},
  title = {On the translocation of masses},
  year = 1942,
VOLUME =  {37},
  PAGES = {199--201}
}

@article {Sudakov,
    AUTHOR = {Sudakov, V. N.},
     TITLE = {Geometric problems in the theory of infinite-dimensional
              probability distributions},
   JOURNAL = {Proc. Steklov Inst. Math.},
  FJOURNAL = {Proceedings of the Steklov Institute of Mathematics},
      YEAR = {1979},
    NUMBER = {2},
     PAGES = {i--v, 1--178},
      ISSN = {0081-5438},
   MRCLASS = {60G15 (28A35 52A05)},
  MRNUMBER = {530375},
}

@misc{AKP,
  author    = {Alberti, G. and Kircheim, B. and Preiss, D.},
  note      = {Presented in a lecture by Kircheim at the Scuola Normale Superiore workshop, October 27, 2000.},
}

@article{Mondino-Suhr,
   title={An optimal transport formulation of the Einstein equations of general relativity},
   volume={25},
   ISSN={1435-9863},
   url={http://dx.doi.org/10.4171/JEMS/1188},
   DOI={10.4171/jems/1188},
   number={3},
   journal={Journal of the European Mathematical Society},
   publisher={European Mathematical Society - EMS - Publishing House GmbH},
   author={Mondino, Andrea and Suhr, Stefan},
   year={2022},
   month=jan, pages={933--994} }

@article{Feldman-McCann,
  title     = "Monge's transport problem on a Riemannian manifold",
  author    = "Feldman, Mikhail and McCann, Robert J.",
  journal   = "Trans. Am. Math. Soc.",
  publisher = "American Mathematical Society (AMS)",
  volume    =  354,
  number    =  4,
  pages     = "1667--1697",
  month     =  dec,
  year      =  2001,
}

@article{Ambrosio,
  author    = {Ambrosio, L.},
  title     = {Lecture notes on optimal transport problem},
  booktitle = {Mathematical Aspects of Evolving Interfaces},
  editor    = {Colli, P. and Rodrigues, J.},
  series    = {Lecture Notes in Mathematics},
  volume    = {1812},
  pages     = {1--52},
  year      = {2003},
  publisher = {Springer},
  address   = {Berlin},
  note      = {CIME summer school in Madeira (Portugal)}
}

@ARTICLE{Trudinger-Wang,
  title     = "On the Monge mass transfer problem",
  author    = "Trudinger, Neil S. and Wang, Xu-Jia",
  journal   = "Calc. Var. Partial Differ. Equ.",
  publisher = "Springer Science and Business Media LLC",
  volume    =  13,
  number    =  1,
  pages     = "19--31",
  month     =  aug,
  year      =  2001,
  language  = "en"
}

@book {VillaniTopics,
    AUTHOR = {Villani, C\'edric},
     TITLE = {Topics in optimal transportation},
    SERIES = {Graduate Studies in Mathematics},
    VOLUME = {58},
 PUBLISHER = {American Mathematical Society, Providence, RI},
      YEAR = {2003},
     PAGES = {xvi+370},
      ISBN = {0-8218-3312-X},
   MRCLASS = {90-02 (28D05 35B65 35J60 49N90 49Q20 90B20)},
  MRNUMBER = {1964483},
       DOI = {10.1090/gsm/058},
       URL = {https://doi.org/10.1090/gsm/058},
}

@article{Kell2017,
  author = {Kell, M.},
  title = {Transport maps, non-branching sets of geodesics and measure rigidity},
  journal = {Adv. Math.},
  volume = {320},
  year = {2017},
  pages = {520--573}
}

@article{RajalaSturm2014,
  author  = {Rajala, Tapio and Sturm, Karl-Theodor},
  title   = {Non-branching geodesics and optimal maps in strong {CD(K,$\infty$)}-spaces},
  journal = {Calc. Var. Partial Differential Equations},
  volume  = {50},
  pages   = {831--846},
  year    = {2014},
  doi     = {10.1007/s00526-013-0657-x}
}

@article{CavallettiHuesmann2015,
  author = {Cavalletti, F. and Huesmann, M.},
  title = {Existence and uniqueness of optimal transport maps},
  journal = {Ann. I. H. Poincar\'e AN},
  volume = {32},
  year = {2015},
  pages = {1367--1377}
}

@article{vonRenesseSturm,
author = {von Renesse, Max-K. and Sturm, Karl-Theodor},
title = {Transport inequalities, gradient estimates, entropy and Ricci curvature},
journal = {Communications on Pure and Applied Mathematics},
volume = {58},
number = {7},
pages = {923-940},
doi = {https://doi.org/10.1002/cpa.20060},
url = {https://onlinelibrary.wiley.com/doi/abs/10.1002/cpa.20060},
eprint = {https://onlinelibrary.wiley.com/doi/pdf/10.1002/cpa.20060},
year = {2005}
}

@book {MR1835418,
    AUTHOR = {Burago, Dmitri and Burago, Yuri and Ivanov, Sergei},
     TITLE = {A course in metric geometry},
    SERIES = {Graduate Studies in Mathematics},
    VOLUME = {33},
 PUBLISHER = {American Mathematical Society, Providence, RI},
      YEAR = {2001},
     PAGES = {xiv+415},
      ISBN = {0-8218-2129-6},
   MRCLASS = {53C23},
  MRNUMBER = {1835418},
MRREVIEWER = {Mario\ Bonk},
       DOI = {10.1090/gsm/033},
       URL = {https://doi.org/10.1090/gsm/033},
}

@misc{octet+,
      title={A nonlinear d'Alembert comparison theorem and causal differential calculus on metric measure spacetimes}, 
      author={Tobias Beran and Mathias Braun and Matteo Calisti and Nicola Gigli and Robert J. McCann and Argam Ohanyan and Felix Rott and Clemens S{\"a}mann},
      year={2025},
      eprint={2408.15968},
      archivePrefix={arXiv},
      primaryClass={math.DG},
      url={https://arxiv.org/abs/2408.15968}, 
}

@article{Klartag2017,
  author  = {B. Klartag},
  title   = {Needle decomposition in {Riemannian} geometry},
  journal = {Mem. Amer. Math. Soc.},
  volume  = {249},
  year    = {2017},
  number  = {1180}
}

\end{document}